\documentclass[twoside,11pt]{article}

\usepackage{blindtext}

%

%
%
%

\usepackage{jmlr2e}


\usepackage[utf8]{inputenc} 
\usepackage[T1]{fontenc}    
\usepackage{hyperref}       
\usepackage{url}            
\usepackage{booktabs}       
\usepackage{amsfonts}       
\usepackage{nicefrac}       
\usepackage{microtype}      
\usepackage[table]{xcolor}         
\usepackage{tocvsec2}
\usepackage{titletoc}
\usepackage{setspace}
\usepackage{xspace}
\usepackage{caption}

\usepackage{amsmath}
\usepackage{amsthm}
\usepackage{amssymb}

\usepackage{multicol}
\usepackage{multirow}
\usepackage{makecell}
\usepackage{enumitem}
\usepackage{wrapfig}
\usepackage{dsfont}

\usepackage{graphicx,epstopdf}
\usepackage{subfigure} 

\usepackage{algorithm}
\usepackage{algorithmic}

\usepackage{threeparttable, booktabs}
\usepackage{mathtools}
\usepackage{diagbox}
\mathtoolsset{showonlyrefs=false}

\newtheorem{assumption}{Assumption}

\newtheorem{remark}{Remark}
\newtheorem{lemma}{Lemma}
\newtheorem{proposition}{Proposition}
\newtheorem{theorem1}{Theorem}
\newtheorem{corollary}[theorem1]{Corollary}

\newcommand{\ours}{D-SOBA }
\newcommand{\oursFO}{D-SOBA-FO }
\newcommand{\oursSO}{D-SOBA-SO }

\newcommand{\cD}{{\mathcal{D}}}

\newcommand{\cN}{{\mathcal{N}}}
\newcommand{\cO}{{\mathcal{O}}}




\newcommand{\EE}{\mathbb{E}}
\newcommand{\RR}{\mathbb{R}}

\def\one{\mathds{1}}


\DeclareMathOperator*{\argmin}{arg\,min}

\newcommand{\eg}{\textit{e.g.}}
\newcommand{\ie}{\textit{i.e.}}

\usepackage{xcolor,pifont}
\newcommand{\cmark}{\textcolor{green}{\ding{52}}}
\newcommand{\xmark}{\textcolor{red}{\ding{55}}}

\newcommand{\vvvert}{{\vert\kern-0.25ex\vert\kern-0.25ex\vert}}


\usepackage{lastpage}
\jmlrheading{26}{2025}{1-\pageref{LastPage}}{4/25; Revised 9/25}{11/25}{25-0677}{Boao Kong, Shuchen Zhu, Songtao Lu, Xinmeng Huang, Kun Yuan}


\ShortHeadings{Decentralized Bilevel Optimization: A Perspective from Transient Complexity}{Kong, Zhu, Lu, Huang, and Yuan}
\firstpageno{1}

\begin{document}

\title{Decentralized Bilevel Optimization: A Perspective from Transient Iteration Complexity}

\author{\name Boao Kong$^*$ \email kongboao@stu.pku.edu.cn \\
       \addr Center for Data Science, Peking University\\
       Beijing, China
       \AND
       \name Shuchen Zhu$^*$\email shuchenzhu@stu.pku.edu.cn \\
       \addr Center for Data Science, Peking University\\
       Beijing, China
       \AND
       \name Songtao Lu \email stlu@cse.cuhk.edu.hk \\
       \addr Department of Computer Science and Engineering\\
       The Chinese University of Hong Kong\\
       Hong Kong SAR
       \AND
       \name Xinmeng Huang \email xinmengh@sas.upenn.edu \\
       \addr Graduate Group in Applied Mathematics and Computational Science\\
       University of Pennsylvania\\
       Philadelphia, USA
       \AND
       \name Kun Yuan$^\dagger$ \email kunyuan@pku.edu.cn \\
       \addr Center for Machine Learning Research, Peking University\\
       AI for Science Institute\\
       Beijing, China
       }
       
\editor{Shiqian Ma}

\maketitle
\def\thefootnote{*}\footnotetext{Equal contribution.}
\def\thefootnote{$\dagger$}\footnotetext{Corresponding author.}
\begin{abstract}
Stochastic bilevel optimization (SBO) is becoming increasingly essential in machine learning due to its versatility in handling nested structures. To address large-scale SBO, decentralized approaches have emerged as effective paradigms in which nodes communicate with immediate neighbors without a central server, thereby improving communication efficiency and enhancing algorithmic robustness. However, most decentralized SBO algorithms focus solely on asymptotic convergence rates, overlooking transient iteration complexity-the number of iterations required before asymptotic rates dominate, which results in limited understanding of the influence of network topology, data heterogeneity, and the nested bilevel algorithmic structures. To address this issue, this paper introduces \textbf{D-SOBA}, a \textbf{D}ecentralized \textbf{S}tochastic \textbf{O}ne-loop \textbf{B}ilevel \textbf{A}lgorithm framework. D-SOBA comprises two variants: D-SOBA-SO, which incorporates second-order Hessian and Jacobian matrices, and D-SOBA-FO, which relies entirely on first-order gradients. We provide a comprehensive non-asymptotic convergence analysis and establish the transient iteration complexity of D-SOBA. This provides the first theoretical understanding of how network topology, data heterogeneity, and nested bilevel structures influence decentralized SBO. Extensive experimental results demonstrate the efficiency and theoretical advantages of D-SOBA.
\end{abstract}

\begin{keywords}
  bilevel optimization, decentralized optimization, convergence rate, transient iteration complexity, non-asymptotic analysis.
\end{keywords}

\section{Introduction}

Stochastic bilevel optimization, which tackles problems with nested optimization structures, has gained growing interest. This two-level structure provides a flexible and potent framework for addressing a broad range of tasks, spanning from hyperparameter optimization \citep{bertinetto2019meta,franceschi2018bilevel,snell2017prototypical} to reinforcement learning \citep{hong2023two}, adversarial learning \citep{zhang2022revisiting}, continue learning \citep{borsos2020coresets}, and imitation learning \citep{arora2020provable}. State-of-the-art performance in these tasks is typically achieved with extremely large  datasets, which necessitates efficient distributed algorithms for stochastic bilevel optimization across multiple computing nodes. 

This paper considers $N$ nodes connected through a given graph topology. Each node $i$ privately owns an upper-level loss function $f_i: \mathbb{R}^d \times \mathbb{R}^p  \rightarrow \mathbb{R}$ and a lower-level loss function $g_i: \mathbb{R}^d \times \mathbb{R}^p \rightarrow \mathbb{R}$. The goal of all nodes is to collaboratively find a solution to the following distributed stochastic bilevel optimization problem:
\begin{subequations}
\label{prob:general}
\begin{align}
\min_{x \in \RR^d} \quad & \Phi(x) := f(x,y^\star(x)):=\frac{1}{N}\sum_{i=1}^N f_i\left(x, y^\star(x)\right) \label{eq:upper} \\
\mathrm{s.t.} \quad &  y^\star(x) := \argmin_{y \in \RR^p}  \left\{ g(x,y)  :=  \frac{1}{N}\sum_{i=1}^N g_i(x, y)\right\}  \label{eq:lower} 
\end{align}
\end{subequations}
where $f_i$ and $g_i$ are defined as the expectation of $F(x,y;\xi_i)$ and $G(x,y;\zeta_i)$: 
\begin{align}\label{eq:f-and-g}
f_i(x, y) := \EE_{\xi_i \sim \cD_{f_i}}[F(x, y;\xi_i)], \quad
g_i(x, y) := \EE_{\zeta_i \sim \cD_{g_i}}[G(x, y; \zeta_i)].
\end{align}
The random variables $\xi_i$ and $\zeta_i$ represent data samples available at node $i$, drawn from the local upper-level distribution $\cD_{f_i}$ and the lower-level distribution $\cD_{g_i}$, respectively. Throughout this paper, we assume that these local data distributions vary across nodes, potentially causing data heterogeneity issues during training.

\subsection{Decentralized Bilevel Algorithms}

Conventional distributed approaches for solving problem \eqref{prob:general} typically follow a centralized paradigm \citep{huang2023achieving, tarzanagh2022fednest, yang2023simfbo}, where a central server synchronizes updates across the network by computing a globally averaged upper- or lower-level stochastic gradient to update the model parameters. This global averaging step is commonly implemented using mechanisms such as Parameter Server \citep{li2014scaling, smola2010architecture} or Ring-Allreduce \citep{ring-allreduce}. However, these techniques incur substantial bandwidth costs or high latency \citep[Table I]{ying2021bluefog}, which severely limits the scalability of centralized bilevel optimization.

Decentralized bilevel optimization is a new paradigm to approach problem \eqref{prob:general} eliminating the global averaging step, wherein each node maintains a local model updated by communicating with its immediate neighbors.
This distinctive feature results in a noteworthy reduction in communication costs compared to centralized approaches. Beyond the communication efficiency, decentralized bilevel algorithms also exhibit enhanced robustness to node and link failures, persisting in converging to the desired solution as long as the graph remains connected. In contrast, centralized bilevel algorithms inevitably break down when the central server crashes.
For these reasons, considerable research efforts, {\em e.g.}, \citep{chen2022decentralized,chen2023decentralized,dong2023single,gao2023convergence,liu2022interact,lu2022stochastic,niu2023distributed,yang2022decentralized,zhang2023communication}, have been dedicated to developing decentralized bilevel algorithms with theoretical guarantees and empirical effectiveness. 

\subsection{Limitations and Open Questions}
\label{sec:intro-limitation}

\textbf{Limitations in existing literature.} Significant progress has been made in developing decentralized stochastic bilevel algorithms, with most studies focusing on their \textit{asymptotic} convergence rates, which are achieved after a sufficiently large number of iterations. References \citep{lu2022stochastic,yang2022decentralized} demonstrate that decentralized bilevel algorithms attain the same asymptotic convergence rates as their centralized bilevel counterparts or decentralized single-level algorithms, highlighting their strong theoretical guarantees. However, an exclusive focus on asymptotic convergence rates presents several critical limitations:

\begin{itemize}
\item[L1.] In practical scenarios, computational constraints—such as limited time and resource budgets—often restrict the number of iterations that can be performed, preventing algorithms from reaching their asymptotic stage. It remains unknown the number of transient iterations required before the algorithm attains its asymptotic rate.

\item[L2.] Asymptotic convergence rates offer limited insight into the impact of nested algorithmic structures, network topologies, and data heterogeneity on decentralized bilevel algorithms, as these factors are typically associated with the algorithms' transient convergence stage rather than its asymptotic steady-state rate.

\item[L3.] Asymptotic convergence rates are inadequate as a metric for distinguishing between various decentralized bilevel algorithms, as most existing methods share the same asymptotic convergence rate~\citep{lu2022stochastic,yang2022decentralized}. This makes it challenging to determine which algorithm delivers superior theoretical performance. 
\end{itemize}

\noindent \textbf{Open questions.} To address these limitations and advance our understanding of decentralized bilevel optimization, several fundamental open questions need to be answered:
\begin{itemize}
    \item[Q1.] Can we quantify the transient iteration complexity of decentralized stochastic bilevel algorithms—that is, the number of iterations needed for the asymptotic convergence rate to dominate? Furthermore, can transient iteration complexity serve as a criterion to differentiate between various decentralized bilevel algorithms?

    \item[Q2.] Can we elucidate how network topologies and data heterogeneity influence transient iteration complexity? What benefits can we achieve by enhancing network connectivity and reducing data heterogeneity?

    \item[Q3.] Can we clarify how the nested bilevel algorithmic structure affects transient iteration complexity? Does the lower-level optimization introduce substantial challenges to the non-asymptotic phase?
\end{itemize}

\subsection{Main Results}
This paper proposes a new decentralized stochastic bilevel optimization method, focusing on its non-asymptotic convergence analysis and transient iteration complexity. This case study sheds light on the aforementioned open questions.

\begin{wrapfigure}{r}{0.45\textwidth}
\centering
\vspace{-13pt}
\includegraphics[width=0.38\textwidth]{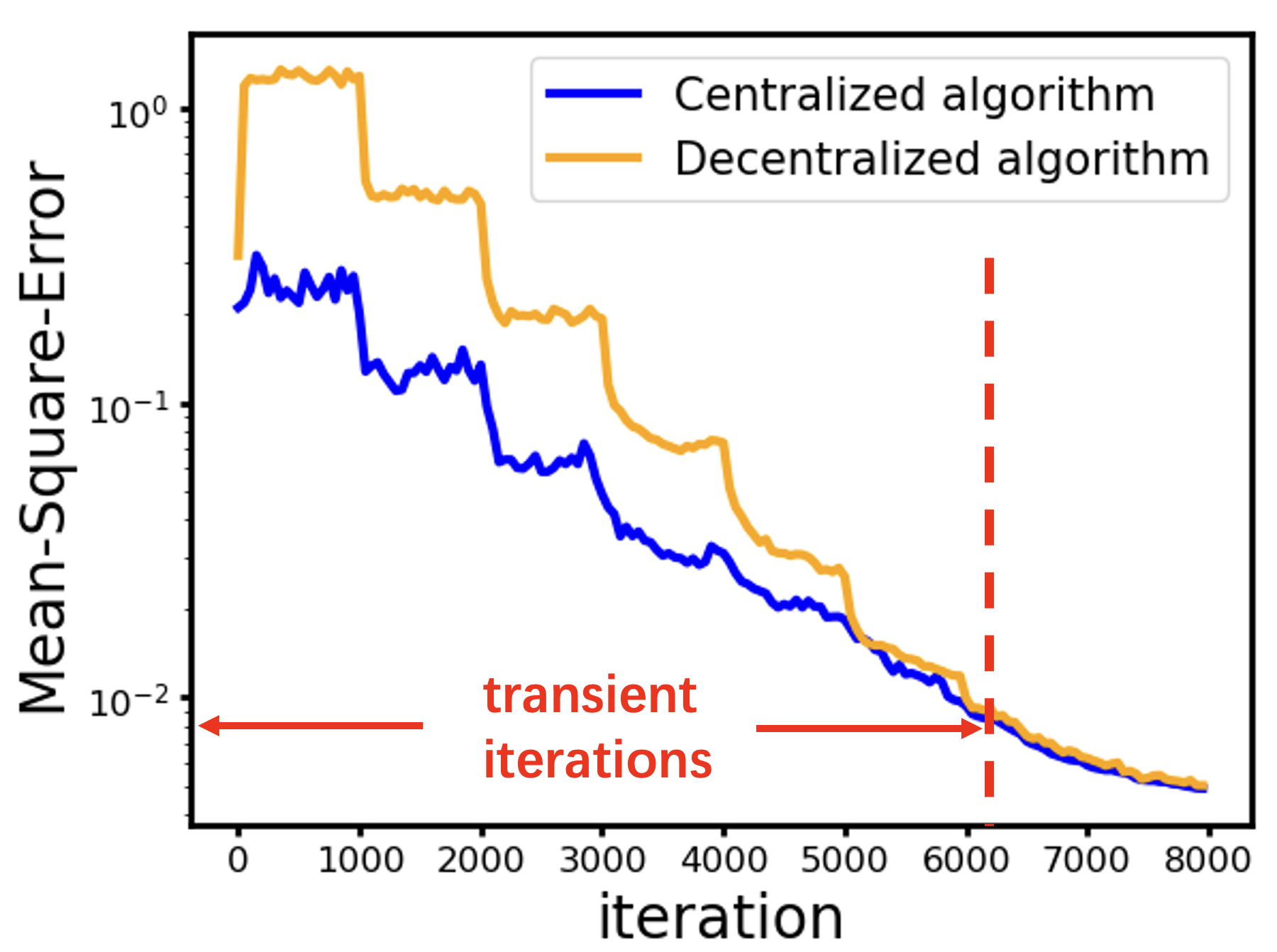}
\captionsetup{justification=raggedright}
\caption{\small Decentralized algorithm (with stage-wise decaying learning rate) has to experience sufficiently massive transient iterations to achieve the same asymptotic rate as centralized approach.}
 \vspace{-10pt}
\label{fig: tran_iter}
\end{wrapfigure}

\vspace{1mm}
\noindent \textbf{Transient iteration complexity.} Before presenting the main results, we will formally introduce the concept of transient iteration complexity. Although stochastic decentralized bilevel algorithms can asymptotically achieve the same linear speedup rate of $\mathcal{O}(1/\sqrt{NT})$ as their centralized counterparts~\citep{gao2023convergence,lu2022stochastic,yang2022decentralized}, they typically require more iterations to reach this stage due to inexact averaging in decentralized communication settings. \textit{Transient iteration complexity} refers to the number of iterations a decentralized algorithm undergoes before the asymptotic convergence rate becomes dominant, as illustrated in Fig.~\ref{fig: tran_iter}. A lower transient complexity indicates that the decentralized approach can more rapidly align with the performance of its centralized counterpart.

\begin{table*}[t!]
\small
\caption{\small Comparison between different decentralized stochastic bilevel algorithms. Notation $T$ indicates the number of (outer) iterations, $1-\rho \in (0, 1]$ measures the connectivity of the underlying graph, $N$ is the number of nodes, $G^2$ denotes gradient upper bound, and $b^2$ denotes the magnitude of gradient dissimilarity. We also list the result of single-level DSGD in the bottom line for reference.}
\vspace{1mm}
\centering
\begin{threeparttable}
\begin{tabular}{lccccccl}
\toprule
\small
{Algorithm} & \hspace{-0.4cm}{$\text{S.L.}^\blacklozenge$} & \hspace{-0.4cm}{L.S.$^\Diamond$} & \hspace{-0.4cm}{F.O.$^\heartsuit$} & \hspace{-0.32cm}{\begin{tabular}[c]{@{}c@{}}Asymptotic\\complexity$^\dagger$\end{tabular}} & {\begin{tabular}[c]{@{}c@{}}\hspace{-0.7cm}Transient \\ \hspace{-0.7cm}complexity$^\ddagger$\end{tabular}} & \multicolumn{1}{c}{\hspace{-0.3cm}{Assumption}$^\triangleleft$}                                                        \\ \midrule
\citep{chen2022decentralized}                & \hspace{-0.2cm}\xmark                                       & \hspace{-0.2cm}\xmark    & \hspace{-0.2cm}\xmark                                                & \hspace{-0.35cm}$\frac{1}{\varepsilon^3}$                          & \hspace{-0.7cm} N. A.                                                                    & \hspace{-0.3cm}LC $f_i$                                                             \vspace{1mm}      \\
\citep{chen2023decentralized}                 & \hspace{-0.2cm}\xmark                                       & \hspace{-0.2cm}\xmark                 & \hspace{-0.2cm}\xmark                                                & \hspace{-0.35cm}$\frac{1}{\varepsilon^2}\log(\frac{1}{\varepsilon})$                          & \hspace{-0.7cm} N. A.                                                                    & \hspace{-0.3cm}LC $f_i$                                                                 \vspace{1mm}    \\
\citep{lu2022stochastic}                 & \hspace{-0.2cm}\xmark                                       & \hspace{-0.2cm}\cmark      & \hspace{-0.2cm}\xmark                                              & \hspace{-0.35cm}$\frac{1}{N \varepsilon^2}\log(\frac{1}{\varepsilon})$                        & \hspace{-0.7cm} N. A.                                                                    & \hspace{-0.3cm}LC $f_i$                                                                   \vspace{1mm}   \\
\citep{yang2022decentralized}               & \hspace{-0.2cm}\xmark                                       & \hspace{-0.2cm}\cmark          & \hspace{-0.2cm}\xmark                                          & \hspace{-0.35cm}$\frac{1}{N \varepsilon^2}\log(\frac{1}{\varepsilon})$                        &\hspace{-0.7cm} $\frac{N^3 G^4}{(1-\rho)^4}^{\triangleright}$                                                 & \hspace{-0.3cm}BG $\nabla f_i$                                                                     \vspace{1mm}   \\
\citep[Thm 1]{gao2023convergence}$^\ast$                & \hspace{-0.2cm}\xmark                                       & \hspace{-0.2cm}\xmark         & \hspace{-0.2cm}\xmark                                     & \hspace{-0.35cm}$\frac{1}{(1-\rho)^2\varepsilon^2}\log(\frac{1}{\varepsilon})$                &\hspace{-0.7cm} N. A.                                                                     & \hspace{-0.3cm}BG $\nabla f_i$                                                                 \vspace{1mm}       \\
\citep[Thm 2]{gao2023convergence}$^\ast$                & \hspace{-0.2cm}\xmark                                       & \hspace{-0.2cm}\cmark         & \hspace{-0.2cm}\xmark                                           & \hspace{-0.35cm}$\frac{1}{N\varepsilon^2}\log(\frac{1}{\varepsilon})$                         &\hspace{-0.7cm}$\frac{N^3}{(1-\rho)^8}$                                                 & \hspace{-0.42cm} \begin{tabular}[l]{@{}l@{}}BG $\nabla f_i, \nabla g_i$ \\ BG $\nabla^2 g_i$\end{tabular}  \vspace{1mm}   \\
\citep{wang2024fully}                & \hspace{-0.2cm}\xmark                                       & \hspace{-0.2cm}\cmark          & \hspace{-0.2cm}\cmark                                          & \hspace{-0.35cm}$\frac{1}{(1-\rho^2)^{7/2}N \varepsilon^7}$                        &\hspace{-0.7cm} N. A.                                                 & \hspace{-0.3cm}LC $f_i\,^\clubsuit$                             \vspace{1mm}   \\
\rowcolor{green!30}\textbf{D-SOBA-SO(ours)}               & \hspace{-0.2cm}\cmark                                     & \hspace{-0.2cm}\cmark         & \hspace{-0.2cm}\xmark                                                   & \hspace{-0.35cm}$\boldsymbol{\frac{1}{N\varepsilon^2}}$                                                 & $\hspace{-0.9cm} \boldsymbol{\max\left\{ \frac{N^3}{(1-\rho)^2}, \frac{N^3b^2}{(1-\rho)^4}\right\}}$        & \hspace{-0.3cm}\begin{tabular}[c]{@{}l@{}}\textbf{BGD} $\nabla f_i, \nabla g_i$ \\ \textbf{BGD} $\nabla^2g_i$\end{tabular}       \vspace{-0.5mm}    
\\ \rowcolor{orange!30}\textbf{D-SOBA-FO(ours)}               & \hspace{-0.2cm}\cmark                                     & \hspace{-0.2cm}\cmark       & \hspace{-0.2cm}\cmark                                                     & \hspace{-0.35cm}$\boldsymbol{\frac{1}{N\varepsilon^2}}$                                                 & $\hspace{-0.9cm} \boldsymbol{\max\left\{ \frac{N^3}{(1-\rho)^2}, \frac{N^3b^2}{(1-\rho)^4}\right\}}$        & \hspace{-0.3cm}\begin{tabular}[c]{@{}l@{}}\textbf{BGD} $\nabla f_i, \nabla g_i$ \\ \textbf{BGD} $\nabla^2g_i$\end{tabular}       \vspace{-0.5mm}    
\\\midrule
\multicolumn{1}{l}{\begin{tabular}[c]{@{}l@{}}Single-level DSGD \\ \citep{chen2021accelerating}\end{tabular}} 
& \hspace{-0.2cm}\cmark                                     & \hspace{-0.2cm}\cmark                                     & \hspace{-0.2cm}N. A.                                                     & \hspace{-0.35cm}${\frac{1}{N\varepsilon^2}}$                                                 & $\hspace{-0.8cm} {\max\left\{ \frac{N^3}{(1-\rho)^2}, \frac{N^3b^2}{(1-\rho)^4}\right\}}$        & \hspace{-0.3cm}{BGD} $\nabla f_i$  \\ \bottomrule
\end{tabular}
\begin{tablenotes}
\footnotesize
    \item[$\blacklozenge$] The algorithm has the single-loop structure. 
    \item[$\Diamond$] The algorithm achieves {linear speedup}, i.e., it achieves the asymptotic rate of $1/{\sqrt{NT}}$.
    \item[$\heartsuit$] The algorithm only uses first-order gradients and does not need computation of the Hessian/Jacobian matrix or the Hessian/Jacobian-vector product.
    \item[$\dagger$] \#gradient/Hessian evaluations to achieve an $\varepsilon$-stationary solution when $\epsilon \to 0$ (smaller is better).
    \item[$\ddagger$] \#transient iterations an algorithm experiences before the asymptotic rate dominates (smaller is better).
    \item[$\triangleleft$]Additional assumptions beyond convexity, smoothness, and stochastic gradient. Lipschitz continuous (LC) function and bounded gradients (BG) are more restrictive than bounded gradient dissimilarity (BGD).

    \item[$\triangleright$] $G$ is the uniform upper bound of gradients assumed in \citep{yang2022decentralized}. It typically holds that $b \ll G$ where $b$ gauges the magnitude of the gradient dissimilarity, \ie, $\frac{1}{N}\sum_{i=1}^N\|\nabla f_i - \nabla f\|^2 \le b^2$.
    
\item[$\clubsuit$]The algorithm relies on the  Lipschitz continuity  assumption about $f_i$ with respect to $y$, and the 
 condition $\|\frac{1}{N}\sum_{i=1}^N\nabla_1f_i(x_i^{(t)},y_i^{(t)})+\alpha(\nabla_1g_i(x_i^{(t)},y_i^{(t)})-\nabla_1g_i(x_i^{(t)},v_i^{(t)}))\|^2=\mathcal{O}(\alpha^2)$, where $v_i^{(t)}$ is an intermediate variable.  In general cases, this condition essentially requires that each $\nabla_1 f_i$ is uniformly bounded.

\item[$\ast$]Asymptotic rate/transient complexity are not given in \citep{gao2023convergence}. We derive them in Appendix \ref{analysis of gao}. 
\end{tablenotes}
\end{threeparttable}
\label{table:comparison}
\end{table*}

\vspace{1mm}
\noindent \textbf{Contributions.} This paper addresses the above open questions through the results below:
\begin{itemize}
    \item[C1.] We propose \textbf{D-SOBA}, a \underline{\bf D}ecentralized \underline{\bf S}tochastic \underline{\bf O}ne-loop \underline{\bf B}ilevel \underline{\bf A}lgorithm framework. A key feature of D-SOBA is its elimination of the inner loop typically required for estimating the lower-level solution $y^\star(x)$ or the Hessian inverse of $g_i(x,y)$. The D-SOBA framework is versatile and can be implemented in two variants: \textbf{D-SOBA-SO} and \textbf{D-SOBA-FO}, which differ in their approaches to Hessian/Jacobian-vector product evaluations. {D-SOBA-SO} maintains \underline{\bf S}econd-\underline{\bf O}rder stochastic Hessian/Jacobian-vector products, while {D-SOBA-FO} performs hypergradient evaluation using fully \underline{\bf F}irst-\underline{\bf O}rder gradients through finite-difference techniques \citep{allen2018neon2,carmon2018accelerated}. By avoiding direct computation and storage of Hessian/Jacobian matrices or their vector products, D-SOBA-FO offers greater efficiency and practicality in high-dimensional applications such as meta-learning \citep{franceschi2018bilevel,rajeswaran2019meta,kayaalp2022dif,NEURIPS2022_1a82986c}. 
    
    \item[C2.] We provide a comprehensive non-asymptotic convergence analysis and derive the transient iteration complexity for the \ours framework. On one hand, we demonstrate that \ours achieves an asymptotic gradient complexity of $\mathcal{O}(1/(N\epsilon^2))$ where $\epsilon$ is the desired convergence accuracy, outperforming existing results by at least a factor of $\log(1/\epsilon)$. On the other hand, the transient iteration complexity allows us to clarify the theoretical superiority of \ours over other bilevel algorithms in the transient iteration regime. This addresses open question Q1. Notably, all convergence results for the \ours framework extend naturally to both D-SOBA-SO and D-SOBA-FO. 

    \item[C3.] Our transient iteration complexity is the {\em first} result in decentralized bilevel optimizaiton to quantify how network topology and data heterogeneity jointly affect the non-asymptotic convergence stage. It highlights that severe data heterogeneity exacerbates the impact of network topology, while poorly connected networks amplify the adverse effects of data heterogeneity. Neglecting either factor result in an incomplete transient-stage analysis. Additionally, we quantify the theoretical benefits achievable through enhanced network connectivity and reduced data heterogeneity, thereby addressing question Q2. All these results apply to both {D-SOBA-SO} and {D-SOBA-FO}.

    \item[C4.] Our comprehensive convergence analysis elucidates the impact of the nested bilevel algorithmic structure on transient iteration complexity. On one hand, we demonstrate that decentralized single-level and bilevel algorithms share the same asymptotic convergence rate and transient iteration complexity with respect to network topology and data heterogeneity. On the other hand, we clarify that the nested algorithmic structure significantly exacerbates the influence of the lower-level optimization's condition number. This highlights the challenges inherent to decentralized bilevel optimization, thereby addressing open question Q3.
\end{itemize}

\noindent \textbf{Comparison with existing algorithms.} All our established results, along with those of existing decentralized stochastic bilevel optimization algorithms, are summarized in Table \ref{table:comparison}. Notably, both D-SOBA-SO and D-SOBA-FO achieve an asymptotic gradient complexity of \(\mathcal{O}(1/(N\epsilon^2))\), surpassing existing methods by at least a factor of \(\log(1/\epsilon)\). In particular, the fully first-order decentralized bilevel algorithm proposed in \citep{wang2024fully} attains a complexity of \(\mathcal{O}(1/(N\epsilon^7))\), which is less efficient than our proposed D-SOBA-FO. Furthermore, the transient iteration complexity of the proposed algorithms is considerably better than that of \citep{yang2022decentralized} and \citep{gao2023convergence} when the data heterogeneity \( b^2 \) is trivial and the network topology is sparse (i.e., \( 1-\rho \to 0 \)). In addition, it is observed that our algorithms attain the same asymptotic and transient-iteration complexities as single-level DSGD \citep{chen2021accelerating} when the influence of the condition number can be neglected. Finally, Table~\ref{table:comparison} shows our results hold under more relaxed assumptions than those of existing methods.

\vspace{2mm} 
\noindent \textbf{Note.} This paper focuses on bilevel algorithms based on vanilla decentralized SGD~\citep{sayed2014adaptive,lian2017can}. Since decentralized SGD is inherently influenced by data heterogeneity, studying decentralized bilevel algorithms within this framework enables a quantitative analysis of how data heterogeneity impacts decentralized bilevel optimization. This establishes a foundation for understanding the quantitative benefits of reducing data heterogeneity in decentralized bilevel optimization. By contrast, skipping the analysis of decentralized bilevel SGD and directly delving into decentralized bilevel gradient tracking, as in references~\citep{zhang2023communication,dong2023single,wang2024fully}, has resulted in an inability to quantify the improvement in convergence rates achieved by removing data heterogeneity.

\subsection{Related Works}
\label{section:related}
\noindent \textbf{Bilevel optimization.} Bilevel optimization \citep{bracken1973mathematical} 
has shown extensive applications in operations research, signal processing, and machine learning \citep{colson2007overview, vicente1994bilevel, zhang2023introduction}. A central challenge in bilevel optimization is the estimation of the hypergradient $\nabla \Phi(x)$. {To address this, various algorithms have emerged, leveraging approaches such as approximate implicit differentiation \citep{domke2012generic,ghadimi2018approximation,grazzi2020iteration,ji2021bilevel}, iterative differentiation \citep{domke2012generic,franceschi2018bilevel,grazzi2020iteration,ji2021bilevel,maclaurin2015gradient}, and Neumann series \citep{chen2021closing,hong2023two,xiao2022alternating,li2022fully}.} However, these approaches incur additional inner-loops to estimate the lower-level solution $y^\star(x)$ and the Hessian inverse of $g(x,y)$,  resulting in extra computational overhead and worsening the iteration complexity. A recent work \citep{dagreou2022framework} develops a novel single-loop framework for stochastic bilevel optimization, in which the Hessian inverse is removed from the algorithmic structure and the lower and upper variables are updated simultaneously. 
Moreover, first-order algorithms for bilevel optimization have gained an increasing focus to solve large-scale optimization tasks. An effective approach is to view the lower-level loss as a penalty and add it to the upper-level loss \citep{liu2022bome,kwon2023fully,lu2024first}. Moreover, \cite{yang2023achieving} also uses finite-difference of stochastic gradients to evaluate Hessian/Jacobian, reducing computation and storage cost especially in large-scaled scenarios.

\vspace{2mm}
\noindent \textbf{Decentralized optimization.} Decentralized optimization is useful in situations where the centralized control of all nodes by a single server is either practically infeasible or prohibitively expensive.
Early well-known algorithms include decentralized gradient descent \citep{nedic2009distributed,yuan2016convergence}, diffusion strategies \citep{chen2012diffusion}, dual averaging \citep{duchi2011dual}, EXTRA \citep{shi2015extra}, Exact-Diffusion \citep{li2017decentralized,yuan2017exact1}, gradient tracking \citep{di2016next,nedic2017achieving,xu2015augmented}, and decentralized ADMM methods \citep{chang2014multi,shi2014linear}. In the stochastic context, decentralized SGD is established in \citep{lian2017can} to achieve the same asymptotic linear speedup as centralized SGD.  Since then, many efforts have extended decentralized SGD to directed topologies \citep{nedic2014distributed}, time-varying topologies \citep{koloskova2020unified,nedic2014distributed}, and data-heterogeneous scenarios \citep{lin2021quasi,lu2019gnsd,tang2018d,xin2020improved,yuan2023removing,yuan2021decentlam}. Lower bounds and optimal complexities are also recently established for stochastic decentralized optimization \citep{lu2021optimal,yuan2022revisiting}. 

\vspace{2mm}
\noindent \textbf{Decentralized stochastic bilevel optimization (SBO).} 
Decentralized SBO algorithms are studied in \citep{chen2022decentralized,chen2023decentralized,gao2023convergence,lu2022stochastic,yang2022decentralized,wang2024fully} with solid theoretical guarantees and strong empirical performance. Nevertheless, these algorithms entail computationally expensive inner-loop updates and often exhibit a limited focus on the non-asymptotic convergence stage in their analyses, as discussed in Sec.~\ref{sec:intro-limitation}. Table \ref{table:comparison} presents a comparison of our results with existing works in terms of both asymptotic and transient iteration complexities. Specifically, \cite{wang2024fully} achieve the hyper-gradient evaluation only by first-order gradients.  Meanwhile, \citep{lu2022decentralized,niu2023distributed} are also proposed to solve the personalized bilevel problem:
\begin{equation}\label{prob:personalized}
\begin{aligned}
    \min_{x \in \RR^d} \quad  \frac{1}{N}\sum_{i=1}^N f_i\left(x, y_i^\star(x)\right), \quad\quad
     \vspace{-1mm} \mathrm{s.t.} \quad y_i^\star(x) := \argmin_{y \in \RR^p}   g_i(x,y) , \quad  {\forall\,1\leq  i \leq N}.
\end{aligned}
\end{equation}
where each node has a personalized lower-level cost function. This differs from problem \eqref{prob:general}, where both the upper- and lower-level cost functions are globally averaged.

\vspace{2mm}
\noindent \textbf{Concurrent works.} Recently, a single-loop decentralized stochastic bilevel optimization method, SPARKLE, was introduced in \citep{anonymous2024sparkle}. Building on our findings on the impact of data heterogeneity\footnote{The main results of this paper were first presented in arXiv reports \citep{kong2024decentralized}.}, SPARKLE quantifies the performance gains achieved by mitigating data heterogeneity through gradient tracking. Another recent work \citep{wen2024communication} proposed a fully first-order algorithm for decentralized \emph{deterministic} bilevel optimization, successfully avoiding Hessian/Jacobian-vector product computations. However, this algorithm relies on a stronger assumption of the Lipschitz continuity of the Hessian matrices \(\nabla^2 f_i\) and \(\nabla^2 g_i\) and does not clarify performance in stochastic settings.

\section{Preliminaries}
\label{subsection:Notations and Assumptions}
\subsection{Notations}
For a second-order differentiable function \( f:\mathbb{R}^{d}\times\mathbb{R}^{p}\to \mathbb{R} \), we denote its partial gradients as \( \nabla_1 f(x, y) \in \mathbb{R}^d \) and \( \nabla_2 f(x, y) \in \mathbb{R}^p \) with respect to \( x \) and \( y \), respectively. The corresponding partial Hessians are \( \nabla_{12}^2 f(x, y) \in \mathbb{R}^{d \times p} \) and \( \nabla_{22}^2 f(x, y)\in \mathbb{R}^{p\times p} \). Additionally, we use \( \nabla_x f(x,y^{\star}(x)) \) to denote the gradient of \( f \) with respect to \( x \), treating \( y \) as a function of \( x \). We let \( \|\cdot\| \) denote the \(\ell_2\) norm for both vectors and matrices, and \( \|\cdot\|_F \) the Frobenius norm for matrices. The vector \( \one_N\in\mathbb{R}^N \) represents an all-ones vector. For local variables \( \{x_i^{(t)}\}_{i=1}^N \), the subscript \( i \) denotes the node index, while the superscript \( t \) indicates the iteration. Their average, \( \sum_{i=1}^N x_{i}^{(t)}/N \), is denoted as \( \bar{x}^{(t)} \). Finally, we write \( a\lesssim b \) if there exists a constant \( C > 0 \) such that \( a\leq Cb \).  

\subsection{Assumptions}
With the notations introduced above, we next state the assumptions used in the paper.
\begin{assumption}[\sc Smoothness]
\label{assumption:smooth}
There exist positive constants $L_{\nabla f}$, $L_{\nabla g}$, $L_{\nabla^2 g}$, $L_f$ and $\mu_g$ such that  for any $1\le i \le N$,

1. $\nabla f_i$,$\nabla g_i$,$\nabla^2g_i$ are $L_{\nabla f}$, $L_{\nabla g}$, $L_{\nabla^2 g}$ Lipschitz continuous,  respectively;

2. $\left\Vert\nabla_2f_i(x,y^\star(x))\right\Vert\leq L_f<\infty$ for all $x\in\mathbb{R}^d$ in which $y^\star(x)$ is defined in problem \eqref{eq:lower};

3. $g_i(x,\cdot)$ is $\mu_g$-strongly convex for any given $x\in \mathbb{R}^d$. 

\end{assumption}
\noindent It is noteworthy that {the second condition} of Assumption \ref{assumption:smooth} relaxes the restrictive assumptions of Lipschitz continuity of $f$ or, equivalently, the boundedness of $\nabla_2 f$ used {in~\citep{gao2023convergence, lu2022stochastic}}.

Due to the heterogeneity of local data distributions, the local functions $\{(f_i, g_i)\}_{i=1}^N$ are non-identical across different nodes. As this paper focuses on bilevel optimization algorithms built upon vanilla decentralized SGD, we bound gradient dissimilarity using the following standard assumption~\citep{koloskova2020unified,lian2017can}:
\begin{assumption}[\sc Gradient dissimilarity]
\label{assumption: data heterogeneity}
There exist constants  ${b_1, b_2}\geq0$ such that for all $(x,y)\in\mathbb{R}^{d}\times\mathbb{R}^{p}$, it holds that:
\begin{subequations}
\begin{align*}
    &\dfrac{1}{N}\sum_{i=1}^N\left\Vert \nabla_1f_i(x,y)-\nabla_1f(x,y)\right\Vert^2\leq {b_1^2},\quad\hspace{2pt}\dfrac{1}{N}\sum_{i=1}^N\left\Vert \nabla_2f_i(x,y)-\nabla_2f(x,y)\right\Vert^2\leq {b_1^2};\\
    &\dfrac{1}{N}\sum_{i=1}^N\left\Vert \nabla_2g_i(x,y)-\nabla_2g(x,y)\right\Vert^2\leq {b_2^2},\quad\hspace{2pt}\dfrac{1}{N}\sum_{i=1}^N\left\Vert \nabla_{12}^2g_i(x,y)-\nabla_{12}^2g(x,y)\right\Vert^2\leq {b_2^2},\\
    &\dfrac{1}{N}\sum_{i=1}^N\left\Vert \nabla_{22}^2g_i(x,y)-\nabla_{22}^2g(x,y)\right\Vert^2\leq {b_2^2}.
\end{align*}
\end{subequations}
\end{assumption}
\noindent 
We also make the following standard assumption for stochastic gradients and Hessians.

\begin{assumption}[\sc Stochasticity]
\label{assumption:unbiased}
There exists a constant $\sigma\geq 0$ such that  for any given $(x,y)\in\RR^{d}\times \RR^p$ and $1\leq i\leq N$:

\vspace{1mm}
\noindent 1. the gradient oracles satisfy:      
\begin{align*}
&\EE_{\xi_i\sim\cD_{f_i}}[\nabla_1 F(x,y;\xi_i)]=\nabla_1 f_i(x,y),\quad\EE_{\xi_i\sim\cD_{f_i}}\left[\left\|\nabla_1 F(x,y;\xi_i)-\nabla_1 f_i(x,y)\right\|^2\right]\le\sigma^2,\\
&\EE_{\xi_i\sim\cD_{f_i}}[\nabla_2 F(x,y;\xi_i)]=\nabla_2 f_i(x,y),\quad\EE_{\xi_i\sim\cD_{f_i}}\left[\left\|\nabla_2 F(x,y;\xi_i)-\nabla_2 f_i(x,y)\right\|^2\right]\le\sigma^2,\\
&\EE_{\zeta_i\sim\cD_{g_i}}[\nabla_2 G(x,y;\zeta_i)]=\nabla_2 g_i(x,y),\quad
\EE_{\zeta_i\sim\cD_{g_i}}\left[\left\|\nabla_2 G(x,y;\zeta_i)-\nabla_2 g_i(x,y)\right\|^2\right]\le\sigma^2;
\end{align*}
2. the Hessian/Jacobian oracles satisfy:
\begin{align*}
\hspace{-5mm}
&\EE_{\zeta_i\sim\cD_{g_i}}\left[\nabla_{12}^2G(x,y;\zeta_i)\right]=\nabla_{12}^2g_i(x,y),\quad
\EE_{\zeta_i\sim\cD_{g_i}}\left[\left\|\nabla_{12}^2G(x,y;\zeta_i)-\nabla_{12}^2g_i(x,y)\right\|^2\right]\le\sigma^2,\\
&\EE_{\zeta_i\sim\cD_{g_i}}\left[\nabla_{22}^2G(x,y;\zeta_i)\right]=\nabla_{22}^2g_i(x,y),\quad
\EE_{\zeta_i\sim\cD_{g_i}}\left[\left\|\nabla_{22}^2G(x,y;\zeta_i)-\nabla_{22}^2g_i(x,y)\right\|^2\right]\le\sigma^2.
\end{align*}
\end{assumption}

This work studies decentralized algorithms over networks of $N$ nodes interconnected by a graph $\mathcal{G}$ with a set of edges $\mathcal{E}$. Node $i$ is connected to node $j$ if $(i,j)\in \mathcal{E}$. To facilitate decentralized communication, we introduce the mixing matrix $W=[w_{i,j}]_{i,j=1}^N\in\RR^{N \times N}$ in which each weight $w_{i,j}$ gauges information flowing from node $j$ to node $i$. Furthermore, we set $w_{i,j}=0$ for $(j,i)\notin \mathcal{E}$. The following assumption on the mixing matrix is widely used for decentralized algorithms~\citep{koloskova2020unified,lian2017can,yuan2016convergence}.
\begin{assumption}[\sc Mixing matrix]
\label{assumption: gossip communication}
The mixing matrix $W$ is doubly stochastic, \ie, $$\one_{N}^\top W=\one_{N}^\top , \quad W \one_{N}=\one_{N}.$$
Moreover, we assume  $\rho:=\left\Vert W-\one_N \one_N^\top/ N\right\Vert_2\in[0,1)$.
\end{assumption}
\begin{remark}[\sc spectral gap]
In decentralized algorithms, the quantity $1-\rho$ is commonly known as the {\it spectral gap}~\citep{lu2021optimal, yuan2023removing} of $W$, which serves as a metric for measuring the connectivity of the network topology. Notably, as $1-\rho \to 1$, it indicates that the topology is well-connected (\eg, for a fully connected graph, the mixing matrix is $W=\one_N \one_{N}^\top/N$ with $\rho=0)$. Conversely, as $1-\rho\to 0$, it suggests that the topology is potentially sparse \citep{lian2017can, ying2021exponential}.
\end{remark}

\begin{remark}[\sc Non-symmetric mixing matrix]
While Assumption~\ref{assumption: gossip communication} requires the mixing matrix \( W \) to be doubly stochastic, it does not need to be symmetric. This contrasts with the recent work \citep{anonymous2024sparkle}, where the SPARKLE algorithm is proposed and analyzed under the assumption of a symmetric and doubly-stochastic mixing matrix.
\end{remark}

\section{Decentralized Stochastic One-loop Bilevel Algorithm (D-SOBA)}
\label{section:ours}
\noindent \textbf{Major challenges of decentralized SBO.} In this section, we present our \ours algorithm for decentralized SBO. The primary challenge in SBO stems from the estimation of the hypergradient $\nabla \Phi(x)$, \ie, $\nabla_x f(x, y^\star(x))$, due to the implicit dependence of $y^\star(x)$ on $x$. Under Assumption~\ref{assumption:smooth} and the principles of implicit function theory~\citep{griewank2008evaluating}, $\nabla \Phi(x)$ can be expressed as:

\begin{equation}
\label{eq: grad phix}
 \nabla \Phi(x)=\   \nabla_1 f(x,y^\star(x)) - \Big( \nabla^2_{12} g(x,y^\star(x))\ \cdot \nonumber
   \left[\nabla^2_{22} g(x,y^\star(x))\right]^{-1} \cdot \nabla_2 f(x,y^\star(x)) \Big) 
\end{equation}
which is computationally expensive due to the need for inverting the partial Hessian. Moreover, the Hessian-inversion 
$
\left[\nabla_{22}^2 g(x,y^\star(x))\right]^{-1} = \left[\frac{1}{N}\sum_{i=1}^N \nabla_{22}^2~ g_i(x,y^\star(x))\right]^{-1}
$
cannot be easily accessed through decentralized communication even in the absence of stochastic noise affecting the estimate of $\nabla_{22}^2~ g_i(x,y^\star(x))$. This challenge can be partially mitigated by incorporating auxiliary inner loops to approximate the lower-level solution $y^\star(x)$ and evaluate the Hessian inversion using the Neumann series, as demonstrated in works such as \citep{chen2022decentralized, chen2023decentralized, lu2022stochastic, yang2022decentralized}. However, introducing these auxiliary inner loops results in sub-optimal convergence, as shown in Table~\ref{table:comparison}. Moreover, it complicates algorithmic implementation and may impose a substantial burden on communication and computation.

\vspace{2mm}
\noindent
\textbf{Centralized SOBA.} The challenge of Hessian inversion can be effectively addressed by a novel framework known as SOBA.
SOBA, initially proposed in \citep{dagreou2022framework}, is a single-node algorithm. We now extend it to solve the distributed problem \eqref{prob:general} in the centralized setup. To begin with, SOBA introduces
\begin{align}
\label{eq:z-star} 
z^\star(x) 
=  \left[\nabla^2_{22} g(x,y^\star(x))\right]^{-1} \nabla_2 f(x,y^\star(x)) 
\end{align}
which can be regarded as the solution to minimizing the following optimization problem 
\begin{align}
\label{eq:soba}
\!\frac{1}{2} z^\top \nabla_{22}^2 g \left(x, y^\star\right)z - z^{\top} \nabla_{2}f \left(x, y^\star\right) \overset{\eqref{prob:general}}{=} \frac{1}{N} \sum_{i=1}^N \left\{ \frac{1}{2} z^\top \nabla_{22}^2 g_i \left(x, y^\star\right)z - z^{\top} \nabla_{2}f_i \left(x, y^\star\right) \right\},\!
\end{align}
where $y^\star$ denotes $y^\star(x)$. It is worth noting that while $z^\star(x)$ in \eqref{eq:z-star} cannot be written as a finite sum across nodes, problem \eqref{eq:soba} involves only simple sums. 

To save computations, we can approximately solve \eqref{eq:soba} using one-step (stochastic) gradient descent. This, combined with one-step (stochastic) gradient descent to update the upper- and lower-level variables $(x,y)$, forms the centralized single-loop framework for solving \eqref{prob:general}:
\begin{align}
\label{eq:c-soba}
\underbrace{x^{(t+1)} = x^{(t)} -  \frac{\alpha_t}{N}\sum_{i=1}^N D_{x,i}^{(t)}}_{\rm minimize \;\eqref{eq:upper}},  \quad
\underbrace{y^{(t+1)} = y^{(t)} -  \frac{\beta_t}{N}\sum_{i=1}^N D_{y,i}^{(t)}}_{\rm minimize \;\eqref{eq:lower}},  \quad
\underbrace{z^{(t+1)} = z^{(t)} -  \frac{\gamma_t}{N}\sum_{i=1}^N D_{z,i}^{(t)}}_{\rm minimize \;\eqref{eq:soba}}, 
\end{align}
where $D_{x,i}^{(t)}, D_{y,i}^{(t)}$ and $D_{z,i}^{(t)}$ are defined as follows: 
\begin{subequations}
\label{eq-defi-D}
\begin{align}
D^{(t)}_{x,i}(x^{(t)},y^{(t)},z^{(t)}) &= \nabla_1 f_i(x^{(t)},y^{(t)}) - \nabla^2_{12}g_i(x^{(t)},y^{(t)})z^{(t)},  \\
D^{(t)}_{y,i}(x^{(t)},y^{(t)},z^{(t)}) &= \nabla_{2}g_i(x^{(t)},y^{(t)}), \\
D^{(t)}_{z,i}(x^{(t)},y^{(t)},z^{(t)}) &= 
\nabla^2_{22}g_i(x^{(t)},y^{(t)})z^{(t)} - \nabla_2 f_i(x^{(t)},y^{(t)}),
\end{align} 
\end{subequations}
and $\alpha_t$, $\beta_t$ and $\gamma_t$ are learning rates. We denote recursion \eqref{eq:c-soba} as centralized SOBA since a central server is required to collect $D_{x,i}^{(t)}, D_{y,i}^{(t)}$, and $D_{z,i}^{(t)}$ across the entire network, as well as update variables $x^{(t)}, y^{(t)}$ and $z^{(t)}$. It is noteworthy that \eqref{eq:c-soba} does not require any inner loop to approximate the lower-level solution $y^\star(x)$ or evaluate the Hessian inversion of $g(x,y)$.

\vspace{2mm}
\noindent \textbf{Decentralized SOBA (D-SOBA) Framework.} 
\label{sec-sub:d-soba}
Inspired by \citep{chen2012diffusion,nedic2009distributed}, we  extend centralized SOBA \eqref{eq:c-soba} to decentralized setup: 
\begin{subequations}
\label{eq:d-soba}
\begin{align}
x_i^{(t+1)} &= \sum_{j\in \cN_i} w_{ij}(x_j^{(t)} - \alpha_t D_{x,j}^{(t)}  ), \\
y_i^{(t+1)} &= \sum_{j\in \cN_i} w_{ij}(y_j^{(t)} - \beta_t D_{y,j}^{(t)}  ),\\
z_i^{(t+1)} &= \sum_{j\in \cN_i} w_{ij}(z_j^{(t)} - \gamma_t D_{z,j}^{(t)}  ),  
\end{align} 
\end{subequations}
where \( x_i^{(t)} \), \( y_i^{(t)} \), and \( z_i^{(t)} \) are local variables maintained by node \( i \) at iteration \( t \), and \( D_{x,i}^{(t)} \), \( D_{y,i}^{(t)} \), and \( D_{z,i}^{(t)} \) are simplified notations for \( D_{x,i}^{(t)}(x^{(t)},y^{(t)},z^{(t)}) \), \( D_{y,i}^{(t)}(x^{(t)},y^{(t)},z^{(t)}) \), and \( D_{z,i}^{(t)}(x^{(t)},y^{(t)},z^{(t)}) \), respectively, as defined in \eqref{eq-defi-D}. The set $\mathcal{N}_i$ includes node $i$ and all its immediate neighbors. We refer to recursions \eqref{eq:d-soba} as the D-SOBA framework, which forms the foundation of the D-SOBA-SO and D-SOBA-FO algorithms.

\vspace{2mm}
\noindent \textbf{D-SOBA-SO Algorithm.} The quantities \( D_{x,i}^{(t)} \), \( D_{y,i}^{(t)} \), and \( D_{z,i}^{(t)} \) in the decentralized SOBA framework \eqref{eq:d-soba} involve second-order matrices such as \( \nabla^2_{12} g_i(x^{(t)},y^{(t)}) \) and \( \nabla^2_{22} g_i(x^{(t)},y^{(t)}) \). When these second-order matrices are explicitly maintained in the algorithm, the resulting method is referred to as D-SOBA-SO. In stochastic settings, D-SOBA-SO samples stochastic estimates of the gradients, Jacobians, and Hessians in \eqref{eq-defi-D}. In particular, suppose D-SOBA-SO independently samples a minibatch of data, \( \xi^{(t)}_i \sim \mathcal{D}_{f_i} \) and \( \zeta^{(t)}_i \sim \mathcal{D}_{g_i} \), it approximates 
\begin{subequations}
\label{eq-defi-D-hat}
\begin{align}
{D}^{(t)}_{x,i}(x_i^{(t)},y_i^{(t)},z_i^{(t)}) &\approx \nabla_1 F(x_{i}^{(t)},y_{i}^{(t)};\xi^{(t)}_i) - \nabla^2_{12} G(x_{i}^{(t)},y_{i}^{(t)};\zeta^{(t)}_i) z_i^{(t)},  \\
D^{(t)}_{y,i}(x_i^{(t)},y_i^{(t)},z_i^{(t)}) &\approx \nabla_2 G(x_{i}^{(t)},y_{i}^{(t)};\zeta^{(t)}_i), \\
D^{(t)}_{z,i}(x_i^{(t)},y_i^{(t)},z_i^{(t)}) &\approx 
\nabla^2_{22} G(x_{i}^{(t)},y_{i}^{(t)};\zeta^{(t)}_i)z_i^{(t)} - \nabla_2 F(x_{i}^{(t)},y_{i}^{(t)};\xi^{(t)}_i).
\end{align} 
\end{subequations}
The detailed implementation of \colorbox{green!30}{D-SOBA-SO} is listed in Algorithm~\ref{alg:DeMA-SOBA}.  Furthermore, we impose a moving average on the update of $x_i$ in Algorithm \ref{alg:DeMA-SOBA}. As shown in \citep{chen2023optimal}, the moving average step enables a more stable and finer-grained direction for estimating hypergradients. This strategy plays a crucial role in reducing the order of bias from sample noise in the convergence analysis, as well as relaxing the technical assumptions. For notational simplicity in the proof, we introduce a scaling parameter \( \tau \), such that the upper-level step size is given by \( \tau\alpha \).

\begin{algorithm}[t!]
    \caption{\colorbox{green!30}{D-SOBA-SO} and \colorbox{orange!30}{D-SOBA-FO}}
    \label{alg:DeMA-SOBA}
    \begin{algorithmic}
    \REQUIRE{{Initialize $x_i^{(0)}\in\RR^d$, $y_i^{(0)}\in\RR^p$, $z_i^{(0)}\in\RR^p$, and $h_i^{(0)}\in\RR^d$ for any $i=1,2,\cdots,N$}, the step size $\{\alpha_t\}_{t=0}^{T-1},\{\beta_t\}_{t=0}^{T-1},\{\gamma_t\}_{t=0}^{T-1},\{\theta_t\}_{t=0}^{T-1}$, $\tau$, the mixing matrix $W$, \colorbox{orange!20}{$\{\delta_t\}_{t=0}^{T-1}$}}.
    \FOR{$t=0,1,\cdots,T-1$}  
    \FOR{each node $i=1,2,\cdots,N$ in parallel} 
    \STATE Get stochastic sample $\xi_i^{(t)}\sim\mathcal{D}_{f_i}$ and $\zeta_i^{(t)}\sim\mathcal{D}_{g_i}$.
    \STATE \colorbox{green!30}{$p_{H,i}^{(t)}:=\nabla^2_{22}G(x_{i}^{(t)},y_{i}^{(t)};\zeta^{(t)}_i)z_{i}^{(t)}$;} 
    \STATE \colorbox{green!30}{$p_{J,i}^{(t)}:=\nabla^2_{12}G(x_{i}^{(t)},y_{i}^{(t)};\zeta^{(t)}_i)z_{i}^{(t)}$.} 
    \STATE  \colorbox{orange!30}{$p_{H,i}^{(t)}:=\dfrac{1}{2\delta_t}\left(\nabla_2G(x_i^{(t)},y_i^{(t)}+\delta_tz_i^{(t)},\zeta_i^{(t)})-\nabla_2G(x_i^{(t)},y_i^{(t)}-\delta_tz_i^{(t)},\zeta_i^{(t)})\right)$;}
    \STATE \colorbox{orange!30}{$p_{J,i}^{(t)}:=\dfrac{1}{2\delta_t}\left(\nabla_1G(x_i^{(t)},y_i^{(t)}+\delta_tz_i^{(t)},\zeta_i^{(t)})-\nabla_1G(x_i^{(t)},y_i^{(t)}-\delta_tz_i^{(t)},\zeta_i^{(t)})\right)$.} 
    \STATE $x_{i}^{(t+1)}:=\sum_{j\in \cN_i} w_{ij}(x_{j}^{(t)}-\tau\alpha_th_{j}^{(t)})$;
    \STATE $y_{i}^{(t+1)}:=\sum_{j\in \cN_i}w_{ij}\left(y_{j}^{(t)}-\beta_t\nabla_{2}G(x_{j}^{(t)},y_{j}^{(t)};\zeta^{(t)}_j)\right)$;
    \STATE $z_{i}^{(t+1)}:=\sum_{j\in \cN_i}w_{ij}\left(z_{j}^{(t)}-\gamma_t\left(p_{H,j}^{(t)}-\nabla_2F(x_{j}^{(t)},y_{j}^{(t)};\xi^{(t)}_j)\right)\right)$;
    \STATE $\omega_{i}^{(t+1)}:=\nabla_1F(x_{i}^{(t)},y_{i}^{(t)};\xi^{(t)}_i)-p_{J,i}^{(t)}$;
    \STATE $h_{i}^{(t+1)}:=(1-\theta_t)h_{i}^{(t)}+\theta_t\omega_{i}^{(t+1)}$.
    \ENDFOR 
    \ENDFOR 
    \end{algorithmic}
\end{algorithm}

\vspace{2mm}
\noindent \textbf{D-SOBA-FO Algorithm.} In high-dimensional bilevel optimization, storing and computing the second-order matrices \( \nabla^2_{12} g_i(x^{(t)},y^{(t)}) \) and \( \nabla^2_{22} g_i(x^{(t)},y^{(t)}) \) is highly expensive, particularly in large-scale machine learning applications, such as data reweighting in large language models \citep{pan2024scalebio}. Finite difference approximation is an efficient approach for estimating higher-order gradients using lower-order information \citep{allen2018neon2,carmon2018accelerated,yang2023achieving}. Here, we introduce finite difference Hessian/Jacobian-vector product approximations to D-SOBA. Given the Lipschitz continuity of \( \nabla_{12}^2 g_i \) under Assumption \ref{assumption:smooth}, we obtain the following approximation for some perturbation \( \delta_t > 0 \):
\begin{subequations}
\label{eq-first-order}
\begin{align}
\nabla_{1}g_i(x_i^{(t)},y_i^{(t)}+\delta_t z_i^{(t)})=\nabla_{1}g_i(x_i^{(t)},y_i^{(t)})+\delta_t\nabla_{12}^2g_i(x_i^{(t)},y_i^{(t)})z_i^{(t)}+\mathcal{O}\left(\delta_t^2\Vert z_i^{(t)}\Vert^2\right),\\
\nabla_{1}g_i(x_i^{(t)},y_i^{(t)}-\delta_t z_i^{(t)})=\nabla_{1}g_i(x_i^{(t)},y_i^{(t)})-\delta_t\nabla_{12}^2g_i(x_i^{(t)},y_i^{(t)})z_i^{(t)}+\mathcal{O}\left(\delta_t^2\Vert z_i^{(t)}\Vert^2\right).
\end{align}
\end{subequations}
Thus, the term \( (2\delta_t)^{-1} (\nabla_{1} g_i(x_i^{(t)},y_i^{(t)}+\delta_t z_i^{(t)}) - \nabla_{1} g_i(x_i^{(t)},y_i^{(t)}-\delta_t z_i^{(t)})) \) provides an approximation of the Jacobian-vector product \( \nabla_{12}^2 g_i(x_i^{(t)},y_i^{(t)}) z_i^{(t)} \), particularly when \( \delta_t \) is small:
\begin{align*}
&\dfrac{1}{2\delta_t}(\nabla_{1}g_i(x_i^{(t)},y_i^{(t)}+\delta_t z_i^{(t)})-\nabla_{1}g_i(x_i^{(t)},y_i^{(t)}-\delta_t z_i^{(t)}))\\
=&\nabla_{12}^2g_i(x_i^{(t)},y_i^{(t)})z_i^{(t)}+\mathcal{O}\left(\delta_t\Vert z_i^{(t)}\Vert^2\right)\approx\nabla_{12}^2g_i(x_i^{(t)},y_i^{(t)})z_i^{(t)}.
\end{align*}
Similarly, $\nabla_{22}^2g_i(x_i^{(t)},y_i^{(t)})z_i^{(t)}$ can also be evaluated by first-order gradients as follows:
\begin{align*}
&\dfrac{1}{2\delta_t}(\nabla_{2}g_i(x_i^{(t)},y_i^{(t)}+\delta_t z_i^{(t)})-\nabla_{2}g_i(x_i^{(t)},y_i^{(t)}-\delta_t z_i^{(t)}))\\
=&\nabla_{22}^2g_i(x_i^{(t)},y_i^{(t)})z_i^{(t)}+\mathcal{O}\left(\delta_t\Vert z_i^{(t)}\Vert^2\right)\approx\nabla_{22}^2g_i(x_i^{(t)},y_i^{(t)})z_i^{(t)}.
\end{align*}
By replacing the true gradients in the above expressions with stochastic approximations, we introduce \colorbox{orange!30}{D-SOBA-FO}, which enables Hessian/Jacobian evaluation while requiring only first-order gradients. See Algorithm \ref{alg:DeMA-SOBA} for details.

\section{Theoretical Analysis}\label{section:Convergence Analysis} 
Here we establishes the convergence guarantees for {D-SOBA-SO} and {D-SOBA-FO}. We define the notation \(\mathcal{F}^{(t)} = \sigma\left[\bigcup_{\tau=0}^t \left(\bigcup_{i=1}^N \{x^{(\tau)}_i, y^{(\tau)}_i, z^{(\tau)}_i, h^{(\tau)}_i\} \cup \{\alpha_{\tau}, \beta_{\tau}, \gamma_{\tau}, \theta_{\tau}, \delta_{\tau}\} \right)\right]\) to represent the filtration of the $t$-th iteration, i.e., the \(\sigma\)-field generated by all elements with superscripts within the first \(t\) iterations. Additionally, we denote the conditional expectation with respect to \(\mathcal{F}^{(t)}\) as \(\mathbb{E}_t[\cdot] := \mathbb{E}[\cdot | \mathcal{F}^{(t)}]\). The following lemma quantifies the error introduced by the finite-difference approximation in {D-SOBA-FO}:
\begin{proposition}
\label{error of finite} 
Suppose Assumptions \ref{assumption:smooth} and \ref{assumption:unbiased} hold. Then, the terms \( p_{H,i}^{(t)} \) and \( p_{J,i}^{(t)} \) obtained from D-SOBA-FO satisfy (see proof in Lemma \ref{desjifen}):
\begin{subequations}
    \begin{align}
        \label{chafen_H}
        \mathbb{E}_t\left\Vert p_{H,i}^{(t)}-\mathbb{E}_t[p_{H,i}^{(t)}]\right\Vert^2\leq\sigma^2\left\Vert z_i^{(t)}\right\Vert^2,\quad\left\Vert\mathbb{E}_t[p_{H,i}^{(t)}]-\nabla_{22}g_i(x_i^{(t)},y_i^{(t)})z_i^{(t)}\right\Vert^2\leq\iota^2;\\
        \label{chafen_J}
        \mathbb{E}_t\left\Vert p_{J,i}^{(t)}-\mathbb{E}_t[p_{J,i}^{(t)}]\right\Vert^2\leq\sigma^2\left\Vert z_i^{(t)}\right\Vert^2,\quad\left\Vert\mathbb{E}_t[p_{J,i}^{(t)}]-\nabla_{12}g_i(x_i^{(t)},y_i^{(t)})z_i^{(t)}\right\Vert^2\leq\iota^2,
    \end{align}
\end{subequations} 
where $\iota^2=\dfrac{1}{3}L^2_{\nabla^2g}\delta_t^2\left\Vert z_i^{(t)}\right\Vert^4$.
\end{proposition}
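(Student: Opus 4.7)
The plan is to write the finite-difference operator as an integral over the direction of perturbation, which reduces both bounds to straightforward applications of Jensen's inequality together with Assumptions~\ref{assumption:smooth} and \ref{assumption:unbiased}. I will give the argument for $p_{H,i}^{(t)}$ in detail; $p_{J,i}^{(t)}$ is identical after replacing $\nabla_2$ by $\nabla_1$ and $\nabla^2_{22}$ by $\nabla^2_{12}$.

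First, I would apply the fundamental theorem of calculus to the stochastic gradient along the segment $y_i^{(t)} \pm \delta_t z_i^{(t)}$, which (writing $x, y, z, \zeta$ as shorthand for $x_i^{(t)}, y_i^{(t)}, z_i^{(t)}, \zeta_i^{(t)}$) yields the exact representation
\begin{equation*}
p_{H,i}^{(t)} \;=\; \frac{1}{2\delta_t}\int_{-\delta_t}^{\delta_t} \nabla^2_{22}G(x, y + sz;\zeta)\, z\,ds.
\end{equation*}
Taking the conditional expectation and using Assumption~\ref{assumption:unbiased}.2 to swap expectation and integral gives the analogous representation for $\mathbb{E}_t[p_{H,i}^{(t)}]$ with $g_i$ in place of $G$.

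Next, for the variance bound I would subtract the two representations, apply Jensen's inequality (equivalently, Cauchy--Schwarz) to the integral of length $2\delta_t$ normalized by $1/(2\delta_t)$, pull the conditional expectation inside, and invoke the Hessian noise bound in Assumption~\ref{assumption:unbiased}.2 together with $\|Az\|\le\|A\|\,\|z\|$:
\begin{equation*}
\mathbb{E}_t\!\left\|p_{H,i}^{(t)} - \mathbb{E}_t[p_{H,i}^{(t)}]\right\|^2
\;\le\; \frac{1}{2\delta_t}\!\int_{-\delta_t}^{\delta_t}\! \mathbb{E}_t\bigl\|[\nabla^2_{22}G(x, y+sz;\zeta) - \nabla^2_{22}g_i(x, y+sz)]\,z\bigr\|^2 ds \;\le\; \sigma^2\|z\|^2.
\end{equation*}

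For the bias bound I would subtract $\nabla^2_{22}g_i(x,y)\,z$ from the integral representation of $\mathbb{E}_t[p_{H,i}^{(t)}]$, again apply Jensen to the squared norm of the $(2\delta_t)^{-1}$-normalized integral, and then invoke the $L_{\nabla^2 g}$-Lipschitz continuity of $\nabla^2 g_i$ from Assumption~\ref{assumption:smooth}.1, obtaining
\begin{equation*}
\bigl\|\mathbb{E}_t[p_{H,i}^{(t)}] - \nabla^2_{22}g_i(x,y)\,z\bigr\|^2
\;\le\; \frac{1}{2\delta_t}\!\int_{-\delta_t}^{\delta_t}\! L_{\nabla^2 g}^2 s^2\|z\|^4\, ds
\;=\; \tfrac{1}{3} L_{\nabla^2 g}^2 \delta_t^2 \|z\|^4.
\end{equation*}
The factor $\tfrac{1}{3}$ arises from $\int_{-\delta_t}^{\delta_t} s^2 ds = \tfrac{2}{3}\delta_t^3$; this is the step that I anticipate as the main point to get right, since a naive ``pull-the-norm-inside'' bound gives $\tfrac{1}{4}$ instead of $\tfrac{1}{3}$ and uses a different ordering of the inequalities. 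Repeating the same two steps with $\nabla_1 G$ and $\nabla^2_{12} G$ yields \eqref{chafen_J}, completing the proof.
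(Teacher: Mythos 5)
Your proof is correct and follows essentially the same route as the paper's Lemma~\ref{desjifen}: the paper writes the finite difference as $\frac{1}{2\delta_t}\int_{-1}^{1}\nabla^2_{22}G(x,y+t\,\delta_t z;\zeta)\,(\delta_t z)\,dt$, which is your representation after the substitution $s=t\delta_t$, and then applies the same Jensen-plus-assumption steps to get the variance bound $\sigma^2\|z\|^2$ and the bias bound $\tfrac{1}{3}L_{\nabla^2 g}^2\delta_t^2\|z\|^4$. No gaps.
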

\noindent Proposition \ref{error of finite} demonstrates that the error in the finite-difference Hessian/Jacobian-vector approximation is primarily dominated by sample noise when \(\delta_t\) is sufficiently small.  

We then present the unified convergence results for both {D-SOBA-SO} and {D-SOBA-FO}, stated in the following theorem:
\begin{theorem1}
    \label{thm:convergence of DeMA-SOBA}
    Suppose Assumptions~\ref{assumption:smooth}, \ref{assumption: data heterogeneity}, \ref{assumption:unbiased}, and \ref{assumption: gossip communication} hold. If \( \alpha = \Theta(\sqrt{N/T}) \) and \( \tau = \Theta(\kappa^{-4}) \), then there exist constants \( c_1, c_2, c_3 > 0 \) such that, for all \( 0 \leq t < T \), setting \( \alpha_t \equiv \alpha \), \( \beta_t \equiv c_1 \alpha_t \), \( \gamma_t \equiv c_2 \alpha_t \), and \( \theta_t \equiv c_3 \alpha_t \), the iterates \( \{\bar{x}^{(t)}\}_{t\ge0} \) generated by Algorithm~\ref{alg:DeMA-SOBA} (including both D-SOBA-SO and D-SOBA-FO) satisfy ({see proof in Appendix \ref{proof of convergence of D-SOBA}}):

\begin{align}
\label{eq:convergence_MADeSOBA}
    &\ \dfrac{1}{T}\sum_{t=0}^{T-1}\EE\left[\left\|\nabla\Phi(\bar{x}^{(t)})\right\|^2\right] \nonumber \\
    \lesssim&\ \underbrace{\dfrac{\kappa^5}{\sqrt{NT}}}_{\mathrm{A. rate}} + \quad \underbrace{\dfrac{\rho^{\frac{2}{3}}\kappa^{6}}{(1-\rho)^{\frac{1}{3}}T^{\frac{2}{3}}}+{\dfrac{\rho^{\frac{2}{3}}(b_1^{\frac{2}{3}}\kappa^{\frac{16}{3}}+b_2^{\frac{2}{3}}\kappa^{6})}{(1-\rho)^{\frac{2}{3}}T^{\frac{2}{3}}}}+\dfrac{\rho\kappa^{6}}{(1-\rho)T}+{\dfrac{\kappa^4}{(1-\rho)NT}}+\dfrac{\kappa^{13}}{T}+\dfrac{\kappa^7}{NT}}_{\mathrm{extra\  overhead}},
\end{align}
where $\kappa:=\max\{L_f,L_{\nabla f},L_{\nabla g},L_{\nabla^2 g}\}/\mu_g$ denotes the condition number.
\end{theorem1}

\vspace{2mm}
\noindent 
\textbf{D-SOBA-SO and D-SOBA-FO achieve the same convergence rate.} According to Theorem \ref{thm:convergence of DeMA-SOBA}, {D-SOBA-SO} and {D-SOBA-FO} achieve the same convergence rate. Moreover, Proposition \ref{error of finite} shows that the error introduced by the finite difference approximation is negligible in the overall error, implying that first-order hyper-gradient evaluation does not substantially impact convergence. To the best of our knowledge, this is the first result demonstrating that fully first-order decentralized bilevel algorithms can achieve a convergence rate of the same order as their second-order counterparts.

\vspace{2mm}
\noindent 
{\textbf{Two-Stage Convergence of D-SOBA. }The convergence rate established in Theorem \ref{thm:convergence of DeMA-SOBA} reveals a two-stage behavior for both D-SOBA-SO and D-SOBA-FO, which can be summarized as follows:
\begin{itemize}[leftmargin=*]
\item \textbf{Stage 1: } When $T$ is small, the $1/\sqrt{NT}$ term does not dominate the convergence rate; instead, higher-order terms prevail, leading to slower convergence compared to centralized algorithms. This stage corresponds to the transient iteration period of the decentralized optimization process.
\item \textbf{Stage 2: } When $T$ is sufficiently large, the $1/\sqrt{NT}$ term becomes dominant. The decentralized algorithm achieves linear speedup and convergence performance comparable to that of centralized approaches.
\end{itemize}}

\vspace{2mm}
\noindent 
\textbf{Asymptotic linear speedup.} An algorithm is said to achieve {\em linear speedup} if the term \( 1/\sqrt{NT} \) dominates the convergence rate \citep{koloskova2020unified,lian2017can} as \( T \) becomes sufficiently large. In this regime, the number of iterations required to reach an \( \varepsilon \)-stationary solution is \( 1/(N\varepsilon^2) \), decreasing linearly with the number of computing nodes. As shown in \eqref{eq:convergence_MADeSOBA}, \ours eventually attains linear speedup, whereas the algorithms in \citep{chen2022decentralized,chen2023decentralized} achieve only a significantly slower asymptotic rate of \( 1/\sqrt{T} \). Furthermore, the algorithm that rely solely on first-order gradients proposed in \citep{wang2024fully} achieve a complexity of \( 1/(N\epsilon^7) \), which is worse than that of the proposed D-SOBA-FO.

\vspace{2mm}
\noindent 
\textbf{Transient iteration complexity.} While D-SOBA attains the asymptotic rate of \( 1/(NT) \) as \( T \to \infty \), it undergoes additional transient iterations, quantified by the extra overhead terms in \eqref{eq:convergence_MADeSOBA}, before reaching this rate. Transient iteration complexity \citep{pu2021sharp} refers to the number of iterations an algorithm has to experience before reaching its asymptotic linear-speedup stage, that is, iterations $1,\cdots, T$ where $T$ is relatively small so that non-$NT$ terms still dominate the rate. Here, the smoothness  coefficients as well as the conditional number $\kappa$ are viewed as constants in the transient analysis since we mainly consider the influence of $N,\rho,b_1,b_2$ in transient complexity analysis. Transient iteration complexity measures the non-asymptotic stage in decentralized stochastic algorithms, see Fig.~\ref{fig: tran_iter}. With the explicit asymptotic rate and extra overhead characterized in \eqref{eq:convergence_MADeSOBA}, we establish the transient iteration complexity of D-SOBA algorithms.
\begin{corollary}[\sc transient iteration complexity]
    \label{thm:transient time of DeMA-SOBA}
    Under the same assumptions as in Theorem \ref{thm:convergence of DeMA-SOBA}, the transient iteration complexity of \ours is  {\it (Proof in Appendix \ref{proof of transient time of D-SOBA})}
    \begin{align} \label{eq:transient-expression}
    \mathcal{O}\left(\max\left\{ \frac{N^3}{(1-\rho)^2}, \frac{N^3b^2}{(1-\rho)^4}\right\}\right),
    \end{align}
    {where $b:=\max\{b_1,b_2\}$.}
\end{corollary}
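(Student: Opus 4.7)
The plan is to deduce the transient iteration complexity directly from the non-asymptotic bound in Theorem~\ref{thm:convergence of DeMA-SOBA}. By definition, the transient complexity is the smallest number of iterations $T$ for which the asymptotic linear-speedup term $\kappa^5/\sqrt{NT}$ dominates every one of the six extra-overhead terms appearing in \eqref{eq:convergence_MADeSOBA}. So for each overhead term $E_k(T)$, I would solve the inequality $E_k(T) \lesssim 1/\sqrt{NT}$ for the minimum admissible $T$, and then take the maximum over all such thresholds.

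First, I would follow the standard convention in transient-iteration analyses and treat the smoothness constants and the condition number $\kappa$ as $O(1)$, since the focus of the complexity \eqref{eq:transient-expression} is on the scaling with respect to the network size $N$, the spectral gap $1-\rho$, and the combined heterogeneity parameter $b = \max\{b_1,b_2\}$. This lets me collapse, for instance, $b_1^{2/3}\kappa^{16/3} + b_2^{2/3}\kappa^{6}$ into $b^{2/3}$ up to absolute constants, and similarly absorb the $\rho^{2/3}$ and $\rho$ numerators (which are $O(1)$) into those constants.

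Next, I would identify the two $T^{-2/3}$ terms in \eqref{eq:convergence_MADeSOBA} as the only binding constraints. Imposing
\[
\frac{\rho^{2/3}}{(1-\rho)^{1/3}\, T^{2/3}} \;\lesssim\; \frac{1}{\sqrt{NT}}
\]
and squaring both sides yields the topology-only threshold $T \gtrsim N^{3}/(1-\rho)^{2}$. Imposing the analogous inequality on the second $T^{-2/3}$ term, which carries the extra $b^{2/3}/(1-\rho)^{1/3}$ factor, yields the heterogeneity-inflated threshold that appears as the second entry of \eqref{eq:transient-expression}. The remaining four overhead terms, which are of orders $1/((1-\rho)T)$, $1/((1-\rho)NT)$, $1/T$, and $1/(NT)$, all give strictly smaller thresholds and are therefore absorbed into the maximum. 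For example, $\rho/((1-\rho)T)\lesssim 1/\sqrt{NT}$ reduces to $T\gtrsim N/(1-\rho)^2$, which is strictly dominated by $N^{3}/(1-\rho)^{2}$; the other three are checked analogously.

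Since the entire argument reduces to elementary algebraic manipulation on the bound already established in Theorem~\ref{thm:convergence of DeMA-SOBA}, I do not anticipate any substantive obstacle beyond careful bookkeeping of exponents across the six overhead terms. The conceptual content of the corollary lies not in the derivation itself but in the observation that precisely two of those terms survive as the binding thresholds, and that these two terms jointly encode the multiplicative interaction between network topology and data heterogeneity emphasized in the main contributions.
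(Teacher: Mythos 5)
Your proposal is correct and follows essentially the same route as the paper's own proof: for each extra-overhead term in \eqref{eq:convergence_MADeSOBA} one requires $1/\sqrt{NT}$ to dominate, solves the resulting inequality for a threshold on $T$, and takes the maximum, with the two $T^{-2/3}$ terms being the binding ones (the paper additionally lists the $\rho/((1-\rho)T)$ term, but as you observe its threshold $N/(1-\rho)^2$ is dominated by $N^3/(1-\rho)^2$). One caveat you share with the paper: carrying out the computation for the heterogeneity term exactly, $b^{2/3}(1-\rho)^{-2/3}T^{-2/3}\lesssim (NT)^{-1/2}$ gives $T\gtrsim N^3b^4/(1-\rho)^4$ rather than $N^3b^2/(1-\rho)^4$, so the exponent of $b$ in the second entry of \eqref{eq:transient-expression} is not actually recovered by the derivation you (and the paper) sketch.
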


\noindent Many existing works, such as \citep{chen2022decentralized,chen2023decentralized,lu2022decentralized,wang2024fully}, fail to establish the transient iteration complexity as their analysis ignores non-dominant convergence terms. Transient iteration complexity is crucial for differentiating between various decentralized bilevel algorithms. While \citep{yang2022decentralized} and \citep{gao2023convergence} achieve nearly the same asymptotic complexity as D-SOBA (differing by a factor of \(\log(1/\epsilon)\)), their transient complexity is worse than that of D-SOBA, especially when the data heterogeneity \(b^2\) is small and the network topology is sparse (i.e., \(1 - \rho \to 0\)). This addresses \textbf{Open Question Q1} listed in Section~\ref{sec:intro-limitation}.

\vspace{2mm}
\noindent \textbf{Joint influence of graph and heterogeneity}. To our knowledge, Corollary~\ref{thm:transient time of DeMA-SOBA} presents the first result that 
quantifies how network topology and data heterogeneity
jointly affect the non-asymptotic convergence in decentralized SBO. First, expression \eqref{eq:transient-expression} reveals that a sparse topology with $1-\rho \to 0$ can significantly amplify the influence of data heterogeneity $b^2$. Second, a large data heterogeneity $b^2$ also exacerbates the adverse impact of sparse topologies from $\cO((1-\rho)^{-2})$ to $\cO((1-\rho)^{-4})$. Prior to our work, no results have clarified the interplay between network topology and data heterogeneity. Furthermore, expression \eqref{eq:transient-expression} quantifies the benefits that can be achieved by enhancing network connectivity and reducing data heterogeneity. For instance, if the influence of data heterogeneity \(b^2\) can be eliminated, the transient complexity can be improved from $\cO((1-\rho)^{-4})$ to $\cO((1-\rho)^{-2})$. This addresses \textbf{Open Question Q2} and motivates recent efforts \citep{anonymous2024sparkle} to develop more effective decentralized bilevel algorithms that mitigate the influence of data heterogeneity by leveraging advanced methods such as EXTRA \citep{shi2015extra}, Exact-Diffusion \citep{yuan2017exact1}, and Gradient-Tracking \citep{xu2015augmented, nedic2017achieving}.

\vspace{2mm}
\noindent {\textbf{Influence of the data heterogeneity across different levels}. Theorem \ref{thm:convergence of DeMA-SOBA} demonstrates that the heterogeneity at the lower level exerts a more pronounced influence on the convergence rate, particularly when the lower-level problem possesses a large condition number. This effect arises because the corresponding $\kappa$ parameter associated with $b_2$ exhibits a higher order of magnitude compared to that of $b_1$, as derived from Equation \eqref{eq:convergence_MADeSOBA}. Specifically, the increased sensitivity to lower-level heterogeneity can be attributed to the amplification of errors in optimization processes under ill-conditioned scenarios.
However, when $\kappa$ is treated as a constant and the analysis focuses on transient iterations, both $b_1$ and $b_2$ contribute equally to the transient complexity. This uniformity occurs because, under such assumptions, the dominant terms in the complexity bound become symmetric with respect to the heterogeneity parameters, leading to comparable impacts during the transient regime.}

\vspace{2mm}
\noindent 
\textbf{Comparison with single-level DSGD}. Our comprehensive convergence analysis enables the first detailed comparison between bilevel and single-level optimization. We find that both the asymptotic rate and transient iteration complexity of \ours are identical to those of single-level decentralized SGD \citep{chen2021accelerating} with respect to network size \(N\), data heterogeneity \(b^2\), and spectral gap \(1 - \rho\). This suggests that the nested lower- and upper-level structure does not introduce substantial challenges to decentralized stochastic optimization in terms of both asymptotic rate and transient iteration complexity, as illustrated in the comparison in Table~\ref{table:comparison}. However, upon examining the convergence rate in \eqref{eq:convergence_MADeSOBA}, we observe that the presence of the lower-level optimization exacerbates the influence of the condition number. While single-level DSGD achieves an asymptotic rate of \(1/\sqrt{NT}\), D-SOBA results in \(\kappa^5/\sqrt{NT}\). Prior to our work, all previous works listed in Table~\ref{table:comparison} fail to explicitly capture the influence of \(\kappa\) on both asymptotic and non-asymptotic terms. With all these results, \textbf{Open Question Q3} has been addressed.

\vspace{2mm}
\noindent 
{
\textbf{Non-asymptotic concensus error. }With the theoretical analysis of the convergence rate, we can also present an upper-bound for the concensus error of \ours as follows ({see proof in Appendix \ref{Proof of the non-asymptotic concensus error}}):
\begin{corollary}[\sc concensus error]
    \label{thm:concensus error}
    Suppose Assumptions~\ref{assumption:smooth}, \ref{assumption: data heterogeneity}, \ref{assumption:unbiased}, and \ref{assumption: gossip communication} hold. If we take the hyperparameters as Theorem \ref{thm:concensus error}, then the concensus error of \ours satisfies that:
\begin{align}
\label{eq:concensus_MADeSOBA}
    &\dfrac{1}{T}\sum_{t=0}^{T-1}\EE\left[\dfrac{\left\Vert \mathbf{x}^{(t)}-\bar{\mathbf{x}}^{(t)} \right\Vert_F^2}{n}+\dfrac{\left\Vert \mathbf{y}^{(t)}-\bar{\mathbf{y}}^{(t)} \right\Vert_F^2}{n}+\dfrac{\left\Vert \mathbf{z}^{(t)}-\bar{\mathbf{z}}^{(t)} \right\Vert_F^2}{n}\right] \nonumber \\
    \lesssim&\dfrac{N}{T}\left(\dfrac{b^2}{(1-\rho)^2}+\dfrac{1}{1-\rho}\right)+\dfrac{N^{\frac{1}{2}}}{T^{\frac{3}{2}}(1-\rho)^2}+\dfrac{1}{(1-\rho)NT}.
\end{align}
\end{corollary}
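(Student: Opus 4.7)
The plan is to derive, for each of the three variables $\mathbf{x}^{(t)}, \mathbf{y}^{(t)}, \mathbf{z}^{(t)}$, a one-step consensus recursion that contracts with the spectral gap $1-\rho$ and is driven by the norms of the local update directions. Stacking the local variables into matrices and using Assumption~\ref{assumption: gossip communication} together with the identity $(W-\tfrac{1}{N}\mathbf{1}_N\mathbf{1}_N^\top)(I-\tfrac{1}{N}\mathbf{1}_N\mathbf{1}_N^\top)=W-\tfrac{1}{N}\mathbf{1}_N\mathbf{1}_N^\top$, a Young-type decomposition yields
\begin{equation*}
\EE\bigl\|\mathbf{x}^{(t+1)}-\bar{\mathbf{x}}^{(t+1)}\bigr\|_F^2 \le \tfrac{1+\rho^2}{2}\,\EE\bigl\|\mathbf{x}^{(t)}-\bar{\mathbf{x}}^{(t)}\bigr\|_F^2 + \tfrac{2\rho^2\tau^2\alpha_t^2}{1-\rho^2}\,\EE\bigl\|\mathbf{h}^{(t)}\bigr\|_F^2,
\end{equation*}
and analogous inequalities for $\mathbf{y}^{(t)},\mathbf{z}^{(t)}$ driven by $\EE\|\mathbf{D}_y^{(t)}\|_F^2$ and $\EE\|\mathbf{D}_z^{(t)}\|_F^2$ with coefficients $\beta_t^2,\gamma_t^2$ respectively. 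These three inequalities form the backbone of the consensus analysis and already appear (or can be extracted) from the lemmas used to prove Theorem~\ref{thm:convergence of DeMA-SOBA} in the appendix.

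Next, I would bound each driving term by splitting it into a mean part plus a fluctuation. Using Assumption~\ref{assumption: data heterogeneity} to quantify $\tfrac{1}{N}\sum_i \|\nabla f_i-\nabla f\|^2\le b_1^2$ and $\tfrac{1}{N}\sum_i\|\nabla g_i-\nabla g\|^2\le b_2^2$, Assumption~\ref{assumption:unbiased} for the $\sigma^2/N$-type stochastic noise, smoothness to relate local gradients at $(x_i^{(t)},y_i^{(t)})$ to those at $(\bar x^{(t)},\bar y^{(t)})$ (thereby introducing secondary consensus terms that can be absorbed), and Proposition~\ref{error of finite} to handle the finite-difference bias of D-SOBA-FO, one obtains
\begin{equation*}
\tfrac{1}{N}\EE\|\mathbf{D}_y^{(t)}\|_F^2 \lesssim \EE\|\nabla_2 g(\bar{x}^{(t)},\bar{y}^{(t)})\|^2 + b_2^2 + \sigma^2 + \text{(consensus residuals)},
\end{equation*}
and similarly for $\mathbf{D}_z^{(t)}$ and $\mathbf{h}^{(t)}$, whose mean piece is controlled by $\EE\|\nabla\Phi(\bar{x}^{(t)})\|^2$ plus a bias relating $\bar{h}^{(t)}$ to the true hypergradient.

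Averaging the recursions over $t=0,\dots,T-1$ and using the geometric sum $\sum_{s\le t}(\tfrac{1+\rho^2}{2})^{t-s}\le \tfrac{2}{1-\rho^2}$, the time-averaged consensus error is bounded by $\tfrac{\alpha^2}{(1-\rho)^2}$ times the time-average of the driving terms. Substituting the hyperparameter choice $\alpha=\Theta(\sqrt{N/T})$, $\beta,\gamma,\theta=\Theta(\alpha)$, $\tau=\Theta(\kappa^{-4})$, and invoking Theorem~\ref{thm:convergence of DeMA-SOBA} to control $\tfrac{1}{T}\sum_t \EE\|\nabla\Phi(\bar{x}^{(t)})\|^2$ (which contributes the $\tfrac{1}{(1-\rho)NT}$ and $\tfrac{N^{1/2}}{T^{3/2}(1-\rho)^2}$ terms) and the heterogeneity/noise pieces (which contribute $\tfrac{Nb^2}{T(1-\rho)^2}+\tfrac{N}{T(1-\rho)}$), yields \eqref{eq:concensus_MADeSOBA} after dividing by $N$.

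The main obstacle will be the cross-coupling between the three consensus errors and the lower-level tracking error $\|y^{(t)}-y^\star(\bar{x}^{(t)})\|^2$: the driving term for $\mathbf{x}$ involves $\mathbf{h}^{(t)}$ whose bias depends on $\mathbf{y}^{(t)},\mathbf{z}^{(t)}$, which in turn depend on the $x$-consensus error through the Jacobian/Hessian-vector products. I would handle this by bundling the three consensus errors into a single Lyapunov quantity together with the lower-level optimality gap, then showing that the coupling constants are controlled by the scaling $\tau=\Theta(\kappa^{-4})$, so that a single Gronwall-style argument separates the geometric $\rho$-contraction from a vanishing perturbation. The remaining arithmetic is routine substitution of the step-size choices.
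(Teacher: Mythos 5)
Your proposal is correct and follows essentially the same route as the paper's proof: per-variable consensus recursions contracting at rate $\rho$ and driven by the local update directions (the paper's Lemmas~\ref{lem:consensusx}--\ref{lem:consensusz}, aggregated into $\Delta_t^2$), with the driving terms bounded via the heterogeneity, noise, and smoothness assumptions and then coupled to the descent lemma and the $\bar y,\bar z$ tracking lemmas before substituting $\alpha=\Theta(\sqrt{N/T})$. The only cosmetic difference is bookkeeping: in the paper the $1/((1-\rho)NT)$ term arises from the initial consensus error $\Delta_0^2$ and the $N^{1/2}T^{-3/2}(1-\rho)^{-2}$ term from the $T$-independent tracking/initialization constants, rather than from the bound on $\tfrac{1}{T}\sum_t\EE\|\nabla\Phi(\bar x^{(t)})\|^2$ as you attribute them, but this does not affect the validity of the argument.
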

}

\vspace{2mm}
\noindent 
\textbf{Convergence in deterministic scenario.} 
When gradient or Hessian information can be accessed noiselessly, \ie, $\sigma_{f1}$, $\sigma_{g1}$, and $\sigma_{g2}$ are all zero, \ours has enhanced convergence. Following the argument in Theorem \ref{thm:convergence of DeMA-SOBA}, we can readily derive the following result:
\begin{corollary}[\sc deterministic convergence]
    \label{thm:convergence of deterministic}
    Suppose Assumptions ~\ref{assumption:smooth}, \ref{assumption: data heterogeneity}, and \ref{assumption: gossip communication} hold, and variances $\sigma_{f1}=\sigma_{g1}=\sigma_{g2}=0$. \oursSO and \oursFO achieve the following conergence rate ({see proof in Appendix \ref{proof of deterministic}}):
    \begin{align}
        \dfrac{1}{T}\sum_{t=0}^{T-1} \left\Vert\nabla\Phi(\bar{x}^{(t)})\right\Vert^2  
        \lesssim&\ {\dfrac{\rho^{\frac{2}{3}}(b_1^{\frac{2}{3}}\kappa^{\frac{16}{3}}+b_2^{\frac{2}{3}}\kappa^{6})}{T^{\frac{2}{3}}(1-\rho)^{\frac{2}{3}}}}+\dfrac{\rho\kappa^6}{(1-\rho)T}+{\dfrac{\kappa^4}{(1-\rho)NT}}+\dfrac{\kappa^{13}}{T}. 
    \end{align}
\end{corollary}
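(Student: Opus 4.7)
The plan is to derive the deterministic rate as a direct specialization of the proof of Theorem \ref{thm:convergence of DeMA-SOBA} by tracing how the noise variance $\sigma^2$ enters each bound and then setting $\sigma=0$. In the stochastic analysis, every intermediate bound on the consensus errors $\Vert \mathbf{x}^{(t)}-\bar{\mathbf{x}}^{(t)}\Vert_F^2$, $\Vert \mathbf{y}^{(t)}-\bar{\mathbf{y}}^{(t)}\Vert_F^2$, $\Vert \mathbf{z}^{(t)}-\bar{\mathbf{z}}^{(t)}\Vert_F^2$, on the auxiliary tracking errors $\Vert \bar{y}^{(t)}-y^\star(\bar{x}^{(t)})\Vert^2$ and $\Vert \bar{z}^{(t)}-z^\star(\bar{x}^{(t)})\Vert^2$, and on the descent of $\Phi(\bar{x}^{(t)})$, decomposes into contributions from initialization, from data heterogeneity $b_1^2, b_2^2$, from graph topology $\rho$, and from stochastic noise (terms proportional to $\sigma^2$). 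Comparing \eqref{eq:convergence_MADeSOBA} with the target statement, the three vanishing terms $\kappa^5/\sqrt{NT}$, $\rho^{2/3}\kappa^6/((1-\rho)^{1/3}T^{2/3})$, and $\kappa^7/(NT)$ are precisely those whose prefactors carry $\sigma^2$ in the recursive bounds.

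Concretely, I would first re-verify the per-iteration recursion for each error quantity with the noise zeroed. For instance, the lower-level tracking error satisfies a recursion of the schematic form
\begin{equation*}
\EE\Vert \bar{y}^{(t+1)}-y^\star(\bar{x}^{(t+1)})\Vert^2 \le (1-\mu_g\beta_t)\EE\Vert \bar{y}^{(t)}-y^\star(\bar{x}^{(t)})\Vert^2 + C_1\beta_t^2\sigma^2/N + C_2\beta_t^2 b_2^2 + (\text{cross terms}),
\end{equation*}
where the $\sigma^2/N$ term disappears in the deterministic regime; the recursions for $\bar{z}^{(t)}$, the consensus quantities, and the momentum buffer $h^{(t)}$ simplify analogously. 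The resulting recursions are strictly tighter than their stochastic counterparts, and combining them with the descent inequality on $\Phi(\bar{x}^{(t)})$ (whose $\sigma^2$-dependent contributions likewise vanish) yields the simplified bound without re-engineering the argument.

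The second step is to reuse the step-size schedule from Theorem \ref{thm:convergence of DeMA-SOBA}, namely $\alpha=\Theta(\sqrt{N/T})$ with $\beta_t,\gamma_t,\theta_t$ proportional to $\alpha_t$ and $\tau=\Theta(\kappa^{-4})$, and substitute into the telescoped inequality. The heterogeneity-driven terms of order $\alpha^2 b^2/(1-\rho)^2$ and the topology-driven terms of order $\alpha^2\rho^2/(1-\rho)^2$ combine with the optimization error $1/(T\alpha)$ through the same AM--GM balancing used in the proof of Theorem \ref{thm:convergence of DeMA-SOBA}, producing the $\rho^{2/3}(b_1^{2/3}\kappa^{16/3}+b_2^{2/3}\kappa^{6})/((1-\rho)^{2/3}T^{2/3})$ and $\rho\kappa^6/((1-\rho)T)$ contributions. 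The residual $\kappa^{13}/T$ and $\kappa^4/((1-\rho)NT)$ terms, being independent of $\sigma$, are inherited unchanged.

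The main obstacle is bookkeeping through the coupled recursions of $(\bar{y}^{(t)}, \bar{z}^{(t)}, \mathbf{x}^{(t)}, \mathbf{y}^{(t)}, \mathbf{z}^{(t)}, h^{(t)})$: one must confirm that every $\sigma$-carrying term is identified and eliminated, while no $b_1, b_2$-driven contribution is accidentally deleted through the cross-couplings (e.g., the $z$-recursion receives noise both from its own update and indirectly through $y$ and $\nabla_2 F$). A secondary subtlety arises for D-SOBA-FO: by Proposition \ref{error of finite}, the bias $\iota^2=\tfrac{1}{3}L_{\nabla^2 g}^2\delta_t^2\Vert z_i^{(t)}\Vert^4$ survives even when $\sigma=0$, so one must verify that $\delta_t$ can be chosen (e.g., $\delta_t=\Theta(1/T)$) so that this bias is dominated by the surviving $\kappa^{13}/T$ residual without altering the leading $T^{-2/3}$ behaviour. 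Once these two points are resolved, the bound in the corollary follows directly.
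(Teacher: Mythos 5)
Your first step---tracing the $\sigma^2$-dependence through the coupled recursions and observing that the constants multiplying the noise (the paper's $C_1$ and $C_{D2}$, built from $C_{y1},C_{z1}$, etc.) vanish when $\sigma=0$---matches the paper's argument, which simply sets $C_1=C_{D2}=0$ in the already-telescoped inequality \eqref{final11Phi} rather than re-deriving the recursions. Your treatment of the finite-difference bias $\iota^2$ is also consistent with the paper, which picks $\iota$ so that the bias contributions are $\lesssim 1/T$.

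The genuine gap is in your second step: you propose to \emph{reuse} the stochastic step size $\alpha=\Theta(\sqrt{N/T})$. This cannot yield the claimed bound. The term $\kappa^5/\sqrt{NT}$ in Theorem~\ref{thm:convergence of DeMA-SOBA} is not a term ``whose prefactor carries $\sigma^2$'' in isolation; it is the geometric mean $\sqrt{C_0C_1/(NT)}$ produced by balancing the $\sigma$-independent optimization term $C_0/(\alpha T)$ (driven by $E_0$ and the initial hypergradient error, with $C_0=\mathcal{O}(\kappa^5)$) against the noise term $C_1\alpha/N$. If you freeze $\alpha=\Theta(\sqrt{N/T})$, the surviving term $C_0/(\alpha T)=\Theta(\kappa^5/\sqrt{NT})$ does \emph{not} disappear when $\sigma=0$, and it dominates the claimed $T^{-2/3}$ rate for large $T$; moreover the heterogeneity term $\hat{C}_DC_{D1}\rho^2\alpha^2(b_1^2+\kappa^2b_2^2)/(1-\rho)^2$ would become $\mathcal{O}(Nb^2/(T(1-\rho)^2))$ rather than the stated $T^{-2/3}$ contribution. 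The corollary requires re-optimizing the step size once the noise term is absent: the paper drops the constraints $\alpha\le\alpha_1$ and $\alpha\le\alpha_3$ and instead balances $C_0/(\alpha T)$ against the $\alpha^2$ heterogeneity terms via
\begin{equation*}
\alpha_{21}'=\Bigl(\tfrac{C_0(1-\rho)^2}{\hat{C}_DC_{D1}\rho^2 b_1^2T}\Bigr)^{\frac{1}{3}},\qquad
\alpha_{22}'=\Bigl(\tfrac{C_0(1-\rho)^2}{\hat{C}_DC_{D1}\rho^2 b_2^2\kappa^2T}\Bigr)^{\frac{1}{3}},
\end{equation*}
so that $\alpha=\Theta(T^{-1/3})$ and $C_0/(\alpha T)=\Theta(T^{-2/3})$ matches the heterogeneity term. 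Without this re-balancing your argument proves only the weaker $\mathcal{O}(1/\sqrt{NT})$ deterministic rate, not the statement of the corollary.
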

\noindent Corollary \ref{thm:convergence of deterministic} implies that \ours achieves an asymptotic rate of \(\mathcal{O}(T^{-2/3}(1-\rho)^{-2/3})\). In the same deterministic setting, the concurrent single-loop decentralized bilevel algorithm \citep{dong2023single} achieves a rate of at best \(\mathcal{O}(T^{-1}(1-\rho)^{-3/2})\), which is inferior to ours when the communication topology is sufficiently sparse (i.e., as \((1-\rho) \to 0\)). Additionally, the single-loop algorithm employs a gradient tracking technique, which introduces extra communication overhead and does not address performance in stochastic settings.

\section{Experiments}
\label{section: experiments}
In this section, we present experiments to validate the influence of convergence caused by communication topologies, data heterogeneity, moving average parameter $\theta$, and finite-difference parameter $\delta$. We also compare \ours to other decentralized SBO algorithms and show the benefit of finite-difference Hessian/Jacobian-vector approximation of D-SOBA-FO in high-dimensional settings.

\subsection{Linear regression with a synthetic dataset.}
\label{section: experiments_linear}

Here, we consider problem \eqref{prob:general} with the upper- and lower-level loss functions defined as 
\begin{subequations}
\label{exp:toymodel_detail}
\begin{align*}
f_i(x,y)&=\mathbb{E}_{\xi_i,\zeta_i}\left[\left(\xi_i^\top y-\zeta_i\right)^2\right], \\
g_i(x,y)&=\mathbb{E}_{\xi_i,\zeta_i}\left[\left(\xi_i^\top y-\zeta_i\right)^2+|x| \Vert y \Vert^2\right] ,
\end{align*}
\end{subequations}
where $y\in\mathbb{R}^p$ denotes the regression parameter, $|x|\geq0$ is a ridge regularization parameter. This setting exemplifies tuning the regularization parameter with the upper-/lower-level objectives associated with validation and training datasets, respectively. Each node $i$ observes data samples $(\xi_i,\zeta_i)$ in a streaming manner in which $\xi_i$ is a $p$-dimensional vector with all elements drawn independently and randomly form the uniform distribution $U\left(-2\times 1.5^{{1}/{3}},2\times 1.5^{{1}/{3}}\right)$. Then $\zeta_i$ is generated by $\zeta_i=\xi_i^\top\omega_i+\delta_i$, where $\delta_i{\sim}\mathcal{N}({0},1)$.
Moreover, $\omega_i$ is generated by $\omega_i=\omega+\varepsilon_i$, where $\omega$ is a given vector whose elements are generated independently and randomly form the uniform distribution $U\left(0,10\right)$. Here we adjust $\varepsilon\overset{iid}{\sim}\mathcal{N}({0},2I_p)$ to represent severe heterogeneity across nodes while $\varepsilon_i\overset{iid}{\sim}\mathcal{N}({0},0.5I_p)$ yields mild heterogeneity.

We set $p=10$ and the number of nodes was set to $9$ and $20$. The communication topologies are set to Ring, 2D-Torus~\citep{ying2021exponential} topologies ($3\times3$ when $N=9$ and $4\times5$ when $N=20$) as well as a fully connected network (i.e. centralized SOBA). The step-sizes $\alpha_t,\beta_t,\gamma_t$ are initialized as $0.1$ and multiplied by $0.8$ per $1,000$ iterations while $\theta_t$ is set to $0.2$. We repeat all the cases $50$ times and plot the average of all trials. 

\begin{figure}[t!]
\centering
\subfigure{
		\includegraphics[width=0.45\textwidth]{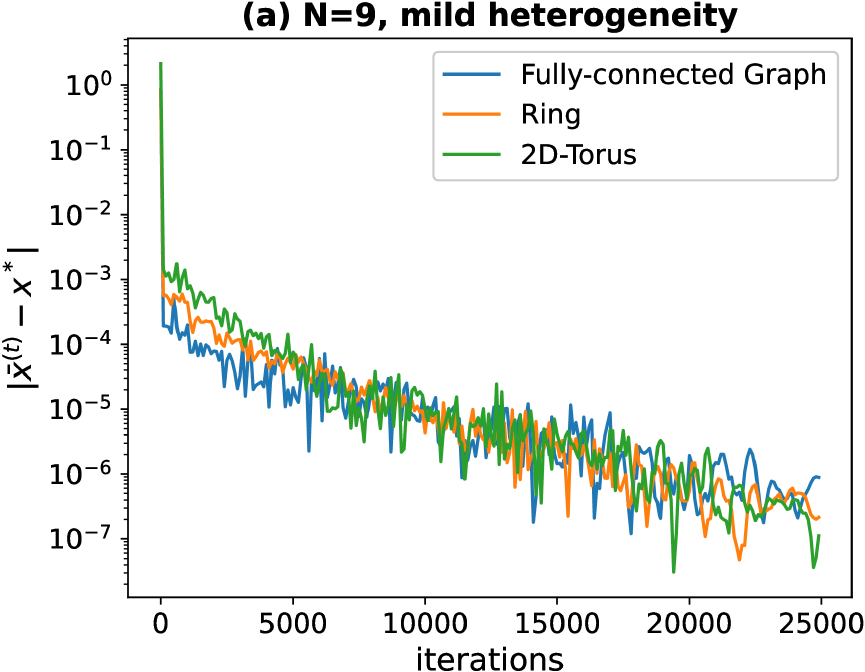}}
\hspace{7pt}
\subfigure{
		\includegraphics[width=0.45\textwidth]{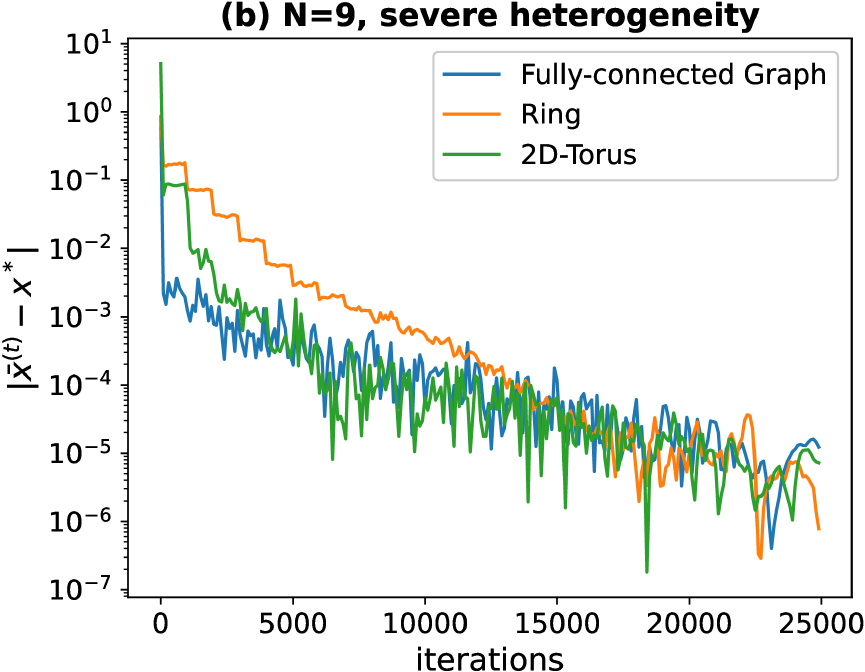}}\\
\subfigure{
		\includegraphics[width=0.45\textwidth]{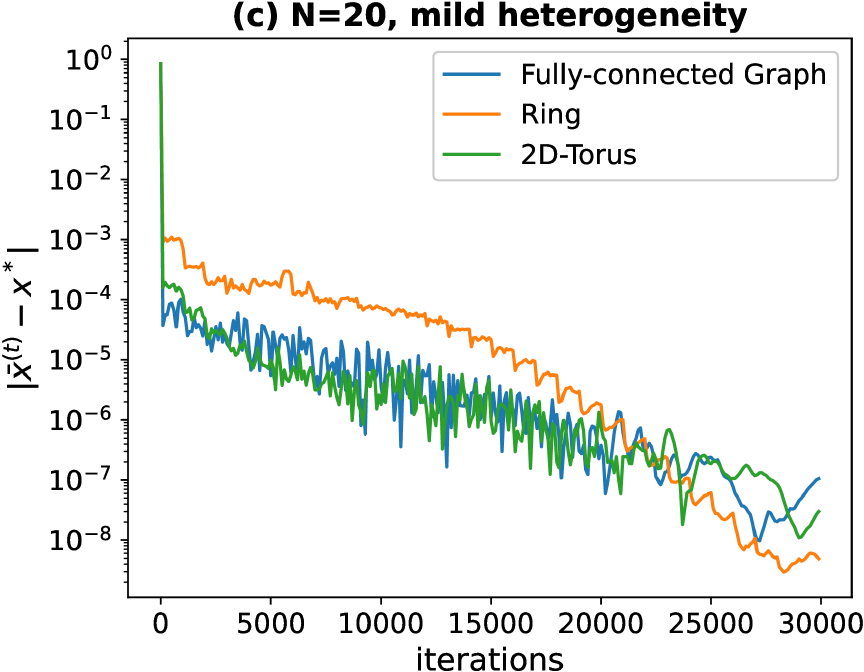}}
\hspace{7pt}
\subfigure{
		\includegraphics[width=0.45\textwidth]{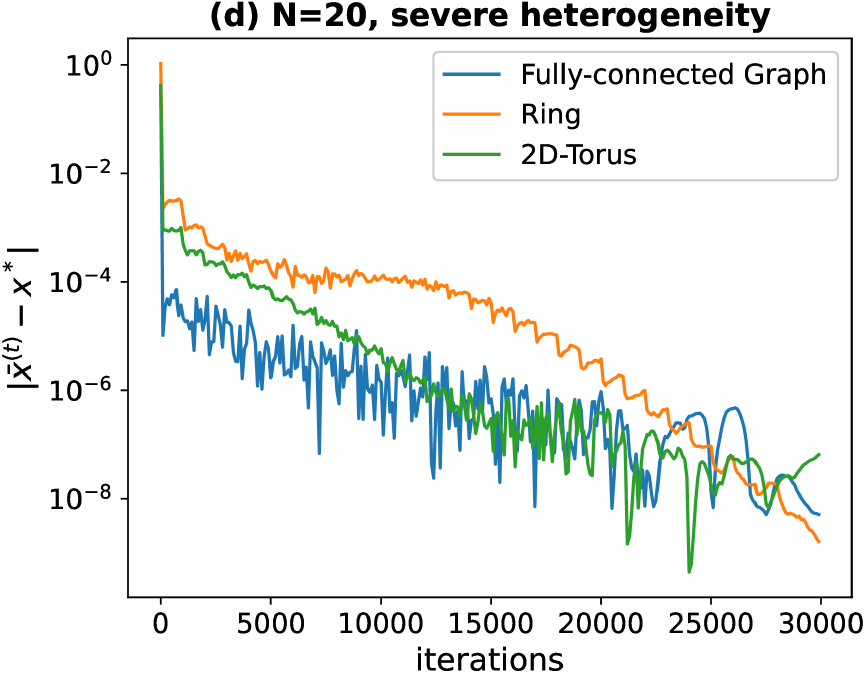}}
\caption{\small Convergence performance of D-SOBA-SO over various networks under different data heterogeneity.}
\label{fig: toy}
\end{figure}

Fig.\ref{fig: toy} illustrates the performance of D-SOBA-SO. {For Fig. \ref{fig: toy}, we can observe that D-SOBA with a decentralized communication topology achieves linear speedup if the corresponding convergence curve matches with the convergence curve with a fully-connected graph. Thus, Fig.\ref{fig: toy} illustrates the two-stage convergence of the D-SOBA algorithm.}

It can be also observed that for both $N=9$ and $N=20$, the transient stage of D-SOBA-SO over decentralized networks becomes longer as data heterogeneity increases, highlighting the crucial impact of data heterogeneity. Additionally, D-SOBA-SO over the 2D-torus graph exhibits a shorter transient stage than that with the ring graph, indicating that networks with a smaller spectral gap (i.e., worse connectivity) can significantly slow down convergence. Moreover, the transient iterations of both the 2D-torus and ring networks increase as the network size $N$ grows.  All these results are consistent with our transient iteration complexity derived in Corollary \ref{thm:transient time of DeMA-SOBA}.

\subsection{Data hyper-cleaning on FashionMNIST dataset.}
\label{section: experiments_minst}
Here we consider a data hyper-cleaning problem \citep{shaban2019truncated} for a Fashion MNIST dataset \citep{xiao2017fashion}, which consists of 60000 images for training and 10000 images for testing. Then we split the 60000 training images into the training set and validation set, which contains 50000 images and 10000 images separately.  

In the data hyper-cleaning problem, we aim to train a classifier in a corrupted setting in which the label of each training data is replaced by a random class number with a probability $p$ (i.e. the corruption rate) on a decentralized network with $N=10$ clients. It can be viewed as a bilevel optimization problem \eqref{prob:general} whose loss functions on the upper and lower level of the $i$-th node denote:
\begin{subequations}
\label{exp:hyperclean_detail_app}
\begin{align}
f_i(x,y)&=\dfrac{1}{\left|\mathcal{D}_{val}^{(i)}\right|}\sum_{(\xi_e,\zeta_e)\in D_{val}^{(i)}}L(\phi(\xi_e;y),\zeta_e),\\
g_i(x,y)&=\dfrac{1}{\left|\mathcal{D}_{tr}^{(i)}\right|}\sum_{(\xi_e,\zeta_e)\in \mathcal{D}_{tr}^{(i)}}\sigma(x_e)L(\phi(\xi_e;y),\zeta_e)+C\left\Vert y\right\Vert^2,
\end{align}
\end{subequations}
where $\phi$ denotes the parameters of a two-layer MLP network with a 300-dim hidden layer and ReLU activation while $y$ denotes its parameters. $L$ denotes cross-entropy loss, $\sigma(u)=(1+e^{-u})^{-1}$ denotes the sigmoid function. $\mathcal{D}_{val}^{(i)}$ and $\mathcal{D}_{tr}^{(i)}$ denote the validation and training sets of the $i$-th client, which are sampled randomly by Dirichlet distribution with parameters $\alpha=0.1$ in non-i.i.d. cases \citep{lin2021quasi}. Following with the setting in \citep{shaban2019truncated}, we set $C=0.001$. The step-sizes in all the gradient updates are all set to 0.1. And all the experiments are repeated 10 times.

\textbf{Comparison between D-SOBA and other existing algorithms. }We firstly compare D-SOBA-SO and D-SOBA-FO with MA-DSBO \citep{chen2023decentralized} and Gossip DSBO~\citep{yang2022decentralized} by solving \eqref{exp:hyperclean_detail_app} over an exponential graph \citep{ying2021exponential} with $p=0.1,0.4$, respectively. For all algorithms, the step-sizes are set to 0.1 and the batch size are set to 200. Meanwhile, the moving average parameters $\theta_t$ of D-SOBA-SO, D-SOBA-FO and MA-DSBO are set to 0.8. For MA-DSBO, we set the number of inner-loop iterations $T=5$ and the number of outer-loop iterations $K=5$. For Gossip DSBO, we set the number of Hessian-inverse estimation iterations $T=5$. At the end of the update of outer parameters, we use the average of $y$ among all clients to do the classification of the test set. 

Fig.~\ref{fig: FashionMINST_algs11} presents the upper-level loss and test accuracy of D-SOBA-SO, D-SOBA-FO, MA-DSBO, and Gossip DSBO when the corruption rate $p$ equals 0.1 and 0.4 over an exponential graph \citep{ying2021exponential} with $N=10$ nodes, respectively. We observe that both D-SOBA-SO and D-SOBA-FO achieves higher test accuracy compared to the other two algorithms. 

{Moreover, although the first-order variant in D-SOBA-FO reduces computational overhead, it may still lead to suboptimal performance than D-SOBA-SO in Figure 3 as the finite difference of first-order gradients in D-SOBA-FO exactly approximates a Hessian/Jacobian–vector product. Then D-SOBA-FO may introduce additional error into the hyper-gradient evaluation. In contrast, D-SOBA-SO computes exact Hessian-/Jacobian–vector products directly from stochastic samples, resulting in fewer iterations to reach a given accuracy compared to first-order methods. On the other hand, the Hessian/Jacobian–vector product is computed via PyTorch's forward-mode automatic differentiation in D-SOBA-SO, which is significantly faster than first computing the full Hessian/Jacobian matrix and then multiplying it by the vector. This computational advantage prevents its per-iteration time from becoming prohibitively slow compared to D-SOBA-FO. }

\begin{figure}[t!]
\centering
	\subfigure{
		\includegraphics[width=0.48\textwidth]{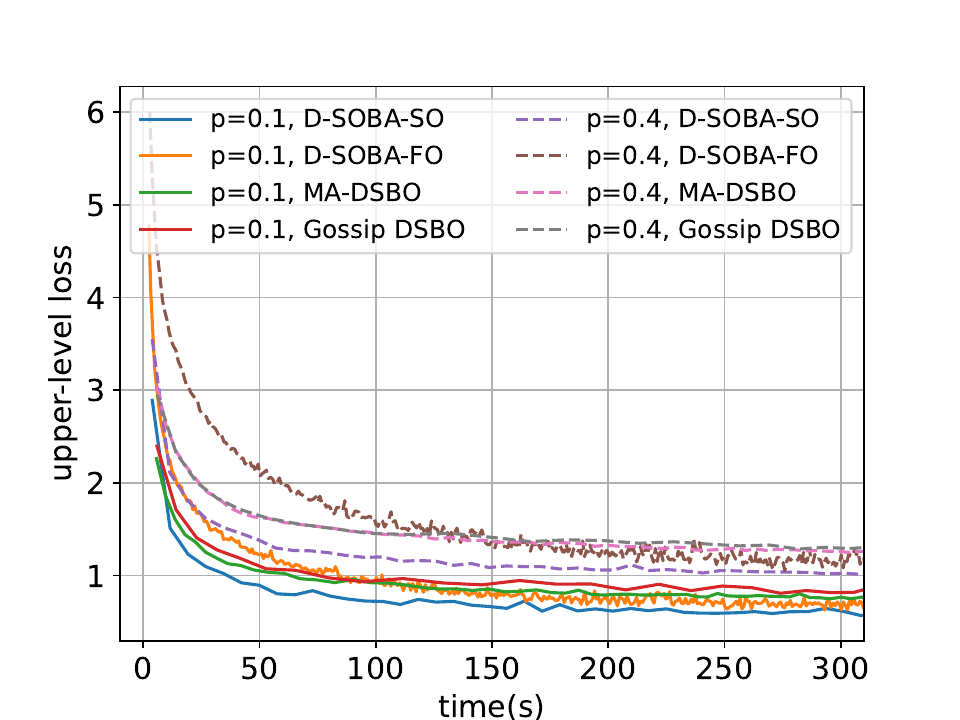}}
	\subfigure{
		\includegraphics[width=0.48\textwidth]{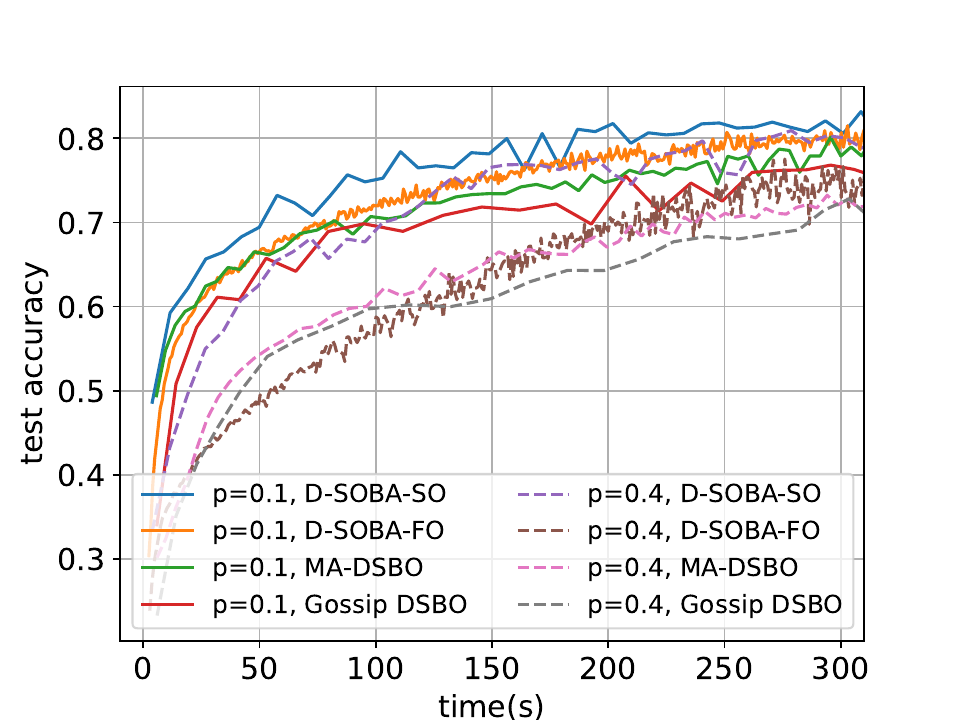}
        \label{fig: FashionMINST_topo_testacc_app}}
\caption{The upper-level loss (left) and test accuracy (right) of different decentralized stochastic bilevel optimization algorithms.}
\label{fig: FashionMINST_algs11}
\end{figure}

\begin{figure}
\centering
	\subfigure{
		\includegraphics[width=0.48\textwidth]{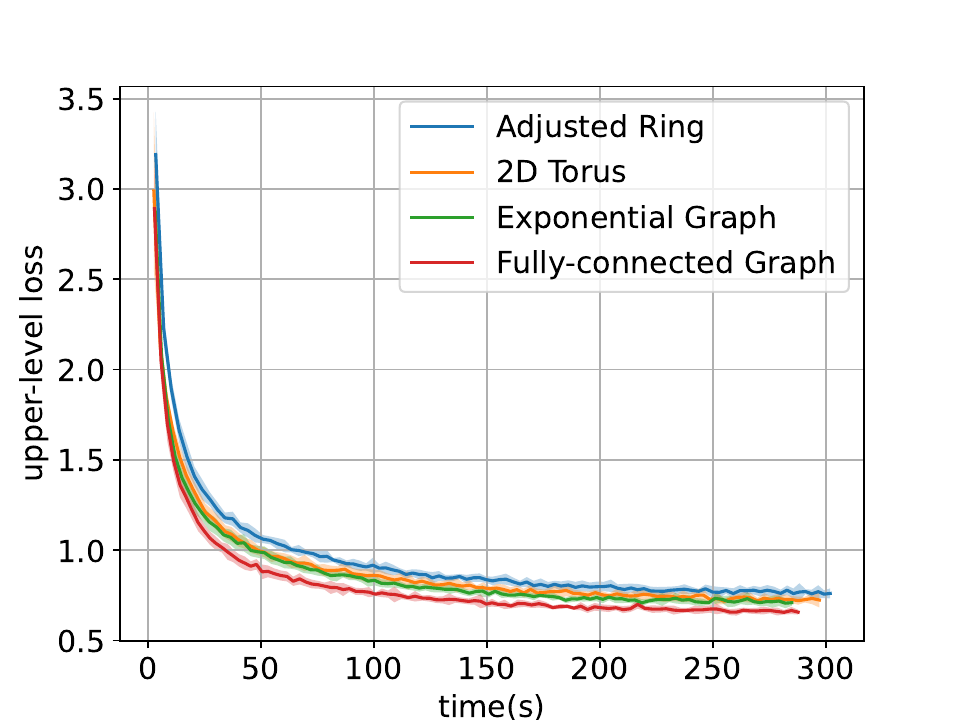}}
	\subfigure{
		\includegraphics[width=0.48\textwidth]{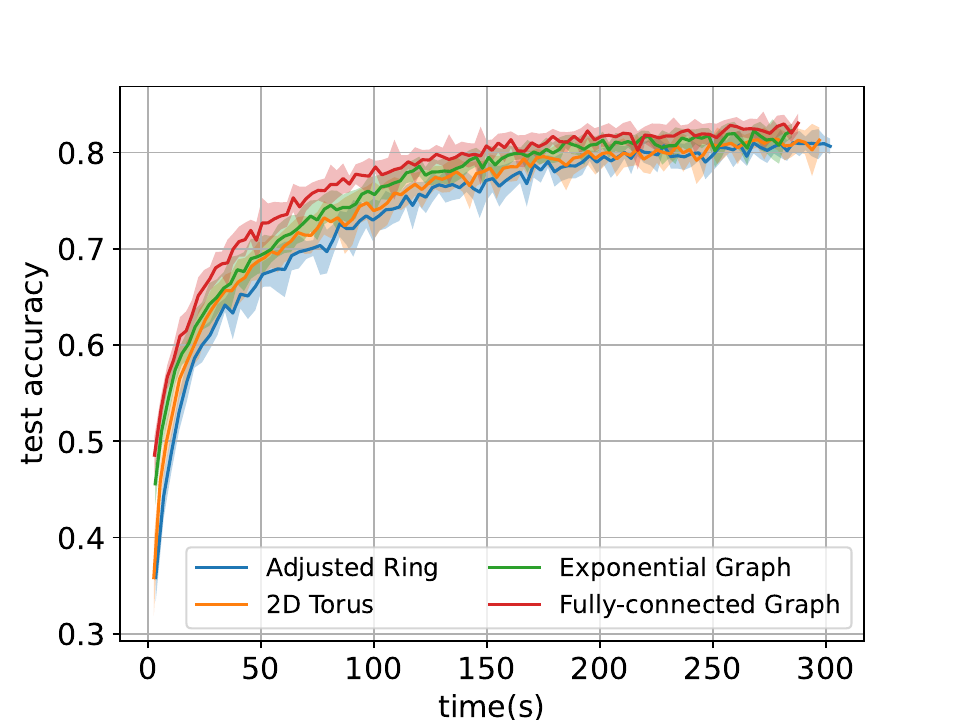}
        \label{fig: FashionMINST_topo_testacc_app}}
\caption{The upper-level loss (left) and test accuracy (right) of D-SOBA with different communication topologies.}
\label{fig: FashionMINST_topo_app}
\end{figure}

\begin{figure}[t!]
\hspace{-15pt}
	\centering
	\begin{minipage}[c]{0.48\textwidth}
    \hspace{-17pt}
    \centering
    	\subfigure{
    		\includegraphics[width=1\textwidth]{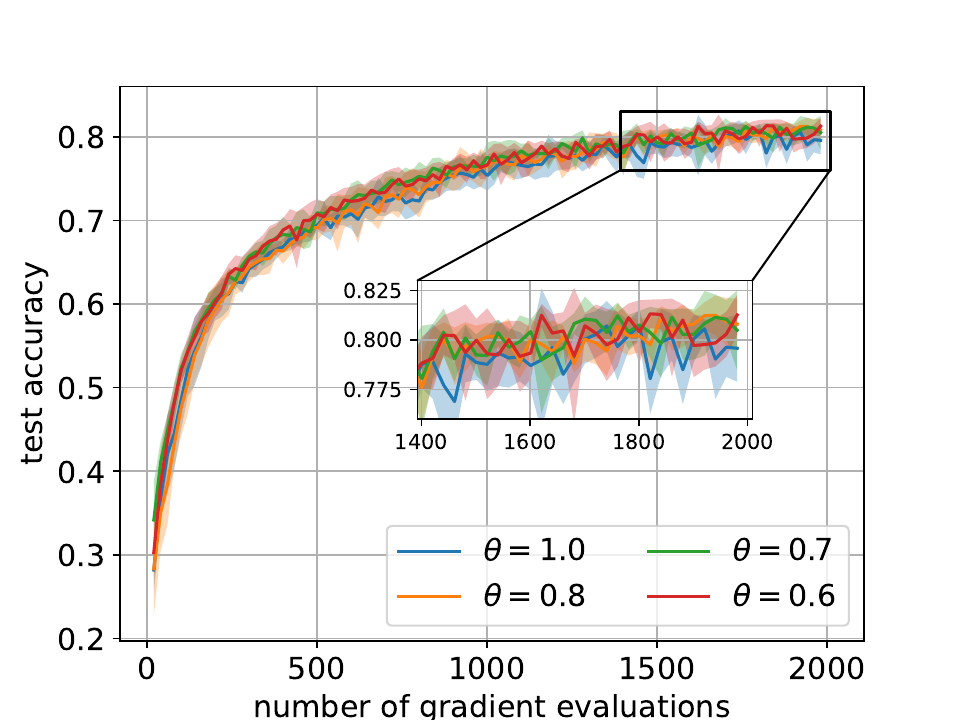}}
      \vspace{-10pt}
    \hspace{-16pt}
    \caption{\small The test accuracy of D-SOBA-SO with different moving-average parameter $\theta_t$.}
    \label{fig: FashionMINST_theta}
	\end{minipage} 
    \hspace{4pt}
	\begin{minipage}[c]{0.48\textwidth}
    \centering
      \hspace{-20pt}
    	\subfigure{
    		\includegraphics[width=\textwidth]{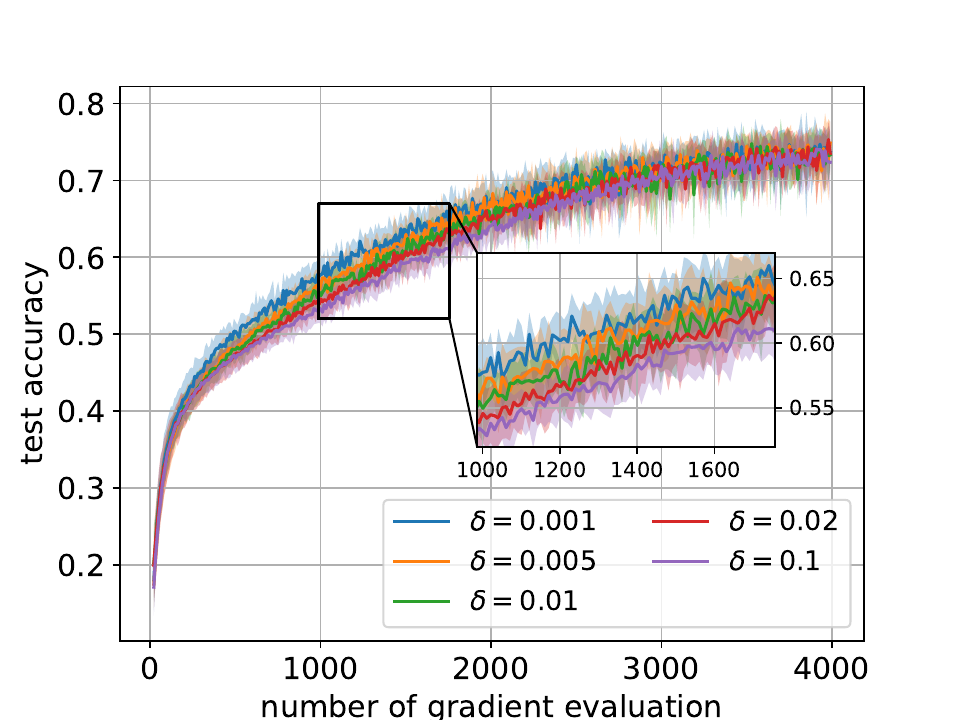}}
      \vspace{-10pt}
    \hspace{-20pt}
    \caption{\small The test accuracy of D-SOBA-FO with different moving-average parameter $\delta_t$.}
    \label{fig: FashionMINST_delta}
	\end{minipage} 
\hspace{-15pt}
\end{figure}

\textbf{Convergence of D-SOBA with different communication topologies. }Then we conduct the D-SOBA-SO with different topologies including Adjust Ring, 2D-Torus, Exponential graph as well as the centralized case in the non-i.i.d setting, while the weight matrix of Adjust Ring $W=[w_{ij}]_{N\times N}$ satisfies:
\begin{align*}
w_{ij}=\begin{cases} 0.4,\quad\text{if }(j-i)\%{N}\in\{\pm1\} ,
\\ 0.2,\quad\text{if }j=i,
\\ 0,\hspace{1.5em}\text{  else}.
    \end{cases}
\end{align*}
We set $p=0.2$, batch size equal to 200, and repeat all the cases 10 times and illustrate the mean of all trials. Figure \ref{fig: FashionMINST_topo_app} illustrates the upper-level training loss and test accuracy of different cases, {from which we can observe how the two-stage convergence of decentralized communication affect the practical optimization tasks. Specifically, D-SOBA with sparser topologies exhibits slower convergence in the upper-level loss and then comes with suboptimal test accuracy.}

\textbf{Influence of moving-average iterations on convergence. }Here we aim to find the influence of moving-average parameter $\theta$ on the convergence. Let the batch size equal 100 and $p=0.2$, we run D-SOBA-SO on an Adjusted Ring with $\theta\in\{1.0,0.8,0.7,0.6\}$. We repeat each case 10 times and obtain the test accuracy. As the Figure \ref{fig: FashionMINST_theta} demonstrates, the cases when $\theta=0.7,0.8$ achieve a higher average test accuracy than when $\theta=0.6,1.0$, which implies the trade-off of the selection $\theta$ in the convergence.

\textbf{Influence of finite-difference parameter $\delta$ in D-SOBA-FO. } To find how the hyperparameter $\delta$ in D-SOBA-FO affect the convergence, we run D-SOBA-FO on an Adjusted Ring with the batch size equal 100 and $p=0.3$. And the term $\theta=0.8$. Moreover, the term $\delta=0.001,0.005,0.01,0.02,0.1$, respectively. As Fig. \ref{fig: FashionMINST_delta} shows, a smaller $\delta_t$ leads to a faster convergence at the beginning of the iteration. However, the influence of $\delta$ on the convergence is insignificant as all the cases finally achieve similar test accuracy. This result illustrates how $\delta_t$ affect the convergence, which has been discussed in Section \ref{section:Convergence Analysis}.

{
\begin{figure}[t!]
\centering
	\subfigure{
		\includegraphics[width=0.48\textwidth]{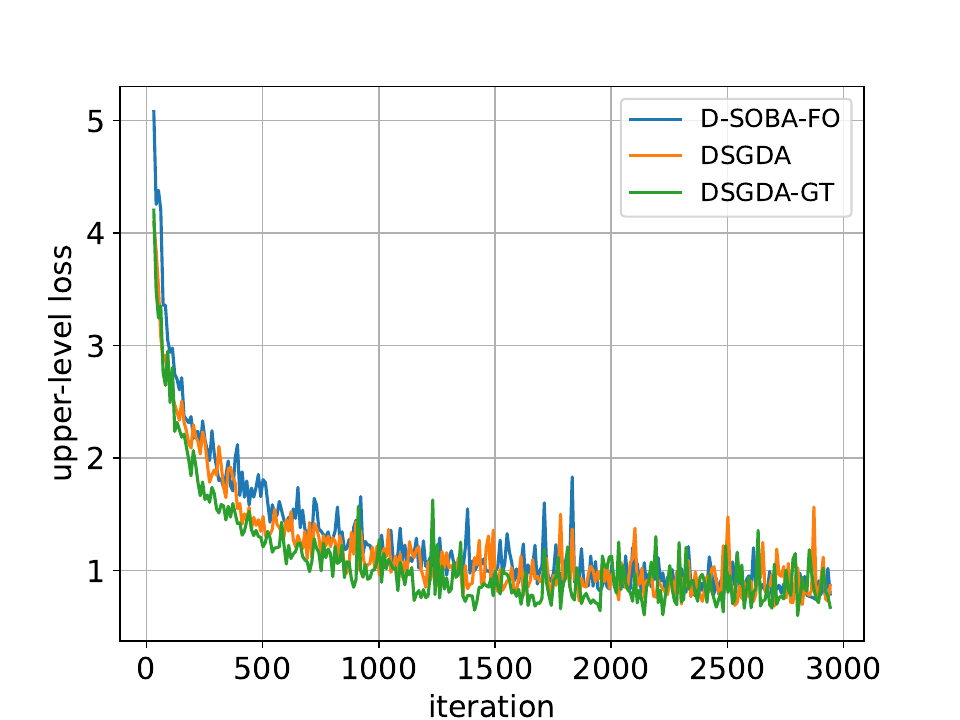}}
	\subfigure{
		\includegraphics[width=0.48\textwidth]{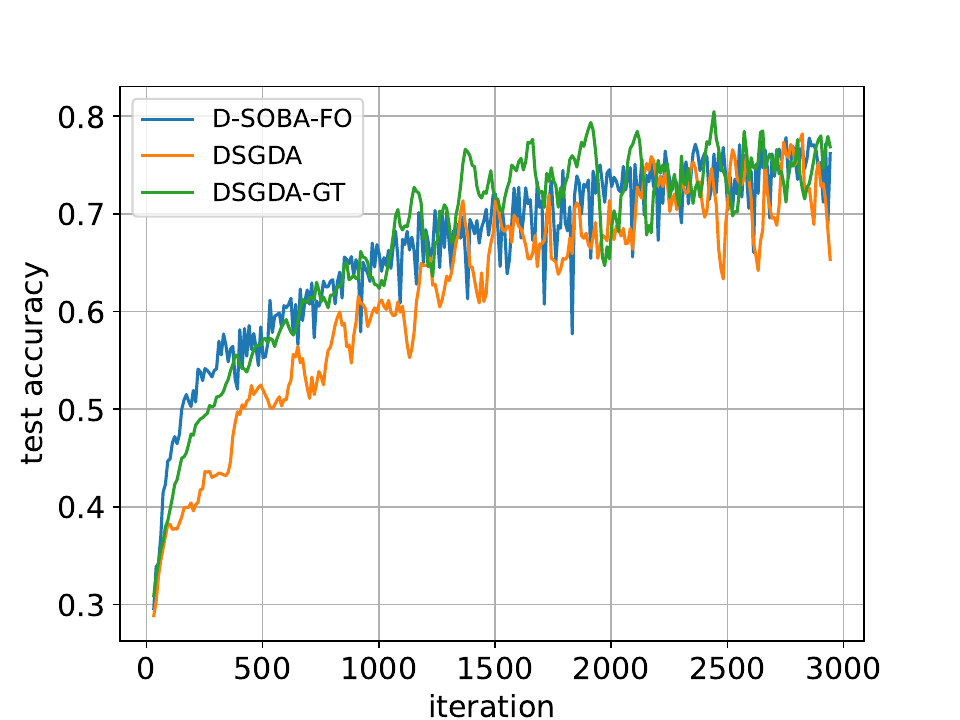}
        \label{fig: FashionMINST_topo_testacc_app}}
\caption{{The upper-level loss (left) and test accuracy (right) of different first-order decentralized stochastic bilevel optimization algorithms.}}
\label{fig: FashionMINST_fo_dsgda}
\end{figure}

\begin{figure}[t!]
\centering
	\subfigure{
		\includegraphics[width=0.48\textwidth]{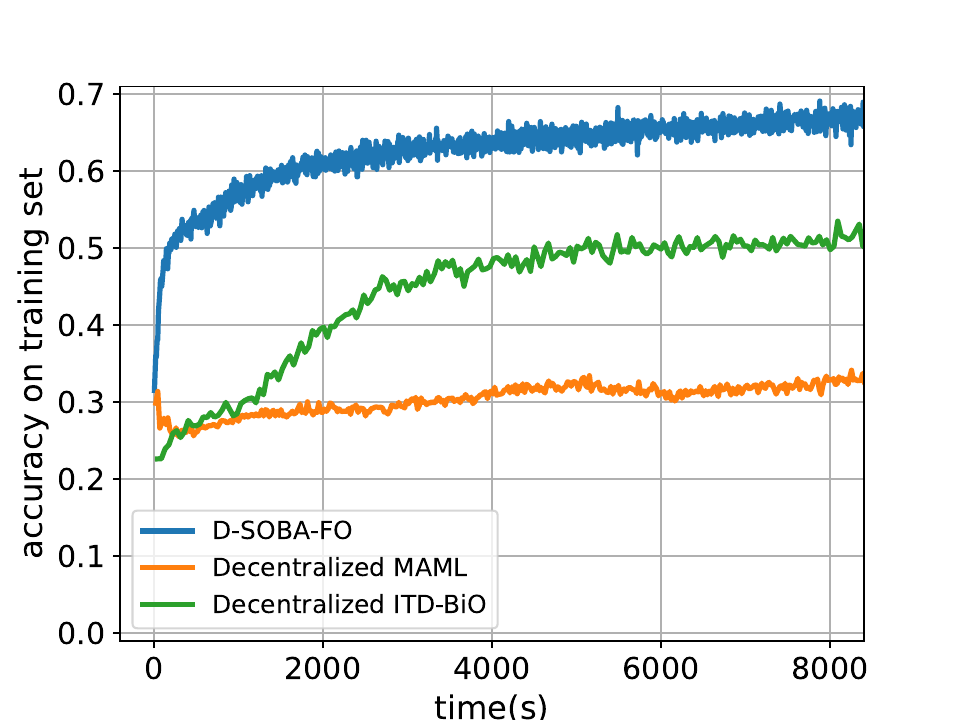}}
	\subfigure{
		\includegraphics[width=0.48\textwidth]{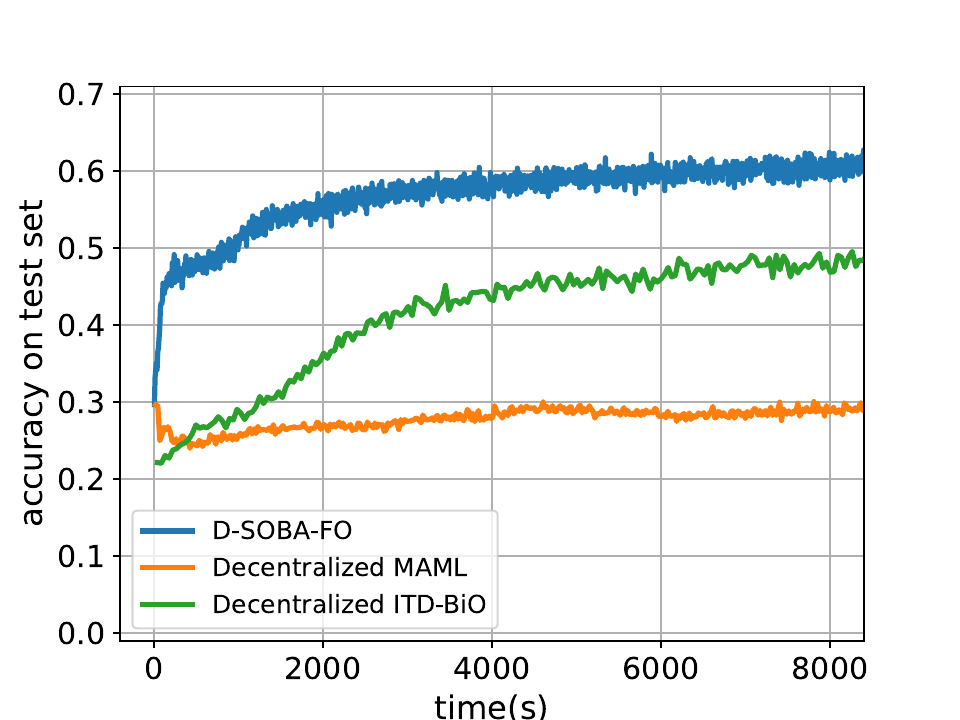}
        \label{fig: Fmeta}}
\caption{The accuracy on training set (left) and testing set (right) of different algorithms for the meta-learning problem.}
\label{fig: FashionMINST_algs1}
\end{figure}

\textbf{Comparison between D-SOBA-FO and first-order baselines.} Additionally, we compare D-SOBA-FO with DSGDA\footnote{{DSGDA is the non-tracking version of DSGDA-GT that proposed in \citep{wang2024fully}.}} as well as DSGDA-GT \citep{wang2024fully} on an Adjusted Ring topology, with batch size 50 and $p=0.2$. Key hyperparameters: $\delta=0.0001$ (D-SOBA-FO), inner iterations $T=1$ per gradient step (DSGDA), $\theta_t=0.8$ (D-SOBA-FO moving average), and $\alpha=1$ (DSGDA). All methods use step size 0.03. Figure \ref{fig: FashionMINST_fo_dsgda} demonstrates comparable performance between D-SOBA-FO and DSGDA in both upper-level loss minimization and test accuracy.
}

\subsection{Decentralized meta-learning}
Consider a meta-learning problem \citep{finn2017model}. We consider $R$ tasks $\{\mathcal{T}_r,r=1,\cdots,R\}$. Each task $\mathcal{T}_r$ has its own loss function $l_r(x,y_r)=\mathbb{E}_{\xi\in\mathcal{D}_r}L(x,y_r,\xi)$, where $y_r$ denotes the task-specific parameters and $x$ denotes global parameters shared by all the tasks. Meta-learning problem aims to find good global parameter $x^{\star}$ to bulid the embedded features. Then each task can adapt its task-specific parameters $y_i$.

Here, we train a meta-learning framework in a decentralized scenario \citep{kayaalp2022dif} over $N$ nodes. For $i=1,2,\cdots,N$, let $\mathcal{D}_{r,i}$ denote the data set received by the $i$-th node of the $r$-th task $\mathcal{T}_r$. And we split $\mathcal{D}_{r,i}$ to $\mathcal{D}^{\text{train}}_{r,i}$ and $\mathcal{D}^{\text{val}}_{r,i}$ for training and validation, respectively. Then we find the optimal parameter by solving the decentralized SBO problem \eqref{prob:general} with the upper- and lower-level loss functions defined as:
\begin{equation}
    \begin{aligned}
        f_i(x,y)&=\dfrac{1}{R}\sum_{r=1}^R\left[\dfrac{1}{|\mathcal{D}^{\text{val}}_{r,i}|}\sum_{\xi\in\mathcal{D}^{\text{val}}_{r,i}}L(x,y_r,\xi)\right], \\
        g_i(x,y)&=\dfrac{1}{R}\sum_{r=1}^R\left[\dfrac{1}{|\mathcal{D}^{\text{train}}_{r,i}|}\sum_{\xi\in\mathcal{D}^{\text{train}}_{r,i}}L(x,y_r,\xi)+\mathcal{R}(y_r)\right],
    \end{aligned}
\end{equation}
where $\mathcal{R}(y_r)$ denotes a strongly-convex regularization term of $y_r$.

In this experiment, we compared D-SOBA-FO with ITD-BiO \citep{ji2021bilevel} and MAML \citep{finn2017model} with decentralized communication over a 5-way 10-shot task over a network of $N=8$ nodes connected by Ring graph. The dataset is miniImageNet \citep{vinyals2016matching}, which is generated from ImageNet \citep{russakovsky2015imagenet} and consists of 100 classes with each class containing 600 images of size $84\times84$. Following the setting of, we split these classes into 64 for training, 16 for validation, and 20 for testing. Each training and validation class splits its data by Dirichlet distribution with parameters $\alpha=0.1$ \citep{lin2021quasi}. We use a four-layer CNN with four convolution blocks in which each block sequentially consists of a $3\times3$ convolution with 32 filters, batch normalizationm, ReLU activation, and $2\times2$ max pooling. $L$ denotes cross-entropy loss and $\mathcal{R}(y)=0.001||y||^2$. The number of tasks $R=1000$. The batch size is set to 32. The parameters of the last linear layer are set to be task-specific and the other parameters are global. We choose the stepsize 0.002 for upper-level iterations and 0.01 for lower-level iterations for all algorithms. The stepsize of auxiliary variables in D-SOBA-FO is 0.01 and the term $\delta_t$ in D-SOBA-FO is 0.001. 

Figure \ref{fig: FashionMINST_algs1} the average accuracy on training set for all nodes and the test accuracy of the three algorithms. We can observe that D-SOBA-FO outperforms than the other algorithms in this large-scaled problems. Thus, this experiment indicates the advantage of D-SOBA-FO in large-scaled scenarios.

\section{Conclusion and Limitations}
\vspace{-5pt}
This paper introduces D-SOBA-SO and D-SOBA-FO, two single-loop decentralized SBO algorithms that achieve state-of-the-art asymptotic convergence rate and gradient/Hessian complexity under more relaxed assumptions compared to existing methods. Especially, D-SOBA-FO evaluates the Hessian/Jacobian-vector only by first-order gradients. We also clarify, for the first time, the joint influence of network topology and data heterogeneity on bilevel algorithms through an analysis of transient iteration complexity. Nevertheless, \ours relies on the assumption of bounded gradient dissimilarity, which might be violated when local distribution $\cD_{f_i}$ or $\cD_{g_i}$ differs drastically across nodes. 

\section*{Acknowledgements}
The work of Kun Yuan is supported by National Key Research and Development Program of China (No. 2024YFA1012902), and National Natural Science Foundation of China (No. 92370121, 12301392, W2441021). Songtao Lu is supported in part by the Chinese University of Hong Kong (CUHK) Direct Grant (Project No. 4055259).

\newpage

\appendix
\section{Proof of the Convergence Rate for Algorithm \ref{alg:DeMA-SOBA}.}
\label{proof of convergence of D-SOBA}
Here we present the proof of Theorem \ref{thm:convergence of DeMA-SOBA} as well as two corollaries including the transient time and the convergence rate in the deterministic case of Algorithm \ref{alg:DeMA-SOBA}, which provides the theoretical analysis on the convergence of Algorithm \ref{alg:DeMA-SOBA}. 
\subsection{Notations}
We first introduce some notations which will be used in the proof as follows. Firstly, for  $0\leq t\leq T$, let $\bar{y}^{(t)}_{\star}=\mathop{\arg\min}_{y}g(\bar{x}^{(t)},y)$ denote the minimizer of \eqref{eq:lower} with respect to  $\bar{x}^{(t)}$. Then, let $\bar{z}^{(t)}_{\star}=(\nabla_{22}^2g(\bar{x}^{(t)},\bar{y}^{(t)}_{\star}))^{-1}\nabla_2f(\bar{x}^{(t)},\bar{y}^{(t)}_{\star})$ denote the minimizer of \eqref{eq:soba} with respect to  $\bar{x}^{(t)}, \bar{y}^{(t)}_{\star}$. Denote $q_i^{(t)} =\nabla_1f_i(x_i^{(t)},y_i^{(t)})-\nabla^2_{12}g_i(x_i^{(t)},y_i^{(t)})z_i^{(t)} $, and $\bar{q}^{(t)}=\frac{1}{N}\sum_{i=1}^nq_i^{(t)}$. 

For iteration $t$, we use the following notations to represent the stochastic gradients in Algorithm \ref{alg:DeMA-SOBA}: 
\begin{subequations}
\label{alg-variables}
\begin{align*}
u_{1,i}^{(t)} & :=\nabla_1F(x_{i}^{(t)},y_{i}^{(t)};\xi^{(t)}_i), \quad\hspace{4pt}
u_{2,i}^{(t)}:=\nabla_2F(x_{i}^{(t)},y_{i}^{(t)};\xi^{(t)}_i),\quad\hspace{4pt}
v_{2,i}^{(t)} := \nabla_{2}G(x_{i}^{(t)},y_{i}^{(t)};\zeta^{(t)}_i).
\end{align*} 
\end{subequations}

\textbf{Remark.} From the definition of $\mathcal{F}^{(t)}$, we know that $h^{(t+1)}_i$ and $x^{(t+1)}_i$ ($1\leq i\leq N$) are both $\mathcal{F}^{(t)}$-measurable. Then from the update of $x^{(t+1)}_i$ in Algorithm \ref{alg:DeMA-SOBA}, we know that $x^{(t+1)}_{i}$ is actually  $\mathcal{F}^{(t)}$-measurable. 

Then we present the matrix form of iterators on different clients as follows:
\begin{align*}
    \mathbf{x}^{(t)}&=\left(x^{(t)}_{1},x^{(t)}_{2} ,\cdots,x^{(t)}_{N}\right)^\top _{N\times{d}},\quad\mathbf{\bar{x}}^{(t)}=\left(\bar{x}^{(t)} ,\bar{x}^{(t)} ,\cdots,\bar{x}^{(t)}\right)^\top _{N\times{d}},\\
    \mathbf{y}^{(t)}&=\left(y^{(t)}_{1},y^{(t)}_{2} ,\cdots,y^{(t)}_{N}\right)^\top _{N\times{p}},\quad\mathbf{\bar{y}}^{(t)}=\left(\bar{y}^{(t)} ,\bar{y}^{(t)} ,\cdots,\bar{y}^{(t)} \right)^\top _{N\times{p}},\\
    \mathbf{z}^{(t)}&=\left(z^{(t)}_{1}, z^{(t)}_{2} ,\cdots,z^{(t)}_{N}\right)^\top _{N\times{p}},\quad\mathbf{\bar{z}}^{(t)}=\left(\bar{z}^{(t)} ,\bar{z}^{(t)},\cdots,\bar{z}^{(t)} \right)^\top _{N\times{p}},\\
    \mathbf{h}^{(t)}&=\left(h^{(t)}_{1}, h^{(t)}_{2} ,\cdots,h^{(t)}_{N}\right)^\top _{N\times{d}},\\
    \mathbb{E}_{t-1}[\mathbf{\omega}^{(t)}]&=\left(\mathbb{E}_{t-1}[{\omega}^{(t)}_{1}],\mathbb{E}_{t-1}[{\omega}^{(t)}_{2}],\cdots,\mathbb{E}_{t-1}[{\omega}^{(t)}_{N}]\right)^\top _{N\times{d}},\\
    \mathbb{E}_{t-1}[\overline{\mathbf{\omega}^{(t)}}]&=\left(\mathbb{E}_{t-1}[\bar{\omega}^{(t)}],\mathbb{E}_{t-1}[\bar{\omega}^{(t)}],\cdots,\mathbb{E}_{t-1}[\bar{\omega}^{(t)}]\right)^\top _{N\times{d}},\\
    \mathbf{v}^{(t)}&=\left(v^{(t)}_{1},v^{(t)}_{2} ,\cdots,v^{(t)}_{N}\right)^\top _{N\times{p}},\\
    \mathbf{r}^{(t)}&=\left(p_{H,1}^{(t)}-u_{2,1}^{(t)},p_{H,2}^{(t)}-u_{2,2}^{(t)},\cdots,p_{H,N}^{(t)}-u_{2,N}^{(t)}\right)^\top _{N\times{d}}.
\end{align*}

The gradient of lower-level loss functions of different nodes can also be presented as the following matrix forms:
\begin{equation*}
    \begin{aligned}
        \nabla_2\mathbf{g}^{(t+1)}&=\left(\nabla_2g_1(x_{1}^{(t)},y_{1}^{(t)}),\nabla_2g_2(x_{2}^{(t)},y_{2}^{(t)}) ,\cdots,\nabla_2g_N(x_{N}^{(t)},y_{N}^{(t)})\right)^\top _{N\times{p}}.
    \end{aligned}
\end{equation*}

Besides, we denote $\Delta_{t}^2$ as the consensus error of the $t$-th round of iteration, \ie,
\begin{align*}
    \Delta_{t}^2=\kappa^2\left[\left\Vert \mathbf{x}^{(t)}-\bar{\mathbf{x}}^{(t)} \right\Vert_F^2+\left\Vert \mathbf{y}^{(t)}-\bar{\mathbf{y}}^{(t)} \right\Vert_F^2\right]+\left\Vert \mathbf{z}^{(t)}-\bar{\mathbf{z}}^{(t)} \right\Vert_F^2.
\end{align*}

We also assume that the step-sizes are fixed over different rounds, which means that there exist constants $c_1,c_2,c_3>0$ that
\begin{equation*}
    \begin{aligned}
        &\alpha_1=\alpha_2=\cdots=\alpha_T=\alpha,\hspace{34pt} \beta_1=\beta_2=\cdots=\beta_T=\beta=c_1\alpha,\\
        &\gamma_1=\gamma_2=\cdots=\gamma_T=\gamma=c_2\alpha,\quad \theta_1=\theta_2=\cdots=\theta_T=\theta=c_3\alpha.
    \end{aligned}
\end{equation*}

\subsection{Technical Lemmas}
\begin{lemma}\label{des} Suppose $f(x)$ is $\mu$-strongly convex and $L$- smooth. For any $x$ and $\eta<\frac{2}{\mu+L}$, define $x^{+}=x-\eta \nabla f(x), x^\star=$ $\arg \min f(x)$. Then we have
$$
\left\|x^{+}-x^\star\right\| \leq(1-\eta \mu)\left\|x-x^\star\right\|.
$$
\end{lemma}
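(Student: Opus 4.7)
The plan is to reduce the claim to the standard contraction analysis for gradient descent on strongly convex smooth functions. First I will use the optimality condition $\nabla f(x^\star) = 0$ to rewrite
\[
x^+ - x^\star = (x - x^\star) - \eta\bigl(\nabla f(x) - \nabla f(x^\star)\bigr),
\]
so that expanding the squared norm gives
\[
\|x^+ - x^\star\|^2 = \|x-x^\star\|^2 - 2\eta\langle \nabla f(x)-\nabla f(x^\star), x-x^\star\rangle + \eta^2\|\nabla f(x)-\nabla f(x^\star)\|^2.
\]

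Next I will invoke the classical co-coercivity (Nesterov) inequality for functions that are simultaneously $\mu$-strongly convex and $L$-smooth:
\[
\langle \nabla f(x)-\nabla f(x^\star), x-x^\star\rangle \ge \frac{\mu L}{\mu+L}\|x-x^\star\|^2 + \frac{1}{\mu+L}\|\nabla f(x)-\nabla f(x^\star)\|^2.
\]
Substituting this into the expansion above gathers the contribution of the gradient-norm term into $\eta\bigl(\eta - \tfrac{2}{\mu+L}\bigr)\|\nabla f(x)-\nabla f(x^\star)\|^2$, which is nonpositive precisely under the assumed step-size condition $\eta \le \tfrac{2}{\mu+L}$. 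This leaves the clean bound
\[
\|x^+-x^\star\|^2 \le \Bigl(1-\tfrac{2\eta\mu L}{\mu+L}\Bigr)\|x-x^\star\|^2.
\]

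Finally, to match the stated contraction factor $1-\eta\mu$, I will verify the elementary inequality $1-\tfrac{2\eta\mu L}{\mu+L} \le (1-\eta\mu)^2$, which rearranges to $\eta \le \tfrac{2}{\mu+L}$ and is thus covered by the hypothesis. Since $\eta < \tfrac{2}{\mu+L} \le \tfrac{1}{\mu}$ ensures $1-\eta\mu > 0$, I can take square roots to obtain $\|x^+-x^\star\|\le(1-\eta\mu)\|x-x^\star\|$.

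There is no genuine obstacle here; the only subtlety is recognizing that the tight one-step contraction emerging from the co-coercivity bound is $1-\tfrac{2\eta\mu L}{\mu+L}$ in squared form, and that this is dominated by $(1-\eta\mu)^2$ exactly in the step-size regime allowed by the lemma. Everything else is a direct expansion and application of a textbook inequality.
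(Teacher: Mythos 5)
The paper does not prove this lemma itself (it simply cites Lemma~10 of Qu and Li, 2018), so the only question is whether your argument stands on its own — and it does not, because of a sign error in the last step. The elementary inequality you need, $1-\tfrac{2\eta\mu L}{\mu+L}\le(1-\eta\mu)^2$, rearranges to $\tfrac{2\eta\mu^2}{\mu+L}\le\eta^2\mu^2$, i.e.\ to $\eta\ge\tfrac{2}{\mu+L}$ — the \emph{opposite} of the lemma's hypothesis. Geometrically this is forced: as a function of $\eta$, the line $1-\tfrac{2\eta\mu L}{\mu+L}$ agrees with the convex parabola $(1-\eta\mu)^2$ at $\eta=0$ and at $\eta=\tfrac{2}{\mu+L}$, so it is the chord and lies \emph{above} the parabola on the whole interval in between. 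Consequently the contraction factor $\bigl(1-\tfrac{2\eta\mu L}{\mu+L}\bigr)^{1/2}$ you obtain by discarding the gradient-norm term is strictly weaker than $1-\eta\mu$ in the allowed step-size range, and the claimed conclusion does not follow.

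The approach is salvageable with one extra ingredient: do not throw away the term $\eta\bigl(\eta-\tfrac{2}{\mu+L}\bigr)\|\nabla f(x)-\nabla f(x^\star)\|^2$ just because it is nonpositive. Since its coefficient is negative, lower-bound the gradient difference using strong convexity, $\|\nabla f(x)-\nabla f(x^\star)\|\ge\mu\|x-x^\star\|$ (from $\langle\nabla f(x)-\nabla f(x^\star),x-x^\star\rangle\ge\mu\|x-x^\star\|^2$ and Cauchy--Schwarz). Then
\begin{equation*}
\|x^+-x^\star\|^2\le\Bigl(1-\tfrac{2\eta\mu L}{\mu+L}\Bigr)\|x-x^\star\|^2+\eta\Bigl(\eta-\tfrac{2}{\mu+L}\Bigr)\mu^2\|x-x^\star\|^2
=\Bigl(1-2\eta\mu+\eta^2\mu^2\Bigr)\|x-x^\star\|^2,
\end{equation*}
since $\tfrac{2\eta\mu L}{\mu+L}+\tfrac{2\eta\mu^2}{\mu+L}=2\eta\mu$. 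Taking square roots (with $1-\eta\mu>0$) gives exactly the stated bound. So the decomposition and the co-coercivity inequality are the right tools; the gap is that the tight constant requires keeping and re-using the gradient term rather than comparing the two contraction factors after the fact.
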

\begin{proof}
    See Lemma 10 in \cite{qu2018harnessing}.
\end{proof}

\begin{lemma}
Suppose Assumption \ref{assumption:smooth} holds, then $\nabla\Phi(x)$ is $L_{\nabla\Phi}$-Lipschitz continuous, where
\begin{align*}
    L_{\nabla\Phi}=&L_{\nabla f}+\dfrac{2L_{\nabla f}L_{\nabla g}+L_f^2L_{\nabla^2g}}{\mu_g}+\dfrac{2L_fL_{\nabla g}L_{\nabla^2g}+L_{\nabla f}L_{\nabla g}^2}{\mu_g^2}+\dfrac{L_fL_{\nabla^2g}L_{\nabla g}^2}{\mu_g^3}=\mathcal{O}(\kappa^3).
\end{align*}

Also, if we define $y^{\star}(x)=\mathop{\arg\min}_{y}g(x,y)$, $z^{\star}(x)=(\nabla_{22}^2g(x,y^{\star}(x)))^{-1}\nabla_2f(x,y^{\star}(x))$, then $y^{\star}(x)$ are $L_{y^{\star}}$-Lipschitz continuous, and $z^{\star}(x)$ are $L_{z^{\star}}$-Lipschitz continuous, where 
\begin{align*}
    L_{y^{\star}}=&\dfrac{L_{\nabla g}}{\mu_g}=\mathcal{O}(\kappa),\quad L_{z^{\star}}=\sqrt{1+L_{y^{\star}}^2}\left(\dfrac{L_{\nabla f}}{\mu_g}+\dfrac{L_fL_{\nabla^2g}}{\mu_g^2}\right)=\mathcal{O}(\kappa^3).
\end{align*}

And we also have:
\begin{align}
\label{est:z*}
    \left\Vert z^{\star}(x)\right\Vert\leq\dfrac{L_f}{\mu_g},\quad\forall x\in\mathbb{R}^d.
\end{align}
\end{lemma}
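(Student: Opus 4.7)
The plan is to establish the four claims in order, since each subsequent claim builds on the previous ones. Throughout, I will exploit Assumption~\ref{assumption:smooth} together with the identity $\nabla_2 g(x, y^\star(x)) = 0$ coming from the lower-level optimality condition, and the explicit formula for $\nabla \Phi(x)$ displayed in \eqref{eq: grad phix}.

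First, I would prove the Lipschitz continuity of $y^\star(\cdot)$. The natural approach avoids the implicit function theorem: for any $x_1, x_2$, apply the optimality identities $\nabla_2 g(x_1, y^\star(x_1)) = 0 = \nabla_2 g(x_2, y^\star(x_2))$, insert the mixed term $\nabla_2 g(x_1, y^\star(x_2))$, take an inner product with $y^\star(x_1) - y^\star(x_2)$, and invoke $\mu_g$-strong convexity of $g(x_1,\cdot)$ on one side and $L_{\nabla g}$-smoothness of $g$ (in $x$) on the other. This directly yields $\|y^\star(x_1)-y^\star(x_2)\| \le (L_{\nabla g}/\mu_g)\|x_1-x_2\|$, so $L_{y^\star} = L_{\nabla g}/\mu_g$. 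The bound \eqref{est:z*} then follows immediately from the definition of $z^\star(x)$, the operator-norm bound $\|[\nabla_{22}^2 g(x,y^\star(x))]^{-1}\| \le 1/\mu_g$ (by strong convexity), and $\|\nabla_2 f(x, y^\star(x))\| \le L_f$ from Assumption~\ref{assumption:smooth}.

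Next I would prove the Lipschitz continuity of $z^\star(\cdot)$. Writing $A(x) := \nabla_{22}^2 g(x, y^\star(x))$ and $b(x) := \nabla_2 f(x, y^\star(x))$, so $z^\star(x) = A(x)^{-1} b(x)$, I use the telescoping identity
\begin{align*}
z^\star(x_1) - z^\star(x_2) = A(x_1)^{-1}\bigl(b(x_1) - b(x_2)\bigr) + A(x_1)^{-1}\bigl(A(x_2) - A(x_1)\bigr)A(x_2)^{-1} b(x_2).
\end{align*}
Bounding $\|A(x_i)^{-1}\|\le 1/\mu_g$, $\|b(x_2)\|\le L_f$, and controlling the increments via $L_{\nabla f}$- and $L_{\nabla^2 g}$-Lipschitz continuity composed with the Lipschitzness of $y^\star$ (picking up the factor $\sqrt{1+L_{y^\star}^2}$ from $\|(x_1,y^\star(x_1)) - (x_2,y^\star(x_2))\| \le \sqrt{1+L_{y^\star}^2}\|x_1-x_2\|$) gives the stated $L_{z^\star}$.

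Finally, I would prove the Lipschitz continuity of $\nabla \Phi$. Using \eqref{eq: grad phix}, I write $\nabla \Phi(x) = \nabla_1 f(x, y^\star(x)) - \nabla_{12}^2 g(x, y^\star(x))\, z^\star(x)$ and split $\nabla\Phi(x_1) - \nabla\Phi(x_2)$ into three telescoping pieces: (i) $\nabla_1 f(x_1, y^\star(x_1)) - \nabla_1 f(x_2, y^\star(x_2))$, (ii) $\bigl[\nabla_{12}^2 g(x_1, y^\star(x_1)) - \nabla_{12}^2 g(x_2, y^\star(x_2))\bigr]\, z^\star(x_1)$, and (iii) $\nabla_{12}^2 g(x_2, y^\star(x_2))\,\bigl[z^\star(x_1) - z^\star(x_2)\bigr]$. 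The three pieces are handled respectively by the Lipschitzness of $\nabla_1 f$ (times $\sqrt{1+L_{y^\star}^2}$), the Lipschitzness of $\nabla^2 g$ (times $\sqrt{1+L_{y^\star}^2}$) combined with \eqref{est:z*}, and the smoothness bound $\|\nabla_{12}^2 g\| \le L_{\nabla g}$ combined with the $L_{z^\star}$-Lipschitz bound from the previous step. Summing and using $\sqrt{1+L_{y^\star}^2} \le 1 + L_{\nabla g}/\mu_g$ to simplify yields the claimed expression for $L_{\nabla \Phi}$, whose dominant term scales as $\kappa^3$.

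The only mildly delicate step is the third one: the telescoping identity for $A(x)^{-1} b(x)$ must be applied carefully, and the final collection of terms for $L_{\nabla \Phi}$ is an arithmetic exercise where one must consistently use the upper bound $\sqrt{1+L_{y^\star}^2} \le 1 + L_{y^\star}$ to match the stated form; none of this is conceptually hard, but it is where bookkeeping errors are most likely to creep in.
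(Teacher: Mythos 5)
Your plan is correct and complete in outline. Note, however, that the paper does not actually prove this lemma: its entire ``proof'' is a one-line deferral to \citep[Lemma B.2]{chen2023optimal}, on the grounds that Assumption~\ref{assumption:smooth} reduces the statement to the single-agent case. Your self-contained argument --- the strong-convexity/smoothness cross-term trick for $L_{y^\star}$, the bound $\|[\nabla_{22}^2 g]^{-1}\|\le 1/\mu_g$ together with $\|\nabla_2 f(x,y^\star(x))\|\le L_f$ for \eqref{est:z*}, the resolvent-type telescoping $A_1^{-1}b_1-A_2^{-1}b_2=A_1^{-1}(b_1-b_2)+A_1^{-1}(A_2-A_1)A_2^{-1}b_2$ for $L_{z^\star}$, and the three-way telescoping of $\nabla\Phi$ using \eqref{eq: grad phix} --- is exactly the standard argument underlying the cited reference, so substantively you are on the same route; the added value of your version is that it makes the dependence on each constant explicit rather than outsourcing it.

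One bookkeeping caveat: carrying out your final summation with $\sqrt{1+L_{y^\star}^2}\le 1+L_{\nabla g}/\mu_g$ yields
\begin{align*}
L_{\nabla f}+\dfrac{2L_{\nabla f}L_{\nabla g}+L_f L_{\nabla^2 g}}{\mu_g}+\dfrac{2L_f L_{\nabla g}L_{\nabla^2 g}+L_{\nabla f}L_{\nabla g}^2}{\mu_g^2}+\dfrac{L_f L_{\nabla^2 g}L_{\nabla g}^2}{\mu_g^3},
\end{align*}
i.e.\ with $L_f L_{\nabla^2 g}$ rather than the $L_f^2 L_{\nabla^2 g}$ appearing in the stated $L_{\nabla\Phi}$. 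So your derivation does not literally ``yield the claimed expression'' as written; it yields a slightly different (and in fact tighter-looking) constant. This discrepancy is almost certainly a typo inherited from the cited lemma and is immaterial to the $\mathcal{O}(\kappa^3)$ order and to every downstream use in the paper, but you should flag it rather than assert an exact match.
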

\begin{proof}
    Due to Assumption \ref{assumption:smooth} and its remark, the proof of this lemma is the same as the single agent case which is shown in \citep[Lemma B.2.]{chen2023optimal}.
\end{proof}

{
\begin{remark}
If we define $L:=\max\left\{L_f,L_{\nabla f},L_{\nabla g}, L_{\nabla^2 g}\right\}$, then the order of $L_{\nabla \Phi}$ is actually $\mathcal{O}(L\kappa^3)$. We follow the existing works \cite{anonymous2024sparkle,chen2023decentralized} to view $\kappa = L/\mu$ and the smooth term $L:=\max\{L_f,L_{\nabla f},L_{\nabla g}, L_{\nabla^2 g}\}$ as a constant.
\end{remark}
}

\subsection{Error Analysis of the Hessian/Jacobian vector product evaluation}
Here, we analysis the error of the Hessian/Jacobian vector product evaluation. The following lemma presents the error of the Hessian/Jacobian vector product evaluation in both D-SOBA-SO and D-SOBA-FO. 

\begin{lemma}
\label{desjifen} 
Suppose Assumption \ref{assumption:smooth} and \ref{assumption:unbiased} hold, then the terms $p_{H,i}^{(t)}$, $p_{J,i}^{(t)}$ obtained from D-SOBA-SO satisfy:
\begin{subequations}
    \begin{align}
        \label{chafen_H}
        \mathbb{E}_t\left\Vert p_{H,i}^{(t)}-\mathbb{E}_t[p_{H,i}^{(t)}]\right\Vert^2\leq\sigma^2\left\Vert z_i^{(t)}\right\Vert^2,\quad\left\Vert\mathbb{E}_t[p_{H,i}^{(t)}]-\nabla_{22}g_i(x_i^{(t)},y_i^{(t)})z_i^{(t)}\right\Vert^2\leq\iota^2;\\
        \label{chafen_J}
        \mathbb{E}_t\left\Vert p_{J,i}^{(t)}-\mathbb{E}_t[p_{J,i}^{(t)}]\right\Vert^2\leq\sigma^2\left\Vert z_i^{(t)}\right\Vert^2,\quad\left\Vert\mathbb{E}_t[p_{J,i}^{(t)}]-\nabla_{12}g_i(x_i^{(t)},y_i^{(t)})z_i^{(t)}\right\Vert^2\leq\iota^2,
    \end{align}
\end{subequations}
for all $\iota>0$. If we choose $\delta_t\leq\sqrt{3}\iota{L_{\nabla^2g}^{-1}\Vert z_i^{(t)}\Vert^{-2}}$ in D-SOBA-FO, \eqref{chafen_H} and \eqref{chafen_J} also hold.
\end{lemma}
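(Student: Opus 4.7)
\textbf{Plan for the proof of Lemma \ref{desjifen}.}

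The plan is to handle the two variants separately, noting that the claims for $p_{J,i}^{(t)}$ are entirely parallel to those for $p_{H,i}^{(t)}$ after swapping $\nabla_2 G \to \nabla_1 G$ and $\nabla_{22}^2 g_i \to \nabla_{12}^2 g_i$, so I would prove everything for the Hessian--vector case and then remark that the Jacobian--vector case is identical. Recall from the filtration setup that $x_i^{(t)}, y_i^{(t)}, z_i^{(t)}$ are $\mathcal{F}^{(t-1)}$-measurable and therefore deterministic given $\mathcal{F}^{(t)}$, so throughout the proof the only randomness that enters $\mathbb{E}_t[\cdot]$ comes from the fresh samples $\xi_i^{(t)}, \zeta_i^{(t)}$.

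For \textbf{D-SOBA-SO}, the argument is short. Since $z_i^{(t)}$ is $\mathcal{F}^{(t)}$-measurable, Assumption \ref{assumption:unbiased} yields $\mathbb{E}_t[p_{H,i}^{(t)}] = \mathbb{E}_t[\nabla_{22}^2 G(x_i^{(t)},y_i^{(t)};\zeta_i^{(t)})] z_i^{(t)} = \nabla_{22}^2 g_i(x_i^{(t)},y_i^{(t)})z_i^{(t)}$, so the bias is exactly zero and hence trivially bounded by $\iota^2$ for any $\iota>0$. For the variance, pulling $z_i^{(t)}$ out gives $\mathbb{E}_t\|p_{H,i}^{(t)} - \mathbb{E}_t[p_{H,i}^{(t)}]\|^2 = \mathbb{E}_t\|(\nabla_{22}^2 G - \nabla_{22}^2 g_i)z_i^{(t)}\|^2 \le \|z_i^{(t)}\|^2\,\mathbb{E}_t\|\nabla_{22}^2 G - \nabla_{22}^2 g_i\|^2 \le \sigma^2\|z_i^{(t)}\|^2$, again invoking Assumption \ref{assumption:unbiased}.

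For \textbf{D-SOBA-FO}, the key trick is to rewrite the finite difference as a one-dimensional integral along the segment $y_i^{(t)} + s z_i^{(t)}$ for $s\in[-\delta_t,\delta_t]$. Applying the fundamental theorem of calculus to $\phi(s):=\nabla_2 G(x_i^{(t)}, y_i^{(t)}+s z_i^{(t)};\zeta_i^{(t)})$, whose derivative is $\nabla_{22}^2 G(x_i^{(t)}, y_i^{(t)}+s z_i^{(t)};\zeta_i^{(t)}) z_i^{(t)}$, gives
\begin{equation*}
p_{H,i}^{(t)} \;=\; \frac{1}{2\delta_t}\int_{-\delta_t}^{\delta_t} \nabla_{22}^2 G(x_i^{(t)}, y_i^{(t)}+s z_i^{(t)};\zeta_i^{(t)})\,z_i^{(t)}\,ds,
\end{equation*}
and taking $\mathbb{E}_t$ produces the same expression with $g_i$ in place of $G$. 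From this representation, the variance bound follows from Jensen's inequality applied to $\|\cdot\|^2$ (swallowing the $1/(2\delta_t)$ and the length-$2\delta_t$ integration interval) combined with the pointwise variance bound $\mathbb{E}_t\|\nabla_{22}^2 G - \nabla_{22}^2 g_i\|^2 \le \sigma^2$ inside the integrand, yielding precisely $\sigma^2\|z_i^{(t)}\|^2$ with no $\delta_t^{-2}$ blow-up. For the bias, subtracting $\nabla_{22}^2 g_i(x_i^{(t)},y_i^{(t)})z_i^{(t)}$ (which equals the value of the integrand at $s=0$) and applying the Lipschitz estimate $\|\nabla_{22}^2 g_i(x_i^{(t)},y_i^{(t)}+sz_i^{(t)}) - \nabla_{22}^2 g_i(x_i^{(t)},y_i^{(t)})\|\le L_{\nabla^2 g}|s|\|z_i^{(t)}\|$ inside the integral gives a bound of order $L_{\nabla^2 g}\delta_t\|z_i^{(t)}\|^2$, whose square is at most $\tfrac{1}{3}L_{\nabla^2 g}^2\delta_t^2\|z_i^{(t)}\|^4 = \iota^2$ under the stated condition on $\delta_t$.

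The only nontrivial step is the variance bound for D-SOBA-FO: naively estimating $\mathrm{Var}(A(\zeta)-B(\zeta))\le 2\,\mathrm{Var}(A) + 2\,\mathrm{Var}(B)$ in the finite difference would produce a catastrophic $\sigma^2/\delta_t^2$ factor, so the essential observation is that $A$ and $B$ use the \emph{same} sample $\zeta_i^{(t)}$, which permits the exact cancellation exposed by the integral representation. Once that representation is in hand, everything reduces to textbook Jensen/Cauchy--Schwarz calculations.
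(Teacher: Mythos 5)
Your proposal is correct and follows essentially the same route as the paper: for D-SOBA-SO the bias vanishes and the variance is bounded by pulling $z_i^{(t)}$ out under Assumption \ref{assumption:unbiased}, while for D-SOBA-FO the finite difference is rewritten as the same line-integral of the Hessian--vector product (the paper parametrizes over $t\in[-1,1]$ with increment $\delta_t z_i^{(t)}$, which is your representation after a change of variables), followed by Jensen's inequality with the pointwise variance bound and the Lipschitz estimate on $\nabla_{22}^2 g_i$ for the bias. Your observation that the shared sample $\zeta_i^{(t)}$ is what prevents a $\sigma^2/\delta_t^2$ blow-up is exactly the point of the paper's argument, and your constants match (your bias bound of $\tfrac14 L_{\nabla^2 g}^2\delta_t^2\|z_i^{(t)}\|^4$ is in fact slightly sharper than the paper's $\tfrac13$).
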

\begin{proof}
    Firstly, we consider D-SOBA-FO. From assumption \ref{assumption:unbiased}, we know that the Hessian of $g_i$ with respect to a stochastic sample $\zeta_i^{(t)}\in\mathcal{D}_{g_i}$ exists for all $x\in\mathbb{R}^d$ and $y\in\mathbb{R}^p$. Then, for the term $p_{H,i}^{(t)}$ obtained from D-SOBA-FO, we have:
    \begin{equation}
    \begin{aligned}
    &\mathbb{E}_t\left\Vert p_{H,i}^{(t)}-\mathbb{E}_t[p_{H,i}^{(t)}]\right\Vert^2\\
    =&\mathbb{E}_t\left\Vert\dfrac{1}{2\delta_t}\int_{-1}^1\left(\nabla_{22}^2G(x_i^{(t)},y_i^{(t)}+t(\delta_t z_i^{(t)});\zeta_i^{(t)})-\nabla_{22}^2g_i(x_i^{(t)},y_i^{(t)}+t(\delta_t z_i^{(t)}))\right)(\delta_t z_i^{(t)})dt\right\Vert^2\\
    \leq&\dfrac{1}{4\delta_t^2}\int_{-1}^12\mathbb{E}_t\left\Vert\nabla_{22}^2G(x_i^{(t)},y_i^{(t)}+t(\delta_t z_i^{(t)});\zeta_i^{(t)})-\nabla_{22}^2g_i(x_i^{(t)},y_i^{(t)}+t(\delta_t z_i^{(t)}))\right\Vert^2\left\Vert\delta_t z_i^{(t)}\right\Vert^2dt\\
    \leq&\dfrac{1}{\delta_t^2}\cdot\sigma^2\left\Vert\delta_t z_i^{(t)}\right\Vert^2\leq\sigma^2\left\Vert z_i^{(t)}\right\Vert^2.
    \end{aligned}    
    \end{equation}

    Then, we consider the error between $\mathbb{E}_t[p_{H,i}^{(t)}]$ and $\nabla_{22}g_i(x_i^{(t)},y_i^{(t)})z_i^{(t)}$. We can obtain:
    \begin{equation}
    \begin{aligned}
    &\left\Vert\mathbb{E}_t[p_{H,i}^{(t)}]-\nabla_{22}g_i(x_i^{(t)},y_i^{(t)})z_i^{(t)}\right\Vert^2\\
    =&\left\Vert\dfrac{1}{2\delta_t}\int_{-1}^1\left(\nabla_{22}^2g_i(x_i^{(t)},y_i^{(t)}+t(\delta_t z_i^{(t)}))-\nabla_{22}^2g_i(x_i^{(t)},y_i^{(t)})\right)(\delta_t z_i^{(t)})dt\right\Vert^2\\
    \leq&\dfrac{1}{4\delta_t^2}\int_{-1}^12\left\Vert\nabla_{22}^2g_i(x_i^{(t)},y_i^{(t)}+t(\delta_t z_i^{(t)}))-\nabla_{22}^2g_i(x_i^{(t)},y_i^{(t)})\right\Vert^2\left\Vert\delta_t z_i^{(t)}\right\Vert^2dt\leq\dfrac{1}{3}L^2_{\nabla^2g}\delta_t^2\left\Vert z_i^{(t)}\right\Vert^4.
    \end{aligned}    
    \end{equation}
    
    Similarly, for $p_{J,i}^{(t)}$, we have:
    \begin{equation}
    \begin{aligned}
    &\mathbb{E}_t\left\Vert p_{J,i}^{(t)}-\mathbb{E}_t[p_{J,i}^{(t)}]\right\Vert^2\\
    =&\mathbb{E}_t\left\Vert\dfrac{1}{2\delta_t}\int_{-1}^1\left(\nabla_{12}^2G(x_i^{(t)},y_i^{(t)}+t(\delta_t z_i^{(t)});\zeta_i^{(t)})-\nabla_{12}^2g(x_i^{(t)},y_i^{(t)}+t(\delta_t z_i^{(t)}))\right)(\delta_t z_i^{(t)})dt\right\Vert^2\\
    \leq&\dfrac{1}{4\delta_t^2}\int_{-1}^12\mathbb{E}_t\left\Vert\nabla_{12}^2G(x_i^{(t)},y_i^{(t)}+t(\delta_t z_i^{(t)});\zeta_i^{(t)})-\nabla_{12}^2g(x_i^{(t)},y_i^{(t)}+t(\delta_t z_i^{(t)}))\right\Vert^2\left\Vert\delta_t z_i^{(t)}\right\Vert^2dt\\
    \leq&\dfrac{1}{\delta_t^2}\cdot\sigma^2\left\Vert\delta_t z_i^{(t)}\right\Vert^2\leq\sigma^2\left\Vert z_i^{(t)}\right\Vert^2.
    \end{aligned}    
    \end{equation}

    And the error between $\mathbb{E}_t[p_{J,i}^{(t)}]$ and $\nabla^2_{12}g_i(x_i^{(t)},y_i^{(t)})z_i^{(t)}$ can be bounded by:
    \begin{equation}
    \begin{aligned}
    &\left\Vert\mathbb{E}_t[p_{J,i}^{(t)}]-\nabla_{12}g_i(x_i^{(t)},y_i^{(t)})z_i^{(t)}\right\Vert^2\\
    =&\left\Vert\dfrac{1}{2\delta_t}\int_{-1}^1\left(\nabla_{12}^2g_i(x_i^{(t)},y_i^{(t)}+t(\delta_t z_i^{(t)}))-\nabla_{12}^2g_i(x_i^{(t)},y_i^{(t)})\right)(\delta_t z_i^{(t)})dt\right\Vert^2\\
    \leq&\dfrac{1}{4\delta_t^2}\int_{-1}^1\left\Vert2\nabla_{12}^2g_i(x_i^{(t)},y_i^{(t)}+t(\delta_t z_i^{(t)}))-\nabla_{12}^2g_i(x_i^{(t)},y_i^{(t)})\right\Vert^2\left\Vert\delta_t z_i^{(t)}\right\Vert^2dt\leq\dfrac{1}{3}L^2_{\nabla^2g}\delta_t^2\left\Vert z_i^{(t)}\right\Vert^4.
    \end{aligned}    
    \end{equation}
    Let $\delta_t\leq\sqrt{3}\iota{L_{\nabla^2g}^{-1}\Vert z_i^{(t)}\Vert^{-2}}$, then \eqref{chafen_H} and \eqref{chafen_J} hold in D-SOBA-FO.

    Then we consider the scenario of D-SOBA-SO. From Assumption \ref{assumption:unbiased}, we have:
\begin{subequations}
    \begin{align}
        \label{chafen_H1}
        \mathbb{E}_t\left\Vert p_{H,i}^{(t)}-\mathbb{E}_t[p_{H,i}^{(t)}]\right\Vert^2\leq\mathbb{E}_t\left[\left\Vert \nabla_{22}^2G(x_i^{(t)},y_i^{(t)};\zeta_i^{(t)})-\nabla_{22}^2g_i(x_i^{(t)},y_i^{(t)})\right\Vert^2\right]\left\Vert z_i^{(t)}\right\Vert^2\leq\sigma^2\left\Vert z_i^{(t)}\right\Vert^2;\\
        \label{chafen_J1}
        \mathbb{E}_t\left\Vert p_{J,i}^{(t)}-\mathbb{E}_t[p_{J,i}^{(t)}]\right\Vert^2\leq\mathbb{E}_t\left[\left\Vert \nabla_{12}^2G(x_i^{(t)},y_i^{(t)};\zeta_i^{(t)})-\nabla_{12}^2g_i(x_i^{(t)},y_i^{(t)})\right\Vert^2\right]\left\Vert z_i^{(t)}\right\Vert^2\leq\sigma^2\left\Vert z_i^{(t)}\right\Vert^2.
    \end{align}
\end{subequations}
{We also have $\left\Vert\mathbb{E}_t[p_{H,i}^{(t)}]-\nabla_{22}g_i(x_i^{(t)},y_i^{(t)})z_i^{(t)}\right\Vert^2=\left\Vert\mathbb{E}_t[p_{J,i}^{(t)}]-\nabla_{12}g_i(x_i^{(t)},y_i^{(t)})z_i^{(t)}\right\Vert^2=0$ from Assumption \ref{assumption:unbiased} for D-SOBA-SO.} So \eqref{chafen_H} and \eqref{chafen_J} also hold in D-SOBA-SO.
\end{proof}

With Lemma \ref{desjifen}, we can directly obtain that Proposition \ref{error of finite} is also satisfied.

\subsection{Descent Lemma}
In this subsection, we firstly present the descent lemma.
\begin{lemma}[Descent Lemma]
    Suppose Assumption \ref{assumption:smooth} and \ref{assumption:unbiased} hold, and the step-sizes $\alpha$, $\gamma$ satisfy:
    \begin{align}
    \label{eq:assumptionhyper}
        \alpha\leq\min\left\{\dfrac{c_3}{2\tau(c_3L_{\nabla\Phi}+L_{\nabla\eta})},\dfrac{\tau^2}{20c_3}\right\},\quad 0<\tau<\sqrt{20c_3}\leq1,\quad\gamma\mu_g\leq1.
    \end{align}
    Then in Algorithm \ref{alg:DeMA-SOBA}, we have:
    \begin{equation}
    \label{eq:final_avgh11}
        \begin{aligned}
            &\alpha\sum_{t=0}^{T-1} \mathbb{E}\left[\left\Vert\bar{h}^{(t)}\right\Vert^2\right]+\alpha\sum_{t=0}^{T} \mathbb{E}\left[\left\Vert\bar{h}^{(t)}-\nabla\Phi(\bar{x}^{(t)})\right\Vert^2\right]\\
            \leq&\dfrac{2}{\tau}\mathbb{E}[E_0]+{\dfrac{1}{c_3}\mathbb{E}\left[\left\Vert\bar{h}^{(0)}-\nabla\Phi(\bar{x}^{(0)})\right\Vert^2\right]}+\dfrac{4L_{\nabla\Phi}^2\tau^2}{c_3^2}\alpha\sum_{t=0}^{T-1} \mathbb{E}\left[\left\Vert\bar{h}^{(t)}\right\Vert^2\right]\\
            &+\dfrac{1}{2}\cdot\alpha\sum_{t=0}^{T-1} \mathbb{E}\left[\left\Vert\bar{h}^{(t)}-\nabla\Phi(\bar{x}^{(t)})\right\Vert^2\right]+\left(60L^2+\dfrac{6(1+c_3)\alpha\sigma^2}{N}\right)\dfrac{1}{N}\cdot\alpha\sum_{t=0}^{T-1}\mathbb{E}[\Delta_t^2]\\&+\dfrac{6(1+c_3)T\alpha^2}{N}\left(1+\dfrac{L_f^2}{\mu_g^2}\right)\sigma^2+10T\alpha\iota^2            +60\left(\dfrac{L_f^2L_{\nabla^2g}^2}{\mu_g^2}+L_{\nabla f}^2\right)\sum_{t=0}^{T-1}\alpha\mathbb{E}\left[\left\Vert \bar{y}^{(t)}-\bar{y}^{(t)}_{\star}\right\Vert^2\right]\\
            &+\left(60L_{\nabla^2g}^2+\dfrac{6(1+c_3)\alpha\sigma^2}{N}\right)\sum_{t=0}^{T-1}\alpha\mathbb{E}\left[\left\Vert \bar{z}^{(t)}-\bar{z}^{(t)}_{\star}\right\Vert^2\right].
        \end{aligned}
    \end{equation}
\end{lemma}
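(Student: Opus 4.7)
The approach is to combine two complementary recursions: (i) a descent inequality for $\Phi$ along the averaged iterates $\{\bar{x}^{(t)}\}$, and (ii) a one-step contraction for the hypergradient approximation error $\|\bar{h}^{(t)}-\nabla\Phi(\bar{x}^{(t)})\|^2$, which is induced by the moving-average update $h_i^{(t+1)} = (1-\theta)h_i^{(t)} + \theta\omega_i^{(t+1)}$. These two pieces are then glued together via a Lyapunov function of the form $E_t = \Phi(\bar{x}^{(t)}) - \Phi^\star + \frac{\tau}{2c_3 L_{\nabla\eta}}\|\bar{h}^{(t)}-\nabla\Phi(\bar{x}^{(t)})\|^2$ (with $L_{\nabla\eta}$ the effective Lipschitz constant appearing in the error recursion), telescoped from $t=0$ to $t=T-1$.

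First, since $\bar{x}^{(t+1)} = \bar{x}^{(t)} - \tau\alpha \bar{h}^{(t)}$ (using the double stochasticity of $W$), $L_{\nabla\Phi}$-smoothness of $\Phi$ yields
\begin{equation*}
\Phi(\bar{x}^{(t+1)}) \le \Phi(\bar{x}^{(t)}) - \tau\alpha \langle \nabla\Phi(\bar{x}^{(t)}), \bar{h}^{(t)}\rangle + \tfrac{L_{\nabla\Phi}\tau^2\alpha^2}{2}\|\bar{h}^{(t)}\|^2.
\end{equation*}
I will then invoke the identity $-\langle a,b\rangle = \tfrac12(\|a-b\|^2-\|a\|^2-\|b\|^2)$ on the cross term, which produces $-\tfrac{\tau\alpha}{2}\|\nabla\Phi(\bar{x}^{(t)})\|^2-\tfrac{\tau\alpha}{2}\|\bar{h}^{(t)}\|^2+\tfrac{\tau\alpha}{2}\|\bar{h}^{(t)}-\nabla\Phi(\bar{x}^{(t)})\|^2$, so that after choosing $\alpha$ small (which is exactly where the hypothesis $\alpha \le \tau^2/(20c_3)$ enters), the $\bar{h}^{(t)}$ term retains a favorable negative coefficient while the error term is paid for later by the contraction.

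Second, I will derive a recursion for $\|\bar{h}^{(t+1)}-\nabla\Phi(\bar{x}^{(t+1)})\|^2$. Writing
\begin{equation*}
\bar{h}^{(t+1)}-\nabla\Phi(\bar{x}^{(t+1)}) = (1-\theta)\bigl(\bar{h}^{(t)}-\nabla\Phi(\bar{x}^{(t)})\bigr) + (1-\theta)\bigl(\nabla\Phi(\bar{x}^{(t)})-\nabla\Phi(\bar{x}^{(t+1)})\bigr) + \theta\bigl(\bar{\omega}^{(t+1)}-\nabla\Phi(\bar{x}^{(t+1)})\bigr),
\end{equation*}
I apply $\|a+b\|^2\le (1+\eta)\|a\|^2+(1+\eta^{-1})\|b\|^2$ with $\eta \asymp \theta$, producing a $(1-\theta/2)$ contraction on the old error and controllable residuals. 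The second residual uses $L_{\nabla\Phi}$-smoothness and $\|\bar{x}^{(t+1)}-\bar{x}^{(t)}\|^2 = \tau^2\alpha^2\|\bar{h}^{(t)}\|^2$; the third residual is split by $\mathbb{E}_t[\bar{\omega}^{(t+1)}]$ into a variance term and a bias term. The variance is handled by Assumption~\ref{assumption:unbiased} together with Proposition~\ref{error of finite}, producing the $\sigma^2/N$ noise term, the $\iota^2$ finite-difference term, and an additional $\sigma^2/N$ factor multiplying $\|z_i^{(t)}\|^2$ (which is then split via $\|z_i^{(t)}\|^2 \lesssim \|z_i^{(t)}-\bar{z}^{(t)}\|^2+\|\bar{z}^{(t)}-\bar{z}_\star^{(t)}\|^2 + L_f^2/\mu_g^2$ using \eqref{est:z*}). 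The bias term
\begin{equation*}
\Bigl\|\tfrac{1}{N}\sum_i\bigl(\nabla_1 f_i(x_i^{(t)},y_i^{(t)})-\mathbb{E}_t[p_{J,i}^{(t)}]\bigr) - \nabla\Phi(\bar{x}^{(t)})\Bigr\|^2
\end{equation*}
is decomposed through the chain $(x_i^{(t)},y_i^{(t)},z_i^{(t)}) \to (\bar{x}^{(t)},\bar{y}^{(t)},\bar{z}^{(t)}) \to (\bar{x}^{(t)},\bar{y}_\star^{(t)},\bar{z}_\star^{(t)})$, with each hop controlled by Assumption~\ref{assumption:smooth} Lipschitzness; this is precisely where the coefficients $60L^2$, $60(L_{\nabla f}^2+L_f^2L_{\nabla^2 g}^2/\mu_g^2)$ and $60L_{\nabla^2 g}^2$ on $\Delta_t^2$, $\|\bar{y}^{(t)}-\bar{y}_\star^{(t)}\|^2$ and $\|\bar{z}^{(t)}-\bar{z}_\star^{(t)}\|^2$ are generated.

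Finally, I multiply the descent inequality by $2/\tau$ and add a suitable multiple ($1/c_3$) of the error recursion to form the Lyapunov sequence $E_t$; telescoping from $0$ to $T-1$, dropping $\mathbb{E}[E_T]\ge 0$, and invoking the step-size hypothesis $\alpha \le c_3/(2\tau(c_3 L_{\nabla\Phi}+L_{\nabla\eta}))$ lets the $\|\bar{h}^{(t)}\|^2$ term from the contraction residual be absorbed with coefficient at most $4L_{\nabla\Phi}^2\tau^2/c_3^2$, and leaves only half of $\alpha\sum \|\bar{h}^{(t)}-\nabla\Phi(\bar{x}^{(t)})\|^2$ on the RHS. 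The main obstacle is the bias decomposition in the third step: keeping track of all five sources of error (consensus in $x,y,z$; optimality gap in $\bar{y}$; optimality gap in $\bar{z}$; finite-difference bias $\iota^2$; and stochastic noise $\sigma^2$) through repeated Young's inequalities while ensuring the coefficients match the ones on the RHS of \eqref{eq:final_avgh11} requires careful bookkeeping, especially in combining the factor-of-$60$ deterministic bias terms with the factor-of-$6(1+c_3)\alpha\sigma^2/N$ stochastic cross terms that emerge from the moving-average variance.
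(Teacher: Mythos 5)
Your proposal is correct in substance, but it reaches the descent half of the lemma by a genuinely different route than the paper. The paper does not use the polarization identity on $\langle\nabla\Phi(\bar{x}^{(t)}),\bar{h}^{(t)}\rangle$; instead it introduces the auxiliary function $\eta(x,h,\tau)=\min_y[\langle h,y-x\rangle+\tfrac{1}{2\tau}\|y-x\|^2]=-\tfrac{\tau}{2}\|h\|^2$, exploits its joint $L_{\nabla\eta}$-smoothness in $(x,h)$ (via \citealp[Lemma B.8]{chen2023optimal}) to obtain \eqref{eq:lsmootheta}, and works with the Lyapunov function $E_t=\Phi(\bar{x}^{(t)})-\inf\Phi-c_3^{-1}\eta(\bar{x}^{(t)},\bar{h}^{(t)},\tau)$, i.e.\ the potential plus a multiple of $\|\bar{h}^{(t)}\|^2$ itself — not of the error $\|\bar{h}^{(t)}-\nabla\Phi(\bar{x}^{(t)})\|^2$ as in your $E_t$. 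This is why the step-size hypothesis carries the otherwise mysterious constant $L_{\nabla\eta}$; your more elementary argument would only need $\alpha\lesssim 1/(L_{\nabla\Phi}\tau)$, so the stated hypothesis is more than sufficient for you. The second half of your plan — the $(1-\theta)$-contraction recursion for $\|\bar{h}^{(t)}-\nabla\Phi(\bar{x}^{(t)})\|^2$, the decomposition of the bias through $(x_i^{(t)},y_i^{(t)},z_i^{(t)})\to(\bar{x}^{(t)},\bar{y}^{(t)},\bar{z}^{(t)})\to(\bar{x}^{(t)},\bar{y}^{(t)}_{\star},\bar{z}^{(t)}_{\star})$, the use of Proposition~\ref{error of finite} for $\iota^2$, and the split of $\|z_i^{(t)}\|^2$ via \eqref{est:z*} — coincides with the paper's \eqref{eq:final_h-PHI}--\eqref{eq:final_omega-phi} essentially verbatim. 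Two bookkeeping caveats if you execute your version: (i) the quantity $E_0$ appearing in \eqref{eq:final_avgh11} is the paper's Lyapunov value, so with your differently-defined $E_0$ you prove a variant with the same order of initialization constants but not the literal displayed bound; (ii) your polarization step yields $+\alpha\sum_t\|\bar{h}^{(t)}-\nabla\Phi(\bar{x}^{(t)})\|^2$ on the right (coefficient $1$, versus the paper's $\tfrac12$ obtained from its Young splits in \eqref{eq:midd_ht}), so you must add the error recursion with a multiplier strictly larger than $1/c_3$ to leave a positive net coefficient on the left; this only rescales constants and does not affect any downstream result.
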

\begin{proof}
    Define
    \begin{align}
    \label{define_eta}
        \eta(x,h,\tau)=\min_{y\in\mathbb{R}^p}\left[\langle h,y-x\rangle+\dfrac{1}{2\tau}\left\Vert y-x\right\Vert^2\right].
    \end{align}
    From \citep[Lemma 3.2]{ghadimi2020single}, the minimum problem in Eq. \eqref{define_eta} has the solution $y^{\star}=x-\tau h$, and  $\eta(x,h,\tau)$ is $L_{\nabla\eta}$-smooth, where
    \begin{align}
        L_{\nabla\eta}=2\sqrt{\left(1+\dfrac{1}{\tau}\right)^2+\left(1+\dfrac{\tau}{2}\right)^2}.
    \end{align}
    As $\Phi(x)$ is $L_{\nabla\Phi}$-smooth, we have:
    \begin{align}
    \label{eq:lsmoothPhi}
        \Phi(\bar{x}^{(t+1)})-\Phi(\bar{x}^{(t)})\leq\tau\alpha\langle\nabla\Phi(\bar{x}^{(t)}),-\bar{h}^{(t)}\rangle+\dfrac{L_{\nabla\Phi}}{2}\left\Vert\bar{x}^{(t+1)}-\bar{x}^{(t)}\right\Vert^2.
    \end{align}
    Also, according to \citep[Lemma B.8.]{chen2023optimal}, we have:
    \begin{equation}
    \begin{aligned}
    \label{eq:lsmootheta}
        &\eta(\bar{x}^{(t)},\bar{h}^{(t)},\tau)-\eta(\bar{x}^{(t+1)},\bar{h}^{(t+1)},\tau)\\
        \leq&-\theta\tau\left\Vert\bar{h}^{(t)}\right\Vert^2+\theta\tau\langle\bar{\omega}^{(t+1)},\bar{h}^{(t)}\rangle+\dfrac{L_{\nabla\eta}}{2}\left(\left\Vert\bar{x}^{(t+1)}-\bar{x}^{(t)}\right\Vert^2+\left\Vert\bar{h}^{(t+1)}-\bar{h}^{(t)}\right\Vert^2\right).
    \end{aligned}
    \end{equation}
    Define $E_t=\Phi(\bar{x}^{(t)})-\inf_{x}\Phi(x)-c_3^{-1}\eta(\bar{x}^{(t)},\bar{h}^{(t)},\tau)$. 
    Substituting \eqref{eq:lsmoothPhi} into \eqref{eq:lsmootheta} and taking the expectation with respect to $\mathcal{F}^{(t)}$, we have:
    \begin{equation}
    \label{eq:midd_ht}
    \begin{aligned}
        &\alpha\left\Vert\bar{h}^{(t)}\right\Vert^2\\
        \leq&\dfrac{1}{c_3\tau}\left(\mathbb{E}_t\left[\eta(\bar{x}^{(t+1)},\bar{h}^{(t+1)},\tau)\right]-\eta(\bar{x}^{(t)},\bar{h}^{(t)},\tau)\right)+\dfrac{1}{\tau}\left(\Phi(\bar{x}^{(t)})-\mathbb{E}_t\left[\Phi(\bar{x}^{(t+1)})\right]\right)\\
        &-\alpha\mathbb{E}_t\left[\left\langle\nabla\Phi(\bar{x}^{(t)})-\bar{\omega}^{(t+1)},\bar{h}^{(t)}\right\rangle\right]+\dfrac{c_3 L_{\nabla\Phi}+L_{\nabla\eta}}{2c_3\tau}\left\Vert\bar{x}^{(t+1)}-\bar{x}^{(t)}\right\Vert^2\\
        &+\dfrac{L_{\nabla\eta}}{2c_3\tau}\mathbb{E}_t\left[\left\Vert\bar{h}^{(t+1)}-\bar{h}^{(t)}\right\Vert^2\right]\\
        \leq&\dfrac{1}{\tau}\left(E_t-\mathbb{E}_t\left[E_{t+1}\right]\right)+\alpha\left(\left\Vert\nabla\Phi(\bar{x}^{(t)})-\mathbb{E}_t[\bar{\omega}^{(t+1)}]\right\Vert^2+\dfrac{1}{4}\left\Vert\bar{h}^{(t)}\right\Vert^2\right)+\dfrac{\alpha}{4}\left\Vert\bar{h}^{(t)}\right\Vert^2 \\
        &+\dfrac{\alpha^2}{4}\mathbb{E}_t\left[\left\Vert\nabla\Phi(\bar{x}^{(t)})-\bar{\omega}^{(t+1)}\right\Vert^2\right]+\dfrac{\alpha^2}{4}\mathbb{E}_t\left[\left\Vert\bar{h}^{(t)}-\nabla\Phi(\bar{x}^{(t)})\right\Vert^2\right]\\
        \leq&\dfrac{1}{\tau}\left(E_t-\mathbb{E}_t\left[E_{t+1}\right]\right)+\alpha\left(\left\Vert\nabla\Phi(\bar{x}^{(t)})-\mathbb{E}_t[\bar{\omega}^{(t+1)}]\right\Vert^2+\dfrac{1}{4}\left\Vert\bar{h}^{(t)}\right\Vert^2\right)+\dfrac{\alpha}{4}\left\Vert\bar{h}^{(t)}\right\Vert^2 \\
        &+\dfrac{\alpha}{4}\mathbb{E}_t\left[\left\Vert\nabla\Phi(\bar{x}^{(t)})-\bar{\omega}^{(t+1)}\right\Vert^2\right]+\dfrac{\alpha^2}{4}\left\Vert\bar{\omega}^{(t+1)}-\mathbb{E}_t[\bar{\omega}^{(t+1)}]\right\Vert^2\\
        &+\dfrac{\alpha}{4}\mathbb{E}_t\left[\left\Vert\bar{h}^{(t)}-\nabla\Phi(\bar{x}^{(t)})\right\Vert^2\right],
    \end{aligned}
    \end{equation}
    where the first inequality uses the fact that $\bar{x}^{(t)}$ and $\bar{h}^{(t)}$ are measurable with respect to $\mathcal{F}_t$. The second inequality uses the assumption  $\alpha\leq\dfrac{c_3}{2\tau(c_3L_{\nabla\Phi}+L_{\nabla\eta})}$, 
    $\dfrac{c_3}{\tau^2}\leq\dfrac{1}{20}$, and the fact that $L_{\nabla\eta}\leq\dfrac{5}{\tau}$. The third inequality uses the assumption that $\alpha\leq1$.

    Then, taking the expectation and summation on both sides of \eqref{eq:midd_ht}, we have:
    \begin{equation}
    \label{eq:final_avgh}
        \begin{aligned}
            &\alpha\sum_{t=0}^{T-1} \mathbb{E}\left[\left\Vert\bar{h}^{(t)}\right\Vert^2\right]\\
            \leq&\dfrac{2}{\tau}\mathbb{E}[E_0]+\alpha^2\sum_{t=0}^{T-1}\mathbb{E}\left[\left\Vert\bar{\omega}^{(t+1)}-\mathbb{E}_t[\bar{\omega}^{(t+1)}]\right\Vert^2\right]+3\alpha\sum_{t=0}^{T-1} \mathbb{E}\left[\left\Vert\mathbb{E}_t[\bar{\omega}^{(t+1)}]-\nabla\Phi(\bar{x}^{(t)})\right\Vert^2\right]\\
            &+\dfrac{1}{2}\cdot\alpha\sum_{t=0}^{T-1} \mathbb{E}\left[\left\Vert\bar{h}^{(t)}-\nabla\Phi(\bar{x}^{(t)})\right\Vert^2\right]\\
            \leq&\dfrac{2}{\tau}\mathbb{E}[E_0]+\alpha^2\sum_{t=0}^{T-1}\mathbb{E}\left[\left\Vert\bar{\omega}^{(t+1)}-\mathbb{E}_t[\bar{\omega}^{(t+1)}]\right\Vert^2\right]+6\alpha\sum_{t=0}^{T-1} \mathbb{E}\left[\left\Vert\mathbb{E}_t[\bar{\omega}^{(t+1)}]-\bar{q}^{(t)}\right\Vert^2\right]\\
            &+6\alpha\sum_{t=0}^{T-1} \mathbb{E}\left[\left\Vert\bar{q}^{(t)}-\nabla\Phi(\bar{x}^{(t)})\right\Vert^2\right]
            +\dfrac{1}{2}\cdot\alpha\sum_{t=0}^{T-1} \mathbb{E}\left[\left\Vert\bar{h}^{(t)}-\nabla\Phi(\bar{x}^{(t)})\right\Vert^2\right].
        \end{aligned}
    \end{equation}

    Then we consider the term $\left\Vert\bar{h}^{(t)}-\nabla\Phi(\bar{x}^{(t)})\right\Vert^2$. Note that:
    \begin{equation}
    \begin{aligned}
        \bar{h}^{(t+1)}-\nabla\Phi(\bar{x}^{(t+1)})=&(1-\theta)\left(\bar{h}^{(t+1)}-\nabla\Phi(\bar{x}^{(t)})\right)+\theta\left(\bar{q}^{(t)}-\nabla\Phi(\bar{x}^{(t)})\right)\\
        &+\nabla\Phi(\bar{x}^{(t)})-\nabla\Phi(\bar{x}^{(t+1)})+\theta\left(\bar{\omega}^{(t+1)}-\bar{q}^{(t)}\right).
    \end{aligned}
    \end{equation}
    Taking the conditional expectation with respect to $\mathcal{F}^{(t)}$ and noting that $\bar{x}^{(t+1)}$ is measurable with respect to $\mathcal{F}_t$, we have:
    \begin{equation}
        \begin{aligned}
            &\mathbb{E}_t\left[\left\Vert\bar{h}^{(t+1)}-\nabla\Phi(\bar{x}^{(t+1)})\right\Vert^2\right]\\
            \leq&(1-\theta)\mathbb{E}_t\left[\left\Vert\bar{h}^{(t)}-\nabla\Phi(\bar{x}^{(t)})\right\Vert^2\right]+2\theta\mathbb{E}_t\left[\left\Vert\mathbb{E}_t[\bar{\omega}^{(t+1)}]-\bar{q}^{(t)}\right\Vert^2\right]\\
            &+2\theta\mathbb{E}_t\left[\left\Vert\left(\bar{q}^{(t)}-\nabla\Phi(\bar{x}^{(t)})\right)+\dfrac{1}{\theta}\left(\nabla\Phi(\bar{x}^{(t)})-\nabla\Phi(\bar{x}^{(t+1)})\right)\right\Vert^2\right]+\theta^2\left\Vert\bar{\omega}^{(t+1)}-\mathbb{E}_t[\bar{\omega}^{(t+1)}]\right\Vert^2\\
            \leq&(1-\theta)\mathbb{E}_t\left[\left\Vert\bar{h}^{(t)}-\nabla\Phi(\bar{x}^{(t)})\right\Vert^2\right]+2\theta\mathbb{E}_t\left[\left\Vert\mathbb{E}_t[\bar{\omega}^{(t+1)}]-\bar{q}^{(t)}\right\Vert^2\right]+4\theta\mathbb{E}_t\left[\left\Vert\bar{q}^{(t)}-\nabla\Phi(\bar{x}^{(t)})\right\Vert^2\right]\\
            &+\dfrac{4L_{\nabla\Phi}^2\tau^2\alpha^2}{\theta}\mathbb{E}_t\left[\left\Vert\bar{h}^{(t)}\right\Vert^2\right]+\theta^2\left\Vert\bar{\omega}^{(t+1)}-\mathbb{E}_t[\bar{\omega}^{(t+1)}]\right\Vert^2
        \end{aligned}
    \end{equation}
    where the first inequality is due to the convex of 2-norm $\left\Vert\cdot\right\Vert^2$, and the last equality is from the Lipchitz continuity of $\nabla\Phi$. 
    
    Taking expectation on both sides and then taking summation from $t=0$ to $T-1$ and, we have:
    \begin{equation}
    \label{eq:final_h-PHI}
        \begin{aligned}
            &\alpha\sum_{t=0}^{T} \mathbb{E}\left[\left\Vert\bar{h}^{(t)}-\nabla\Phi(\bar{x}^{(t)})\right\Vert^2\right]\\
            \leq&{\dfrac{1}{c_3}\mathbb{E}\left[\left\Vert\bar{h}^{(0)}-\nabla\Phi(\bar{x}^{(0)})\right\Vert^2\right]}+2\alpha\sum_{t=0}^{T-1} \mathbb{E}\left[\left\Vert\mathbb{E}_t[\bar{\omega}^{(t+1)}]-\bar{q}^{(t)}\right\Vert^2\right]\\
            &+4\alpha\sum_{t=0}^{T-1} \mathbb{E}\left[\left\Vert\bar{q}^{(t)}-\nabla\Phi(\bar{x}^{(t)})\right\Vert^2\right]+\dfrac{4L_{\nabla\Phi}^2\tau^2}{c_3^2}\alpha\sum_{t=0}^{T-1} \mathbb{E}\left[\left\Vert\bar{h}^{(t)}\right\Vert^2\right]\\
            &+c_3\alpha^2\sum_{t=0}^{T-1}\mathbb{E}\left[\left\Vert\bar{\omega}^{(t+1)}-\mathbb{E}_t[\bar{\omega}^{(t+1)}]\right\Vert^2\right].
        \end{aligned}
    \end{equation}

Then, we consider the term $\left\Vert \bar{q}^{(t)}-\nabla\Phi(\bar{x}^{(t)})\right\Vert^2$. Note that:
    \begin{equation}
        \begin{aligned}
            \bar{q}^{(t)}-\nabla\Phi(\bar{x}^{(t)})
            =&\dfrac{1}{N}\sum_{i=1}^N\left[\left(\nabla_1f_i(x^{(t)}_{i},y^{(t)}_{i})-\nabla_1f_i(\bar{x}^{(t)},\bar{y}^{(t)}_{\star})\right)-\nabla_{22}g_i(x^{(t)}_{i},y^{(t)}_{i})(z^{(t)}_{i}-\bar{z}^{(t)}_{\star})\right]\\
            &-\dfrac{1}{N}\sum_{i=1}^N\left(\nabla_{22}g_i(x^{(t)}_{i},y^{(t)}_{i})-\nabla_{22}g_n(\bar{x}^{(t)},\bar{y}^{(t)}_{\star})\right)\bar{z}^{(t)}_{\star}.
        \end{aligned}
    \end{equation}
    Then, taking the norm on both sides and using Assumption \ref{assumption:smooth}, we have:
    \begin{equation}
    \label{eq:final_omega-phi}
        \begin{aligned}
            &\left\Vert\bar{q}^{(t)}-\nabla\Phi(\bar{x}^{(t)})\right\Vert^2\\
            \leq&3\cdot\dfrac{1}{N}\sum_{i=1}^N\left\Vert\nabla_1f_i(x^{(t)}_{i},y^{(t)}_{i})-\nabla_1f_i(\bar{x}^{(t)},\bar{y}^{(t)}_{\star})\right\Vert^2+3\cdot\dfrac{1}{N}\sum_{i=1}^N\left\Vert\nabla_{22}g_i(x^{(t)}_{i},y^{(t)}_{i})\right\Vert^2\left\Vert z^{(t)}_{i}-\bar{z}^{(t)}_{\star}\right\Vert^2\\
            &+3\cdot\dfrac{1}{N}\sum_{i=1}^N\left\Vert\nabla_{22}g_i(x^{(t)}_{i},y^{(t)}_{i})-\nabla_{22}g_i(\bar{x}^{(t)},\bar{y}^{(t)}_{\star})\right\Vert^2\left\Vert\bar{z}^{(t)}_{\star}\right\Vert^2\\
            \leq&6\left(\dfrac{L_f^2L_{\nabla^2g}^2}{\mu_g^2}+L_{\nabla f}^2\right)\dfrac{1}{N}\left(\left\Vert\mathbf{x}^{(t)}-\mathbf{\bar{x}}^{(t)}\right\Vert_F^2+\left\Vert\mathbf{y}^{(t)}-\mathbf{\bar{y}}^{(t)}\right\Vert_F^2+N\left\Vert \bar{y}^{(t)}-\bar{y}^{(t)}_{\star}\right\Vert^2\right)\\
            &+6L_{\nabla^2g}^2\cdot\dfrac{1}{N}\sum_{i=1}^N\left(\left\Vert z^{(t)}_{i}-\bar{z}^{(t)}\right\Vert^2+\left\Vert \bar{z}^{(t)}-\bar{z}^{(t)}_{\star}\right\Vert^2\right)\\
            \leq&6L^2\dfrac{1}{N}\Delta_t^2+6\left(\dfrac{L_f^2L_{\nabla^2g}^2}{\mu_g^2}+L_{\nabla f}^2\right)\left\Vert \bar{y}^{(t)}-\bar{y}^{(t)}_{\star}\right\Vert^2+6L_{\nabla^2g}^2\left\Vert  \bar{z}^{(t)}-\bar{z}^{(t)}_{\star}\right\Vert^2.
        \end{aligned}
    \end{equation} 

    Next, considering the term $\left\Vert\mathbb{E}_t[\bar{\omega}^{(t+1)}]-\bar{q}^{(t)}\right\Vert^2$, we have:
    \begin{equation}
        \begin{aligned}
            \left\Vert\mathbb{E}_t[\bar{\omega}^{(t+1)}]-\bar{q}^{(t)}\right\Vert^2\leq\dfrac{1}{N}\sum_{i=1}^N\left\Vert \mathbb{E}_t[p_{J,i}^{(t)}]-\nabla_{12}^2g_i(x_i^{(t)},y_i^{(t)})z_i^{(t)}\right\Vert^2
            \leq\iota^2.
        \end{aligned}
    \end{equation}

    Finally, we consider the term$\left\Vert\bar{\omega}^{(t+1)}-\mathbb{E}_t[\bar{\omega}^{(t+1)}]\right\Vert^2$. From Assumption \ref{assumption:unbiased} and Eq. \eqref{chafen_J}, we have:
    \begin{equation}
        \begin{aligned}
            \left\Vert\bar{\omega}^{(t+1)}-\mathbb{E}_t[\bar{\omega}^{(t+1)}]\right\Vert^2\leq&\dfrac{2}{N^2}\sum_{i=1}^N\left[\left\Vert p_{J,i}^{(t)}-\mathbb{E}_t[p_{J,i}^{(t)}]\right\Vert^2+\left\Vert u_{1,i}^t-\mathbb{E}_t[u_{1,i}^t]\right\Vert^2\right]\\
            \leq&\dfrac{2}{N^2}\sum_{i=1}^N\left(1+\left\Vert z_i^{(t)}\right\Vert^2\right)\sigma^2\\
            \leq&\dfrac{6}{N}\left(1+\dfrac{L_f^2}{\mu_g^2}\right)\sigma^2+\dfrac{6\sigma^2}{N}\left\Vert\bar{z}^{(t)}-\bar{z}_{\star}^{(t)}\right\Vert^2+\dfrac{6\sigma^2}{N}\dfrac{1}{N}\sum_{i=1}^N\left\Vert {z}_i^{(t)}-\bar{z}^{(t)}\right\Vert^2.
        \end{aligned}
    \end{equation}
    
    Adding \eqref{eq:final_avgh} and \eqref{eq:final_h-PHI} together, and then plugging \eqref{eq:final_omega-phi} into it, we have:
    \begin{equation}\label{H1_new}
        \begin{aligned}
            &\alpha\sum_{t=0}^{T-1} \mathbb{E}\left[\left\Vert\bar{h}^{(t)}\right\Vert^2\right]+\alpha\sum_{t=0}^{T} \mathbb{E}\left[\left\Vert\bar{h}^{(t)}-\nabla\Phi(\bar{x}^{(t)})\right\Vert^2\right]\\
            \leq&\dfrac{2}{\tau}\mathbb{E}[E_0]+{\dfrac{1}{c_3}\mathbb{E}\left[\left\Vert\bar{h}^{(0)}-\nabla\Phi(\bar{x}^{(0)})\right\Vert^2\right]}+\dfrac{4L_{\nabla\Phi}^2\tau^2}{c_3^2}\alpha\sum_{t=0}^{T-1} \mathbb{E}\left[\left\Vert\bar{h}^{(t)}\right\Vert^2\right]\\
            &+10\alpha\sum_{t=0}^{T-1} \mathbb{E}\left[\left\Vert\mathbb{E}_t[\bar{\omega}^{(t+1)}]-\bar{q}^{(t)}\right\Vert^2\right]+\dfrac{1}{2}\cdot\alpha\sum_{t=0}^{T-1} \mathbb{E}\left[\left\Vert\bar{h}^{(t)}-\nabla\Phi(\bar{x}^{(t)})\right\Vert^2\right]\\
            &+10\alpha\sum_{t=0}^{T-1} \mathbb{E}\left[\left\Vert\bar{q}^{(t)}-\nabla\Phi(\bar{x}^{(t)})\right\Vert^2\right]+(1+c_3)\alpha^2\sum_{t=0}^{T-1}\mathbb{E}\left[\left\Vert\bar{\omega}^{(t+1)}-\mathbb{E}_t[\bar{\omega}^{(t+1)}]\right\Vert^2\right]\\
            \leq&\dfrac{2}{\tau}\mathbb{E}[E_0]+{\dfrac{1}{c_3}\mathbb{E}\left[\left\Vert\bar{h}^{(0)}-\nabla\Phi(\bar{x}^{(0)})\right\Vert^2\right]}+\dfrac{4L_{\nabla\Phi}^2\tau^2}{c_3^2}\alpha\sum_{t=0}^{T-1} \mathbb{E}\left[\left\Vert\bar{h}^{(t)}\right\Vert^2\right]\\
            &+\dfrac{1}{2}\cdot\alpha\sum_{t=0}^{T-1} \mathbb{E}\left[\left\Vert\bar{h}^{(t)}-\nabla\Phi(\bar{x}^{(t)})\right\Vert^2\right]+\left(60L^2+\dfrac{6(1+c_3)\alpha\sigma^2}{N}\right)\dfrac{1}{N}\cdot\alpha\sum_{t=0}^{T-1}\mathbb{E}[\Delta_t^2]\\&+\dfrac{6(1+c_3)T\alpha^2}{N}\left(1+\dfrac{L_f^2}{\mu_g^2}\right)\sigma^2+10T\alpha\iota^2            +60\left(\dfrac{L_f^2L_{\nabla^2g}^2}{\mu_g^2}+L_{\nabla f}^2\right)\sum_{t=0}^{T-1}\alpha\mathbb{E}\left[\left\Vert \bar{y}^{(t)}-\bar{y}^{(t)}_{\star}\right\Vert^2\right]\\
            &+\left(60L_{\nabla^2g}^2+\dfrac{6(1+c_3)\alpha\sigma^2}{N}\right)\sum_{t=0}^{T-1}\alpha\mathbb{E}\left[\left\Vert \bar{z}^{(t)}-\bar{z}^{(t)}_{\star}\right\Vert^2\right].
        \end{aligned}
    \end{equation}
    Then we complete the proof of this lemma.
\end{proof}
 
\subsection{Estimation bias of $y,z$}
In this section, we first estimate the convergence error induced by $y$ and $z$, and then estimate the error between $\nabla\Phi(\bar{x}^{(t+1)})$ and $\bar{h}^{(t+1)}$.
\begin{lemma}
\label{lemma:est yt+1-yt+1*}
Suppose Assumption \ref{assumption:smooth} and \ref{assumption:unbiased} hold, and the step-size $\beta$ satisfies:
\begin{align}
\label{eq:assumptiony}
    \beta(\mu_g+L_{\nabla g})\leq1.
\end{align}
Then in Algorithm \ref{alg:DeMA-SOBA}, we have:
\begin{equation}
    \begin{aligned}
    \label{eq:lemma:yt+1-yt+1*}
        &\sum_{t=0}^{T-1} \alpha\mathbb{E}\left[\left\Vert\bar{y}^{(t)}-\bar{y}^{(t)}_{\star}\right\Vert^2\right]\\
        \leq&C_{yx}\sum_{t=0}^{T-1} \tau^2\alpha\mathbb{E}\left[\left\Vert\bar{h}^{(t)}\right\Vert^2\right]+C_{yD}\sum_{t=0}^{T-1} \alpha\mathbb{E}\left[\dfrac{1}{N}\Delta_t^2\right]+C_{y1}\dfrac{T\alpha^2}{N}+C_{y2},
    \end{aligned}
\end{equation}
where the constants are defined as:
\begin{align*}
C_{yx}&=\dfrac{3L_{y^{\star}}^2}{c_1^2\mu_g^2}=\mathcal{O}\left(\dfrac{\kappa^4}{c_1^2}\right),\quad C_{yD}=\dfrac{6L_{\nabla g}^2}{\mu_g^2\kappa^2}=\mathcal{O}\left(1\right),\\
C_{y1}&=\frac{2c_1\sigma_{g1}^2}{\mu_g}=\mathcal{O}\left(c_1\kappa\right),\quad C_{y2}={\dfrac{1}{c_1\mu_g}\mathbb{E}\left[\left\Vert\bar{y}^{(0)}-\bar{y}^{(0)}_{\star}\right\Vert^2\right]}=\mathcal{O}\left(\frac{\kappa}{c_1}\right).
\end{align*}
\end{lemma}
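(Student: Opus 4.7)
\textbf{Proof plan for Lemma \ref{lemma:est yt+1-yt+1*}.} My plan is to derive a one-step recursion for $\|\bar{y}^{(t+1)}-\bar{y}^{(t+1)}_\star\|^2$ that contracts by $(1-\beta\mu_g)$ up to three perturbation terms, and then to telescope it over $t=0,\dots,T-1$. Starting from the averaged update $\bar{y}^{(t+1)}=\bar{y}^{(t)}-(\beta/N)\sum_i \nabla_2 G(x_i^{(t)},y_i^{(t)};\zeta_i^{(t)})$, I will split via the auxiliary point $\tilde y^{(t+1)}:=\bar y^{(t)}-\beta\nabla_2 g(\bar x^{(t)},\bar y^{(t)})$ and use the triangle inequality with parameter $\epsilon=\Theta(\beta\mu_g)$ to insert $\bar y^{(t)}_\star$, writing
\begin{align*}
\|\bar y^{(t+1)}-\bar y^{(t+1)}_\star\|^2 \le (1+\epsilon_1)\|\bar y^{(t+1)}-\bar y^{(t)}_\star\|^2 + (1+1/\epsilon_1)\|\bar y^{(t)}_\star-\bar y^{(t+1)}_\star\|^2.
\end{align*}
The first summand I will further split into $(1+\epsilon_2)\|\tilde y^{(t+1)}-\bar y^{(t)}_\star\|^2+(1+1/\epsilon_2)\|\bar y^{(t+1)}-\tilde y^{(t+1)}\|^2$.

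Next, for the contraction piece $\|\tilde y^{(t+1)}-\bar y^{(t)}_\star\|^2$, I will invoke Lemma \ref{des} applied to $g(\bar x^{(t)},\cdot)$, which is $\mu_g$-strongly convex and $L_{\nabla g}$-smooth, to obtain $(1-\beta\mu_g)^2\|\bar y^{(t)}-\bar y^{(t)}_\star\|^2$ under the step-size restriction \eqref{eq:assumptiony}. For the perturbation $\|\bar y^{(t+1)}-\tilde y^{(t+1)}\|^2$, I will decompose into a deterministic part coming from smoothness of $\nabla_2 g_i$ (which generates $L_{\nabla g}^2(\|\mathbf{x}^{(t)}-\bar{\mathbf{x}}^{(t)}\|_F^2+\|\mathbf{y}^{(t)}-\bar{\mathbf{y}}^{(t)}\|_F^2)/N$ bounds, i.e.\ the consensus error $\Delta_t^2/N$ up to constants) and a conditional-variance part from the stochastic oracle whose expectation is bounded by $\beta^2\sigma^2/N$ using Assumption \ref{assumption:unbiased} and independence across nodes. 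For the $\bar y^{(t)}_\star-\bar y^{(t+1)}_\star$ shift, I will apply Lipschitz continuity of $y^\star(\cdot)$ with constant $L_{y^\star}$, so that this term is $L_{y^\star}^2\|\bar x^{(t+1)}-\bar x^{(t)}\|^2=L_{y^\star}^2\tau^2\alpha^2\|\bar h^{(t)}\|^2$ (noting the double-stochasticity of $W$ preserves averages).

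Once all pieces are assembled, the recursion will take the form $\mathbb{E}\|\bar y^{(t+1)}-\bar y^{(t+1)}_\star\|^2\le (1-\beta\mu_g/2)\mathbb{E}\|\bar y^{(t)}-\bar y^{(t)}_\star\|^2+A\tau^2\alpha\|\bar h^{(t)}\|^2/(\beta\mu_g)+B\Delta_t^2/(N\beta\mu_g)+C\beta\sigma^2/N$ with $\epsilon_1,\epsilon_2$ chosen of order $\beta\mu_g$ so the cross-terms are absorbed. Telescoping this recursion and multiplying through by $\alpha$ then yields \eqref{eq:lemma:yt+1-yt+1*} with the stated constants; in particular the initial-error coefficient $C_{y2}=1/(c_1\mu_g)\cdot\mathbb{E}\|\bar y^{(0)}-\bar y^{(0)}_\star\|^2$ comes from the geometric-series sum $\sum_t (1-\beta\mu_g/2)^t\le 2/(\beta\mu_g)=2/(c_1\alpha\mu_g)$ applied to the initial condition. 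The main obstacle will be the careful selection of the two slack parameters $\epsilon_1,\epsilon_2$ and the resulting bookkeeping to ensure a clean $(1-\beta\mu_g/2)$ contraction, since naive choices either destroy the contraction or blow up the coefficient in front of $\|\bar h^{(t)}\|^2$; a secondary subtlety is that the use of Lemma \ref{des} requires $\beta\le 2/(\mu_g+L_{\nabla g})$, which is precisely encoded by \eqref{eq:assumptiony} and must be verified together with the final hyper-parameter choice in Theorem \ref{thm:convergence of DeMA-SOBA}.
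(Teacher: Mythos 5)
Your overall route is the same as the paper's: a one-step contraction for $\|\bar y^{(t)}-\bar y^{(t)}_\star\|^2$ obtained from Lemma \ref{des} applied to the $\mu_g$-strongly convex, $L_{\nabla g}$-smooth map $g(\bar x^{(t)},\cdot)$, a drift term controlled by the $L_{y^{\star}}$-Lipschitzness of $y^\star(\cdot)$ together with $\bar x^{(t+1)}-\bar x^{(t)}=-\tau\alpha\bar h^{(t)}$, a consensus term coming from the smoothness of $\nabla_2 g_i$, and a telescoping step that produces the $1/(c_1\mu_g)$ prefactors. All of that matches the paper and is sound.

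There is, however, one genuine flaw in the order of your decompositions. You propose to bound $\|\bar y^{(t+1)}-\bar y^{(t)}_\star\|^2\le(1+\epsilon_2)\|\tilde y^{(t+1)}-\bar y^{(t)}_\star\|^2+(1+1/\epsilon_2)\|\bar y^{(t+1)}-\tilde y^{(t+1)}\|^2$ with $\epsilon_2=\Theta(\beta\mu_g)$, and only afterwards split $\|\bar y^{(t+1)}-\tilde y^{(t+1)}\|^2$ into its deterministic (consensus) part and its zero-mean stochastic part. This multiplies the gradient-noise variance $\beta^2\sigma^2/N$ by $1+1/\epsilon_2=\Theta(1/(\beta\mu_g))$, giving a per-step noise contribution of order $\beta\sigma^2/(N\mu_g)$ — and indeed your assembled recursion carries $C\beta\sigma^2/N$ rather than $C\beta^2\sigma^2/N$. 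After telescoping (which costs another factor $2/(\beta\mu_g)$) and multiplying by $\alpha$, this yields a term of order $T\alpha\sigma^2/(N\mu_g^2)$ instead of the claimed $C_{y1}T\alpha^2/N$; the two differ by a factor $1/(\beta\mu_g)\sim\sqrt{T/N}$, and once propagated into Theorem \ref{thm:convergence of DeMA-SOBA} the resulting contribution to $\frac{1}{T}\sum_t\mathbb{E}\|\nabla\Phi(\bar x^{(t)})\|^2$ no longer vanishes as $T\to\infty$, destroying the linear-speedup rate. The paper avoids this by first invoking the exact conditional second-moment identity $\mathbb{E}_t\|X\|^2=\|\mathbb{E}_t X\|^2+\mathbb{E}_t\|X-\mathbb{E}_t X\|^2$ (see \eqref{final:yt+1-y*}), so the variance enters additively as $\beta^2\sigma^2/N$ with no Young-type amplification, and only the deterministic mean is subjected to the $(1+\epsilon)$ splits. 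Peel off the stochastic noise by conditional unbiasedness before introducing any slack parameter; with that reordering the rest of your bookkeeping goes through and delivers the stated constants.
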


\begin{proof}
    Firstly, from Jensen's Inequality and the Lipchitz continuity of $y^{\star}(x)$, we have:
    \begin{equation}
    \begin{aligned}
    \label{first:yt+1-yt+1*}
        \left\Vert\bar{y}^{(t+1)}-\bar{y}^{(t+1)}_{\star}\right\Vert^2&\leq\left(1+\dfrac{\beta\mu_g}{2}\right)\left\Vert\bar{y}^{(t+1)}-\bar{y}^{(t)}_{\star}\right\Vert^2+\left(1+\frac{2}{\beta\mu_g}\right)\left\Vert\bar{y}^{(t)}_{\star}-\bar{y}^{(t+1)}_{\star}\right\Vert^2\\
        &\leq\left(1+\dfrac{\beta\mu_g}{2}\right)\left\Vert\bar{y}^{(t+1)}-\bar{y}^{(t)}_{\star}\right\Vert^2+\left(\alpha^2+\frac{2\alpha^2}{\beta\mu_g}\right)L_{y^{\star}}^2\tau^2\left\Vert\bar{h}^{(t)}\right\Vert^2.
    \end{aligned}
    \end{equation}
    Then, as $g$ is $\mu_g$-strongly convex with respect to $y$, we have:
    \begin{equation}
    \label{final:yt+1-y*}
        \begin{aligned}
        &\mathbb{E}_t\left[\left\Vert\bar{y}^{(t+1)}-\bar{y}^{(t)}_{\star}\right\Vert^2\right]\\
        =&\left\Vert\bar{y}^{(t)}-\beta\cdot\dfrac{1}{N}\sum_{i=1}^N\nabla_2g_i({x}^{(t)}_i,{y}^{(t)}_i)-\bar{y}^{(t)}_{\star}\right\Vert^2+\beta^2\dfrac{1}{N^2}\sum_{i=1}^N\mathbb{E}_t\left[\left\Vert\nabla_2g_i({x}^{(t)}_i,{y}^{(t)}_i)-v^{(t)}_i\right\Vert^2\right]\\
        \leq&\left(1+\dfrac{\beta\mu_g}{2}\right)\left\Vert\bar{y}^{(t)}-\beta\nabla_2g(\bar{x}^{(t)},\bar{y}^{(t)})-\bar{y}^{(t)}_{\star}\right\Vert^2\\
        &+\left(\beta^2+\dfrac{2\beta}{\mu_g}\right)\dfrac{1}{N}\sum_{i=1}^N\left\Vert\nabla_2g_n({x}^{(t)}_i,{y}^{(t)}_i)-\nabla_2g_n(\bar{x}^{(t)},\bar{y}^{(t)})\right\Vert^2+\dfrac{\beta^2\sigma_{g1}^2}{N}\\
        \leq&\left(1+\dfrac{\beta\mu_g}{2}\right)\left(1-{\beta\mu_g}\right)^2\left\Vert\bar{y}^{(t)}-\bar{y}^{(t)}_{\star}\right\Vert^2+\dfrac{3\beta L_{\nabla g}^2}{N\mu_g\kappa^2}\Delta_t^2+\dfrac{\beta^2\sigma^2}{N},
        \end{aligned}
    \end{equation}
    where the first equation is due to Assumption \ref{assumption:unbiased}, the first inequality is from Jensen's inequality and Assumption \ref{assumption:unbiased}, and the last inequality is from $\beta\mu_g<1$ and Lemma \ref{des}.
    
    Taking the conditional expectation of (\ref{first:yt+1-yt+1*}) and using (\ref{final:yt+1-y*}), we have:
    \begin{equation}
    \label{final:yt+1-yt+1*}
    \begin{aligned}
        &\mathbb{E}_t\left[\left\Vert\bar{y}^{(t+1)}-\bar{y}^{(t+1)}_{\star}\right\Vert^2\right]\\
        \leq&\left(1+\dfrac{\beta\mu_g}{2}\right)^2\left(1-{\beta\mu_g}\right)^2\left\Vert\bar{y}^{(t)}-\bar{y}^{(t)}_{\star}\right\Vert^2+\left(1+\dfrac{\beta\mu_g}{2}\right)\left(\dfrac{3\beta L_{\nabla g}^2}{N\mu_g\kappa^2}\Delta_t^2+\dfrac{\beta^2\sigma^2}{N}\right)\\
        &+\left(\alpha^2+\frac{2\alpha^2}{\beta\mu_g}\right)L_{y^{\star}}^2\tau^2\left\Vert\bar{h}^{(t)}\right\Vert^2\\
        \leq&\left(1-{\beta\mu_g}\right)\left\Vert\bar{y}^{(t)}-\bar{y}^{(t)}_{\star}\right\Vert^2+\dfrac{6\beta L_{\nabla g}^2}{N\mu_g\kappa^2}\Delta_t^2+\dfrac{2\beta^2\sigma^2}{N}+\dfrac{3\tau^2\alpha^2L_{y^{\star}}^2}{\beta\mu_g}\left\Vert\bar{h}^{(t)}\right\Vert^2,
    \end{aligned}
    \end{equation}
    where the second inequality holds because $-\dfrac{3}{4}(\beta\mu_g)^2+\dfrac{1}{2}(\beta\mu_g)^3+\dfrac{1}{4}(\beta\mu_g)^4\leq0$ when $0\leq\beta\mu_g\leq1$.
    Taking summation and expectation on both sides, we have:
    \begin{equation}
    \label{sum:yt+1-yt+1*}
    \begin{aligned}
        &\beta\mu_g\sum_{t=0}^{T-1} \mathbb{E}\left[\left\Vert\bar{y}^{(t)}-\bar{y}^{(t)}_{\star}\right\Vert^2\right]\\
        \leq&\dfrac{3\tau^2\alpha^2L_{y^{\star}}^2}{\beta\mu_g}\sum_{t=0}^{T-1} \mathbb{E}\left[\left\Vert\bar{h}^{(t)}\right\Vert^2\right]+\dfrac{6\beta L_{\nabla g}^2}{\mu_g\kappa^2}\sum_{t=0}^{T-1} \mathbb{E}\left[\dfrac{1}{N}\Delta_t^2\right]+\dfrac{2T\beta^2\sigma^2}{N}+{\mathbb{E}\left[\left\Vert\bar{y}^{(0)}-\bar{y}^{(0)}_{\star}\right\Vert^2\right]}.
    \end{aligned}
    \end{equation}

    Finally, multiplying $\dfrac{1}{c_1\mu_g}$ on both sides of \eqref{sum:yt+1-yt+1*}, we have:
    \begin{equation}
    \begin{aligned}
    \label{final:yt+1-yt+1**}
        &\sum_{t=0}^{T-1} \alpha\mathbb{E}\left[\left\Vert\bar{y}^{(t)}-\bar{y}^{(t)}_{\star}\right\Vert^2\right]\\
        \leq&\dfrac{3\tau^2L_{y^{\star}}^2}{c_1^2\mu_g^2}\sum_{t=0}^{T-1} \alpha\mathbb{E}\left[\left\Vert\bar{h}^{(t)}\right\Vert^2\right]+\dfrac{6L_{\nabla g}^2}{\mu_g^2}\sum_{t=0}^{T-1} \alpha\mathbb{E}\left[\dfrac{1}{N}\Delta_t^2\right]+\frac{2c_1\sigma^2}{\mu_g}\dfrac{T\alpha^2}{N}+{\dfrac{1}{c_1\mu_g}\mathbb{E}\left[\left\Vert\bar{y}^{(0)}-\bar{y}^{(0)}_{\star}\right\Vert^2\right]}\\
        =&C_{yx}\sum_{t=0}^{T-1} \tau^2\alpha\mathbb{E}\left[\left\Vert\bar{h}^{(t)}\right\Vert^2\right]+C_{yD}\sum_{t=0}^{T-1} \alpha\mathbb{E}\left[\dfrac{1}{N}\Delta_t^2\right]+C_{y1}\dfrac{T\alpha^2}{N}+C_{y2},
    \end{aligned}
    \end{equation}
    where $C_{yx}=\dfrac{3L_{y^{\star}}^2}{c_1^2\mu_g^2}$, $C_{yD}=\dfrac{6L_{\nabla g}^2}{\mu_g^2\kappa^2}$, $C_{y1}=\dfrac{2c_1\sigma^2}{\mu_g}$, $C_{y2}={\dfrac{1}{c_1\mu_g}\mathbb{E}\left[\left\Vert\bar{y}^{(0)}-\bar{y}^{(0)}_{\star}\right\Vert^2\right]}$.
\end{proof}

\begin{lemma}
\label{lemma:est zt+1-zt+1*}
Suppose Assumption \ref{assumption:smooth} and \ref{assumption:unbiased} hold. If the step-size $\gamma$ satisfies:
\begin{align}
\label{eq:assumptionz}
    -\dfrac{1}{6}\gamma\mu_g+ {\dfrac{6}{N}}\gamma^2\sigma^2+\dfrac{1}{2}\gamma^3\mu_g^3\leq0,\quad \gamma\mu_g\leq\min\left\{1,\dfrac{NL^2}{\mu_g\sigma^2}\right\},
\end{align}
in Algorithm \ref{alg:DeMA-SOBA}, we have:
    \begin{equation}
    \label{eq:lemma:zt+1-zt+1*}
        \begin{aligned}
           &\sum_{t=0}^{T-1} \alpha\mathbb{E}\left[\left\Vert\bar{z}^{(t+1)}-\bar{z}^{(t+1)}_{\star}\right\Vert^2\right]\\
           \leq& C_{zx}\sum_{t=0}^{T-1} \tau^2\alpha\mathbb{E}\left[\left\Vert\bar{h}^{(t)}\right\Vert^2\right]+C_{zD}\sum_{t=0}^{T-1} \alpha\mathbb{E}\left[\dfrac{1}{N}\Delta_t^2\right]+C_{z1}\dfrac{T\alpha^2}{N}+C_{z2}+C_{z3}T\alpha\iota^2,
        \end{aligned}
    \end{equation}
where the constants are defined as:

\begin{align*}
    C_{zx}&=\dfrac{32}{\mu_g^2}\left(\dfrac{L_{\nabla^2g}^2L_f^2}{\mu_g^2}+L_{\nabla f}^2\right)C_{yx}+\dfrac{8L_{z^{\star}}^2}{c_2^2\mu_g^2}=\mathcal{O}\left(\dfrac{\kappa^8}{c_1^2}+\dfrac{\kappa^8}{c_2^2}\right),\\
     C_{zD}&=\dfrac{32}{\mu_g^2}\left[\left(\dfrac{L_{\nabla^2g}^2L_f^2}{\mu_g^2}+L_{\nabla f}^2\right)C_{yD}+2L^2\right]=\mathcal{O}(\kappa^4),\\
    C_{z1}&=\dfrac{32}{\mu_g^2}\left(\dfrac{L_{\nabla^2g}^2L_f^2}{\mu_g^2}+L_{\nabla f}^2\right)C_{y1}+\dfrac{8c_2}{\mu_g}\left(1+\dfrac{L_f^2}{\mu_g^2}{}\right)\sigma^2=\mathcal{O}(c_1\kappa^5+c_2\kappa^3),\\
    C_{z2}&=\dfrac{32}{\mu_g^2}\left(\dfrac{L_{\nabla^2g}^2L_f^2}{\mu_g^2}+L_{\nabla f}^2\right)C_{y2}+{\dfrac{1}{c_2\mu_g}\mathbb{E}\left[\left\Vert \bar{z}^{(0)}-\bar{z}^{(0)}_{\star}\right\Vert^2\right]}=\mathcal{O}\left(\dfrac{\kappa^5}{c_1}+\dfrac{\kappa}{c_2}\right),\\
    C_{z3}&=\dfrac{16}{\mu_g^2}=\mathcal{O}(\kappa^2).
\end{align*}
\end{lemma}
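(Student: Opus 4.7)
The plan is to mirror the template of Lemma~\ref{lemma:est yt+1-yt+1*}, but with extra bookkeeping for the coupling with the $y$-error and for the finite-difference bias. First, I split
\[
\|\bar{z}^{(t+1)}-\bar{z}^{(t+1)}_\star\|^2\le\Bigl(1+\tfrac{\gamma\mu_g}{2}\Bigr)\|\bar{z}^{(t+1)}-\bar{z}^{(t)}_\star\|^2+\Bigl(1+\tfrac{2}{\gamma\mu_g}\Bigr)\|\bar{z}^{(t)}_\star-\bar{z}^{(t+1)}_\star\|^2,
\]
and bound the drift term using the Lipschitz continuity of $z^\star(\cdot)$ and the one-step $x$-update, giving a contribution proportional to $L_{z^\star}^2\tau^2\alpha^2\|\bar h^{(t)}\|^2$. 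This accounts for the $L_{z^\star}^2/(c_2^2\mu_g^2)$ piece in $C_{zx}$.

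Next, for the progress term $\|\bar{z}^{(t+1)}-\bar{z}^{(t)}_\star\|^2$, I compare the averaged update $\bar z^{(t+1)}=\bar z^{(t)}-\gamma\cdot\tfrac{1}{N}\sum_j(p_{H,j}^{(t)}-u_{2,j}^{(t)})$ with the \emph{ideal} SOBA step at $(\bar x^{(t)},\bar y^{(t)}_\star)$, namely $\bar z^{(t)}-\gamma\bigl[\nabla_{22}^2 g(\bar x^{(t)},\bar y^{(t)}_\star)\bar z^{(t)}-\nabla_2 f(\bar x^{(t)},\bar y^{(t)}_\star)\bigr]$. Problem~\eqref{eq:soba} is $\mu_g$-strongly convex and smooth in $z$, so Lemma~\ref{des} applied to the ideal step yields contraction toward $\bar z^{(t)}_\star$ with factor $1-\gamma\mu_g$. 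The discrepancy between actual and ideal updates decomposes into four parts: (i) consensus error in $x_i,y_i,z_i$, controlled via the smoothness of $\nabla^2_{22}g$ and $\nabla_2 f$ and absorbed into $\Delta_t^2/N$; (ii) the bias from $\bar y^{(t)}\ne\bar y^{(t)}_\star$, bounded by a constant multiple of $\bigl(L_{\nabla^2 g}^2\|\bar z^{(t)}\|^2+L_{\nabla f}^2\bigr)\|\bar y^{(t)}-\bar y^{(t)}_\star\|^2$; (iii) the finite-difference bias $\iota^2$ supplied by Proposition~\ref{error of finite}; (iv) the stochastic variance of $p_{H,j}^{(t)}-u_{2,j}^{(t)}$, which by Lemma~\ref{desjifen} scales as $\sigma^2(1+\|z_j^{(t)}\|^2)/N$. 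For the last piece, I use $\|z_j^{(t)}\|^2\lesssim \|z_j^{(t)}-\bar z^{(t)}\|^2+\|\bar z^{(t)}-\bar z^{(t)}_\star\|^2+\|\bar z^{(t)}_\star\|^2$ together with the a priori bound $\|\bar z^{(t)}_\star\|\le L_f/\mu_g$ from \eqref{est:z*}.

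Combining these yields a one-step recursion of the form
\[
\mathbb{E}_t\|\bar z^{(t+1)}-\bar z^{(t+1)}_\star\|^2\le(1-\gamma\mu_g)\|\bar z^{(t)}-\bar z^{(t)}_\star\|^2+\tfrac{A}{N}\Delta_t^2+B\|\bar y^{(t)}-\bar y^{(t)}_\star\|^2+C\tau^2\alpha^2\|\bar h^{(t)}\|^2+D\tfrac{\gamma^2\sigma^2}{N}+E\gamma^2\iota^2,
\]
where the assumption $-\gamma\mu_g/6+6\gamma^2\sigma^2/N+\gamma^3\mu_g^3/2\le 0$ in \eqref{eq:assumptionz} is precisely what is needed to absorb the stochastic term proportional to $\sigma^2\|\bar z^{(t)}-\bar z^{(t)}_\star\|^2/N$ back into the $(1-\gamma\mu_g)$ contraction. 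Telescoping, summing, and dividing by $\gamma\mu_g=c_2\mu_g\alpha$ gives $\sum_t\alpha\mathbb{E}\|\bar z^{(t+1)}-\bar z^{(t+1)}_\star\|^2$ bounded by the stated constants plus a residual $\tfrac{B}{c_2\mu_g}\sum_t\alpha\mathbb{E}\|\bar y^{(t)}-\bar y^{(t)}_\star\|^2$. Substituting \eqref{eq:lemma:yt+1-yt+1*} for this residual produces exactly the structure in \eqref{eq:lemma:zt+1-zt+1*}, with the $y$-constants carried over into $C_{zx},C_{zD},C_{z1},C_{z2}$ via the $L_{\nabla^2 g}^2L_f^2/\mu_g^2+L_{\nabla f}^2$ prefactor.

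The main obstacle is the fourth part above: because Lemma~\ref{desjifen} produces variance that scales with $\|z_j^{(t)}\|^2$ rather than with a uniform $\sigma^2$, naively splitting $\|z_j^{(t)}\|^2$ resurfaces a $\sigma^2\|\bar z^{(t)}-\bar z^{(t)}_\star\|^2/N$ term that could destroy the contraction. The delicate point is to choose the Young-inequality weights and the step-size condition \eqref{eq:assumptionz} so that this variance term is absorbed into the $(1-\gamma\mu_g)$ factor while simultaneously keeping the coefficient in front of $\|\bar y^{(t)}-\bar y^{(t)}_\star\|^2$ small enough that invoking Lemma~\ref{lemma:est yt+1-yt+1*} does not spoil the sign of the $\|\bar h^{(t)}\|^2$ budget in the descent lemma \eqref{eq:final_avgh11}. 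Once this balance is struck, everything else is routine.
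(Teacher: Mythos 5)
Your plan follows the paper's proof essentially step for step: the same Young split into a drift term controlled by $L_{z^\star}$ and a progress term, the same $(1-\gamma\mu_g)$ contraction coming from $\mu_g$-strong convexity of the quadratic subproblem, the same decomposition of the variance via $\Vert z_j^{(t)}\Vert^2\lesssim\Vert z_j^{(t)}-\bar{z}^{(t)}\Vert^2+\Vert\bar{z}^{(t)}-\bar{z}^{(t)}_\star\Vert^2+\Vert\bar{z}^{(t)}_\star\Vert^2$ absorbed by the first condition in \eqref{eq:assumptionz}, and the same final substitution of Lemma \ref{lemma:est yt+1-yt+1*} after dividing by $\gamma\mu_g=c_2\mu_g\alpha$. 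The one place where your bookkeeping, taken literally, would break is item (ii): you bound the Hessian-mismatch contribution by a multiple of $\left(L_{\nabla^2 g}^2\Vert\bar{z}^{(t)}\Vert^2+L_{\nabla f}^2\right)\Vert\bar{y}^{(t)}-\bar{y}^{(t)}_\star\Vert^2$, and $\Vert\bar{z}^{(t)}\Vert$ has no a priori bound; splitting it as $\Vert\bar{z}^{(t)}-\bar{z}^{(t)}_\star\Vert^2+\Vert\bar{z}^{(t)}_\star\Vert^2$ leaves a fourth-order cross term $\Vert\bar{y}^{(t)}-\bar{y}^{(t)}_\star\Vert^2\,\Vert\bar{z}^{(t)}-\bar{z}^{(t)}_\star\Vert^2$ that the linear recursion cannot absorb. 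The paper sidesteps this by anchoring the decomposition at the optimality condition $\nabla_{22}^2 g(\bar{x}^{(t)},\bar{y}^{(t)}_\star)\bar{z}^{(t)}_\star=\nabla_2 f(\bar{x}^{(t)},\bar{y}^{(t)}_\star)$: adding and subtracting $\nabla_{22}^2 g_i(x_i^{(t)},y_i^{(t)})\bar{z}^{(t)}_\star$ makes the contraction factor $(I-\gamma\nabla_{22}^2 g_i)$ act on $\bar{z}^{(t)}-\bar{z}^{(t)}_\star$ while the Hessian difference multiplies $\bar{z}^{(t)}_\star$, which is bounded by $L_f/\mu_g$ via \eqref{est:z*}; this is exactly what yields the $L_{\nabla^2 g}^2L_f^2/\mu_g^2+L_{\nabla f}^2$ prefactor in $C_{zx},C_{zD},C_{z1},C_{z2}$. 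With that re-anchoring, everything else in your sketch matches the paper's argument.
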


\begin{proof}
    For the term $\bar{z}^{(t+1)}-\bar{z}^{(t+1)}_{\star}$, we have:
    \begin{equation}
    \begin{aligned}
    \label{z_chubu}
        \left\Vert \bar{z}^{(t+1)}-\bar{z}^{(t+1)}_{\star}\right\Vert^2\leq&\left(1+\dfrac{\gamma\mu_g}{3}\right)\left\Vert \bar{z}^{(t+1)}-\bar{z}^{(t)}_{\star}\right\Vert^2+\left(1+\dfrac{3}{\gamma\mu_g}\right)\left\Vert \bar{z}^{(t)}_{\star}-\bar{z}^{(t+1)}_{\star}\right\Vert^2\\
        \leq&\left(1+\dfrac{\gamma\mu_g}{3}\right)\left\Vert \bar{z}^{(t+1)}-\bar{z}^{(t)}_{\star}\right\Vert^2+\left(\alpha^2+\dfrac{3\alpha^2}{\gamma\mu_g}\right)\tau^2L_{z^{\star}}^2\left\Vert \bar{h}^{(t)}\right\Vert^2,
    \end{aligned}
    \end{equation}
    where the first inequality is due to Jensen's Inequality and the second is due to the Lipchitz continuity of $z^{\star}(x)$. Then, from the definition of $\bar{z}^{(t)}_{\star}$, we have:
    \begin{equation}
        \begin{aligned}
            &\bar{z}^{(t+1)}-\bar{z}^{(t)}_{\star}\\
           =&\dfrac{1}{N}\sum_{i=1}^N(I-\gamma\nabla_{22}g_i(x^{(t)}_{i},y^{(t)}_{i}))(\bar{z}^{(t)}-\bar{z}^{(t)}_{\star})-\dfrac{\gamma}{N}\sum_{i=1}^N\nabla_{22}g_i(x^{(t)}_{i},y^{(t)}_{i})(z^{(t)}_{i}-\bar{z}^{(t)})\\
            &-\dfrac{\gamma}{N}\sum_{i=1}^N\left[\left(\nabla_{22}g_i(x^{(t)}_{i},y^{(t)}_{i})-\nabla_{22}g(\bar{x}^{(t)},\bar{y}^{(t)}_{\star})\right)\bar{z}^{(t)}_{\star}-\left(\nabla_{1}f_i(x^{(t)}_{i},y^{(t)}_{i})-\nabla_{1}f(\bar{x}^{(t)},\bar{y}^{(t)}_{\star})\right)\right]\\
            &-\dfrac{\gamma}{N}\sum_{i=1}^N\left[(p_{H,i}-\mathbb{E}_t[p_{H,i}])-(u^{(t+1)}_{1,i}-\nabla_1f_i(x^{(t)}_{i},y^{(t)}_{i}))-(\mathbb{E}_t[p_{H,i}]-\nabla_{22}^2g_i(x^{(t)}_{i},y^{(t)}_{i})z^{(t)}_{i})\right],
        \end{aligned}
    \end{equation}
    Taking the conditional expectation with respect to $\mathcal{F}^{(t)}$ on both sides, with Jensen's Inequality, and Assumption \ref{assumption:unbiased}, we have:
    \begin{equation}
        \label{eq:z^{t+1}-i-{z}^{(t)}-i^*-start}
        \begin{aligned}
            &\mathbb{E}_t\left[\left\Vert \bar{z}^{(t+1)}-\bar{z}^{(t)}_{\star}\right\Vert^2\right]\\
            \leq&\left(1+\dfrac{\gamma\mu_g}{2}\right)\dfrac{1}{N}\sum_{i=1}^N\left\Vert (I-\gamma\nabla_{22}g_i(x^{(t)}_{i},y^{(t)}_{i}))(\bar{z}^{(t)}-\bar{z}^{(t)}_{\star})\right\Vert^2\\
            &+4\gamma^2L_{\nabla g}^2\left(1+\dfrac{2}{\gamma\mu_g}\right)\cdot\dfrac{1}{N}\left\Vert \mathbf{z}^{(t)}-\mathbf{\bar{z}}^{(t)}\right\Vert_F^2\\
            &+4\gamma^2\left(1+\dfrac{2}{\gamma\mu_g}\right)\left\Vert\dfrac{1}{N}\sum_{i=1}^N\left(\nabla_{22}g_i(x^{(t)}_{i},y^{(t)}_{i})-\nabla_{22}g(\bar{x}^{(t)},\bar{y}^{(t)}_{\star})\right)\right\Vert^2\left\Vert\bar{z}^{(t)}_{\star}\right\Vert^2\\
            &+4\gamma^2\left(1+\dfrac{2}{\gamma\mu_g}\right)\left\Vert\dfrac{1}{N}\sum_{i=1}^N\left(\nabla_{1}f_i(x^{(t)}_{i},y^{(t)}_{i})-\nabla_{1}f(\bar{x}^{(t)},\bar{y}^{(t)}_{\star})\right)\right\Vert^2\\
            &+4\gamma^2\left(1+\dfrac{2}{\gamma\mu_g}\right)\left\Vert\dfrac{1}{N}\sum_{i=1}^N\left(\mathbb{E}_t[p_{H,i}]-\nabla_{22}^2g_i(x^{(t)}_{i},y^{(t)}_{i})z^{(t)}_{i}\right)\right\Vert^2\\
            &+\dfrac{2\gamma^2}{N^2}\sum_{i=1}^N\mathbb{E}_t\left[\left\Vert p_{H,i}-\mathbb{E}_t[p_{H,i}]\right\Vert^2\right]+\dfrac{2\gamma^2}{N^2}\sum_{i=1}^N\mathbb{E}_t\left[\left\Vert u^{(t+1)}_{1,i}-\nabla_1f_i(x^{(t)}_{i},y^{(t)}_{i})\right\Vert^2\right].
        \end{aligned}
    \end{equation}
    
    From the Lipchitz continuity of $\nabla_{22}g_i$, we have:
    \begin{equation}
        \label{eq:z^{t+1}-i-{z}^{(t)}-i^*-part31}
        \begin{aligned}
            &\left\Vert\dfrac{1}{N}\sum_{i=1}^N\left(\nabla_{22}g_i(x^{(t)}_{i},y^{(t)}_{i})-\nabla_{22}g_i(\bar{x}^{(t)},\bar{y}^{(t)}_{\star})\right)\right\Vert^2\left\Vert\bar{z}^{(t)}_{\star}\right\Vert^2\\
            \leq&\dfrac{2L_f^2}{\mu_g^2}\left\Vert\dfrac{1}{N}\sum_{i=1}^N\left(\nabla_{22}g_i(x^{(t)}_{i},y^{(t)}_{i})-\nabla_{22}g_i(\bar{x}^{(t)},\bar{y}^{(t)})\right)\right\Vert^2\\
            &+\dfrac{2L_f^2}{\mu_g^2}\left\Vert\dfrac{1}{N}\sum_{i=1}^N\left(\nabla_{22}g_i(\bar{x}^{(t)},\bar{y}^{(t)})-\nabla_{22}g_i(\bar{x}^{(t)},\bar{y}^{(t)}_{\star})\right)\right\Vert^2\\
            \leq&\dfrac{2L_f^2}{N\mu_g^2}\sum_{i=1}^N\left(\left\Vert\nabla_{22}g_i(x^{(t)}_{i},y^{(t)}_{i})-\nabla_{22}g_i(\bar{x}^{(t)},\bar{y}^{(t)})\right\Vert^2+\left\Vert\nabla_{22}g_i(x^{(t)}_{i},y^{(t)}_{i})-\nabla_{22}g_i(\bar{x}^{(t)},\bar{y}^{(t)})\right\Vert^2\right)\\
            \leq&\dfrac{2L_{\nabla^2g}^2L_f^2}{N\mu_g^2}\left(\left\Vert\mathbf{x}^{(t)}-\mathbf{\bar{x}}^{(t)}\right\Vert_F^2+\left\Vert\mathbf{y}^{(t)}-\mathbf{\bar{y}}^{(t)}\right\Vert_F^2+N\left\Vert\bar{y}^{(t)}-\bar{y}^{(t)}_{\star}\right\Vert^2\right).
        \end{aligned}
    \end{equation}
    
    Similarly, from the Lipchitz continuity of $\nabla_1f_i$, we have:
    \begin{equation}
        \label{eq:z^{t+1}-i-{z}^{(t)}-i^*-part3}
        \begin{aligned}
            &\left\Vert\dfrac{1}{N}\sum_{i=1}^N\left(\nabla_{1}f_i(x^{(t)}_{i},y^{(t)}_{i})-\nabla_{1}f_i(\bar{x}^{(t)},\bar{y}^{(t)}_{\star})\right)\right\Vert^2\\
            \leq&2\left\Vert\dfrac{1}{N}\sum_{i=1}^N\left(\nabla_{1}f_i(x^{(t)}_{i},y^{(t)}_{i})-\nabla_{1}f_i(\bar{x}^{(t)},\bar{y}^{(t)})\right)\right\Vert^2\\
            &+2\left\Vert\dfrac{1}{N}\sum_{i=1}^N\left(\nabla_{1}f_i(\bar{x}^{(t)},\bar{y}^{(t)})-\nabla_{1}f_i(\bar{x}^{(t)},\bar{y}^{(t)}_{\star})\right)\right\Vert^2\\
            \leq&\dfrac{2L_{\nabla f}^2}{N}\left(\left\Vert\mathbf{x}^{(t)}-\mathbf{\bar{x}}^{(t)}\right\Vert_F^2+\left\Vert\mathbf{y}^{(t)}-\mathbf{\bar{y}}^{(t)}\right\Vert_F^2+N\left\Vert\bar{y}^{(t)}-\bar{y}^{(t)}_{\star}\right\Vert^2\right).
        \end{aligned}
    \end{equation}

    We also note that $g_i$ is $\mu_g$-strongly convex with respect to $y$ and $\gamma\mu_g\leq1$, then:
    \begin{equation}
    \label{eq:z^{t+1}-i-{z}^{(t)}-i^*-part5}
        \left\Vert (I-\gamma\nabla_{22}g_i(x^{(t)}_{i},y^{(t)}_{i}))(\bar{z}^{(t)}-\bar{z}^{(t)}_{\star})\right\Vert^2\leq(1-\gamma\mu_g)^2\left\Vert\bar{z}^{(t)}-\bar{z}^{(t)}_{\star}\right\Vert^2
    \end{equation}

    Substituting \eqref{chafen_H}, \eqref{eq:z^{t+1}-i-{z}^{(t)}-i^*-part31}, \eqref{eq:z^{t+1}-i-{z}^{(t)}-i^*-part3}, and \eqref{eq:z^{t+1}-i-{z}^{(t)}-i^*-part5} into \eqref{eq:z^{t+1}-i-{z}^{(t)}-i^*-start}, and then using  Assumption \ref{assumption:unbiased}, we get:
    \begin{equation}
        \label{eq:z^{t+1}-i-{z}^{(t)}-i^*-middle_cache}
        \begin{aligned}
            &\mathbb{E}_t\left[\left\Vert \bar{z}^{(t+1)}-\bar{z}^{(t)}_{\star}\right\Vert^2\right]\\
            \leq&\left(1+\dfrac{\gamma\mu_g}{2}\right)(1-\gamma\mu_g)^2\left\Vert\bar{z}^{(t)}-\bar{z}^{(t)}_{\star}\right\Vert^2+\dfrac{12\gamma L_{\nabla g}^2}{\mu_g}\cdot\dfrac{1}{N}\left\Vert \mathbf{z}^{(t)}-\mathbf{\bar{z}}^{(t)}\right\Vert_F^2\\
            &+\dfrac{24\gamma}{N\mu_g}\left(\dfrac{L_{\nabla^2g}^2L_f^2}{\mu_g^2}+L_{\nabla f}^2\right)\left(\left\Vert\mathbf{x}^{(t)}-\mathbf{\bar{x}}^{(t)}\right\Vert_F^2+\left\Vert\mathbf{y}^{(t)}-\mathbf{\bar{y}}^{(t)}\right\Vert_F^2+N\left\Vert\bar{y}^{(t)}-\bar{y}^{(t)}_{\star}\right\Vert^2\right)\\
            &+\dfrac{12\gamma}{\mu_g}\iota^2+\dfrac{2\gamma^2}{N}\sigma^2+\dfrac{6\gamma^2}{N}\left(\dfrac{1}{N}\sum_{i=1}^N\left\Vert z_i^{(t)}-\bar{z}^{(t)}\right\Vert^2+\left\Vert \bar{z}^{(t)}-\bar{z}^{(t)}_{\star}\right\Vert^2+\left\Vert \bar{z}^{(t)}_{\star}\right\Vert^2\right)\sigma^2\\
            \leq&\left(\left(1+\dfrac{\gamma\mu_g}{2}\right)(1-\gamma\mu_g)^2+\dfrac{6\gamma^2\sigma^2}{N}\right)\left\Vert\bar{z}^{(t)}-\bar{z}^{(t)}_{\star}\right\Vert^2+\dfrac{24\gamma}{\mu_g}\left(\dfrac{L_{\nabla^2g}^2L_f^2}{\mu_g^2}+L_{\nabla f}^2\right)\left\Vert\bar{y}^{(t)}-\bar{y}^{(t)}_{\star}\right\Vert^2\\
            &+\dfrac{24\gamma}{\mu_g}\left(\dfrac{L_{\nabla^2g}^2L_f^2}{\mu_g^2}+L_{\nabla f}^2\right)\cdot\dfrac{1}{N}\left(\left\Vert\mathbf{x}^{(t)}-\mathbf{\bar{x}}^{(t)}\right\Vert_F^2+\left\Vert\mathbf{y}^{(t)}-\mathbf{\bar{y}}^{(t)}\right\Vert_F^2\right)\\
            &+\left(\dfrac{12\gamma L_{\nabla g}^2}{\mu_g}+\dfrac{6\gamma^2\sigma^2}{N}\right)\cdot\dfrac{1}{N}\left\Vert \mathbf{z}^{(t)}-\mathbf{\bar{z}}^{(t)}\right\Vert_F^2+\dfrac{6\gamma^2}{N}\left(1+\dfrac{L_f^2}{\mu_g^2}{}\right)\sigma^2+\dfrac{12\gamma}{\mu_g}\iota^2\\
            \leq&\left(1-\dfrac{4\gamma\mu_g}{3}\right)\left\Vert\bar{z}^{(t)}-\bar{z}^{(t)}_{\star}\right\Vert^2+\dfrac{24\gamma}{\mu_g}\left(\dfrac{L_{\nabla^2g}^2L_f^2}{\mu_g^2}+L_{\nabla f}^2\right)\left\Vert\bar{y}^{(t)}-\bar{y}^{(t)}_{\star}\right\Vert^2\\
            &+\dfrac{48\gamma L^2}{\mu_g}\cdot\dfrac{1}{N}\Delta_t^2+\dfrac{6\gamma^2}{N}\left(1+\dfrac{L_f^2}{\mu_g^2}{}\right)\sigma^2+\dfrac{12\gamma}{\mu_g}\iota^2,
        \end{aligned}
    \end{equation}
    where the last equation uses the assumption $-\dfrac{1}{6}\gamma\mu_g+ {\dfrac{6}{N}}\gamma^2\sigma^2+\dfrac{1}{2}\gamma^3\mu_g^3\leq0$, $\gamma\mu_g\leq\dfrac{NL^2}{\mu_g\sigma_{g2}^2}$, and the fact that $\dfrac{L_{\nabla^2g}^2L_f^2}{\mu_g^2}+L_{\nabla f}^2\leq(1+\kappa^2)L^2\leq2\kappa^2L^2$.

    Plugging \eqref{eq:z^{t+1}-i-{z}^{(t)}-i^*-middle_cache} into \eqref{z_chubu} and noting that $1+\dfrac{\gamma\mu_g}{3}\leq\dfrac{4}{3}$, we have:
    \begin{equation}
        \begin{aligned}
            &\mathbb{E}_t\left[\left\Vert\bar{z}^{(t+1)}-\bar{z}^{(t+1)}_{\star}\right\Vert^2\right]\\
            \leq&\left(1+\dfrac{\gamma\mu_g}{3}\right)\left(1-\dfrac{4\gamma\mu_g}{3}\right)\left\Vert\bar{z}^{(t)}-\bar{z}^{(t)}_{\star}\right\Vert^2+\left(\alpha^2+\dfrac{3\alpha^2}{\gamma\mu_g}\right)\tau^2L_{z^{\star}}^2\left\Vert \bar{h}^{(t)}\right\Vert^2+\dfrac{64\gamma}{\mu_g}L^2\dfrac{1}{N}\Delta^2_t\\
            &+\dfrac{32\gamma}{\mu_g}\left(\dfrac{L_{\nabla^2g}^2L_f^2}{\mu_g^2}+L_{\nabla f}^2\right)\left\Vert\bar{y}^{(t)}-\bar{y}^{(t)}_{\star}\right\Vert^2+\dfrac{8\gamma^2}{N}\left(1+\dfrac{L_f^2}{\mu_g^2}{}\right)\sigma^2+\dfrac{16\gamma}{\mu_g}\iota^2\\
            \leq&(1-\gamma\mu_g)\left\Vert\bar{z}^{(t)}-\bar{z}^{(t)}_{\star}\right\Vert^2+\dfrac{8L_{z^{\star}}^2\tau^2\alpha^2}{\gamma\mu_g}\left\Vert \bar{h}^{(t)}\right\Vert^2+\dfrac{64\gamma}{\mu_g}L^2\dfrac{1}{N}\Delta^2_t\\
            &+\dfrac{32\gamma}{\mu_g}\left(\dfrac{L_{\nabla^2g}^2L_f^2}{\mu_g^2}+L_{\nabla f}^2\right)\left\Vert\bar{y}^{(t)}-\bar{y}^{(t)}_{\star}\right\Vert^2+\dfrac{8\gamma^2}{N}\left(1+\dfrac{L_f^2}{\mu_g^2}{}\right)\sigma^2+\dfrac{16\gamma}{\mu_g}\iota^2.
        \end{aligned}
    \end{equation}
    Taking summation and expectation on both sides, we obtain:
    \begin{equation}
        \begin{aligned}
            &\gamma\mu_g\sum_{t=0}^{T-1} \mathbb{E}\left[\left\Vert\bar{z}^{(t+1)}-\bar{z}^{(t+1)}_{\star}\right\Vert^2\right]\\
            \leq&{\mathbb{E}\left[\left\Vert \bar{z}^{(0)}-\bar{z}^{(0)}_{\star}\right\Vert^2\right]}+\dfrac{8L_{z^{\star}}^2\tau^2\alpha^2}{\gamma\mu_g}\sum_{t=0}^{T-1} \mathbb{E}\left[\left\Vert\bar{h}^{(t)}\right\Vert^2\right]+\dfrac{64\gamma}{\mu_g}L^2\sum_{t=0}^{T-1} \mathbb{E}\left[\dfrac{1}{N}\Delta_t^2\right]\\
            &+\dfrac{32\gamma}{\mu_g}\left(\dfrac{L_{\nabla^2g}^2L_f^2}{\mu_g^2}+L_{\nabla f}^2\right)\sum_{t=0}^{T-1} \mathbb{E}\left[\left\Vert\bar{y}^{(t)}-\bar{y}^{(t)}_{\star}\right\Vert^2\right]+\dfrac{8T\gamma^2}{N}\left(1+\dfrac{L_f^2}{\mu_g^2}{}\right)\sigma^2+\dfrac{16T\gamma}{\mu_g}\iota^2.
        \end{aligned}
    \end{equation}
    Finally, from \eqref{eq:lemma:yt+1-yt+1*}, we have:
    \begin{equation}
    \label{final:re_zt+1-zt+1*}
        \begin{aligned}
           &\sum_{t=0}^{T-1} \alpha\mathbb{E}\left[\left\Vert\bar{z}^{(t+1)}-\bar{z}^{(t+1)}_{\star}\right\Vert^2\right]\\
            \leq&{\dfrac{1}{c_2\mu_g}\mathbb{E}\left[\left\Vert \bar{z}^{(0)}-\bar{z}^{(0)}_{\star}\right\Vert^2\right]}+\dfrac{8L_{z^{\star}}^2\tau^2}{c_2^2\mu_g^2}\sum_{t=0}^{T-1} \alpha\mathbb{E}\left[\left\Vert\bar{h}^{(t)}\right\Vert^2\right]+\dfrac{64}{\mu_g^2}L^2\sum_{t=0}^{T-1} \alpha\mathbb{E}\left[\dfrac{1}{N}\Delta_t^2\right]\\
            &+\dfrac{32}{\mu_g^2}\left(\dfrac{L_{\nabla^2g}^2L_f^2}{\mu_g^2}+L_{\nabla f}^2\right)\sum_{t=0}^{T-1} \alpha\mathbb{E}\left[\left\Vert\bar{y}^{(t)}-\bar{y}^{(t)}_{\star}\right\Vert^2\right]+\dfrac{T\alpha^2}{N}\dfrac{8c_2}{\mu_g}\left(1+\dfrac{L_f^2}{\mu_g^2}{}\right)\sigma^2+T\alpha\iota^2\dfrac{16}{\mu_g^2}\\
           \leq&C_{zx}\sum_{t=0}^{T-1} \tau^2\alpha\mathbb{E}\left[\left\Vert\bar{h}^{(t)}\right\Vert^2\right]+C_{zD}\sum_{t=0}^{T-1} \alpha\mathbb{E}\left[\dfrac{1}{N}\Delta_t^2\right]+C_{z1}\dfrac{T\alpha^2}{N}+C_{z2}+C_{z3}T\alpha\iota^2,
        \end{aligned}
    \end{equation}
    where
\begin{align*}
    C_{zx}&=\dfrac{32}{\mu_g^2}\left(\dfrac{L_{\nabla^2g}^2L_f^2}{\mu_g^2}+L_{\nabla f}^2\right)C_{yx}+\dfrac{8L_{z^{\star}}^2}{c_2^2\mu_g^2},\\
    C_{zD}&=\dfrac{32}{\mu_g^2}\left[\left(\dfrac{L_{\nabla^2g}^2L_f^2}{\mu_g^2}+L_{\nabla f}^2\right)C_{yD}+2L^2\right],\\
    C_{z1}&=\dfrac{32}{\mu_g^2}\left(\dfrac{L_{\nabla^2g}^2L_f^2}{\mu_g^2}+L_{\nabla f}^2\right)C_{y1}+\dfrac{8c_2}{\mu_g}\left(1+\dfrac{L_f^2}{\mu_g^2}{}\right)\sigma^2,\\
    C_{z2}&=\dfrac{32}{\mu_g^2}\left(\dfrac{L_{\nabla^2g}^2L_f^2}{\mu_g^2}+L_{\nabla f}^2\right)C_{y2}+{\dfrac{1}{c_2\mu_g}\mathbb{E}\left[\left\Vert \bar{z}^{(0)}-\bar{z}^{(0)}_{\star}\right\Vert^2\right]},\\
    C_{z3}&=\dfrac{16}{\mu_g^2}.
\end{align*}
\end{proof}

Now we can obtain the estimation errors for $y,z$ in the following lemma:
\begin{lemma}
\label{lem:consensus1}
Suppose Assumption \ref{assumption:smooth}, \ref{assumption: data heterogeneity}, \ref{assumption:unbiased}, and \ref{assumption: gossip communication} hold. If the step-sizes $\beta, \gamma$ satisfy \eqref{eq:assumptiony}, \eqref{eq:assumptionz},  in Algorithm \ref{alg:DeMA-SOBA}  we have:
\begin{equation}
\label{eq:final11_xy_consesus1}
    \begin{aligned}
        &(1+\kappa^2)\sum_{t=0}^{T-1} \alpha\mathbb{E}\left[\left\Vert\bar{y}^{(t)}-\bar{y}^{(t)}_{\star}\right\Vert^2\right]+\sum_{t=0}^{T-1} \alpha\mathbb{E}\left[\left\Vert\bar{z}^{(t)}-\bar{z}^{(t)}_{\star}\right\Vert^2\right]\\
        \leq&((1+\kappa^2)C_{yx}+C_{zx})\sum_{t=0}^{T-1} \tau^2\alpha\mathbb{E}\left[\left\Vert\bar{h}^{(t)}\right\Vert^2\right]+((1+\kappa^2)C_{y1}+C_{z1})\dfrac{T\alpha^2}{N}+((1+\kappa^2)C_{y2}+C_{z2})\\
        &+C_{z3}T\alpha^2\iota^2+((1+\kappa^2)C_{yD}+C_{zD})\sum_{t=0}^{T-1} \alpha\mathbb{E}\left[\dfrac{1}{N}\Delta_t^2\right].
    \end{aligned}
\end{equation}

\end{lemma}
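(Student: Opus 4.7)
The claim is essentially a linear combination of Lemma~\ref{lemma:est yt+1-yt+1*} and Lemma~\ref{lemma:est zt+1-zt+1*}, so the argument is pure bookkeeping. I plan to multiply the bound in \eqref{eq:lemma:yt+1-yt+1*} by the scalar $(1+\kappa^2)$ and add it term-by-term to \eqref{eq:lemma:zt+1-zt+1*}. The coefficients of $\tau^2\alpha\mathbb{E}[\|\bar h^{(t)}\|^2]$ then combine into $(1+\kappa^2)C_{yx}+C_{zx}$, the coefficients of $\alpha\mathbb{E}[\Delta_t^2/N]$ combine into $(1+\kappa^2)C_{yD}+C_{zD}$, the $T\alpha^2/N$ contributions combine into $(1+\kappa^2)C_{y1}+C_{z1}$, the initial-condition constants combine into $(1+\kappa^2)C_{y2}+C_{z2}$, and the finite-difference remainder $C_{z3}T\alpha\iota^2$ carries over unchanged. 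The hypotheses on the step sizes needed to invoke the two underlying lemmas, namely \eqref{eq:assumptiony} and \eqref{eq:assumptionz}, are precisely the ones already assumed here.

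The one small technical point I would need to address is an index shift. Lemma~\ref{lemma:est zt+1-zt+1*} bounds $\sum_{t=0}^{T-1}\alpha\mathbb{E}[\|\bar z^{(t+1)}-\bar z^{(t+1)}_{\star}\|^2]$, i.e.~the sum over iterates $t=1,\dots,T$, while the statement of Lemma~\ref{lem:consensus1} uses the unshifted sum over $t=0,\dots,T-1$. The difference between the two is
\begin{equation*}
\alpha\mathbb{E}\left[\left\Vert \bar z^{(0)}-\bar z^{(0)}_{\star}\right\Vert^2\right]-\alpha\mathbb{E}\left[\left\Vert \bar z^{(T)}-\bar z^{(T)}_{\star}\right\Vert^2\right]\le\alpha\mathbb{E}\left[\left\Vert \bar z^{(0)}-\bar z^{(0)}_{\star}\right\Vert^2\right],
\end{equation*}
which, since $\alpha c_2\mu_g\le 1$ by \eqref{eq:assumptionz}, is dominated by the existing contribution $(c_2\mu_g)^{-1}\mathbb{E}[\|\bar z^{(0)}-\bar z^{(0)}_{\star}\|^2]$ hidden inside $C_{z2}$; absorbing this extra $\alpha$-multiple of the same quantity into the universal constant implicit in $C_{z2}$ is harmless and preserves the stated order $\mathcal{O}(\kappa^5/c_1+\kappa/c_2)$.

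There is no substantive obstacle: both underlying bounds have already been established, the linear combination is explicit, and the step-size conditions match. I expect the entire proof to fit in a handful of lines, consisting of (i) rewriting \eqref{eq:lemma:yt+1-yt+1*} multiplied by $(1+\kappa^2)$, (ii) rewriting \eqref{eq:lemma:zt+1-zt+1*}, (iii) adding the two inequalities and regrouping terms, and (iv) a one-line remark absorbing the $\bar z^{(T)}$ boundary term into $C_{z2}$.
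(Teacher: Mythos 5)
Your proposal is correct and matches the paper's (implicit) argument exactly: the paper states Lemma~\ref{lem:consensus1} without a separate proof because it is precisely the $(1+\kappa^2)$-weighted sum of \eqref{eq:lemma:yt+1-yt+1*} and \eqref{eq:lemma:zt+1-zt+1*}, and your handling of the index shift in the $z$-sum (bounding the unshifted sum by the shifted one plus $\alpha\mathbb{E}\left[\left\Vert\bar z^{(0)}-\bar z^{(0)}_{\star}\right\Vert^2\right]$ and absorbing that into $C_{z2}$ via $c_2\alpha\mu_g\le 1$) is in fact more careful than the paper, which performs this shift silently when passing to \eqref{eq:final11_xy_consesus2}. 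The only mismatch is the finite-difference term: your summation yields $C_{z3}T\alpha\iota^2$ whereas the lemma statement displays $C_{z3}T\alpha^2\iota^2$; since \eqref{eq:lemma:zt+1-zt+1*} only supplies the former and the paper itself reverts to $T\alpha\iota^2$ downstream (e.g.\ in \eqref{H2}), this is a typo in the lemma statement rather than a gap in your argument.
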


\subsection{Consensus Lemmas}
Then we analyze the consensus error  $\Delta_t^2$. In the beginning, we  present some notations. 
The iteration of $x$ in Algorithm \ref{alg:DeMA-SOBA} can be written as follows:
\begin{equation}
\label{matrix form}
\mathbf{x}^{(t+1)}=W\left(\mathbf{x}^{(t)}-\tau\alpha\mathbf{h}^{(t)}\right).
\end{equation}
By left-multiplying $\dfrac{1}{N}\mathbf{1}\mathbf{1}^\top $ on both sides of \eqref{matrix form} and using Assumption \ref{assumption: gossip communication}, we have:
\begin{equation}
\label{matrix form average}
\bar{\mathbf{x}}^{(t+1)}=\dfrac{1}{N}\mathbf{1}\mathbf{1}^\top \left(\mathbf{x}^{(t)}-\tau\alpha\mathbf{h}^{(t)}\right).
\end{equation}
Then, from \eqref{matrix form} and \eqref{matrix form average}, we have
\begin{equation}
\label{matrix11}
\begin{aligned}
\mathbf{x}^{(t+1)}-\bar{\mathbf{x}}^{(t+1)}=&\left(W-\dfrac{1}{N}\mathbf{1}\mathbf{1}^\top \right)\left(\mathbf{x}^{(t)}-\tau\alpha\mathbf{h}^{(t)}\right)\\
=&\left(W-\dfrac{1}{N}\mathbf{1}\mathbf{1}^\top \right)\left(\mathbf{x}^{(t)}-\bar{\mathbf{x}}^{(t)}-\tau\alpha\mathbf{h}^{(t)}\right).
\end{aligned}
\end{equation}

Similarly, the consensus term in the iteration of $y$ can be written as:
\begin{equation}
\label{matrix12}
\begin{aligned}
\mathbf{y}^{(t+1)}-\bar{\mathbf{y}}^{(t+1)}=\left(W-\dfrac{1}{N}\mathbf{1}\mathbf{1}^\top \right)\left(\mathbf{y}^{(t)}-\bar{\mathbf{y}}^{(t)}-\beta\mathbf{v}^{(t+1)}\right),
\end{aligned}
\end{equation}
and the consensus term in the iteration of $z$ can be written as:
\begin{equation}
\label{matrix13}
\begin{aligned}
\mathbf{z}^{(t+1)}-\bar{\mathbf{z}}^{(t+1)}=\left(W-\dfrac{1}{N}\mathbf{1}\mathbf{1}^\top \right)\left(\mathbf{z}^{(t)}-\bar{\mathbf{z}}^{(t)}-\gamma\mathbf{r}^{(t+1)}\right).
\end{aligned}
\end{equation}

The following lemma \ref{lem:consensusx} gives the upper bound of the consensus error of $x$:
\begin{lemma}
\label{lem:consensusx}
Suppose Assumption \ref{assumption:smooth}, \ref{assumption: data heterogeneity}, \ref{assumption:unbiased}, and \ref{assumption: gossip communication} hold. Then in Algorithm \ref{alg:DeMA-SOBA}, we have:
\begin{equation}
\label{eq:final-x-consesus2}
    \begin{aligned}
        &\mathbb{E}\left[\sum_{t=0}^{T-1} \left\Vert \mathbf{x}^{(t)}-\mathbf{\bar{x}}^{(t)}\right\Vert_F^2\right]\\
        \leq&\left(\dfrac{208\tau^2\alpha^2\rho^2L^2}{(1-\rho)^2}+\dfrac{6\tau^2\alpha^2\rho^2\sigma^2}{1-\rho}\right)\mathbb{E}\left[\sum_{t=0}^{T-1}\Delta_t^2\right]+{\dfrac{192\tau^2\alpha^2\rho^2}{(1-\rho)^2}NT\left(b_1^2+\dfrac{L_f^2}{\mu_g^2}b_2^2\right)}\\
        &+\left(\dfrac{192N\tau^2\alpha^2\rho^2}{(1-\rho)^2}L_{\nabla g}^2+\dfrac{6N\tau^2\alpha^2\rho^2}{1-\rho}\sigma^2\right)\sum_{t=0}^{T-1}\mathbb{E}\left[\left\Vert\bar{z}^{(t)}-\bar{z}^{(t)}_{\star}\right\Vert^2\right]+\dfrac{12\tau^2\alpha^2\rho^2}{(1-\rho)^2}NT\iota^2\\
        &+\dfrac{6\tau^2\alpha^2\rho^2}{1-\rho}NT\left(1+\dfrac{L_f^2}{\mu_g^2}\right)\sigma^2+{\dfrac{1}{1-\rho}\left\Vert \mathbf{x}^{(0)}-\mathbf{\bar{x}}^{(0)}\right\Vert_F^2}.
    \end{aligned}
\end{equation}
\end{lemma}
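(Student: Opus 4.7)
The plan is to run the standard spectral-gap contraction argument on the matrix recursion \eqref{matrix11} and then separately control the accumulated $\mathbf{h}^{(t)}$-term by unrolling its moving-average update. Writing $\bar J := \tfrac{1}{N}\mathbf{1}\mathbf{1}^\top$, Assumption \ref{assumption: gossip communication} gives $\|W-\bar J\|_2\le\rho$, so \eqref{matrix11} combined with Young's inequality of parameter $\eta=(1-\rho)/\rho$ yields the one-step bound
\begin{equation*}
\|\mathbf{x}^{(t+1)}-\bar{\mathbf{x}}^{(t+1)}\|_F^2
\;\le\;\rho\,\|\mathbf{x}^{(t)}-\bar{\mathbf{x}}^{(t)}\|_F^2
+\frac{\rho^2\tau^2\alpha^2}{1-\rho}\bigl\|(I-\bar J)\mathbf{h}^{(t)}\bigr\|_F^2 .
\end{equation*}
Telescoping over $t=0,\dots,T-1$ (a geometric series of ratio $\rho$) then produces the initial term $\|\mathbf{x}^{(0)}-\bar{\mathbf{x}}^{(0)}\|_F^2/(1-\rho)$ and a factor $1/(1-\rho)^2$ in front of the accumulated $\mathbf{h}$-contribution, matching the denominators attached to the non-noise terms in the statement.

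Next I would bound $\sum_{t=0}^{T-1}\|(I-\bar J)\mathbf{h}^{(t)}\|_F^2$. Since $h_i^{(t+1)}=(1-\theta)h_i^{(t)}+\theta\,\omega_i^{(t+1)}$ with $\omega_i^{(t+1)}=\nabla_1 F(x_i^{(t)},y_i^{(t)};\xi_i^{(t)})-p_{J,i}^{(t)}$ is a convex combination, the squared norm obeys the same convex bound by Jensen; unrolling and summing the resulting geometric series reduces the task to controlling $\sum_t\|(I-\bar J)\omega^{(t+1)}\|_F^2$. I then split each $\omega_i^{(t+1)}$ into four conditionally uncorrelated blocks given $\mathcal{F}^{(t)}$: (i) the deterministic piece $q_i^{(t)}=\nabla_1 f_i(x_i^{(t)},y_i^{(t)})-\nabla_{12}^2 g_i(x_i^{(t)},y_i^{(t)})z_i^{(t)}$; (ii) gradient sampling noise of variance $\sigma^2$; (iii) Jacobian sampling noise whose variance is at most $\sigma^2\|z_i^{(t)}\|^2$ by Proposition \ref{error of finite}; and (iv) the finite-difference bias of norm at most $\iota$ from the same proposition.

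For block (i), I add and subtract $\nabla_1 f(\bar x^{(t)},\bar y_\star^{(t)})-\nabla_{12}^2 g(\bar x^{(t)},\bar y_\star^{(t)})\,\bar z_\star^{(t)}$ inside $q_i^{(t)}-\bar q^{(t)}$. Assumption \ref{assumption: data heterogeneity} then delivers the $N(b_1^2+(L_f^2/\mu_g^2)b_2^2)$ heterogeneity terms, the factor $L_f^2/\mu_g^2$ arising from \eqref{est:z*} whenever $\bar z_\star^{(t)}$ multiplies a Hessian difference, while Assumption \ref{assumption:smooth} absorbs the remaining smooth differences into $L^2\Delta_t^2$ and into $\|\bar z^{(t)}-\bar z_\star^{(t)}\|^2$ through $L_{\nabla g}^2$. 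For block (iii) I apply the splitting $\|z_i^{(t)}\|^2\le 3\bigl(\|z_i^{(t)}-\bar z^{(t)}\|^2+\|\bar z^{(t)}-\bar z_\star^{(t)}\|^2+L_f^2/\mu_g^2\bigr)$, which produces exactly the three $\sigma^2$-weighted contributions on $\Delta_t^2$, on $\|\bar z^{(t)}-\bar z_\star^{(t)}\|^2$, and on $(1+L_f^2/\mu_g^2)$ that appear in the statement.

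The main obstacle is tracking the different powers of $1-\rho$: the deterministic and bias blocks inherit the $1/(1-\rho)^2$ from the telescoped recursion, whereas the conditionally mean-zero noise blocks should only pick up $1/(1-\rho)$, as reflected in the statement. To get this refined bookkeeping I would take the conditional expectation \emph{before} applying Young's inequality to the stochastic part: writing $\mathbf{h}^{(t)}$ as a deterministic component plus a martingale-difference component, the cross term in the squared expansion of $\|(W-\bar J)(\mathbf{x}^{(t)}-\bar{\mathbf{x}}^{(t)}-\tau\alpha\mathbf{h}^{(t)})\|_F^2$ vanishes in expectation, so the noise variance adds directly to $\mathbb{E}[\|\mathbf{x}^{(t+1)}-\bar{\mathbf{x}}^{(t+1)}\|_F^2]$ without the extra $1/(1-\rho)$ inflation that Young's inequality would otherwise introduce. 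Assembling the four blocks with this bookkeeping, and multiplying through by $\rho^2\tau^2\alpha^2/(1-\rho)^2$ or $\rho^2\tau^2\alpha^2/(1-\rho)$ as appropriate, reproduces every coefficient in \eqref{eq:final-x-consesus2}.
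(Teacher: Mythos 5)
Your overall decomposition of $\omega^{(t)}$ into the deterministic dispersion, the two sampling-noise blocks, and the finite-difference bias matches the paper's treatment (cf.\ the bounds \eqref{omega-omega_bar}, \eqref{error conx g}, \eqref{error conx vector product}), and your direct contraction-plus-telescoping route does correctly reproduce the $1/(1-\rho)^2$ coefficients on the heterogeneity and bias terms as well as the initial term $\|\mathbf{x}^{(0)}-\bar{\mathbf{x}}^{(0)}\|_F^2/(1-\rho)$. The gap is in the step you flag as "the main obstacle." You propose to recover the smaller $1/(1-\rho)$ factor on the $\sigma^2$-terms by splitting $\mathbf{h}^{(t)}$ into a deterministic component plus a martingale-difference component and arguing that the cross term with $(W-\bar J)(\mathbf{x}^{(t)}-\bar{\mathbf{x}}^{(t)})$ vanishes in expectation. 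It does not: $h^{(t)}=(1-\theta)^t h^{(0)}+\theta\sum_{s\le t}(1-\theta)^{t-s}\omega^{(s)}$ accumulates \emph{all} past sampling noise, and that same past noise drives $\mathbf{x}^{(t)}-\bar{\mathbf{x}}^{(t)}$ through the earlier updates. Only the freshest increment $\omega^{(t)}-\mathbb{E}_{t-1}[\omega^{(t)}]$ is conditionally mean-zero given the $\sigma$-field that makes $\mathbf{x}^{(t)}$ measurable; the older increments are measurable with respect to that $\sigma$-field and hence fully correlated with the consensus error. So in your one-step recursion the stochastic part of $\mathbf{h}^{(t)}$ cannot be peeled off by orthogonality, and whatever is left must pass through Young's inequality, reinstating the unwanted extra $1/(1-\rho)$ on the variance contributions. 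Accepting that loss yields a strictly weaker lemma (all $\sigma^2$-terms with $(1-\rho)^{-2}$ instead of $(1-\rho)^{-1}$), which degrades the downstream transient complexity.

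The paper avoids this precisely by running the contraction on the compensated difference $(\mathbf{x}^{(t+1)}-\bar{\mathbf{x}}^{(t+1)})-(1-\theta)(\mathbf{x}^{(t)}-\bar{\mathbf{x}}^{(t)})$, exploiting the algebraic identity $-\tau\alpha(W-\tfrac{1}{N}\mathbf{1}\mathbf{1}^\top)\mathbf{h}^{(t-1)}=(\mathbf{x}^{(t)}-\bar{\mathbf{x}}^{(t)})-(W-\tfrac{1}{N}\mathbf{1}\mathbf{1}^\top)(\mathbf{x}^{(t-1)}-\bar{\mathbf{x}}^{(t-1)})$ so that only the fresh increment $\theta(\omega^{(t)}-\mathbb{E}_{t-1}[\omega^{(t)}])$ appears at each step; this term genuinely enters additively by conditional orthogonality, collects only $\rho^2$ at the one-step level, and acquires a single $1/(1-\rho)$ from telescoping. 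The final conversion back to $\sum_t\|\mathbf{x}^{(t)}-\bar{\mathbf{x}}^{(t)}\|_F^2$ is then done via the Jensen step \eqref{drop 1-theta}, which also cancels the $\theta^2$ factors. If you want to keep your direct route, the correct repair is to unroll $\mathbf{x}^{(t)}-\bar{\mathbf{x}}^{(t)}$ all the way down to a double sum over the fresh noises $\omega^{(s)}-\mathbb{E}_{s-1}[\omega^{(s)}]$ and invoke orthogonality of martingale increments across distinct $s$; that works but is substantially more delicate than the two-line fix you describe, and it is the part of the argument you have not actually supplied.
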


\begin{proof}
    From the definition of $\mathcal{F}^{(t)}$, we know that for all $i=1,2,\cdots,N$, the term $x^{(t)}_{i}$ is measureable with respect to $\mathcal{F}^{(t-1)}$. We have:
    \begin{equation}
    \begin{aligned}
        &\mathbb{E}_{t-1}\left[\left\Vert \left(\mathbf{x}^{(t+1)}-\bar{\mathbf{x}}^{(t+1)}\right)-(1-\theta)\left(\mathbf{x}^{(t)}-\bar{\mathbf{x}}^{(t)}\right) \right\Vert^2_F\right]\\
        =&\mathbb{E}_{t-1}\left[\left\Vert \left(W-\dfrac{1}{N}\mathbf{1}\mathbf{1}^\top \right)\left(\mathbf{x}^{(t)}-\bar{\mathbf{x}}^{(t)}-\tau\alpha\mathbf{h}^{(t)}\right)-(1-\theta)\left(\mathbf{x}^{(t)}-\bar{\mathbf{x}}^{(t)}\right) \right\Vert^2_F\right]\\
        =&\left\Vert \left(W-\dfrac{1}{N}\mathbf{1}\mathbf{1}^\top \right)\left(\mathbf{x}^{(t)}-\bar{\mathbf{x}}^{(t)}-\tau\alpha(1-\theta)\mathbf{h}^{(t-1)}-\tau\alpha\theta\mathbb{E}_{t-1}[{\omega}^{(t)}]\right)-(1-\theta)(\mathbf{x}^{(t)}-\bar{\mathbf{x}}^{(t)})\right\Vert^2_F\\
        &+\tau^2\alpha^2\theta^2\rho^2\sum_{i=1}^N\mathbb{E}_{t-1}\left[\left\Vert\omega^{(t)}_{i}-\mathbb{E}_{t-1}[\omega^{(t)}_{i}]\right\Vert^2\right].
    \end{aligned}
    \end{equation}
where the last equation is due to Assumption \ref{assumption:unbiased}. Note that
\begin{align}
    -\tau\alpha\left(W-\dfrac{1}{N}\mathbf{1}\mathbf{1}^\top \right)\mathbf{h}^{(t-1)}=\left(\mathbf{x}^{(t)}-\bar{\mathbf{x}}^{(t)}\right)-\left(W-\dfrac{1}{N}\mathbf{1}\mathbf{1}^\top \right)\left(\mathbf{x}^{(t-1)}-\bar{\mathbf{x}}^{(t-1)}\right).
\end{align}
It follows that:
    \begin{equation}
    \label{consis x 1}
    \begin{aligned}
        &\mathbb{E}_{t-1}\left[\left\Vert \left(\mathbf{x}^{(t+1)}-\bar{\mathbf{x}}^{(t+1)}\right)-(1-\theta)\left(\mathbf{x}^{(t)}-\bar{\mathbf{x}}^{(t)}\right) \right\Vert^2_F\right]\\
        \leq&\dfrac{1}{\rho}\left\Vert \left(W-\dfrac{1}{N}\mathbf{1}\mathbf{1}^\top \right)\left[\mathbf{x}^{(t)}-\bar{\mathbf{x}}^{(t)}-(1-\theta)\left(\mathbf{x}^{(t-1)}-\bar{\mathbf{x}}^{(t-1)}\right)\right]\right\Vert^2_F\\
        &+\dfrac{\tau^2\alpha^2\theta^2}{1-\rho}\left\Vert \left(W-\dfrac{1}{N}\mathbf{1}\mathbf{1}^\top \right)\left(\mathbb{E}_{t-1}[{\omega}^{(t)}]-\overline{\mathbb{E}_{t-1}[{\omega}^{(t)}]}\right)\right\Vert^2_F\\
        &+\tau^2\alpha^2\theta^2\rho^2\sum_{i=1}^N\mathbb{E}_{t-1}\left[\left\Vert\omega^{(t)}_{i}-\mathbb{E}_{t-1}[\omega^{(t)}_{i}]\right\Vert^2_F\right]\\
        \leq&\rho \left\Vert \left(\mathbf{x}^{(t)}-\bar{\mathbf{x}}^{(t)}\right)-(1-\theta)\left(\mathbf{x}^{(t-1)}-\bar{\mathbf{x}}^{(t-1)}\right) \right\Vert^2_F+\dfrac{\tau^2\alpha^2\theta^2\rho^2}{1-\rho}\sum_{i=1}^N\left\Vert\mathbb{E}_{t-1}[\omega_i^{(t)}]-\mathbb{E}_{t-1}[\bar{\omega}^{(t)}]\right\Vert^2\\
        &+\tau^2\alpha^2\theta^2\rho^2\sum_{i=1}^N\mathbb{E}_{t-1}\left[\left\Vert\omega^{(t)}_{i}-\mathbb{E}_{t-1}[\omega^{(t)}_{i}]\right\Vert^2\right],
    \end{aligned}
    \end{equation}
where the first inequality is due to Jensen's inequality and Assumption \ref{assumption: gossip communication}. Then taking expectation and summation on both sides of \eqref{consis x 1} from 0 to $T-1$ and using the fact that $x_i^{(0)}=x_i^{(1)}=0$, we have:
    \begin{equation}
    \label{consis x 10}
    \begin{aligned}
        &\mathbb{E}\left[\sum_{t=0}^{T-1} \left\Vert \left(\mathbf{x}^{(t)}-\bar{\mathbf{x}}^{(t)}\right)-(1-\theta)\left(\mathbf{x}^{(t-1)}-\bar{\mathbf{x}}^{(t-1)}\right) \right\Vert^2_F\right]\\
        \leq&\dfrac{\tau^2\alpha^2\theta^2\rho^2}{(1-\rho)^2}\sum_{t=0}^{T-1} \sum_{i=1}^N\mathbb{E}\left[\left\Vert\mathbb{E}_{t-1}[\omega_i^{(t)}]-\mathbb{E}_{t-1}[\bar{\omega}^{(t)}]\right\Vert^2\right]\\
        &+\dfrac{\tau^2\alpha^2\theta^2\rho^2}{1-\rho}\sum_{t=0}^{T-1} \sum_{i=1}^N\mathbb{E}\left[\left\Vert\omega^{(t)}_{i}-\mathbb{E}_{t-1}[\omega^{(t)}_{i}]\right\Vert^2_F\right]+{\dfrac{1}{1-\rho}\mathbb{E}\left[\left\Vert \mathbf{x}^{(0)}-\bar{\mathbf{x}}^{(0)} \right\Vert^2_F\right]}\\
        \leq&\dfrac{2\tau^2\alpha^2\theta^2\rho^2}{(1-\rho)^2}\sum_{t=0}^{T-1} \sum_{i=1}^N\mathbb{E}\left[\left\Vert\nabla_1f_i(x_i^{(t)},y_i^{(t)})-\dfrac{1}{N}\sum_{m=1}^N\nabla_1f_m(x_m^{(t)},y_m^{(t)})\right\Vert^2\right]\\
        &+\dfrac{2\tau^2\alpha^2\theta^2\rho^2}{(1-\rho)^2}\sum_{t=0}^{T-1} \sum_{i=1}^N\mathbb{E}\left[\left\Vert\mathbb{E}_{t-1}[p_{J,i}^{(t)}]-\mathbb{E}_{t-1}[\bar{p}_{J,i}^{(t)}]\right\Vert^2\right]\\
        &+\dfrac{\tau^2\alpha^2\theta^2\rho^2}{1-\rho}\sum_{t=0}^{T-1} \sum_{i=1}^N\mathbb{E}\left[\left\Vert\omega^{(t)}_{i}-\mathbb{E}_{t-1}[\omega^{(t)}_{i}]\right\Vert^2_F\right]+{\dfrac{1}{1-\rho}\mathbb{E}\left[\left\Vert \mathbf{x}^{(0)}-\bar{\mathbf{x}}^{(0)} \right\Vert^2_F\right]},
    \end{aligned}
    \end{equation}
    where we denote $\mathbf{x}^{(-1)}=\bar{\mathbf{x}}^{(-1)}={\mathbf{0}}_{N\times d}$

    Note that:
    \begin{equation}
    \begin{aligned}
        &\sum_{i=1}^N\mathbb{E}\left[\left\Vert\mathbb{E}_{t-1}[p_{J,i}^{(t)}]-\mathbb{E}_{t-1}[\bar{p}_{J,i}^{(t)}]\right\Vert^2\right]\\
        \leq&3\sum_{i=1}^N\mathbb{E}\left[\left\Vert\mathbb{E}_{t-1}[p_{J,i}^{(t)}]-\nabla_{12}g_i(x^{(t)}_{i},y^{(t)}_{i})z^{(t)}_{i}\right\Vert^2\right]\\
        &+3\sum_{i=1}^N\mathbb{E}\left[\left\Vert\nabla_{12}g_i(x^{(t)}_{i},y^{(t)}_{i})z^{(t)}_{i}-\dfrac{1}{N}\sum_{m=1}^N\nabla_{12}g_m(x^{(t)}_m,y^{(t)}_m)z^{(t)}_m\right\Vert^2\right]\\
        &+3\sum_{i=1}^N\mathbb{E}\left[\left\Vert\dfrac{1}{N}\sum_{m=1}^N\mathbb{E}_{t-1}[p_{J,m}^{(t)}]-\dfrac{1}{N}\sum_{m=1}^N\nabla_{12}g_m(x^{(t)}_m,y^{(t)}_m)z^{(t)}_m\right\Vert^2\right]\\
        \leq&6\sum_{i=1}^N\mathbb{E}\left[\left\Vert\mathbb{E}_{t-1}[p_{J,i}^{(t)}]-\nabla_{12}g_i(x^{(t)}_{i},y^{(t)}_{i})z^{(t)}_{i}\right\Vert^2\right]\\
        &+6\sum_{i=1}^N\mathbb{E}\left[\left\Vert\nabla_{12}g_i(x^{(t)}_{i},y^{(t)}_{i})z^{(t)}_{i}-\dfrac{1}{N}\sum_{m=1}^N\nabla_{12}g_m(x^{(t)}_m,y^{(t)}_m)z^{(t)}_m\right\Vert^2\right].
    \end{aligned}
    \end{equation}

    Then we can obtain:
    \begin{equation}
    \label{consis x 11}
    \begin{aligned}
        &\mathbb{E}\left[\sum_{t=1}^{T-1} \left\Vert \left(\mathbf{x}^{(t)}-\bar{\mathbf{x}}^{(t)}\right)-(1-\theta)\left(\mathbf{x}^{(t-1)}-\bar{\mathbf{x}}^{(t-1)}\right) \right\Vert^2_F\right]\\
        \leq&\dfrac{2\tau^2\alpha^2\theta^2\rho^2}{(1-\rho)^2}\sum_{t=1}^{T-1} \sum_{i=1}^N\mathbb{E}\left[\left\Vert\nabla_1f_i(x_i^{(t)},y_i^{(t)})-\dfrac{1}{N}\sum_{m=1}^N\nabla_1f_m(x_m^{(t)},y_m^{(t)})\right\Vert^2\right]\\
        &+\dfrac{12\tau^2\alpha^2\theta^2\rho^2}{(1-\rho)^2}\sum_{t=1}^{T-1} \sum_{i=1}^N\mathbb{E}\left[\left\Vert\mathbb{E}_{t-1}[p_{J,i}^{(t)}]-\nabla_{12}g_i(x^{(t)}_{i},y^{(t)}_{i})z^{(t)}_{i}\right\Vert^2\right]\\
        &+\dfrac{12\tau^2\alpha^2\theta^2\rho^2}{(1-\rho)^2}\sum_{t=1}^{T-1} \sum_{i=1}^N\mathbb{E}\left[\left\Vert \nabla_{12}g_i(x^{(t)}_{i},y^{(t)}_{i})z^{(t)}_{i}-\dfrac{1}{N}\sum_{m=1}^N\nabla_{12}g_m(x^{(t)}_m,y^{(t)}_m)z^{(t)}_m  \right\Vert^2\right]\\
        &+\dfrac{2\tau^2\alpha^2\theta^2\rho^2}{1-\rho}\sum_{t=1}^{T-1} \sum_{i=1}^N\left(\mathbb{E}\left[\left\Vert u_{i,x}^{(t)}-\mathbb{E}_t[u_{i,x}^{(t)}]\right\Vert^2\right]+\mathbb{E}\left[\left\Vert p^{(t)}_{J,i}-\mathbb{E}_{t-1}[\bar{p}^{(t)}_{J,i}]\right\Vert^2\right]\right).
    \end{aligned}
    \end{equation}
    
For the first term on the right-hand side of \eqref{consis x 11}, we have:
\begin{equation}
\label{omega-omega_bar}
    \begin{aligned}
        &\sum_{i=1}^N\left\Vert\nabla_1f_i(x_i^{(t)},y_i^{(t)})-\dfrac{1}{N}\sum_{m=1}^N\nabla_1f_m(x_m^{(t)},y_m^{(t)})\right\Vert^2\\
        =&\sum_{i=1}^N\left\Vert\left[\nabla_1f_i(x^{(t)}_{i},y^{(t)}_{i})-\nabla_1f(\bar{x}^{(t)},\bar{y}^{(t)})\right]-\dfrac{1}{N}\sum_{m=1}^N\left[\nabla_1f_m(x^{(t)}_m,y^{(t)}_m)-\nabla_1f(\bar{x}^{(t)},\bar{y}^{(t)})\right]\right\Vert^2\\
        \leq&4\sum_{i=1}^N\left\Vert\nabla_1f_i(x^{(t)}_{i},y^{(t)}_{i})-\nabla_1f(\bar{x}^{(t)},\bar{y}^{(t)})\right\Vert^2\\
        \leq&8\sum_{i=1}^N\left\Vert\nabla_1f_i(x^{(t)}_{i},y^{(t)}_{i})-\nabla_1f_i(\bar{x}^{(t)},\bar{y}^{(t)})\right\Vert^2+8\sum_{i=1}^N\left\Vert\nabla_1f_i(\bar{x}^{(t)},\bar{y}^{(t)})-\nabla_1f(\bar{x}^{(t)},\bar{y}^{(t)})\right\Vert^2\\
        \leq&8L_{\nabla f}^2\Delta_t^2+8N{b_1^2}.
    \end{aligned}
\end{equation}

For the third term of \eqref{consis x 11}, we have:
\begin{equation}
\label{error conx g}
    \begin{aligned}
        &\sum_{i=1}^N\left\Vert \nabla_{12}g_i(x^{(t)}_{i},y^{(t)}_{i})z^{(t)}_{i}-\dfrac{1}{N}\sum_{m=1}^N\nabla_{12}g_m(x^{(t)}_m,y^{(t)}_m)z^{(t)}_m  \right\Vert^2\\
        \leq&4\sum_{i=1}^N\left\Vert \nabla_{12}g_i(x^{(t)}_{i},y^{(t)}_{i})z^{(t)}_{i}-\nabla_{12}g(\bar{x}^{(t)},\bar{y}^{(t)})\bar{z}_{\star}^{(t)}\right\Vert^2\\
        \leq&16\sum_{i=1}^N\left\Vert\nabla_{12}g_i(x^{(t)}_{i},y^{(t)}_{i})\left(z^{(t)}_{i}-\bar{z}^{(t)}\right)\right\Vert^2+16\sum_{i=1}^N\left\Vert\nabla_{12}g_i(x^{(t)}_{i},y^{(t)}_{i})\left(\bar{z}^{(t)}-\bar{z}^{(t)}_{\star}\right)\right\Vert^2\\
        &+16\sum_{i=1}^N\left\Vert\left(\nabla_{12}g_i(x^{(t)}_{i},y^{(t)}_{i})-\nabla_{12}g_i(\bar{x}^{(t)},\bar{y}^{(t)})\right)\bar{z}^{(t)}_{\star}\right\Vert^2\\
        &+16\sum_{i=1}^N\left\Vert\left(\nabla_{12}g_i(\bar{x}^{(t)},\bar{y}^{(t)})-\nabla_{12}g(\bar{x}^{(t)},\bar{y}^{(t)})\right)\bar{z}^{(t)}_{\star}\right\Vert^2\\
        \leq&16L_{\nabla g}^2\left\Vert\mathbf{z}^{(t)}-\bar{\mathbf{z}}^{(t)}\right\Vert_F^2+16L_{\nabla g}^2\sum_{i=1}^N\left\Vert\bar{z}^{(t)}-\bar{z}^{(t)}_{\star}\right\Vert^2\\
        &+16\kappa^2L_{\nabla g}^2\left(\left\Vert\mathbf{x}^{(t)}-\bar{\mathbf{x}}^{(t)}\right\Vert^2_F+\left\Vert\mathbf{y}^{(t)}-\bar{\mathbf{y}}^{(t)}\right\Vert^2_F\right)+\dfrac{16L_f^2}{\mu_g^2}N{b_2^2}\\
        \leq&16L_{\nabla g}^2\sum_{i=1}^N\left\Vert\bar{z}^{(t)}-\bar{z}^{(t)}_{\star}\right\Vert^2+16L_{\nabla g}^2\Delta_t^2+\dfrac{16L_f^2}{\mu_g^2}N{b_2^2}.
    \end{aligned}
\end{equation}

Finally, we consider the last term of \eqref{consis x 11}. From Assumption \ref{assumption:unbiased} and Eq. \eqref{chafen_J}, we obtain:
\begin{equation}
    \begin{aligned}
    \label{error conx vector product}
        &\sum_{i=1}^N\left(\mathbb{E}\left[\left\Vert u_{i,x}^{(t)}-\mathbb{E}_t[u_{i,x}^{(t)}]\right\Vert^2_F\right]+\mathbb{E}\left[\left\Vert p^{(t)}_{J,i}-\mathbb{E}_{t-1}[\bar{p}^{(t)}_{J,i}]\right\Vert^2_F\right]\right)\\
        \leq&\sum_{i=1}^N\left(1+\left\Vert z_i^{(t)}\right\Vert^2\right)\sigma^2\\
        \leq&\sum_{i=1}^N\left(1+3\left\Vert z_i^{(t)}-\bar{z}^{(t)}\right\Vert^2+3\left\Vert \bar{z}^{(t)}-\bar{z}^{(t)}_{\star}\right\Vert^2+3\left\Vert \bar{z}^{(t)}_{\star}\right\Vert^2\right)\sigma^2\\
        \leq&3N\left(1+\dfrac{L_f^2}{\mu_g^2}\right)\sigma^2+3N\sigma^2\left\Vert \bar{z}^{(t)}-\bar{z}^{(t)}_{\star}\right\Vert^2+3\sigma^2\Delta_t^2.
    \end{aligned}
\end{equation}

Substituting \eqref{omega-omega_bar}, \eqref{error conx g}, and \eqref{error conx vector product} into \eqref{consis x 11}, then using Assumption \ref{assumption:unbiased} and Eq. \eqref{chafen_J}, we have:
\begin{equation}
\label{consis x 2}
\begin{aligned}
    &\mathbb{E}\left[\sum_{t=0}^{T-1} \left\Vert \left(\mathbf{x}^{(t)}-\bar{\mathbf{x}}^{(t)}\right)-(1-\theta)\left(\mathbf{x}^{(t-1)}-\bar{\mathbf{x}}^{(t-1)}\right) \right\Vert^2_F\right]\\
    \leq&\left(\dfrac{208\tau^2\alpha^2\theta^2\rho^2L^2}{(1-\rho)^2}+\dfrac{6\tau^2\alpha^2\theta^2\rho^2\sigma^2}{1-\rho}\right)\mathbb{E}\left[\sum_{t=0}^{T-1}\Delta_t^2\right]+{\dfrac{192\tau^2\alpha^2\theta^2\rho^2}{(1-\rho)^2}NT\left(b_1^2+\dfrac{L_f^2}{\mu_g^2}b_2^2\right)}\\
    &+\left(\dfrac{192N\tau^2\alpha^2\theta^2\rho^2}{(1-\rho)^2}L_{\nabla g}^2+\dfrac{6N\tau^2\alpha^2\theta^2\rho^2}{1-\rho}\sigma^2\right)\sum_{t=0}^{T-1}\mathbb{E}\left[\left\Vert\bar{z}^{(t)}-\bar{z}^{(t)}_{\star}\right\Vert^2\right]+\dfrac{12\tau^2\alpha^2\theta^2\rho^2}{(1-\rho)^2}NT\iota^2\\
    &+\dfrac{6\tau^2\alpha^2\theta^2\rho^2}{1-\rho}NT\left(1+\dfrac{L_f^2}{\mu_g^2}\right)\sigma^2+{\dfrac{1}{1-\rho}\mathbb{E}\left[\left\Vert \mathbf{x}^{(0)}-\bar{\mathbf{x}}^{(0)} \right\Vert^2_F\right]}.
\end{aligned}
\end{equation}
    
Next, for $t=1,2,\cdots,T-1$, we have the following inequality thanks to Jensen's inequality:
\begin{equation}
\label{drop 1-theta}
    \begin{aligned}
    \big\Vert\mathbf{x}^{(t)}-\mathbf{\bar{x}}^{(t)}\big\Vert_F^2\leq\dfrac{1}{\theta}\big\Vert (\mathbf{x}^{(t)}-\bar{\mathbf{x}}^{(t)})-(1-\theta)(\mathbf{x}^{(t-1)}-\bar{\mathbf{x}}^{(t-1)}) \big\Vert^2_F+(1-\theta)\big\Vert \mathbf{x}^{(t-1)}-\bar{\mathbf{x}}^{(t-1)}\big\Vert^2_F.
    \end{aligned}
    \end{equation}
Taking summation over $i=1,2,\cdots,N$ and $t=1,2,\cdots,T-1$ on both sides of \eqref{drop 1-theta}, then using the fact that $x_i^{(0)}=x_i^{(1)}=0$ and \eqref{consis x 2}, we have:
    \begin{equation}
    \label{final_con_x}
    \begin{aligned}
        &\mathbb{E}\left[\sum_{t=0}^{T-1} \left\Vert \mathbf{x}^{(t)}-\mathbf{\bar{x}}^{(t)}\right\Vert_F^2\right]\\
        \leq&\left(\dfrac{208\tau^2\alpha^2\rho^2L^2}{(1-\rho)^2}+\dfrac{6\tau^2\alpha^2\rho^2\sigma^2}{1-\rho}\right)\mathbb{E}\left[\sum_{t=0}^{T-1}\Delta_t^2\right]+{\dfrac{192\tau^2\alpha^2\rho^2}{(1-\rho)^2}NT\left(b_1^2+\dfrac{L_f^2}{\mu_g^2}b_2^2\right)}\\
        &+\left(\dfrac{192N\tau^2\alpha^2\rho^2}{(1-\rho)^2}L_{\nabla g}^2+\dfrac{6N\tau^2\alpha^2\rho^2}{1-\rho}\sigma^2\right)\sum_{t=0}^{T-1}\mathbb{E}\left[\left\Vert\bar{z}^{(t)}-\bar{z}^{(t)}_{\star}\right\Vert^2\right]+\dfrac{12\tau^2\alpha^2\rho^2}{(1-\rho)^2}NT\iota^2\\
        &+\dfrac{6\tau^2\alpha^2\rho^2}{1-\rho}NT\left(1+\dfrac{L_f^2}{\mu_g^2}\right)\sigma^2+{\dfrac{1}{1-\rho}\left\Vert \mathbf{x}^{(0)}-\mathbf{\bar{x}}^{(0)}\right\Vert_F^2}.
    \end{aligned}
    \end{equation}
\end{proof}

Then, the following lemma \ref{lem:consensusy} gives the upper bound of the consensus error of $y$. 
\begin{lemma}
\label{lem:consensusy}
Suppose Assumption \ref{assumption:smooth}, \ref{assumption: data heterogeneity}, \ref{assumption:unbiased}, and \ref{assumption: gossip communication} hold. Then in Algorithm \ref{alg:DeMA-SOBA}, we have:
\begin{equation}
\label{eq:final-y-consesus2}
    \begin{aligned}
            &\mathbb{E}\left[\sum_{t=0}^{T-1} \left\Vert \mathbf{y}^{(t)}-\mathbf{\bar{y}}^{(t)}\right\Vert_F^2\right]\leq\dfrac{3\rho^2\beta^2}{\kappa^2(1-\rho)^2}L^2_{\nabla g}\mathbb{E}\left[\sum_{t=0}^{T-1} \Delta_t^2\right]+{\dfrac{3N\rho^2\beta^2}{(1-\rho)^2}b_2^2}\\
            +&\dfrac{3N\rho^2\beta^2}{(1-\rho)^2}L_{\nabla g}^2\mathbb{E}\left[\sum_{t=0}^{T-1} \left\Vert\bar{y}^{(t)}-\bar{y}^{(t)}_{\star}\right\Vert^2\right]+\dfrac{NT\rho^2\beta^2}{1-\rho}\sigma^2+{\dfrac{1}{1-\rho}\left\Vert \mathbf{y}^{(0)}-\mathbf{\bar{y}}^{(0)}\right\Vert_F^2}.
    \end{aligned}
\end{equation}
\end{lemma}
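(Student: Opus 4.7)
The plan is to mirror the proof of Lemma \ref{lem:consensusx} but exploit the absence of a moving-average on $y$, which makes the argument considerably shorter. The starting point is the matrix recursion \eqref{matrix12}, and the target is to bound $\sum_{t=0}^{T-1}\mathbb{E}\|\mathbf{y}^{(t)}-\bar{\mathbf{y}}^{(t)}\|_F^2$ by quantities that appear in the other consensus/estimation lemmas (namely $\Delta_t^2$, $\|\bar{y}^{(t)}-\bar{y}^{(t)}_\star\|^2$, a data-heterogeneity term, and a stochastic-noise term).

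First I would take the conditional expectation of \eqref{matrix12} with respect to $\mathcal{F}^{(t)}$, split $\mathbf{v}^{(t+1)}$ into its conditional mean $\nabla_2\mathbf{g}^{(t)}$ plus zero-mean noise, and use the mixing inequality $\|(W - \mathbf{1}\mathbf{1}^\top/N)M\|_F \le \rho\|M\|_F$ from Assumption~\ref{assumption: gossip communication} together with Assumption~\ref{assumption:unbiased} to obtain
\[
\mathbb{E}_t\|\mathbf{y}^{(t+1)}-\bar{\mathbf{y}}^{(t+1)}\|_F^2 \;\le\; \rho^2\|\mathbf{y}^{(t)}-\bar{\mathbf{y}}^{(t)}-\beta\nabla_2\mathbf{g}^{(t)}\|_F^2 + \rho^2\beta^2 N\sigma^2.
\]
Applying Young's inequality $\|a-b\|^2 \le (1+\varepsilon)\|a\|^2 + (1+\varepsilon^{-1})\|b\|^2$ with $\varepsilon = (1-\rho)/\rho$ converts the first term into $\rho\|\mathbf{y}^{(t)}-\bar{\mathbf{y}}^{(t)}\|_F^2 + \frac{\rho^2\beta^2}{1-\rho}\|\nabla_2\mathbf{g}^{(t)}\|_F^2$. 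Summing from $t=0$ to $T-1$, taking full expectation, and rearranging the geometric factor $\rho$ gives the telescoped bound
\[
(1-\rho)\sum_{t=0}^{T-1}\mathbb{E}\|\mathbf{y}^{(t)}-\bar{\mathbf{y}}^{(t)}\|_F^2 \;\le\; \|\mathbf{y}^{(0)}-\bar{\mathbf{y}}^{(0)}\|_F^2 + \frac{\rho^2\beta^2}{1-\rho}\sum_{t=0}^{T-1}\mathbb{E}\|\nabla_2\mathbf{g}^{(t)}\|_F^2 + T\rho^2\beta^2 N\sigma^2.
\]

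What remains is to control the gradient-norm sum, exactly where the $\Delta_t^2$, $b_2^2$, and $\|\bar{y}^{(t)}-\bar{y}^{(t)}_\star\|^2$ contributions materialize. For each $i$ I will insert the chain of reference points $(\bar{x}^{(t)},\bar{y}^{(t)})$ and $(\bar{x}^{(t)},\bar{y}^{(t)}_\star)$ and use $\nabla_2 g(\bar{x}^{(t)},\bar{y}^{(t)}_\star)=0$ together with Jensen (factor $3$), writing
$\nabla_2 g_i(x_i^{(t)},y_i^{(t)})$ as the sum of $[\nabla_2 g_i(x_i^{(t)},y_i^{(t)})-\nabla_2 g_i(\bar{x}^{(t)},\bar{y}^{(t)})]$, $[\nabla_2 g_i(\bar{x}^{(t)},\bar{y}^{(t)})-\nabla_2 g(\bar{x}^{(t)},\bar{y}^{(t)})]$, and $[\nabla_2 g(\bar{x}^{(t)},\bar{y}^{(t)})-\nabla_2 g(\bar{x}^{(t)},\bar{y}^{(t)}_\star)]$. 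Summing over $i$ and invoking the Lipschitz continuity of $\nabla g_i$ (Assumption~\ref{assumption:smooth}) plus the dissimilarity bound (Assumption~\ref{assumption: data heterogeneity}) yields the three summands $L_{\nabla g}^2(\|\mathbf{x}^{(t)}-\bar{\mathbf{x}}^{(t)}\|_F^2+\|\mathbf{y}^{(t)}-\bar{\mathbf{y}}^{(t)}\|_F^2) \le L_{\nabla g}^2\Delta_t^2/\kappa^2$, $Nb_2^2$, and $NL_{\nabla g}^2\|\bar{y}^{(t)}-\bar{y}^{(t)}_\star\|^2$. Plugging back and dividing by $1-\rho$ produces \eqref{eq:final-y-consesus2}.

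The only delicate bookkeeping is in the Young-parameter choice (Step 2) and the three-way decomposition of the gradient norm (Step 4): the coefficient $1/\kappa^2$ in front of $\Delta_t^2$ must come out exactly, since it is the $\kappa^2$ weighting built into the definition of $\Delta_t^2$ that absorbs the $L_{\nabla g}^2$ prefactor. This is the main place where one has to be careful that everything lines up with the other lemmas in the composite analysis; everything else is routine.
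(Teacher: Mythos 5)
Your proposal is correct and follows essentially the same route as the paper's own proof: the conditional bias--variance split on \eqref{matrix12}, the mixing contraction plus Young's inequality with parameter $(1-\rho)/\rho$ to get the $\rho\|\mathbf{y}^{(t)}-\bar{\mathbf{y}}^{(t)}\|_F^2+\tfrac{\rho^2\beta^2}{1-\rho}\|\nabla_2\mathbf{g}\|_F^2$ recursion, telescoping, and the identical three-term decomposition of $\nabla_2 g_i(x_i^{(t)},y_i^{(t)})$ through $(\bar{x}^{(t)},\bar{y}^{(t)})$ and $(\bar{x}^{(t)},\bar{y}^{(t)}_{\star})$ using $\nabla_2 g(\bar{x}^{(t)},\bar{y}^{(t)}_{\star})=0$, Lipschitzness, and Assumption~\ref{assumption: data heterogeneity}. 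As a minor remark, your bookkeeping yields a factor $T$ on the $b_2^2$ term (a sum of $T$ identical contributions), which the paper's displayed bound omits; this is a typo on the paper's side rather than a gap in your argument.
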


\begin{proof}
We consider the conditional expectation of $\left\Vert \mathbf{y}^{(t+1)}-\mathbf{\bar{y}}^{(t+1)}\right\Vert_F^2$ with respect to $\mathcal{F}^{(t)}$:
\begin{equation}
\begin{aligned}
    \label{y-con-tiaojian}
    &\mathbb{E}_t\left[\left\Vert\mathbf{y}^{(t+1)}-\bar{\mathbf{y}}^{(t+1)}\right\Vert_F^2\right]\\
    =&\left\Vert\left(W-\dfrac{1}{N}\mathbf{1}\mathbf{1}^\top \right)\left[\left(\mathbf{y}^{(t)}-\bar{\mathbf{y}}^{(t)}\right)-\beta\nabla_2\mathbf{g}^{(t+1)}\right]\right\Vert_F^2+\rho^2\beta^2\mathbb{E}_t\left[\sum_{i=1}^N\left\Vert v^{(t+1)}_{2,i}-\nabla_2g_i(x^{(t)}_{i},y^{(t)}_{i})\right\Vert^2\right]\\
    \leq&\dfrac{1}{\rho}\left\Vert\left(W-\dfrac{1}{N}\mathbf{1}\mathbf{1}^\top \right)\left(\mathbf{y}^{(t)}-\bar{\mathbf{y}}^{(t)}\right)\right\Vert_F^2+\dfrac{\beta^2}{1-\rho}\left\Vert\left(W-\dfrac{1}{N}\mathbf{1}\mathbf{1}^\top \right)\nabla_2\mathbf{g}^{(t+1)}\right\Vert_F^2+N\rho^2\beta^2\sigma^2\\
    \leq&\rho\left\Vert \mathbf{y}^{(t)}-\mathbf{\bar{y}}^{(t)}\right\Vert_F^2+\dfrac{\rho^2\beta^2}{1-\rho}\sum_{i=1}^N\left\Vert \nabla_2g_i(x^{(t)}_{i},y^{(t)}_{i})\right\Vert^2+N\rho^2\beta^2\sigma^2,
\end{aligned}
\end{equation}
where the first inequality is from Assumption \ref{assumption:unbiased}. For the second term on the right-hand side of \eqref{y-con-tiaojian}, we have:
\begin{equation}
\begin{aligned}
    &\sum_{i=1}^N\left\Vert \nabla_2g_i(x^{(t)}_{i},y^{(t)}_{i})\right\Vert^2\\
    \leq&3\sum_{i=1}^N\left\Vert\nabla_2g_i(x^{(t)}_{i},y^{(t)}_{i})-\nabla_2g_i(\bar{x}^{(t)},\bar{y}^{(t)})\right\Vert^2+3N\left\Vert\nabla_2g(\bar{x}^{(t)},\bar{y}^{(t)})-\nabla_2g(\bar{x}^{(t)},\bar{y}^{(t)}_{\star})\right\Vert^2\\
    &+3\sum_{i=1}^N\left\Vert\nabla_2g_i(\bar{x}^{(t)},\bar{y}^{(t)})-\dfrac{1}{N}\sum_{m=1}^N\nabla_2g_m(\bar{x}^{(t)},\bar{y}^{(t)})\right\Vert^2\\
    \leq&3L_{\nabla g}^2\sum_{i=1}^N\left[\left\Vert x_i^{(t)}-\bar{x}^{(t)}\right\Vert^2+\left\Vert y_i^{(t)}-\bar{y}^{(t)}\right\Vert^2\right]+3N{b_2^2}+3NL_{\nabla g}^2\left\Vert\bar{y}^{(t)}-\bar{y}^{(t)}_{\star}\right\Vert^2\\
    \leq&\frac{3L_{\nabla g}^2}{\kappa^2}\Delta_t^2+3N{b_2^2}+3NL_{\nabla g}^2\left\Vert\bar{y}^{(t)}-\bar{y}^{(t)}_{\star}\right\Vert^2,
\end{aligned}
\end{equation}
where the first inequality is due to the fact that $\nabla_2g(\bar{x}^{(t)},\bar{y}^{(t)})=\frac{1}{N}\sum_{i=1}^N\nabla_2g_i(\bar{x}^{(t)},\bar{y}^{(t)})$ as well as $\nabla_2g(\bar{x}^{(t)},\bar{y}^{(t)}_{\star})=0$.

Finally, taking summation and expectation on both sides of \eqref{y-con-tiaojian}, we have:
\begin{equation}
\begin{aligned}
    &\mathbb{E}\left[\sum_{t=0}^{T-1} \left\Vert\mathbf{y}^{(t+1)}-\bar{\mathbf{y}}^{(t+1)}\right\Vert_F^2\right]\\
    \leq&\dfrac{\rho^2\beta^2}{(1-\rho)^2}\mathbb{E}\left[\sum_{t=0}^{T-1} \sum_{i=1}^N\left\Vert \nabla_2g_i(x^{(t)}_{i},y^{(t)}_{i})\right\Vert^2\right]+\dfrac{N\rho^2\beta^2\sigma_{g1}^2}{1-\rho}+{\dfrac{1}{1-\rho}\left\Vert \mathbf{y}^{(0)}-\mathbf{\bar{y}}^{(0)}\right\Vert_F^2}\\
    \label{final-con-y}
    \leq&\dfrac{3\rho^2\beta^2}{\kappa^2(1-\rho)^2}L^2_{\nabla g}\mathbb{E}\left[\sum_{t=0}^{T-1} \Delta_t^2\right]+{\dfrac{3N\rho^2\beta^2}{(1-\rho)^2}b_2^2}+\dfrac{3NT\rho^2\beta^2}{(1-\rho)^2}L_{\nabla g}^2\mathbb{E}\left[\sum_{t=0}^{T-1} \left\Vert\bar{y}^{(t)}-\bar{y}^{(t)}_{\star}\right\Vert^2\right]\\
    &+\dfrac{NT\rho^2\beta^2}{1-\rho}\sigma^2+{\dfrac{1}{1-\rho}\left\Vert \mathbf{y}^{(0)}-\mathbf{\bar{y}}^{(0)}\right\Vert_F^2}.
\end{aligned}
\end{equation}
\end{proof}

Moreover, the following lemma \ref{lem:consensusz} gives the upper bound of the consensus error of $z$. 
\begin{lemma}
\label{lem:consensusz}
Suppose Assumption \ref{assumption:smooth}, \ref{assumption: data heterogeneity}, \ref{assumption:unbiased}, and \ref{assumption: gossip communication} hold. Then in Algorithm \ref{alg:DeMA-SOBA}, we have:
\begin{equation}
\label{eq:final-z-consesus2}
    \begin{aligned}
&\mathbb{E}\left[\sum_{t=0}^{T-1} \left\Vert \mathbf{z}^{(t)}-\mathbf{\bar{z}}^{(t)}\right\Vert_F^2\right]\\
\leq&\left(\dfrac{64\rho^2\gamma^2}{(1-\rho)^2}L^2+\dfrac{6\rho^2\gamma^2}{1-\rho}\sigma^2\right)\mathbb{E}\left[\sum_{t=0}^{T-1}\Delta_t^2\right]+{\dfrac{66\rho^2\gamma^2}{(1-\rho)^2}NT\left(b_1^2+\dfrac{L_f^2}{\mu_g^2}b_2^2\right)}\\
&+\dfrac{8N\rho^2\gamma^2}{(1-\rho)^2}\left(L_{\nabla f}^2+\dfrac{L_{\nabla^2g}^2L_f^2}{\mu_g^2}\right)\mathbb{E}\left[\sum_{t=0}^{T-1}\left\Vert\bar{y}^{(t)}-\bar{y}_{\star}^{(t)}\right\Vert^2\right]+{\dfrac{1}{1-\rho}\left\Vert \mathbf{z}^{(0)}-\mathbf{\bar{z}}^{(0)}\right\Vert_F^2}\\
&+\left(\dfrac{32N\rho^2\gamma^2}{(1-\rho)^2}L^2+\dfrac{6N\rho^2\gamma^2}{1-\rho}\sigma^2\right)\mathbb{E}\left[\sum_{t=0}^{T-1}\left\Vert \bar{z}^{(t)}-\bar{z}^{(t)}_{\star}\right\Vert^2\right]+\dfrac{6NT\rho^2\gamma^2}{1-\rho}\left(1+\dfrac{L_f^2}{\mu_g^2}\right)\sigma^2.
    \end{aligned}
\end{equation}
\end{lemma}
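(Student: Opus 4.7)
The plan is to mirror the arguments of Lemmas~\ref{lem:consensusx} and~\ref{lem:consensusy}, but applied to the $z$-update, with the extra care needed because (i) the stochastic estimate $p_{H,i}^{(t)}-u_{2,i}^{(t)}$ has a variance that scales with $\|z_i^{(t)}\|^2$ via Proposition~\ref{error of finite}, and (ii) the ``deterministic'' part involves a Hessian--vector product rather than a plain gradient.

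\textbf{Step 1: Base recursion.} Starting from \eqref{matrix13}, I would take conditional expectation with respect to $\mathcal{F}^{(t)}$ and apply Assumption~\ref{assumption:unbiased} together with Proposition~\ref{error of finite} to split the update into a ``mean'' part and a ``variance'' part:
\begin{align*}
\mathbb{E}_t\!\left[\big\|\mathbf{z}^{(t+1)}-\bar{\mathbf{z}}^{(t+1)}\big\|_F^2\right]
&\leq \left\|\left(W-\tfrac{1}{N}\mathbf{1}\mathbf{1}^\top\right)\!\left(\mathbf{z}^{(t)}-\bar{\mathbf{z}}^{(t)}-\gamma\,\mathbb{E}_t[\mathbf{r}^{(t+1)}]\right)\right\|_F^2 \\
&\quad + \rho^2\gamma^2\sum_{i=1}^N\mathbb{E}_t\!\left[\left\|\mathbf{r}_i^{(t+1)}-\mathbb{E}_t[\mathbf{r}_i^{(t+1)}]\right\|^2\right].
\end{align*}
Then Young's inequality with weights $\rho$ and $1-\rho$, together with Assumption~\ref{assumption: gossip communication}, gives a contraction by $\rho$ on $\|\mathbf{z}^{(t)}-\bar{\mathbf{z}}^{(t)}\|_F^2$ plus two error contributions of order $\tfrac{\rho^2\gamma^2}{1-\rho}$.

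\textbf{Step 2: Bounding the deterministic cross-term.} The cross-term reduces to bounding $\sum_i \|\nabla_{22}^2 g_i(x_i^{(t)},y_i^{(t)})z_i^{(t)}-\nabla_2 f_i(x_i^{(t)},y_i^{(t)})\|^2$. Using the optimality identity $\nabla_{22}^2 g(\bar x^{(t)},\bar y^{(t)}_\star)\bar z^{(t)}_\star = \nabla_2 f(\bar x^{(t)},\bar y^{(t)}_\star)$, I would center each summand around this quantity and decompose it into the four pieces already used in \eqref{error conx g}: (a) a consensus-in-$z$ term $\nabla_{22}^2 g_i(x_i^{(t)},y_i^{(t)})(z_i^{(t)}-\bar z^{(t)})$, (b) a distance-to-optimum term $\nabla_{22}^2 g_i(x_i^{(t)},y_i^{(t)})(\bar z^{(t)}-\bar z^{(t)}_\star)$, (c) a Hessian-Lipschitz term contracted with $\bar z_\star^{(t)}$ (controlled by $L_{\nabla^2 g}$ and $\|\bar z_\star^{(t)}\|\le L_f/\mu_g$ from \eqref{est:z*}), plus the analogous two pieces for $\nabla_2 f_i$, and (d) the heterogeneity-at-the-reference-point terms handled by Assumption~\ref{assumption: data heterogeneity} (contributing $b_1^2$ from $\nabla_2 f_i$ and $(L_f^2/\mu_g^2)b_2^2$ from $\nabla_{22}^2 g_i$). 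Each Lipschitz piece produces $L^2$ multiplied by $\Delta_t^2$ or by $\|\bar y^{(t)}-\bar y^{(t)}_\star\|^2$, matching the coefficients in \eqref{eq:final-z-consesus2}.

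\textbf{Step 3: Bounding the variance term.} By Assumption~\ref{assumption:unbiased} (for $u_{2,i}^{(t)}$) and Proposition~\ref{error of finite} (for $p_{H,i}^{(t)}$),
\begin{align*}
\sum_{i=1}^N\mathbb{E}_t\!\left[\left\|\mathbf{r}_i^{(t+1)}-\mathbb{E}_t[\mathbf{r}_i^{(t+1)}]\right\|^2\right]
\lesssim \sigma^2\sum_{i=1}^N\bigl(1+\|z_i^{(t)}\|^2\bigr).
\end{align*}
I would then split $\|z_i^{(t)}\|^2 \le 3\|z_i^{(t)}-\bar z^{(t)}\|^2 + 3\|\bar z^{(t)}-\bar z^{(t)}_\star\|^2 + 3\|\bar z^{(t)}_\star\|^2$ with the last factor bounded by $L_f^2/\mu_g^2$ via \eqref{est:z*}. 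This produces the $\tfrac{\rho^2\gamma^2}{1-\rho}\sigma^2$-coefficients in front of $\Delta_t^2$, $\|\bar z^{(t)}-\bar z^{(t)}_\star\|^2$, and the constant $(1+L_f^2/\mu_g^2)$.

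\textbf{Step 4: Telescoping.} Summing the one-step bound from $t=0$ to $T-1$ and combining like terms yields \eqref{eq:final-z-consesus2}, where the initial consensus $\|\mathbf{z}^{(0)}-\bar{\mathbf{z}}^{(0)}\|_F^2/(1-\rho)$ absorbs the boundary contribution (same geometric-series argument as in \eqref{final_con_x}). The main bookkeeping obstacle—and the reason this lemma is messier than the $y$-case—is tracking how the $\|z_i^{(t)}\|^2$-dependent noise from Proposition~\ref{error of finite} interacts with the $\bar z^{(t)}-\bar z^{(t)}_\star$ and consensus terms; these cannot be discarded and instead contribute the $\bigl(\tfrac{32N\rho^2\gamma^2}{(1-\rho)^2}L^2+\tfrac{6N\rho^2\gamma^2}{1-\rho}\sigma^2\bigr)$ factor in front of $\sum_t\|\bar z^{(t)}-\bar z^{(t)}_\star\|^2$ that distinguishes \eqref{eq:final-z-consesus2} from the $y$-analogue \eqref{eq:final-y-consesus2}.
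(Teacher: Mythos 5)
Your proposal follows essentially the same route as the paper's proof: the same $\rho$-contraction via Young's inequality on the mixing step, the same centering of the drift term around the optimality identity $\nabla_{22}^2 g(\bar x^{(t)},\bar y^{(t)}_\star)\bar z^{(t)}_\star=\nabla_2 f(\bar x^{(t)},\bar y^{(t)}_\star)$ with the identical four-way decomposition into consensus, distance-to-optimum, Lipschitz, and heterogeneity pieces, and the same treatment of the $\sigma^2(1+\|z_i^{(t)}\|^2)$ variance via the three-way split of $\|z_i^{(t)}\|^2$ using $\|\bar z^{(t)}_\star\|\le L_f/\mu_g$. The argument is correct and matches the paper's proof in all essentials.
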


\begin{proof}
Considering the conditional expectation of $\left\Vert \mathbf{z}^{(t+1)}-\mathbf{\bar{z}}^{(t+1)}\right\Vert_F^2$ with respect to $\mathcal{F}^{(t)}$:
\begin{equation}
\label{consensus z start111}
\begin{aligned}
&\mathbb{E}_t\left[\left\Vert\mathbf{z}^{(t+1)}-\bar{\mathbf{z}}^{(t+1)}\right\Vert_F^2\right]\\
\leq&\dfrac{1}{\rho}\left\Vert\left(W-\dfrac{1}{N}\mathbf{1}\mathbf{1}^\top\right)\left(\mathbf{z}^{(t)}-\bar{\mathbf{z}}^{(t)}\right)\right\Vert_F^2\\
&+\dfrac{\rho^2\gamma^2}{1-\rho}\sum_{i=1}^N\left\Vert\mathbb{E}_t[p_{H,i}^{(t)}-u_{2,i}^{(t)}]-\left(\nabla_{22}^2g_i(x_i^{(t)},y_i^{(t)})z_i^{(t)}-\nabla_2f_i(x_i^{(t)},y_i^{(t)})\right)\right\Vert^2\\
&+2\rho^2\gamma^2\sum_{i=1}^N\left\Vert p_{H,i}^{(t)}-\mathbb{E}_t[p_{H,i}^{(t)}]\right\Vert^2+2\rho^2\gamma^2\sum_{i=1}^N\left\Vert u_{2,i}^{(t)}-\mathbb{E}_t[u_{2,i}^{(t)}]\right\Vert^2\\
\leq&\rho\left\Vert\mathbf{z}^{(t)}-\bar{\mathbf{z}}^{(t)}\right\Vert_F^2+\dfrac{2\rho^2\gamma^2}{1-\rho}\sum_{i=1}^N\left\Vert\mathbb{E}_t[p_{H,i}^{(t)}]-\nabla_{22}^2g_i(x_i^{(t)},y_i^{(t)})z_i^{(t)}\right\Vert^2\\
&+2\rho^2\gamma^2\sum_{i=1}^N\mathbb{E}_t\left[\left\Vert p_{H,i}^{(t)}-\mathbb{E}_t[p_{H,i}^{(t)}]\right\Vert^2\right]+2\rho^2\gamma^2\sum_{i=1}^N\mathbb{E}_t\left[\left\Vert u_{2,i}^{(t)}-\mathbb{E}_t[u_{2,i}^{(t)}]\right\Vert^2\right],
\end{aligned}
\end{equation}
where the second inequality is due to $\mathbb{E}_t[u_{2,i}^{(t)}]=\nabla_2f_i(x_i^{(t)},y_i^{(t)})$.

Then using $\nabla_{22}^2g(\bar{x}^{(t)},\bar{y}^{(t)}_{\star})\bar{z}^{(t)}_{\star}-\nabla_2f(\bar{x}^{(t)},\bar{y}^{(t)}_{\star})=0$, we have:
\begin{equation}
    \begin{aligned}
        \label{consensus z start0}
        &\sum_{i=1}^N\left\Vert\nabla_{22}g_i(x^{(t)}_{i},y^{(t)}_{i})z^{(t)}_{i}-\nabla_{2}f_i(x^{(t)}_{i},y^{(t)}_{i})\right\Vert^2\\
        \leq&4\sum_{i=1}^N\left\Vert\nabla_{22}g_i(x^{(t)}_{i},y^{(t)}_{i})z^{(t)}_{i}-\nabla_{22}g(\bar{x}^{(t)},\bar{y}^{(t)})\bar{z}^{(t)}_{\star}\right\Vert^2\\
        &+4\sum_{i=1}^N\left\Vert\nabla_{2}f_i(x^{(t)}_{i},y^{(t)}_{i})-\nabla_{2}f(\bar{x}^{(t)},\bar{y}^{(t)})\right\Vert^2\\
        &+4N\left\Vert\left(\nabla_{22}g(\bar{x}^{(t)},\bar{y}^{(t)})-\nabla_{22}g(\bar{x}^{(t)},\bar{y}_{\star}^{(t)})\right)\bar{z}^{(t)}_{\star}\right\Vert^2+4N\left\Vert\nabla_{2}f(\bar{x}^{(t)},\bar{y}^{(t)})-\nabla_{2}f(\bar{x}^{(t)},\bar{y}_{\star}^{(t)})\right\Vert^2\\
        \leq&4\sum_{i=1}^N\left\Vert\nabla_{22}g_i(x^{(t)}_{i},y^{(t)}_{i})z^{(t)}_{i}-\nabla_{22}g(\bar{x}^{(t)},\bar{y}^{(t)})\bar{z}^{(t)}_{\star}\right\Vert^2\\
        &+4\sum_{i=1}^N\left\Vert\nabla_{2}f_i(x^{(t)}_{i},y^{(t)}_{i})-\nabla_{2}f(\bar{x}^{(t)},\bar{y}^{(t)})\right\Vert^2+4N\left(L_{\nabla f}^2+\dfrac{L_{\nabla^2g}^2L_f^2}{\mu_g^2}\right)\left\Vert\bar{y}^{(t)}-\bar{y}_{\star}^{(t)}\right\Vert^2.
    \end{aligned}
\end{equation}

For the first term of the right-hand side of \eqref{consensus z start0}, we have:
\begin{equation}
    \begin{aligned}
        &\nabla_{22}g_i(x^{(t)}_{i},y^{(t)}_{i})z^{(t)}_{i}-\nabla_{22}g(\bar{x}^{(t)},\bar{y}^{(t)})\bar{z}^{(t)}_{\star}\\
        =&\nabla_{22}g_i(x^{(t)}_{i},y^{(t)}_{i})(z^{(t)}_{i}-\bar{z}^{(t)})+\nabla_{22}g_i(x^{(t)}_{i},y^{(t)}_{i})(\bar{z}^{(t)}-\bar{z}^{(t)}_{\star})\\
        &+(\nabla_{22}g_i(x^{(t)}_{i},y^{(t)}_{i})-\nabla_{22}g_i(\bar{x}^{(t)},\bar{y}^{(t)}))\bar{z}^{(t)}_{\star}+(\nabla_{22}g_i(\bar{x}^{(t)},\bar{y}^{(t)})-\nabla_{22}g(\bar{x}^{(t)},\bar{y}^{(t)}))\bar{z}^{(t)}_{\star}.
    \end{aligned}
\end{equation}

Taking the norm on both sides and using Assumption \ref{assumption: data heterogeneity}, we have:
\begin{equation}
    \begin{aligned}
    \label{consensus z guji1}
        &\sum_{i=1}^N\left\Vert\nabla_{22}g_i(x^{(t)}_{i},y^{(t)}_{i})z^{(t)}_{i}-\nabla_{22}g(\bar{x}^{(t)},\bar{y}^{(t)})\bar{z}^{(t)}_{\star}\right\Vert^2\\
        \leq&4L_{\nabla g}^2\sum_{i=1}^N\left\Vert z^{(t)}_{i}-\bar{z}^{(t)}\right\Vert^2+4NL_{\nabla g}^2\left\Vert \bar{z}^{(t)}-\bar{z}^{(t)}_{\star}\right\Vert^2\\
        &+\dfrac{4L_{\nabla^2g}^2L_f^2}{\mu_g^2}\left(\left\Vert\mathbf{x}^{(t)}-\mathbf{\bar{x}}^{(t)}\right\Vert_F^2+\left\Vert\mathbf{y}^{(t)}-\mathbf{\bar{y}}^{(t)}\right\Vert_F^2\right)+\dfrac{4L_f^2}{\mu_g^2}N{b_2^2}.
    \end{aligned}
\end{equation}

For the second term on the right-hand side of \eqref{consensus z start0}, we have: 
\begin{equation}
    \begin{aligned}
    \label{consensus z guji2}
        &\sum_{i=1}^N\left\Vert\nabla_{2}f_i(x^{(t)}_{i},y^{(t)}_{i})-\nabla_{2}f(\bar{x}^{(t)},\bar{y}^{(t)})\right\Vert^2
        \leq4L_{\nabla f}^2\left(\left\Vert\mathbf{x}^{(t)}-\mathbf{\bar{x}}^{(t)}\right\Vert_F^2+\left\Vert\mathbf{y}^{(t)}-\mathbf{\bar{y}}^{(t)}\right\Vert_F^2\right)+4N{b_1^2}.
    \end{aligned}
\end{equation}

Substituting \eqref{consensus z start0}, \eqref{consensus z guji1} and \eqref{consensus z guji2} into \eqref{consensus z start111}, then using \eqref{chafen_H}, we have:
\begin{equation}
\label{consensus z start01}
\begin{aligned}
&\mathbb{E}_t\left[\left\Vert\mathbf{z}^{(t+1)}-\bar{\mathbf{z}}^{(t+1)}\right\Vert_F^2\right]\\
\leq&\rho\left\Vert\mathbf{z}^{(t)}-\bar{\mathbf{z}}^{(t)}\right\Vert_F^2+\dfrac{8N\rho^2\gamma^2}{1-\rho}\left(L_{\nabla f}^2+\dfrac{L_{\nabla^2g}^2L_f^2}{\mu_g^2}\right)\left\Vert\bar{y}^{(t)}-\bar{y}_{\star}^{(t)}\right\Vert^2\\
&+\left(\dfrac{32N\rho^2\gamma^2}{1-\rho}L^2+6N\rho^2\gamma^2\sigma^2\right)\left\Vert \bar{z}^{(t)}-\bar{z}^{(t)}_{\star}\right\Vert^2+\left(\dfrac{64\rho^2\gamma^2}{1-\rho}L^2+6\rho^2\gamma^2\sigma^2\right)\Delta_t^2\\
&+{\dfrac{66\rho^2\gamma^2}{1-\rho}N\left(b_1^2+\dfrac{L_f^2}{\mu_g^2}b_2^2\right)}+\dfrac{2N\rho^2\gamma^2}{1-\rho}\iota^2+6N\rho^2\gamma^2\left(1+\dfrac{L_f^2}{\mu_g^2}\right)\sigma^2.
\end{aligned}
\end{equation}

Taking the expectation and summation 
 on both sides of \eqref{consensus z start01}, we can get:
\begin{equation}
\label{consensus z start02}
\begin{aligned}
&\mathbb{E}\left[\sum_{t=0}^{T-1} \left\Vert \mathbf{z}^{(t)}-\mathbf{\bar{z}}^{(t)}\right\Vert_F^2\right]\\
\leq&\left(\dfrac{64\rho^2\gamma^2}{(1-\rho)^2}L^2+\dfrac{6\rho^2\gamma^2}{1-\rho}\sigma^2\right)\mathbb{E}\left[\sum_{t=0}^{T-1}\Delta_t^2\right]+{\dfrac{66\rho^2\gamma^2}{(1-\rho)^2}NT\left(b_1^2+\dfrac{L_f^2}{\mu_g^2}b_2^2\right)}+\dfrac{2NT\rho^2\gamma^2}{1-\rho}\iota^2\\
&+\dfrac{8N\rho^2\gamma^2}{(1-\rho)^2}\left(L_{\nabla f}^2+\dfrac{L_{\nabla^2g}^2L_f^2}{\mu_g^2}\right)\mathbb{E}\left[\sum_{t=0}^{T-1}\left\Vert\bar{y}^{(t)}-\bar{y}_{\star}^{(t)}\right\Vert^2\right]+{\dfrac{1}{1-\rho}\left\Vert \mathbf{z}^{(0)}-\mathbf{\bar{z}}^{(0)}\right\Vert_F^2}\\
&+\left(\dfrac{32N\rho^2\gamma^2}{(1-\rho)^2}L^2+\dfrac{6N\rho^2\gamma^2}{1-\rho}\sigma^2\right)\mathbb{E}\left[\sum_{t=0}^{T-1}\left\Vert \bar{z}^{(t)}-\bar{z}^{(t)}_{\star}\right\Vert^2\right]+\dfrac{6NT\rho^2\gamma^2}{1-\rho}\left(1+\dfrac{L_f^2}{\mu_g^2}\right)\sigma^2.
\end{aligned}
\end{equation}

\end{proof}

Through Lemma \ref{lem:consensusx}, Lemma \ref{lem:consensusy}, and Lemma \ref{lem:consensusz}, we can present an upper bound of the consensus error during the iterations in Algorithm \ref{alg:DeMA-SOBA}, which is concluded by the following lemma \ref{lem:consensus xyz}:

\begin{lemma}
\label{lem:consensus xyz}
Suppose Assumption \ref{assumption:smooth}, \ref{assumption: data heterogeneity}, \ref{assumption:unbiased}, and \ref{assumption: gossip communication} hold, and the step-sizes $\alpha, \beta,\gamma$ satisfy:
\begin{align}
\label{eq:assumptioncon con}
    208(\kappa^2\tau^2\alpha^2+\kappa^2\beta^2+\gamma^2)\rho^2\left(\dfrac{L^2}{(1-\rho)^2}+\dfrac{\sigma^2}{1-\rho}\right)\leq\dfrac{1}{3}.
\end{align}
Then in Algorithm \ref{alg:DeMA-SOBA}, we have:
\begin{equation}
\label{eq:final11-xy-consesus11}
    \begin{aligned}
        \sum_{t=0}^{T-1} \alpha\mathbb{E}\left[\dfrac{1}{N}\Delta_t^2\right]\leq&\dfrac{C_{Dy}\rho^2\alpha^2}{(1-\rho)^2}\sum_{t=0}^{T-1} \alpha\mathbb{E}\left[\left\Vert\bar{y}^{(t)}-\bar{y}^{(t)}_{\star}\right\Vert^2\right]+\dfrac{C_{Dz}\rho^2\alpha^2}{(1-\rho)^2}\sum_{t=0}^{T-1} \alpha\mathbb{E}\left[\left\Vert\bar{z}^{(t)}-\bar{z}^{(t)}_{\star}\right\Vert^2\right]\\
        &+{\dfrac{C_{D1}T\rho^2\alpha^3(b_1^2+\kappa^2b_2^2)}{(1-\rho)^2}}+\dfrac{C_{D2}T\rho^2\alpha^3\sigma^2}{1-\rho}+\dfrac{C_{D3}T\rho^2\alpha^3\iota^2}{(1-\rho)^2}+{\dfrac{\alpha}{1-\rho}\cdot\dfrac{1}{N}\Delta_0^2}.
    \end{aligned}
\end{equation}
where the constants are defined as:
\begin{align*}
C_{Dy}=C_{Dz}=&384(\tau^2\kappa^2+\kappa^2c_1^2+c_2^2)(L^2+\sigma^2)=\mathcal{O}(\tau^2\kappa^2+\kappa^2c_1^2+c_2^2),\\ 
{C_{D1}=}&{384(\tau^2\kappa^2+\kappa^2c_1^2+c_2^2)=\mathcal{O}(\tau^2\kappa^2+\kappa^2c_1^2+c_2^2)},\\
C_{D2}=&384(\tau^2\kappa^2+\kappa^2c_1^2+c_2^2)\left(1+\dfrac{L_f^2}{\mu_g^2}\right)=\mathcal{O}(\kappa^2(\tau^2\kappa^2+\kappa^2c_1^2+c_2^2)),\\ 
C_{D3}=&24(\tau^2\kappa^2+c_2^2)=\mathcal{O}(\tau^2\kappa^2+c_2^2).
\end{align*}
\end{lemma}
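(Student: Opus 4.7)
The plan is to combine the three per-variable consensus bounds from Lemmas \ref{lem:consensusx}, \ref{lem:consensusy}, and \ref{lem:consensusz} into a single recursion for $\sum_{t=0}^{T-1}\mathbb{E}[\Delta_t^2]$, then exploit the step-size condition \eqref{eq:assumptioncon con} to absorb the $\Delta_t^2$ term that appears on both sides. Recall that $\Delta_t^2=\kappa^2[\|\mathbf{x}^{(t)}-\bar{\mathbf{x}}^{(t)}\|_F^2+\|\mathbf{y}^{(t)}-\bar{\mathbf{y}}^{(t)}\|_F^2]+\|\mathbf{z}^{(t)}-\bar{\mathbf{z}}^{(t)}\|_F^2$, so the first step is to multiply \eqref{eq:final-x-consesus2} and \eqref{eq:final-y-consesus2} by $\kappa^2$ and add them to \eqref{eq:final-z-consesus2}.

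After this summation, the right-hand side collects coefficients in front of $\sum_t\mathbb{E}[\Delta_t^2]$ of the form $\rho^2(\kappa^2\tau^2\alpha^2+\kappa^2\beta^2+\gamma^2)\bigl(L^2/(1-\rho)^2+\sigma^2/(1-\rho)\bigr)$ times absolute constants. Assumption \eqref{eq:assumptioncon con} is precisely calibrated so that the aggregated coefficient is at most $1/3$. Subtracting it from the left-hand side therefore leaves $(2/3)\sum_t\mathbb{E}[\Delta_t^2]$ on the left, which I will then bound from below by $\tfrac{1}{2}\sum_t\mathbb{E}[\Delta_t^2]$ (or a similar constant factor). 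The remaining terms on the right are: (i) terms proportional to $\sum_t\mathbb{E}[\|\bar y^{(t)}-\bar y_\star^{(t)}\|^2]$ with coefficient of order $\rho^2(\tau^2\alpha^2\kappa^2+\beta^2\kappa^2+\gamma^2\kappa^2)/(1-\rho)^2$, (ii) analogous terms for $\sum_t\mathbb{E}[\|\bar z^{(t)}-\bar z_\star^{(t)}\|^2]$, (iii) a heterogeneity term with coefficient $\rho^2 NT(\tau^2\alpha^2\kappa^2+\beta^2\kappa^2+\gamma^2)/(1-\rho)^2\cdot(b_1^2+\kappa^2b_2^2)$, (iv) a noise term scaling like $\rho^2 NT(\tau^2\alpha^2\kappa^2+\beta^2\kappa^2+\gamma^2)\sigma^2/(1-\rho)$, (v) a finite-difference bias term $\rho^2 NT(\tau^2\alpha^2\kappa^2+\gamma^2)\iota^2/(1-\rho)^2$, and (vi) the initial consensus $\Delta_0^2/(1-\rho)$.

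Next, I substitute $\beta=c_1\alpha$ and $\gamma=c_2\alpha$ so that each coefficient above picks up the common factor $\alpha^2(\tau^2\kappa^2+\kappa^2 c_1^2+c_2^2)$. Multiplying the whole inequality by $\alpha$ and dividing by $N$ produces the desired form \eqref{eq:final11-xy-consesus11}, with the constants $C_{Dy},C_{Dz},C_{D1},C_{D2},C_{D3}$ readable off from the grouped coefficients and matching the stated orders once the absolute numerical constants (the $208$ in \eqref{eq:final-x-consesus2}, the $66$ and $8$ in \eqref{eq:final-z-consesus2}, etc.) are added together and bounded by $384$ and $24$ respectively.

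The only delicate point is the absorption step: one must verify that the contribution of $\sum_t\mathbb{E}[\Delta_t^2]$ coming jointly from Lemmas \ref{lem:consensusx}, \ref{lem:consensusy}, and \ref{lem:consensusz} is dominated by the budget provided by \eqref{eq:assumptioncon con}. Concretely, summing the $\Delta_t^2$-coefficients gives at most $\rho^2[208(\kappa^2\tau^2\alpha^2+\kappa^2\beta^2+\gamma^2)L^2/(1-\rho)^2+6(\kappa^2\tau^2\alpha^2+\gamma^2)\sigma^2/(1-\rho)]\le 1/3$ under the stated hypothesis, so that the net coefficient of $\sum_t\mathbb{E}[\Delta_t^2]$ on the left after transposition is at least $2/3$. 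Everything else in the proof is bookkeeping: checking that the $y$-term from Lemma \ref{lem:consensusy} contributes only to the $\bar y-\bar y_\star$ group (no $\bar z$ cross-term), that the $\iota^2$ contribution only enters through the $x$- and $z$-consensus bounds (consistent with the definition of $C_{D3}$ which omits $c_1$), and that the initialization term inherits the $1/(1-\rho)$ prefactor from each of the three base lemmas.
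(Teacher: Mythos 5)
Your proposal follows essentially the same route as the paper's proof: combine Lemmas \ref{lem:consensusx}, \ref{lem:consensusy}, and \ref{lem:consensusz} with weights $\kappa^2,\kappa^2,1$ matching the definition of $\Delta_t^2$, use the step-size condition \eqref{eq:assumptioncon con} to absorb the aggregated $\sum_t\mathbb{E}[\Delta_t^2]$ coefficient (at most $1/3$) into the left-hand side, substitute $\beta=c_1\alpha$, $\gamma=c_2\alpha$, and multiply by $\alpha/N$ to read off the constants. The bookkeeping you describe, including which lemmas contribute the $\iota^2$ terms and the provenance of the $1/(1-\rho)$ initialization prefactor, matches the paper's argument, so the proof is correct and no further comparison is needed.
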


\begin{proof}
    From \eqref{eq:final-x-consesus2}, \eqref{eq:final-y-consesus2}, and \eqref{eq:final-z-consesus2}, we have:
    \begin{equation}
    \label{final consensus start}
        \begin{aligned}
            &\mathbb{E}\left[\sum_{t=0}^{T-1} \Delta_t^2\right]\\
            \leq&\kappa^2\mathbb{E}\left[\sum_{t=0}^{T-1} \left\Vert \mathbf{x}^{(t)}-\mathbf{\bar{x}}^{(t)}\right\Vert_F^2\right]+\kappa^2\mathbb{E}\left[\sum_{t=0}^{T-1} \left\Vert \mathbf{y}^{(t)}-\mathbf{\bar{y}}^{(t)}\right\Vert_F^2\right]+\mathbb{E}\left[\sum_{t=0}^{T-1} \left\Vert \mathbf{z}^{(t)}-\mathbf{\bar{z}}^{(t)}\right\Vert_F^2\right]\\
            \leq&208(\kappa^2\tau^2\alpha^2+\kappa^2\beta^2+\gamma^2)\rho^2\left(\dfrac{L^2}{(1-\rho)^2}+\dfrac{\sigma^2}{1-\rho}\right)\mathbb{E}\left[\sum_{t=0}^{T-1} \Delta_t^2\right]\\
            &+192(\kappa^2\tau^2\alpha^2+\kappa^2\beta^2+\gamma^2)\rho^2\left(\dfrac{L^2}{(1-\rho)^2}+\dfrac{\sigma^2}{1-\rho}\right)N\sum_{t=0}^{T-1}\mathbb{E}\left[\left\Vert\bar{y}^{(t)}-\bar{y}^{(t)}_{\star}\right\Vert^2+\left\Vert\bar{z}^{(t)}-\bar{z}^{(t)}_{\star}\right\Vert^2\right]\\
            &+{\dfrac{192(\kappa^2\tau^2\alpha^2+\kappa^2\beta^2+\gamma^2)\rho^2}{(1-\rho)^2}NT\left(b_1^2+\dfrac{L_f^2}{\mu_g^2}b_2^2\right)}+\dfrac{12(\kappa^2\tau^2\alpha^2+\gamma^2)\rho^2}{(1-\rho)^2}NT\iota^2\\
            &+\dfrac{192(\kappa^2\tau^2\alpha^2+\kappa^2\beta^2+\gamma^2)\rho^2}{1-\rho}\left(1+\dfrac{L_f^2}{\mu_g^2}\right)NT\sigma^2+{\dfrac{1}{1-\rho}\Delta_0^2}.
        \end{aligned}
    \end{equation}
    Then, multiplying $\dfrac{\alpha}{N}$ on both sides of \eqref{final consensus start} and substituting \eqref{eq:assumptioncon con} into it, we finally achieve:
    \begin{equation}
        \begin{aligned}
            \sum_{t=0}^{T-1} \alpha\mathbb{E}\left[\dfrac{1}{N}\Delta_t^2\right]\leq&\dfrac{C_{Dy}\rho^2\alpha^2}{(1-\rho)^2}\sum_{t=0}^{T-1} \alpha\mathbb{E}\left[\left\Vert\bar{y}^{(t)}-\bar{y}^{(t)}_{\star}\right\Vert^2\right]+\dfrac{C_{Dz}\rho^2\alpha^2}{(1-\rho)^2}\sum_{t=0}^{T-1} \alpha\mathbb{E}\left[\left\Vert\bar{z}^{(t)}-\bar{z}^{(t)}_{\star}\right\Vert^2\right]\\
        &+{\dfrac{C_{D1}T\rho^2\alpha^3(b_1^2+\kappa^2b_2^2)}{(1-\rho)^2}}+\dfrac{C_{D2}T\rho^2\alpha^3\sigma^2}{1-\rho}+\dfrac{C_{D3}T\rho^2\alpha^3\iota^2}{(1-\rho)^2}+{\dfrac{\alpha}{1-\rho}\cdot\dfrac{1}{N}\Delta_0^2},
        \end{aligned}
    \end{equation}
where $C_{Dy}, C_{Dz}, C_{D1}, C_{D2}, C_{D3}$ have already been defined above.
\end{proof}

Finally, we combine the conclusion of Lemma \ref{lem:consensus1} and   Lemma \ref{lem:consensus xyz} together, obtaining the following lemma:
\begin{lemma}
    Suppose Assumption \ref{assumption:smooth}, \ref{assumption: data heterogeneity}, \ref{assumption:unbiased}, and \ref{assumption: gossip communication} hold. If $\alpha, \beta, \gamma$ satisfy \eqref{eq:assumptiony}, \eqref{eq:assumptionz}, \eqref{eq:assumptioncon con}, and
    \begin{align}
    \label{eq:assumptioncon}
    \dfrac{\rho^2\alpha^2}{(1-\rho)^2}\tilde{C}_D((1+\kappa^2)C_{yD}+C_{zD}{ +1})\leq\dfrac{1}{3},
    \end{align}
    where $\tilde{C}_D=\max\{C_{Dy},C_{Dz}\}$, in Algorithm \ref{alg:DeMA-SOBA}  we have:
\begin{equation}
\label{eq:final11_xy_consesus0}
    \begin{aligned}
        &(1+\kappa^2)\sum_{t=0}^{T-1} \alpha\mathbb{E}\left[\left\Vert\bar{y}^{(t)}-\bar{y}^{(t)}_{\star}\right\Vert^2\right]+\sum_{t=0}^{T-1} \alpha\mathbb{E}\left[\left\Vert\bar{z}^{(t)}-\bar{z}^{(t)}_{\star}\right\Vert^2\right]\\
        \leq&2((1+\kappa^2)C_{yx}+C_{zx})\sum_{t=0}^{T-1} \alpha\mathbb{E}\left[\left\Vert\bar{h}^{(t)}\right\Vert^2\right]+2((1+\kappa^2)C_{y1}+C_{z1})\dfrac{T\alpha^2}{N}\\
        &+2((1+\kappa^2)C_{y2}+C_{z2})\alpha+2C_{z3}T\alpha^2\iota^2+{\dfrac{2((1+\kappa^2)C_{yD}+C_{zD})C_{D1}T\rho^2\alpha^3(b_1^2+\kappa^2b_2^2)}{(1-\rho)^2}}\\
        &+\dfrac{2((1+\kappa^2)C_{yD}+C_{zD})C_{D2}T\rho^2\alpha^3\sigma^2}{1-\rho}+\dfrac{2((1+\kappa^2)C_{yD}+C_{zD})C_{D3}T\rho^2\alpha^3\iota^2}{(1-\rho)^2}\\
        &+{\dfrac{2((1+\kappa^2)C_{yD}+C_{zD})\alpha}{1-\rho}\cdot\dfrac{1}{N}\Delta_0^2}.
    \end{aligned}
\end{equation}
\end{lemma}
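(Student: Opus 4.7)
The plan is to combine the two preceding lemmas (namely \eqref{eq:final11_xy_consesus1} from Lemma~\ref{lem:consensus1} and \eqref{eq:final11-xy-consesus11} from Lemma~\ref{lem:consensus xyz}) via substitution, and then to use the new step-size condition \eqref{eq:assumptioncon} to absorb the recursive dependence of the consensus error on the tracking errors $\|\bar y^{(t)}-\bar y_\star^{(t)}\|^2$ and $\|\bar z^{(t)}-\bar z_\star^{(t)}\|^2$ back into the left-hand side.

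First, I would start from \eqref{eq:final11_xy_consesus1}, which bounds the quantity
\[
S := (1+\kappa^2)\sum_{t=0}^{T-1} \alpha\,\mathbb{E}\!\left[\left\Vert\bar{y}^{(t)}-\bar{y}^{(t)}_{\star}\right\Vert^2\right]+\sum_{t=0}^{T-1} \alpha\,\mathbb{E}\!\left[\left\Vert\bar{z}^{(t)}-\bar{z}^{(t)}_{\star}\right\Vert^2\right]
\]
by a linear combination of $\sum_t \tau^2\alpha\,\mathbb{E}\|\bar h^{(t)}\|^2$, the consensus quantity $U := \sum_t \alpha\,\mathbb{E}[\tfrac{1}{N}\Delta_t^2]$, and noise/heterogeneity contributions. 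The coefficient in front of $U$ is $(1+\kappa^2)C_{yD}+C_{zD}$.

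Next, I would plug the bound \eqref{eq:final11-xy-consesus11} for $U$ into this inequality. Doing so produces new terms of the form
\[
\frac{((1+\kappa^2)C_{yD}+C_{zD})\,C_{Dy}\,\rho^2\alpha^2}{(1-\rho)^2}\sum_{t=0}^{T-1}\alpha\,\mathbb{E}\!\left[\left\Vert\bar{y}^{(t)}-\bar{y}^{(t)}_{\star}\right\Vert^2\right]
\]
and an analogous one with $C_{Dz}$ multiplying the $\|\bar z-\bar z_\star\|^2$ sum. These terms are structurally identical to the two summands comprising $S$ on the left-hand side, so I would move them to the left.

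Using $\tilde C_D = \max\{C_{Dy}, C_{Dz}\}$, each of these two coefficients is upper bounded by
$\tfrac{\rho^2\alpha^2}{(1-\rho)^2}\tilde C_D\bigl((1+\kappa^2)C_{yD}+C_{zD}\bigr)$, which by the new step-size condition \eqref{eq:assumptioncon} is at most $1/3$ (after dropping the extra $+1$ inside the parentheses, which is retained in \eqref{eq:assumptioncon} purely to leave room). For the $\bar y$ sum, $1/3 \le (1+\kappa^2)/2$, and for the $\bar z$ sum, $1/3 \le 1/2$; thus in both cases the new term is at most half of the corresponding term in $S$. Subtracting these from both sides yields $(1/2)S$ on the left (a fortiori $(2/3)S$, which is the cleanest factor one gets from \eqref{eq:assumptioncon}), so $S$ is bounded by at most $2$ times all remaining right-hand-side terms. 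Collecting the remaining contributions — the $\|\bar h\|^2$ term with coefficient $(1+\kappa^2)C_{yx}+C_{zx}$, the initialization term, the stochastic term $T\alpha^2/N$, the finite-difference term $T\alpha^2\iota^2$ (carried through $C_{z3}$), the heterogeneity term scaled by $\tfrac{\rho^2\alpha^3(b_1^2+\kappa^2b_2^2)}{(1-\rho)^2}$, the variance term scaled by $\tfrac{\rho^2\alpha^3\sigma^2}{1-\rho}$, the finite-difference term scaled by $\tfrac{\rho^2\alpha^3\iota^2}{(1-\rho)^2}$, and the initial-consensus term $\tfrac{\alpha}{1-\rho}\tfrac{1}{N}\Delta_0^2$ — each multiplied by this factor of $2$ gives exactly \eqref{eq:final11_xy_consesus0}.

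The only delicate step is the bookkeeping of constants when showing that the two absorbed coefficients are both $\le 1/3$; the introduction of the $+1$ inside \eqref{eq:assumptioncon} and the use of $\tilde C_D$ as a uniform upper bound on $\{C_{Dy},C_{Dz}\}$ are precisely what makes a single hypothesis on $\alpha$ suffice for simultaneously absorbing the $\bar y$ and $\bar z$ terms. Everything else is a routine linear combination of inequalities already established in Lemmas~\ref{lem:consensus1} and \ref{lem:consensus xyz}, so no additional technical obstacle arises.
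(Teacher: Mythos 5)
Your proposal is correct and follows essentially the same route as the paper: substitute the consensus bound \eqref{eq:final11-xy-consesus11} into \eqref{eq:final11_xy_consesus1}, bound the resulting recursive coefficients by $\tilde C_D$, and use \eqref{eq:assumptioncon} to absorb the $\bar y$- and $\bar z$-tracking terms into the left-hand side, yielding the overall factor of $2$. The paper absorbs the two sums jointly (getting $\tfrac{2}{3}S$ on the left, hence a factor $\tfrac{3}{2}\le 2$) rather than term-by-term as you do, but this is an immaterial difference in bookkeeping.
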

\begin{proof}
Combining Eq. \eqref{eq:final11_xy_consesus1} and \eqref{eq:final11-xy-consesus11} together, we have:  
\begin{equation}
\label{eq:final11_xy_consesus2}
    \begin{aligned}
        &(1+\kappa^2)\sum_{t=0}^{T-1} \alpha\mathbb{E}\left[\left\Vert\bar{y}^{(t)}-\bar{y}^{(t)}_{\star}\right\Vert^2\right]+\sum_{t=0}^{T-1} \alpha\mathbb{E}\left[\left\Vert\bar{z}^{(t)}-\bar{z}^{(t)}_{\star}\right\Vert^2\right]\\
        \leq&((1+\kappa^2)C_{yx}+C_{zx})\sum_{t=0}^{T-1} \alpha\mathbb{E}\left[\left\Vert\bar{h}^{(t)}\right\Vert^2\right]+((1+\kappa^2)C_{y1}+C_{z1})\dfrac{T\alpha^2}{N}\\
        &+((1+\kappa^2)C_{y2}+C_{z2})\alpha+C_{z3}T\alpha^2\iota^2\\
        &+\dfrac{((1+\kappa^2)C_{yD}+C_{zD})\tilde{C}_{D}\rho^2\alpha^2}{(1-\rho)^2}\left(\sum_{t=0}^{T-1} \alpha\mathbb{E}\left[\left\Vert\bar{y}^{(t)}-\bar{y}^{(t)}_{\star}\right\Vert^2\right]+\sum_{t=0}^{T-1} \alpha\mathbb{E}\left[\left\Vert\bar{z}^{(t)}-\bar{z}^{(t)}_{\star}\right\Vert^2\right]\right)\\
        &+{\dfrac{((1+\kappa^2)C_{yD}+C_{zD})C_{D1}T\rho^2\alpha^3(b_1^2+\kappa^2b_2^2)}{(1-\rho)^2}}+\dfrac{((1+\kappa^2)C_{yD}+C_{zD})C_{D2}T\rho^2\alpha^3\sigma^2}{1-\rho}\\
        &+\dfrac{((1+\kappa^2)C_{yD}+C_{zD})C_{D3}T\rho^2\alpha^3\iota^2}{(1-\rho)^2}+{\dfrac{((1+\kappa^2)C_{yD}+C_{zD})\alpha}{1-\rho}\cdot\dfrac{1}{N}\Delta_0^2}.
    \end{aligned}
\end{equation}

Then, using \eqref{eq:assumptioncon}, we finish the proof of this lemma.

\end{proof}

\subsection{The step-sizes selection and the conclusion}
In the end, the following lemma presents the proof of Eq. \eqref{eq:convergence_MADeSOBA}
\begin{lemma}
\label{final_lemma}
    Suppose Assumption \ref{assumption:smooth}, \ref{assumption: data heterogeneity}, \ref{assumption:unbiased}, and \ref{assumption: gossip communication} hold. There exist $c_0,c_1,c_2,c_3>0$ such that if $\alpha=c_0\sqrt{{N}/{T}}$, and $\alpha_t\equiv\alpha$, $\beta_t\equiv c_1\alpha$, $\gamma_t\equiv c_2\alpha$, $\theta_t\equiv c_3\alpha$ for any $t=0,1,\cdots,T-1$,  the iterates $\bar{x}^{(t)}$ in Algorithm \ref{alg:DeMA-SOBA} satisfy:
\begin{align}
    &\ \dfrac{1}{T}\sum_{t=0}^{T-1}\EE\left[\left\|\nabla\Phi(\bar{x}^{(t)})\right\|^2\right] \nonumber \\
    \lesssim&\ \dfrac{\kappa^5}{\sqrt{NT}} + \dfrac{\rho^{\frac{2}{3}}\kappa^{6}}{(1-\rho)^{\frac{1}{3}}T^{\frac{2}{3}}}+{\dfrac{\rho^{\frac{2}{3}}(b_1^{\frac{2}{3}}\kappa^{\frac{16}{3}}+b_2^{\frac{2}{3}}\kappa^{6})}{(1-\rho)^{\frac{2}{3}}T^{\frac{2}{3}}}}+\dfrac{\rho\kappa^{6}}{(1-\rho)T}+{\dfrac{\kappa^4}{(1-\rho)NT}}+\dfrac{\kappa^{13}}{T}+\dfrac{\kappa^7}{NT},
\end{align}
\end{lemma}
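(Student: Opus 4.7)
The plan is to combine the descent lemma \eqref{eq:final_avgh11} with the estimation-error bound \eqref{eq:final11_xy_consesus0} into a single self-referential inequality for the quantity $S_T := \alpha\sum_{t=0}^{T-1}\EE[\|\bar h^{(t)}\|^2] + \alpha\sum_{t=0}^{T-1}\EE[\|\bar h^{(t)}-\nabla\Phi(\bar x^{(t)})\|^2]$, then tune $\tau, c_1, c_2, c_3$ and $\alpha$ so that every occurrence of $S_T$ on the right-hand side can be absorbed into the left. First I would substitute \eqref{eq:final11_xy_consesus0} (which in turn already incorporates the consensus lemma via \eqref{eq:final11-xy-consesus11}) into the two error terms $60(L_f^2L_{\nabla^2g}^2/\mu_g^2 + L_{\nabla f}^2)\sum \alpha\EE\|\bar y^{(t)}-\bar y_\star^{(t)}\|^2$ and $\left(60L_{\nabla^2g}^2 + 6(1+c_3)\alpha\sigma^2/N\right)\sum \alpha \EE\|\bar z^{(t)}-\bar z_\star^{(t)}\|^2$ appearing in \eqref{eq:final_avgh11}, together with the $(60L^2/N)\sum \alpha\EE[\Delta_t^2]$ consensus term treated by Lemma \ref{lem:consensus xyz}.

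After this substitution the coefficient of $S_T$ on the right-hand side is of order $4L_{\nabla\Phi}^2\tau^2/c_3^2 + \Theta(\kappa^8(C_{yx}+C_{zx}))$. Here the crucial choice $\tau = \Theta(\kappa^{-4})$ kills the $L_{\nabla\Phi}^2\tau^2$ factor (since $L_{\nabla\Phi}=\cO(\kappa^3)$) and simultaneously makes the $\tau^2\kappa^8$ coupling from $C_{yx}, C_{zx}$ a small constant; the remaining constants $c_1, c_2, c_3$ are then picked to satisfy the step-size assumptions \eqref{eq:assumptionhyper}, \eqref{eq:assumptiony}, \eqref{eq:assumptionz}, \eqref{eq:assumptioncon con}, \eqref{eq:assumptioncon} for all $\alpha$ small enough. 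Once the $S_T$ terms are absorbed, the remaining right-hand side splits into three groups: (i) initial conditions $\Theta(\tau^{-1}\EE[E_0] + \kappa\Delta_0^2/(1-\rho))$ that produce $1/T$-type terms after division by $T\alpha$; (ii) stochastic noise $\Theta((1+c_3)T\alpha^2\sigma^2/N + \kappa^3 c_1 T\alpha^2\sigma^2/N)$ giving the linear-speedup term; and (iii) heterogeneity/topology terms $\Theta(T\alpha^3\rho^2(b_1^2+\kappa^2 b_2^2)/(1-\rho)^2 + T\alpha^3\rho^2\sigma^2/(1-\rho))$ from the consensus lemma.

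Plugging in $\alpha = c_0\sqrt{N/T}$ and dividing by $T$, the noise contribution becomes $\cO(\kappa^5/\sqrt{NT})$ after bookkeeping the $\kappa$ factors hidden in $C_{y1}, C_{z1}$; the heterogeneity terms become $\cO(\rho^2 N^{1/2}/T^{3/2}\cdot(b_1^2\kappa^{\cdot}+b_2^2\kappa^{\cdot})/(1-\rho)^2)$, which one then bounds using the inequality $xy \le \tfrac{2}{3}x^{3/2}+\tfrac{1}{3}y^3$ (or equivalently, balancing using $\alpha\lesssim (1-\rho)/\rho^{2/3}\cdot T^{-1/3}$-style arguments) to extract the $T^{-2/3}$ rates shown in \eqref{eq:convergence_MADeSOBA}; the initial-condition terms yield $\kappa^{13}/T$ (coming from $\tau^{-1}\cdot 1/(T\alpha)$ with $\tau=\kappa^{-4}$ and the $\kappa^5$ smoothness prefactors in $E_0$) plus $\kappa^4/((1-\rho)NT)$ from the $\Delta_0^2$ term. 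Finally, the relation $\|\nabla\Phi(\bar x^{(t)})\|^2 \le 2\|\bar h^{(t)}\|^2 + 2\|\bar h^{(t)}-\nabla\Phi(\bar x^{(t)})\|^2$ converts the bound on $S_T/T$ into the claimed bound on $\tfrac{1}{T}\sum_t \EE\|\nabla\Phi(\bar x^{(t)})\|^2$; the finite-difference error $\iota^2$ is handled uniformly by choosing $\delta_t$ small enough (Lemma \ref{desjifen}) so that $\iota^2$ contributions are subsumed by the $\sigma^2/N$ terms, yielding the same rate for D-SOBA-SO and D-SOBA-FO.

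The main obstacle is the careful bookkeeping of $\kappa$ in the absorption step: the constants $C_{yx}, C_{zx}$ scale like $\kappa^8$, the cross-term $(1+\kappa^2)C_{yD}+C_{zD}$ scales like $\kappa^4$, and $L_{\nabla\Phi}^2\tau^2/c_3^2$ must be simultaneously $<1/8$ while $\tau$ is not so small that $\tau^{-1}\EE[E_0]$ blows up the $1/T$ term. The $\tau=\Theta(\kappa^{-4})$ choice is precisely what makes all the coupling inequalities close while keeping the $\kappa^{13}/T$ residual; a less careful choice either breaks the absorption or worsens the initial-condition term.
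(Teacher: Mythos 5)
Your proposal follows essentially the same route as the paper's proof: substitute the $y,z$-estimation and consensus bounds into the descent lemma to get a self-referential inequality in $\alpha\sum_t\EE[\|\bar h^{(t)}\|^2]+\alpha\sum_t\EE[\|\bar h^{(t)}-\nabla\Phi(\bar x^{(t)})\|^2]$, absorb it using $\tau=\Theta(\kappa^{-4})$ and $c_1,c_2,c_3=\Theta(1)$, balance the step size against the noise, heterogeneity, and initialization terms (the paper does this via the harmonic-mean choice $\alpha=(\sum_i\alpha_i^{-1}+C_{\alpha1}+C_{\alpha2})^{-1}$, equivalent to your balancing argument), and finish with $\|\nabla\Phi\|^2\le2\|\bar h\|^2+2\|\bar h-\nabla\Phi\|^2$ and a small enough $\delta_t$ to subsume $\iota^2$. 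The $\kappa$-bookkeeping you describe, including the $\kappa^{13}/T$ residual arising from $C_0\cdot C_{\alpha1}=\mathcal{O}(\kappa^5\cdot\kappa^8)$, matches the paper's accounting.
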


\begin{proof}
We initially assume that the conclusions of all preceding lemmas are valid, and provide explicit expressions of $\alpha,
\beta,\gamma$ at the conclusion of the proof.

Assume that the step-size $\alpha$ satisfies:
    \begin{equation}
        \label{as:final}
        \dfrac{(1+c_3)\sigma^2\alpha}{N}\leq L^2,
    \end{equation}

From \eqref{eq:final_avgh11}, we have:
 \begin{equation}\label{H1}
        \begin{aligned}
        &\alpha\sum_{t=0}^{T-1} \mathbb{E}\left[\left\Vert\bar{h}^{(t)}\right\Vert^2\right]+\alpha\sum_{t=0}^{T} \mathbb{E}\left[\left\Vert\bar{h}^{(t)}-\nabla\Phi(\bar{x}^{(t)})\right\Vert^2\right]\\
        \leq&\dfrac{2}{\tau}\mathbb{E}[E_0]+{\dfrac{1}{c_3}\mathbb{E}\left[\left\Vert\bar{h}^{(0)}-\nabla\Phi(\bar{x}^{(0)})\right\Vert^2\right]}+\dfrac{1}{2}\cdot\alpha\sum_{t=0}^{T-1} \mathbb{E}\left[\left\Vert\bar{h}^{(t)}-\nabla\Phi(\bar{x}^{(t)})\right\Vert^2\right]\\
        &+\dfrac{4L_{\nabla\Phi}^2\tau^2}{c_3^2}\alpha\sum_{t=0}^{T-1} \mathbb{E}\left[\left\Vert\bar{h}^{(t)}\right\Vert^2\right]+6L^2\dfrac{1}{N}\cdot\alpha\sum_{t=0}^{T-1}\mathbb{E}[\Delta_t^2]+\dfrac{6(1+c_3)T\alpha^2}{N}\left(1+\dfrac{L_f^2}{\mu_g^2}\right)\sigma^2\\
        &+10T\alpha\iota^2+66L^2(1+\kappa^2)\sum_{t=0}^{T-1}\alpha\mathbb{E}\left[\left\Vert \bar{y}^{(t)}-\bar{y}^{(t)}_{\star}\right\Vert^2\right]+66L^2\sum_{t=0}^{T-1}\alpha\mathbb{E}\left[\left\Vert \bar{z}^{(t)}-\bar{z}^{(t)}_{\star}\right\Vert^2\right].
        \end{aligned}
    \end{equation}

Then, plugging \eqref{eq:final11-xy-consesus11} and \eqref{eq:final11_xy_consesus0} into \eqref{H1}, we get: 
 \begin{equation}\label{H21}
        \begin{aligned}
        &\alpha\sum_{t=0}^{T-1} \mathbb{E}\left[\left\Vert\bar{h}^{(t)}\right\Vert^2\right]+\alpha\sum_{t=0}^{T} \mathbb{E}\left[\left\Vert\bar{h}^{(t)}-\nabla\Phi(\bar{x}^{(t)})\right\Vert^2\right]\\
        \leq&\dfrac{2}{\tau}\mathbb{E}[E_0]+{\dfrac{1}{c_3}\mathbb{E}\left[\left\Vert\bar{h}^{(0)}-\nabla\Phi(\bar{x}^{(0)})\right\Vert^2\right]}+\dfrac{4L_{\nabla\Phi}^2\tau^2}{c_3^2}\alpha\sum_{t=0}^{T-1} \mathbb{E}\left[\left\Vert\bar{h}^{(t)}\right\Vert^2\right]\\
        &+\dfrac{1}{2}\cdot\alpha\sum_{t=0}^{T-1} \mathbb{E}\left[\left\Vert\bar{h}^{(t)}-\nabla\Phi(\bar{x}^{(t)})\right\Vert^2\right]+\left(\dfrac{6(1+c_3)T\alpha^2}{N}\left(1+\dfrac{L_f^2}{\mu_g^2}\right)+\dfrac{66L^2C_{D2}T\rho^2\alpha^3}{1-\rho}\right)\sigma^2\\
        &+\left(10T\alpha+\dfrac{66L^2C_{D3}T\rho^2\alpha^3}{(1-\rho)^2}\right)\iota^2+{\dfrac{66L^2C_{D1}T\rho^2\alpha^3}{(1-\rho)^2}(b_1^2+\kappa^2b_2^2)}+{\dfrac{66L^2\alpha}{1-\rho}\cdot\dfrac{1}{N}\Delta_0^2}\\
        &+132L^2\left((1+\kappa^2)\sum_{t=0}^{T-1} \alpha\mathbb{E}\left[\left\Vert\bar{y}^{(t)}-\bar{y}^{(t)}_{\star}\right\Vert^2\right]+\sum_{t=0}^{T-1} \alpha\mathbb{E}\left[\left\Vert\bar{z}^{(t)}-\bar{z}^{(t)}_{\star}\right\Vert^2\right]\right).
        \end{aligned}
    \end{equation}
Then, we have:
 \begin{equation}\label{H2}
        \begin{aligned}
        &\alpha\sum_{t=0}^{T-1} \mathbb{E}\left[\left\Vert\bar{h}^{(t)}\right\Vert^2\right]+\alpha\sum_{t=0}^{T} \mathbb{E}\left[\left\Vert\bar{h}^{(t)}-\nabla\Phi(\bar{x}^{(t)})\right\Vert^2\right]\\
        \leq&\dfrac{2}{\tau}\mathbb{E}[E_0]+{\dfrac{1}{c_3}\mathbb{E}\left[\left\Vert\bar{h}^{(0)}-\nabla\Phi(\bar{x}^{(0)})\right\Vert^2\right]}+\dfrac{1}{2}\cdot\alpha\sum_{t=0}^{T-1} \mathbb{E}\left[\left\Vert\bar{h}^{(t)}-\nabla\Phi(\bar{x}^{(t)})\right\Vert^2\right]\\
        &+\left(\dfrac{4L_{\nabla\Phi}^2}{c_3^2}+264L^2((1+\kappa^2)C_{yx}+C_{zx})\right)\sum_{t=0}^{T-1}\alpha\mathbb{E}\left[\left\Vert\bar{h}^{(t)}\right\Vert^2\right]+264L^2((1+\kappa^2)C_{y2}+C_{z2})\tau^2\alpha\\
        &+\left(6(1+c_3)(1+\kappa^2)\sigma^2+264L^2((1+\kappa^2)C_{y1}+C_{z1})\right)\dfrac{T\alpha^2}{N}+\left(10+264L^2C_{z3}\right)T\alpha\iota^2\\
        &+{\hat{C}_D\left(\dfrac{C_{D1}T\rho^2\alpha^3(b_1^2+\kappa^2b_2^2)}{(1-\rho)^2}+\dfrac{C_{D2}T\rho^2\alpha^3\sigma^2}{1-\rho}+\dfrac{C_{D3}T\rho^2\alpha^3\iota^2}{(1-\rho)^2}+\dfrac{\alpha\Delta_0^2}{(1-\rho)N}\right)},
        \end{aligned}
    \end{equation}
{where $\hat{C}_D:=66L^2+264L^2((1+\kappa^2)C_{yD}+C_{zD})$}, and the last inequality is due to $\dfrac{\tilde{C}_{D}\rho^2\alpha^2}{(1-\rho)^2}\leq\dfrac{1}{3}$. Then, if we take:
\begin{equation}
\label{eq:constants_c}
\begin{aligned}
      C_x=&\dfrac{4L_{\nabla\Phi}^2\tau^2}{c_3^2}+264L^2\tau^2((1+\kappa^2)C_{yx}+C_{zx})=\mathcal{O}\left(\dfrac{\kappa^8\tau^2}{c_1^2}+\dfrac{\kappa^8\tau^2}{c_2^2}+\dfrac{\kappa^6\tau^2}{c_3^2}\right),\\
    C_0=&264L^2((1+\kappa^2)C_{y2}+C_{z2})+\dfrac{2}{\tau}\mathbb{E}[E_0]+{\dfrac{1}{c_3}\mathbb{E}\left[\left\Vert\bar{h}^{(0)}-\nabla\Phi(\bar{x}^{(0)})\right\Vert^2\right]}\\
    =&\mathcal{O}\left(\dfrac{\kappa^5}{c_1}+\dfrac{\kappa}{c_2}+\dfrac{1}{c_3}+\dfrac{1}{\tau}\right),\\
    C_1=&6(1+c_3)(1+\kappa^2)\sigma^2+264L^2((1+\kappa^2)C_{y1}+C_{z1})=\mathcal{O}(c_1\kappa^5+c_2\kappa^3+c_3\kappa^2+1),\\
    C_2=&10+264L^2C_{z3}=\mathcal{O}(\kappa^2),\\
    \hat{C}_D=&66L^2+264L^2((1+\kappa^2)C_{yD}+C_{zD})=\mathcal{O}(\kappa^4),  
\end{aligned}
\end{equation}
we can get:
 \begin{equation}\label{H}
        \begin{aligned}
            &\alpha\sum_{t=0}^{T-1} \mathbb{E}\left[\left\Vert\bar{h}^{(t)}\right\Vert^2\right]+\alpha\sum_{t=0}^{T} \mathbb{E}\left[\left\Vert\bar{h}^{(t)}-\nabla\Phi(\bar{x}^{(t)})\right\Vert^2\right]\\
            \le&C_x\sum_{t=0}^{T-1} \alpha\mathbb{E}\left[\left\Vert\bar{h}^{(t)}\right\Vert^2\right]+\dfrac{1}{2}\sum_{t=0}^{T-1} \alpha\mathbb{E}\left[\left\Vert\bar{h}^{(t)}-\nabla\Phi(\bar{x}^{(t)})\right\Vert^2\right]+C_1\dfrac{T\alpha^2}{N}+C_2T\alpha^2\iota^2+C_0\alpha\\
            &+ \hat{C}_D\left[{\dfrac{C_{D1}T\rho^2\alpha^3(b_1^2+\kappa^2b_2^2)}{(1-\rho)^2}}+\dfrac{C_{D2}T\rho^2\alpha^3\sigma^2}{1-\rho}+\dfrac{C_{D3}T\rho^2\alpha^3\iota^2}{(1-\rho)^2}+{\dfrac{\alpha\Delta_0^2}{(1-\rho)N}}\right].
        \end{aligned}
    \end{equation}
    
Define
\begin{align*}
    \alpha_1&=\left(\dfrac{C_0N}{C_1T}\right)^{\frac{1}{2}},\quad{\alpha_{21}=\left(\dfrac{C_0(1-\rho)^2}{\hat{C}_DC_{D1}\rho^2Tb_1^2}\right)^{\frac{1}{3}},\quad\alpha_{22}=\left(\dfrac{C_0(1-\rho)^2}{\hat{C}_DC_{D1}\rho^2T\kappa^2b_2^2}\right)^{\frac{1}{3}}},\\
    \alpha_3&=\left(\dfrac{C_0(1-\rho)}{\hat{C}_DC_{D2}\rho^2T}\right)^{\frac{1}{3}}, \alpha_4=\left(\dfrac{(1-\rho)^2}{208\rho^2(\kappa^2\tau^2+\kappa^2c_1^2+c_2^2)\left(L^2+\sigma^2\right)}\right)^{\frac{1}{2}},\\
    \alpha_5&=\left(\dfrac{(1-\rho)^2}{3\rho^2\tilde{C}_D((1+\kappa^2)C_{yD}+C_{zD}{ +1})}\right)^{\frac{1}{2}},\\
    C_{\alpha1}&={c_1(\mu_g+L_{\nabla g})}+2{c_2\mu_g}+\dfrac{2\tau(c_3L_{\nabla\Phi}+L_{\nabla\eta})}{c_3}+\dfrac{20c_3}{\tau^2}+1,\\
C_{\alpha2}&=\dfrac{144c_2\sigma^2}{N\mu_g}+\dfrac{c_2\mu_g^2\sigma^2}{NL^2}+\dfrac{(1+c_3)\sigma_{g2}^2}{NL_{\nabla^2g}^2}.
\end{align*}
Then take
\begin{align}
\label{def:alpha}
    \alpha=\left(\dfrac{1}{\alpha_1}+{\dfrac{1}{\alpha_{21}}+\dfrac{1}{\alpha_{22}}}+\dfrac{1}{\alpha_3}+\dfrac{1}{\alpha_4}+\dfrac{1}{\alpha_5}+C_{\alpha1}+C_{\alpha2}\right)^{-1}. 
\end{align}

Review that the step-sizes are restricted by:
\begin{itemize}
    \item $\text{Eq. \eqref{eq:assumptionhyper}: \quad}
\alpha\leq\min\left\{\dfrac{c_3}{2\tau(c_3L_{\nabla\Phi}+L_{\nabla\eta})},\dfrac{\tau^2}{20c_3},1\right\},\quad\gamma\mu_g\leq1$;
    \item $\text{Eq. \eqref{eq:assumptiony}: \quad}
\beta(\mu_g+L_{\nabla g})\leq1$;
    \item $\text{Eq. \eqref{eq:assumptionz}: \quad}
-\dfrac{1}{6}\gamma\mu_g+ {\dfrac{6}{N}}\gamma^2\sigma^2+\dfrac{1}{2}\gamma^3\mu_g^3\leq0,\quad \gamma\mu_g\leq\min\left\{1,\dfrac{NL^2}{\mu_g\sigma^2}\right\}$;
    \item $\text{Eq. \eqref{eq:assumptioncon con}: \quad}
624(\kappa^2\tau^2\alpha^2+\kappa^2\beta^2+\gamma^2)\rho^2\left(\dfrac{L^2}{(1-\rho)^2}+\dfrac{\sigma^2}{1-\rho}\right)\leq\dfrac{1}{3}$;
    \item $\text{Eq. \eqref{eq:assumptioncon}: \quad}
\dfrac{\rho^2\alpha^2}{(1-\rho)^2}\tilde{C}_D((1+\kappa^2)C_{yD}+C_{zD}{ +1})\leq\dfrac{1}{3}$;
    \item $\text{Eq. \eqref{as:final}: \quad}
\dfrac{(1+c_3)\sigma_{g2}^2\alpha}{N}\leq L^2$.
\end{itemize}

With \eqref{def:alpha}, all above restrictions hold by the following validation:
\begin{itemize}
    \item Validation of Eq. \eqref{eq:assumptionhyper}: \quad As $\max\left\{\dfrac{2\alpha\tau(c_3L_{\nabla\Phi}+L_{\nabla\eta})}{c_3},\dfrac{20c_3\alpha}{\tau^2},2\gamma\mu_g,\alpha\right\}\leq\alpha C_{\alpha1}\leq1$, thus $\alpha\leq\min\left\{\dfrac{c_3}{2\tau(c_3L_{\nabla\Phi}+L_{\nabla\eta})},\dfrac{\tau^2}{20c_3},1\right\}$ and $\gamma\mu_g\leq\dfrac{1}{2}<1$.
    \item Validation of Eq. \eqref{eq:assumptiony}: \quad $\beta(\mu_g+L_{\nabla g})\leq\alpha C_{\alpha1}\leq1$.
    \item Validation of Eq. \eqref{eq:assumptionz}: \quad As $\dfrac{\gamma\mu_g^2\sigma^2}{NL^2}\leq\alpha C_{\alpha2}\leq1$,  $\gamma\mu_g\leq\min\left\{1,\dfrac{NL^2}{\mu_g\sigma^2}\right\}$. 

    Moreover, it holds that $\gamma\mu_g\leq\dfrac{1}{2}$. Then, $\dfrac{1}{6}\gamma\mu_g-\dfrac{1}{2}\gamma^3\mu_g^3\geq\dfrac{1}{6}\gamma\mu_g-\dfrac{1}{8}\gamma\mu_g=\dfrac{1}{24}\gamma\mu_g$. And from $\dfrac{144\gamma\sigma^2}{N\mu_g}\leq\alpha C_{\alpha2}\leq1$, we can obtain that ${\dfrac{6}{N}}\gamma^2\sigma^2\leq\dfrac{1}{24}\gamma\mu_g$. Thus Eq. \eqref{eq:assumptionz} holds as $ {\dfrac{6}{N}}\gamma^2\sigma^2\leq\dfrac{1}{6}\gamma\mu_g-\dfrac{1}{2}\gamma^3\mu_g^3$.
    \item Validation of Eq. \eqref{eq:assumptioncon con}: \quad As $\alpha/\alpha_4\leq1$, it holds that $\dfrac{624\rho^2(\kappa^2\tau^2+\kappa^2c_1^2+c_2^2)\left(L^2+\sigma^2\right)}{(1-\rho)^2}\leq1$. Thus Eq. \eqref{eq:assumptioncon con} holds.
    \item Validation of Eq. \eqref{eq:assumptioncon}: \quad As $\alpha/\alpha_5\leq1$, it holds that $\dfrac{3\rho^2\alpha^2\tilde{C}_D((1+\kappa^2)C_{yD}+C_{zD})}{(1-\rho)^2}\leq1$. Thus Eq. \eqref{eq:assumptioncon} holds.
    \item Validation of Eq. \eqref{as:final}: \quad As $\alpha C_{\alpha2}\leq1$, it holds that $\dfrac{(1+c_3)\sigma_{g2}^2\alpha}{NL^2}\leq 1$. Then Eq. \eqref{as:final} holds.
\end{itemize}

Thus, all these restrictions hold. Consequently, all preceding lemmas hold. 

Taking {$c_1=\Theta(1)$, $c_2=\Theta(1)$, $c_3=\Theta(1)$, $\tau=\Theta(\kappa^{-4})$} such that $C_x<\dfrac{1}{2}$, we get:
\begin{equation}\label{kap}
    \begin{aligned}
    C_x&=\mathcal{O}(1),\quad C_0=\mathcal{O}\left(\kappa^5\right),\quad
    C_1=\mathcal{O}(\kappa^5),\quad C_2=\mathcal{O}(\kappa^2),\\
    C_{D1}&={\mathcal{O}(\kappa^2)},\quad C_{D2}=\mathcal{O}(\kappa^2),\quad \hat{C}_D=\mathcal{O}\left(\kappa^4\right).
    \end{aligned}
\end{equation}

Then the order of $\kappa$ in the following terms can be provided as:
\begin{equation}\label{kap1}
    \begin{aligned}
    \alpha_1&=\mathcal{O}(\kappa^0),\quad{\alpha_{21}=\mathcal{O}(\kappa^{-\frac{1}{3}}),\quad\alpha_{22}=\mathcal{O}(\kappa^{-1})},\quad\alpha_3=\mathcal{O}(\kappa^{-1}),\quad\alpha_4=\mathcal{O}(\kappa^{-1}),\\
    {\alpha_5}&={\mathcal{O}(\kappa^{-8})},\quad C_{\alpha1}=\mathcal{O}(\kappa^8),\quad C_{\alpha2}=\mathcal{O}(\kappa^2/N).
    \end{aligned}
\end{equation}

From \eqref{H}, we get:
\begin{equation}\label{final11Phi}
\begin{aligned}
    &\dfrac{1}{T}\sum_{t=0}^{T-1} \mathbb{E}\left[\left\Vert\nabla\Phi(\bar{x}^{(t)})\right\Vert^2\right]\\
    \le&{\dfrac{2}{T}}\sum_{t=0}^{T-1} \mathbb{E}\left[\left\Vert\bar{h}^{(t)}\right\Vert^2\right]+\dfrac{2}{T}\sum_{t=0}^{T-1} \mathbb{E}\left[\left\Vert\bar{h}^{(t)}-\nabla\Phi(\bar{x}^{(t)})\right\Vert^2\right]\\
    \leq&2C_1\dfrac{\alpha}{N}+2C_2\alpha\iota^2+\dfrac{2C_0}{\alpha T}+{2\hat{C}_DC_{D1}\dfrac{\rho^2\alpha^2(b_1^2+\kappa^2b_2^2)}{(1-\rho)^2}}+2\hat{C}_DC_{D2}\dfrac{\rho^2\alpha^2}{1-\rho}+2\hat{C}_DC_{D3}\dfrac{\rho^2\alpha^2\iota^2}{(1-\rho)^2}\\
    &+{\dfrac{2\hat{C}_D\Delta_0^2}{(1-\rho)NT}}.
\end{aligned}
\end{equation}

Substituting \eqref{def:alpha} into \eqref{final11Phi} and taking $\iota$ satisfying \[C_2\alpha\iota^2+\hat{C}_DC_{D3}\dfrac{\rho^2\alpha^2\iota^2}{(1-\rho)^2}\lesssim\dfrac{1}{{NT}},\]we obtain:
\begin{equation}
\begin{aligned}
\label{final0Phi}
    &\dfrac{1}{T}\sum_{t=0}^{T-1} \mathbb{E}\left[\left\Vert\nabla\Phi(\bar{x}^{(t)})\right\Vert^2\right]\\
    \leq&\dfrac{2C_0}{T}\left(\dfrac{1}{\alpha_1}+{\dfrac{1}{\alpha_{21}}+\dfrac{1}{\alpha_{22}}}+\dfrac{1}{\alpha_3}+\dfrac{1}{\alpha_4}+\dfrac{1}{\alpha_5}+C_{\alpha1}+C_{\alpha2}\right)+2C_1\dfrac{\alpha_1}{N}\\
    &+{\dfrac{2\hat{C}_DC_{D1}\rho^2\alpha_2^2}{(1-\rho)^2}(b_1^2+\kappa^2b_2^2)}+2\hat{C}_DC_{D2}\dfrac{\rho^2\alpha_3^2}{1-\rho}+2C_2\alpha\iota^2+2\hat{C}_DC_{D3}\dfrac{\rho^2\alpha^2\iota^2}{(1-\rho)^2}+{\dfrac{2\hat{C}_D\Delta_0^2}{(1-\rho)NT}}\\
    \lesssim&\dfrac{\kappa^5}{\sqrt{NT}}+\dfrac{\rho^{\frac{2}{3}}\kappa^{6}}{(1-\rho)^{\frac{1}{3}}T^{\frac{2}{3}}}+{\dfrac{\rho^{\frac{2}{3}}(b_1^{\frac{2}{3}}\kappa^{\frac{16}{3}}+b_2^{\frac{2}{3}}\kappa^{6})}{(1-\rho)^{\frac{2}{3}}T^{\frac{2}{3}}}}+\dfrac{\rho\kappa^{6}}{(1-\rho)T}+{\dfrac{\kappa^4}{(1-\rho)NT}}+\dfrac{\kappa^{13}}{T}+\dfrac{\kappa^7}{NT}.
\end{aligned}
\end{equation}
\end{proof}

\subsection{Proof of Corollary \ref{thm:transient time of DeMA-SOBA}}
\label{proof of transient time of D-SOBA}
\begin{proof}
    According to \eqref{eq:convergence_MADeSOBA}, the linear speed up stage can be achieved if $T$ is sufficiently large to make $1/\sqrt{NT}$ dominate the convergence rate, \ie,:
    \begin{align}
    \dfrac{1}{\sqrt{NT}}\lesssim\dfrac{1}{(1-\rho)^{\frac{1}{3}}T^{\frac{2}{3}}},\quad
    \dfrac{1}{\sqrt{NT}}\lesssim\dfrac{b^{\frac{2}{3}}}{(1-\rho)^{\frac{2}{3}}T^{\frac{2}{3}}},\quad
    \dfrac{1}{\sqrt{NT}}\lesssim\dfrac{1}{(1-\rho)T}.
    \end{align}
    The above constraints are equivalent to $T\lesssim\max\left\{ \dfrac{N^3}{(1-\rho)^2}, \dfrac{N^3b^2}{(1-\rho)^4}\right\}$.
\end{proof}

\subsection{Proof of Corollary \ref{thm:convergence of deterministic}}
\label{proof of deterministic}
\begin{proof}
    From \eqref{final11Phi}, if the stochastic noises are all equal to 0,  $C_1=C_{D2}=0$ but $C_0,\hat{C}_D,C_{D1}\not=0$, the average of square norm of the hyper-gradients, \ie, $\frac{1}{T}\sum_{t=0}^T\left\Vert\nabla\Phi(\bar{x}^{(t)})\right\Vert^2$, can be bounded as:
    \begin{align}
    \label{final11Phi_deter}
        &\dfrac{1}{T}\sum_{t=0}^{T-1} \left\Vert\nabla\Phi(\bar{x}^{(t)})\right\Vert^2\\
        \leq&\dfrac{2C_0}{\alpha T}+{2\hat{C}_DC_{D1}\dfrac{\rho^2\alpha^2(b_1^{2}+\kappa^2b_2^{2})}{(1-\rho)^2}}+2C_2\alpha\iota^2+2\hat{C}_DC_{D3}\dfrac{\rho^2\alpha^2\iota^2}{(1-\rho)^2}+{\dfrac{2\hat{C}_D\Delta_0^2}{(1-\rho)NT}}.
    \end{align}

    Define 
    \begin{align*}
        {\alpha_{21}'=\left(\dfrac{C_0(1-\rho)^2}{\hat{C}_DC_{D1}\rho^2b_1^2T}\right)^{\frac{1}{3}}=\mathcal{O}(\kappa^{\frac{1}{3}}),\quad
        \alpha_{22}'=\left(\dfrac{C_0(1-\rho)^2}{\hat{C}_DC_{D1}\rho^2b_2^2\kappa^2T}\right)^{\frac{1}{3}}=\mathcal{O}(\kappa^{-1})},
    \end{align*}

    and set
    \begin{align}
    \label{def:alpha_deter}
        \alpha=\left({\dfrac{1}{\alpha_{21}'}+\dfrac{1}{\alpha_{22}'}}+\dfrac{1}{\alpha_4}+\dfrac{1}{\alpha_5}+C_{\alpha1}+C_{\alpha2}\right)^{-1}.
    \end{align}
    Then \eqref{eq:assumptionhyper}, \eqref{eq:assumptiony}, \eqref{eq:assumptionz}, \eqref{eq:assumptioncon con}, \eqref{eq:assumptioncon}, and \eqref{as:final} hold. Taking $\iota$ satisfying \[2C_2\alpha\iota^2+2\hat{C}_DC_{D3}\dfrac{\rho^2\alpha^2\iota^2}{(1-\rho)^2}\lesssim\dfrac{1}{T},\] substituting \eqref{def:alpha_deter} into \eqref{final11Phi_deter} and using \eqref{kap}, we have:
    \begin{equation}
    \begin{aligned}
    \label{final0Phi}
        &\dfrac{1}{T}\sum_{t=0}^{T-1} \left\Vert\nabla\Phi(\bar{x}^{(t)})\right\Vert^2\\
        \leq&\dfrac{2C_0}{T}\left({\dfrac{1}{\alpha_{21}'}+\dfrac{1}{\alpha_{22}'}}+\dfrac{1}{\alpha_4}+\dfrac{1}{\alpha_5}+C_{\alpha1}+C_{\alpha2}\right)+{2\hat{C}_DC_{D1}\dfrac{\rho^2(\alpha_{21}')^2}{(1-\rho)^2}b^2+2\hat{C}_DC_{D1}\dfrac{\rho^2(\alpha_{22}')^2}{(1-\rho)^2}\kappa^2b_2^2}\\
    &+2C_2\alpha\iota^2+2\hat{C}_DC_{D3}\dfrac{\rho^2\alpha^2\iota^2}{(1-\rho)^2}+{\dfrac{2\hat{C}_D\Delta_0^2}{(1-\rho)NT}}\\
        \lesssim&{\dfrac{\rho^{\frac{2}{3}}(b_1^{\frac{2}{3}}\kappa^{\frac{16}{3}}+b_2^{\frac{2}{3}}\kappa^{6})}{T^{\frac{2}{3}}(1-\rho)^{\frac{2}{3}}}}+\dfrac{\rho\kappa^6}{(1-\rho)T}+{\dfrac{\kappa^4}{(1-\rho)NT}}+\dfrac{\kappa^{13}}{T}.
    \end{aligned}
    \end{equation}

\end{proof}
{
\section{Proof of the non-asymptotic concensus error}
\label{Proof of the non-asymptotic concensus error}
In this section, we present the proof of the non-asymptotic concensus error, which has been present in Corollary \ref{thm:concensus error} .
\begin{lemma}
\label{lemma: concensus}Suppose Assumptions~\ref{assumption:smooth}, \ref{assumption: data heterogeneity}, \ref{assumption:unbiased}, and \ref{assumption: gossip communication} hold. If we take the hyperparameters as Theorem \ref{thm:concensus error}, then the concensus error of \ours satisfies that:
\begin{equation}
\label{eq:final11-xy-consesus11_new1}
    \begin{aligned}
        \dfrac{1}{T}\sum_{t=0}^{T-1} \mathbb{E}\left[\dfrac{1}{N}\Delta_t^2\right]\lesssim&\dfrac{N}{T}\left(\dfrac{b^2}{(1-\rho)^2}+\dfrac{1}{1-\rho}\right)+\dfrac{N^{\frac{1}{2}}}{T^{\frac{3}{2}}(1-\rho)^2}.
    \end{aligned}
\end{equation} 
\end{lemma}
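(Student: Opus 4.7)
The plan is to chain together the three consensus-type bounds already proved for Algorithm \ref{alg:DeMA-SOBA}, invoke the hypergradient-surrogate estimate from the proof of Theorem \ref{thm:convergence of DeMA-SOBA}, substitute $\alpha=\Theta(\sqrt{N/T})$, and then convert the cumulative bound on $\sum_{t}\alpha\,\EE[\Delta_t^2/N]$ into a time-averaged consensus error by dividing through by $T\alpha$. Since $\|\mathbf{x}^{(t)}-\bar{\mathbf{x}}^{(t)}\|_F^2+\|\mathbf{y}^{(t)}-\bar{\mathbf{y}}^{(t)}\|_F^2+\|\mathbf{z}^{(t)}-\bar{\mathbf{z}}^{(t)}\|_F^2\le \Delta_t^2$ (all $\kappa^2$ factors are $\ge 1$), it suffices to bound $\frac{1}{T}\sum_t\EE[\Delta_t^2/N]$.

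First, I would take inequality \eqref{eq:final11-xy-consesus11} from Lemma \ref{lem:consensus xyz} as the starting point. It already controls $\sum_{t=0}^{T-1}\alpha\EE[\Delta_t^2/N]$ by a small-coefficient combination of the two auxiliary sums $S_y:=\sum \alpha\EE\|\bar{y}^{(t)}-\bar{y}_\star^{(t)}\|^2$ and $S_z:=\sum \alpha\EE\|\bar{z}^{(t)}-\bar{z}_\star^{(t)}\|^2$, plus explicit heterogeneity, variance, finite-difference, and initialization residuals. Under \eqref{eq:assumptioncon con} and \eqref{eq:assumptioncon}, the prefactors $A_y,A_z=\Theta(\rho^2\alpha^2/(1-\rho)^2)$ are $o(1)$, so coupling does not blow up.

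Second, I would absorb $S_y$ and $S_z$ using \eqref{eq:final11_xy_consesus0}, which dominates them by the hypergradient-surrogate sum $S_h:=\sum \alpha\EE\|\bar{h}^{(t)}\|^2$ plus further lower-order residuals. Then I would read $S_h=O(1)$ from inequality \eqref{H} in the proof of Theorem \ref{thm:convergence of DeMA-SOBA}: after using $C_x<1/2$ (guaranteed by the choice of $c_1,c_2,c_3,\tau$ in Lemma \ref{final_lemma}) to move $C_x S_h$ to the left-hand side and the $\frac{1}{2}\sum\alpha\EE\|\bar{h}-\nabla\Phi\|^2$ term likewise, one gets $S_h\lesssim C_0\alpha+C_1 T\alpha^2/N+\hat C_D(\dots)$. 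Substituting $\alpha=\Theta(\sqrt{N/T})$ makes each of these terms $O(1)$ (treating $\kappa$ as a constant, consistent with the convention used in the transient-complexity analysis).

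Third, I would substitute $\alpha=\Theta(\sqrt{N/T})$ into every coefficient, divide by $T\alpha$, and collect terms by order. The explicit residuals in \eqref{eq:final11-xy-consesus11} convert to
\begin{align*}
\frac{C_{D1}\rho^2\alpha^2 b^2}{(1-\rho)^2}+\frac{C_{D2}\rho^2\alpha^2\sigma^2}{1-\rho}+\frac{\Delta_0^2}{(1-\rho)NT}
\;\lesssim\;\frac{Nb^2}{(1-\rho)^2 T}+\frac{N}{(1-\rho)T},
\end{align*}
which supplies the first group of terms in the target bound. The $y$- and $z$-gap contribution $A_y S_y+A_z S_z$, once divided by $T\alpha$ and using $S_y,S_z=O(1)$, scales as $\alpha/((1-\rho)^2 T)=O(\sqrt{N}/(T^{3/2}(1-\rho)^2))$, recovering the remaining term. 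The finite-difference terms proportional to $\iota^2$ are negligible by the same tuning $\delta_t=O(\sqrt{\iota}/\|z_i^{(t)}\|^2)$ used in Theorem \ref{thm:convergence of DeMA-SOBA}.

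The main obstacle is pure bookkeeping: one must verify that absorbing $S_y,S_z$ into $S_h$ does not feed back into itself at the same order, and that after substitution the $A_y S_y$ contribution (which has the worst $(1-\rho)$ dependence, $\rho^2/(1-\rho)^2$ from the consensus recursion and another $1/(1-\rho)^2$ hidden inside the bound on $S_y$) still remains below the explicit heterogeneity term $Nb^2/((1-\rho)^2 T)$ when $Nb^2\gtrsim\sqrt{N/T}$, and above it otherwise, yielding the claimed $N^{1/2}/(T^{3/2}(1-\rho)^2)$ floor in the heterogeneity-free regime.
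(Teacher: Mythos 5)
Your proposal is correct and follows essentially the same route as the paper's proof: both chain the consensus recursion \eqref{eq:final11-xy-consesus11} with the $\bar y,\bar z$ tracking bounds and the descent-lemma control of $\sum_t\alpha\,\EE\|\bar h^{(t)}\|^2=O(1)$, then substitute $\alpha=\Theta(\sqrt{N/T})$ and divide by $T\alpha$, with the $S_y,S_z$ feedback contributing the $N^{1/2}T^{-3/2}(1-\rho)^{-2}$ term and the explicit heterogeneity/variance residuals giving the $N/T$ terms. The only differences are bookkeeping-level (you resolve the coupled inequalities starting from \eqref{eq:final11_xy_consesus0} and \eqref{H} rather than from \eqref{eq:final11_xy_consesus1} and \eqref{eq:final_avgh11}, and your closing remark about one term dominating the other is unnecessary since both simply appear in the final bound), so the argument is sound as written.
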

\begin{proof}
From Eq. \eqref{eq:final11_xy_consesus1}, it holds that:
\begin{equation}
\label{eq:final11_xy_consesus1_new}
    \begin{aligned}
        &(1+\kappa^2)\sum_{t=0}^{T-1} \alpha\mathbb{E}\left[\left\Vert\bar{y}^{(t)}-\bar{y}^{(t)}_{\star}\right\Vert^2\right]+\sum_{t=0}^{T-1} \alpha\mathbb{E}\left[\left\Vert\bar{z}^{(t)}-\bar{z}^{(t)}_{\star}\right\Vert^2\right]\\
        \leq&((1+\kappa^2)C_{yx}+C_{zx})\sum_{t=0}^{T-1} \tau^2\alpha\mathbb{E}\left[\left\Vert\bar{h}^{(t)}\right\Vert^2\right]+((1+\kappa^2)C_{y1}+C_{z1})\dfrac{T\alpha^2}{N}+((1+\kappa^2)C_{y2}+C_{z2})\\
        &+C_{z3}T\alpha^2\iota^2+((1+\kappa^2)C_{yD}+C_{zD})\sum_{t=0}^{T-1} \alpha\mathbb{E}\left[\dfrac{1}{N}\Delta_t^2\right].
    \end{aligned}
\end{equation}

Moreover, from Eq. \eqref{eq:final_avgh11}, the definition of $C_x$ in Eq. \eqref{eq:constants_c},  $C_x\leq\dfrac{1}{2}$, and the step-sizes taken in Lemma \ref{final_lemma}, it holds that:
\begin{equation}
\label{eq:final_avgh11_new}
    \begin{aligned}
        &\alpha\sum_{t=0}^{T-1} \mathbb{E}\left[\left\Vert\bar{h}^{(t)}\right\Vert^2\right]\\
        \leq&\dfrac{4}{\tau}\mathbb{E}[E_0]+\dfrac{2}{c_3}\mathbb{E}\left[\left\Vert\bar{h}^{(0)}-\nabla\Phi(\bar{x}^{(0)})\right\Vert^2\right]+132L^2\dfrac{1}{N}\cdot\alpha\sum_{t=0}^{T-1}\mathbb{E}[\Delta_t^2]\\
        &+\dfrac{12(1+c_3)T\alpha^2}{N}\left(1+\dfrac{L_f^2}{\mu_g^2}\right)\sigma^2+20T\alpha\iota^2\\
        &+132L^2\left((1+\kappa^2)\sum_{t=0}^{T-1}\alpha\mathbb{E}\left[\left\Vert \bar{y}^{(t)}-\bar{y}^{(t)}_{\star}\right\Vert^2\right]+\sum_{t=0}^{T-1}\alpha\mathbb{E}\left[\left\Vert \bar{z}^{(t)}-\bar{z}^{(t)}_{\star}\right\Vert^2\right]\right).
    \end{aligned}
\end{equation}

Then, substituting Eq. \eqref{eq:final_avgh11_new} into Eq. \eqref{eq:final11_xy_consesus1_new}, using the step-sizes taken in Lemma \ref{final_lemma} and the fact that
$$132L^2\tau^2((1+\kappa^2)C_{yx}+C_{zx})\leq\dfrac{1}{2},$$
 it holds that:
\begin{equation}
\label{eq:final11_xy_consesus1_new1}
    \begin{aligned}
        &(1+\kappa^2)\sum_{t=0}^{T-1} \alpha\mathbb{E}\left[\left\Vert\bar{y}^{(t)}-\bar{y}^{(t)}_{\star}\right\Vert^2\right]+\sum_{t=0}^{T-1} \alpha\mathbb{E}\left[\left\Vert\bar{z}^{(t)}-\bar{z}^{(t)}_{\star}\right\Vert^2\right]\\
        \leq&\dfrac{8C_{con}}{\tau}\mathbb{E}[E_0]+\dfrac{4C_{con}}{c_3}\mathbb{E}\left[\left\Vert\nabla\bar{h}^{(0)}-\Phi(\bar{x}^{(0)})\right\Vert^2\right]+\dfrac{24C_{con}(1+c_3)T\alpha^2}{N}\left(1+\dfrac{L_f^2}{\mu_g^2}\right)\sigma^2\\
        &+40C_{con}T\alpha\iota^2+2((1+\kappa^2)C_{y1}+C_{z1})\dfrac{T\alpha^2}{N}+2((1+\kappa^2)C_{y2}+C_{z2})+2C_{z3}T\alpha^2\iota^2\\
        &+2((1+\kappa^2)C_{yD}+C_{zD}+1)\sum_{t=0}^{T-1} \alpha\mathbb{E}\left[\dfrac{1}{N}\Delta_t^2\right],
    \end{aligned}
\end{equation}
where $C_{con}:=((1+\kappa^2)C_{yx}+C_{zx})\tau^2$.

Then, with Eq. \eqref{eq:assumptioncon}, it holds from \eqref{eq:final11_xy_consesus1_new1} and \eqref{eq:final11-xy-consesus11} that
\begin{equation}
\label{eq:final11-xy-consesus11_new}
    \begin{aligned}
        \dfrac{1}{T}\sum_{t=0}^{T-1} \mathbb{E}\left[\dfrac{1}{N}\Delta_t^2\right]\lesssim&\dfrac{\rho^2\alpha}{T(1-\rho)^2}+\dfrac{\rho^2\alpha^3}{N(1-\rho)^2}+\dfrac{\rho^2\alpha^2b^2}{(1-\rho)^2}+\dfrac{\rho^2\alpha^2}{1-\rho}+\dfrac{\rho^2\alpha^2\iota^2}{(1-\rho)^2}+\dfrac{1}{(1-\rho)NT}.
    \end{aligned}
\end{equation} 
As we have taken $\iota$ satisfying $C_2\alpha\iota^2+\hat{C}_DC_{D3}\dfrac{\rho^2\alpha^2\iota^2}{(1-\rho)^2}\lesssim\dfrac{1}{{NT}}$ in Lemma \ref{final_lemma}. Thus with the step-sizes taken in Lemma \ref{final_lemma}, the non-asymptotic concensus error of D-SOBA has an upper bound of:
\begin{equation*}
    \begin{aligned}
        \dfrac{1}{T}\sum_{t=0}^{T-1} \mathbb{E}\left[\dfrac{1}{N}\Delta_t^2\right]\lesssim&\dfrac{N}{T}\left(\dfrac{b^2}{(1-\rho)^2}+\dfrac{1}{1-\rho}\right)+\dfrac{N^{\frac{1}{2}}}{T^{\frac{3}{2}}(1-\rho)^2}+\dfrac{1}{(1-\rho)NT}.
    \end{aligned}
\end{equation*}  
\end{proof}
}

\section{Asymptotic Rate and Transient Complexity Analysis of MDBO}
\label{analysis of gao}
In this section, we give the theoretical analysis of the asymptotic convergence rate when $T\to\infty$ and the transient time of MDBO \citep[Algorithm 1]{gao2023convergence} shown in Table \ref{table:comparison}, which is not explicitly given in their paper.

Firstly, if we take the constraints of \citep[Theorem 1]{gao2023convergence}, then the convergence rate of MDBO satisfies:
\begin{align}
    \dfrac{1}{T}\sum_{t=0}^{T-1}\mathbb{E}\left[\left\Vert\nabla\Phi(\bar{x}^{(t)})\right\Vert^2\right]\leq\dfrac{C_1}{(1-\rho)^2\eta T}+C_2\eta,
\end{align}
where $C_1,C_2,C_3$ are constants and $\eta$ denotes the step-size. Here we remove the $\mathcal{O}(\varepsilon)$ terms as $\varepsilon$ can be set sufficiently small.
Let $\eta=\left(\dfrac{C_1}{C_2(1-\rho)^2T}\right)^{\frac{1}{2}}$, then we have:
\begin{align}
    \dfrac{1}{T}\sum_{t=0}^{T-1}\mathbb{E}\left[\left\Vert\nabla\Phi(\bar{x}^{(t)})\right\Vert^2\right]\lesssim\dfrac{1}{(1-\rho)\sqrt{T}}.
\end{align}

Similarly, if we take the constraints of \citep[Theorem 2]{gao2023convergence}, then the convergence rate of MDBO satisfies:
\begin{align}
    \dfrac{1}{T}\sum_{t=0}^{T-1}\mathbb{E}\left[\left\Vert\nabla\Phi(\bar{x}^{(t)})\right\Vert^2\right]\leq\dfrac{C_1}{\eta T}+\dfrac{C_2\eta^2}{(1-\rho)^4}+C_3\dfrac{\eta}{N},
\end{align}
where $C_1,C_2,C_3,C_4$ are constants. Taking
\begin{align*}
    \eta_1 = \left( \dfrac{C_1(1-\rho)^4}{C_2T} \right)^{\frac{1}{3}},\quad \eta_2 =  \left( \dfrac{C_1N}{C_3T} \right)^{\frac{1}{2}},\quad \eta=\left(\frac{1}{\eta_1}+\frac{1}{\eta_2}\right)^{-1},
\end{align*}
we have:
\begin{align}
    \dfrac{1}{T}\sum_{t=0}^{T-1}\mathbb{E}\left[\left\Vert\nabla\Phi(\bar{x}^{(t)})\right\Vert^2\right]\lesssim\dfrac{1}{\sqrt{NT}}+\dfrac{1}{(1-\rho)^\frac{4}{3}{T}^\frac{2}{3}}.
\end{align}
Thus, it has an asymptotic convergence rate of $1/\sqrt{NT}$. Moreover, to achieve the linear speedup stage, the following condition is required:
\begin{align}
    \dfrac{1}{(1-\rho)^\frac{4}{3}{T}^\frac{2}{3}}\lesssim\dfrac{1}{\sqrt{NT}}.
\end{align}
The above constraint is equivalent to $T\lesssim\dfrac{N^3}{(1-\rho)^8}$.

\vskip 0.2in
\bibliography{reference.bib}

\end{document}